\def\RSthmtxt{theorem~}\newref{thm}{name = \RSthmtxt}}
\def\RSlemtxt{lemma~}\newref{lem}{name = \RSlemtxt}}
\numberwithin{equation}{section}
\numberwithin{figure}{section}
\numberwithin{table}{section}
\theoremstyle{plain}
\newtheorem{thm}{\protect\theoremname}[section]
\theoremstyle{plain}
\newtheorem{prop}[thm]{\protect\propositionname}
\theoremstyle{remark}
\newtheorem{rem}[thm]{\protect\remarkname}
\theoremstyle{plain}
\newtheorem{lem}[thm]{\protect\lemmaname}
\theoremstyle{definition}
\newtheorem{defn}[thm]{\protect\definitionname}
\theoremstyle{plain}
\newtheorem{cor}[thm]{\protect\corollaryname}
\providecommand{\corollaryname}{Corollary}
\providecommand{\definitionname}{Definition}
\providecommand{\lemmaname}{Lemma}
\providecommand{\propositionname}{Proposition}
\providecommand{\remarkname}{Remark}
\providecommand{\theoremname}{Theorem}
\begin{document}
\global\long\def\Cov{\operatorname{Cov}}%

\global\long\def\Var{\operatorname{Var}}%

\global\long\def\Lip{\operatorname{Lip}}%

\global\long\def\e{\mathrm{e}}%

\global\long\def\R{\mathbf{R}}%

\global\long\def\Law{\operatorname{Law}}%

\global\long\def\dif{\mathrm{d}}%

\global\long\def\eps{\varepsilon}%

\newcommand{\email}[1]{{\href{mailto:#1}{\nolinkurl{#1}}}}
\title{A forward-backward SDE from the 2D nonlinear stochastic heat equation} 
\author{Alexander Dunlap\thanks{Department of Mathematics, Courant Institute of Mathematical Sciences, New York University, New York, NY 10012 USA. \email{alexander.dunlap@cims.nyu.edu}. Partially supported by the NSF Mathematical Sciences Postdoctoral Fellowship program under grant no.\ DMS-2002118, and by NSF grant no.\ DMS-1910023 and BSF grant no.\ 2014302 (both awarded to Lenya Ryzhik).} \and Yu
	Gu\thanks{Department of Mathematics, University of Maryland, College Park, MD 20742 USA. \email{ygu7@umd.edu}. Partially supported by the NSF through grants no.\ DMS-1907928 and CAREER-2042384.}}
\maketitle
\begin{abstract}
	We consider a nonlinear stochastic heat equation in spatial dimension
	$d=2$, forced by a white-in-time multiplicative Gaussian noise with spatial correlation
	length $\eps>0$ but divided by a factor of $\sqrt{\log\eps^{-1}}$.
	We impose a condition on the Lipschitz constant of the nonlinearity
	so that the problem is in the ``weak noise'' regime. We show that,
	as $\eps\downarrow0$, the one-point distribution of the solution
	converges, with the limit characterized
	in terms of the solution to a forward-backward stochastic differential equation (FBSDE). We also characterize the limiting multipoint statistics of
	the solution, when the points are chosen on appropriate scales, in
	similar terms. Our approach is new even for the linear case, in which the FBSDE can be solved explicitly and
	we recover results of Caravenna, Sun, and Zygouras (\emph{Ann.\ Appl.\ Probab.}\ 27(5):3050--3112, 2017).
\end{abstract}

\section{Introduction}

Fix a Lipschitz function $\sigma:[0,\infty)\to[0,\infty)$ with $\sigma(0)=0$.
Define $\beta=\Lip(\sigma)$. We are interested in the following two-dimensional
stochastic heat equation with colored noise of spatial correlation length
$\eps>0$, started at constant initial condition $a\in\mathbf{R}_{\ge 0}$:
\begin{align}
	\dif u_{\eps,a}(t,x) & =\frac{1}{2}\Delta u_{\eps,a}(t,x)\dif t+(\log\eps^{-1})^{-\frac{1}{2}}\sigma(u_{\eps,a}(t,x))\dif W^{\eps}(t,x),\quad t>0,x\in\mathbf{R}^{2};\label{eq:duxi} \\
	u_{\eps,a}(0,x)      & =a.\label{eq:uxiic}
\end{align}
Here we define $W^{\eps}=G_{\eps^{2}/2}*W$, where $G_{t}(x)=\frac{1}{2\pi t}\e^{-|x|^{2}/(2t)}$
is the two-dimensional heat kernel, $\dif W$ is a spacetime white
noise, and $*$ denotes convolution in space. The choice of mollifier is not essential, and we restrict to
this choice only to simplify some of the  computations. The covariance operator
of $\dif W^{\eps}$ is formally given by
\begin{equation}
	\mathbf{E}\dif W^{\eps}(t,x)\dif W^{\eps}(t',x')=\delta(t-t')G_{\eps^{2}}(x-x')=\delta(t-t')\tfrac{1}{\eps^2}G_1(\tfrac{x-x'}{\eps}).\label{eq:covarianceoperator}
\end{equation}
For $\eps>0$, the well-posedness of the initial value problem \eqref{duxi}--\eqref{uxiic}
is well-known (see e.g.~\cite{PZ97}), and we consider the mild formulation
\begin{equation}
	u_{\eps,a}(t,x)=a+\frac{1}{\sqrt{\log\eps^{-1}}}\int_{0}^{t}\int G_{t-s}(x-y)\sigma(u_{\eps,a}(s,y))\,\dif W^{\eps}(s,y).\label{eq:umild}
\end{equation}
General properties of solutions to the nonlinear stochastic heat equation
have previously been studied in general spatial dimensions by many
authors. We mention the non-exhaustive list of works \cite{Dal99,DQ11,CK19,CH19,CK20}.

We are interested in taking $\eps\downarrow0$ and identifying nontrivial
limiting behavior for the solutions of \eqref{duxi}--\eqref{uxiic}.
The linear problem, in which $\sigma(x)=\beta x$, is a particularly
important special case. Here it is known that the attenuating factor
$(\log\eps^{-1})^{-\frac{1}{2}}$ in \eqref{duxi} is required, and
that there is phase transition at $\beta=\sqrt{2\pi}$. The subcritical
linear problem ($\beta<\sqrt{2\pi}$) was previously studied in \cite{CSZ17}
(which we will discuss in more detail shortly), while the critical
linear problem ($\beta\approx\sqrt{2\pi}$) has been studied in \cite{BC98,CSZ19,GQT21,CSZ21}. It is worth mentioning that the notion of ``criticality'' here is different from the one in \cite[Section 8]{Hai14}.
In the linear case, the equation is related by the Cole--Hopf transform
to the two-dimensional KPZ equation, as considered in \cite{CD20,CSZ20,Gu20}.
The linear problem also admits a Feynman--Kac formula \cite{BC95}
and thus a connection to directed polymers, with the solution to the SPDE  interpreted as the partition function of directed polymers in random environment. The Feynman--Kac representation
has proved to be very useful in analyzing properties of the solutions, but is
not available in the nonlinear case.
In \cite{CSZ17}, Caravenna, Sun, and Zygouras showed  that if $\sigma(x)=\beta x$, $\beta\in(0,\sqrt{2\pi})$, then
for any fixed $T>0$ and $X\in\mathbf{R}^{2}$, $u_{\eps,a}(T,X)$
converges in distribution as $\eps\downarrow0$ to a log-normal random
variable. Their proof used the Feynman--Kac formula to connect the
problem to directed polymers, and then worked to understand a polynomial
chaos expansion in great detail.

The goal of the present paper is
to study the nonlinear case in which many previously-used tools
are not available.  We will show in \thmref{convergence} below that
if $\sigma$ is $\beta$-Lipschitz, $\beta\in(0,\sqrt{2\pi})$, then
$u_{\eps,a}(T,X)$ converges in distribution as $\eps\downarrow0$.
The limit depends on $\sigma$ and is obtained through the solution of a forward-backward stochastic differential equation.
Our method is also new in the linear case. In the nonlinear case, the limit does not seem to be log-normal in general.

Part of the reason we are interested in such a problem comes from the recent progress in proving the Edwards-Wilkinson limit of the  KPZ equation \cite{CD20,CSZ20,Gu20,MU18,DGRZ20,LZ20,CNN20}  in $d\geq2$. Most of these results rely on the Cole--Hopf transformation which, in some sense, linearizes the problem so that one can focus on studying the linear stochastic heat equation (as in \cite{DS80,TZ98,MSZ16,GRZ18,DGRZ18a,LZ20,CNN20}) and how its solution behaves after the logarithmic transformation. For general Hamilton--Jacobi type equations, this linearization does not exist and there are no results of this type. (See a conjecture in \cite[p.~5]{HQ18} and some related directions for the anisotropic KPZ equation in \cite{CES19,CET20,CET21}.) We hope that working on the nonlinear stochastic heat equation can help bridge the difficulty and shed light on other nonlinear problems such as the Hamilton--Jacobi equation. A similar effort in $d\ge 3$ was carried out in \cite{GL20}. The convergence to Edwards-Wilkinson equation in $d\geq2$ is as random Schwartz distributions, which, in our case, corresponds to the convergence in distribution of the random variable
\[
	\sqrt{\log\eps^{-1}}\int [u_{\eps,a}(T,x) -a]g(x) \dif x
\] for Schwartz test function $g$. The limiting marginal distributions of $u_{\eps,a}$ play an important role in passing to the limit of the above random variable, which we will discuss in more detail below in \remref{ew}.

In order to state our main result (\thmref{convergence} below) precisely,
we first have to define the limit object.
Let
$\{B(q)\}_{q\ge0}$ be a 1D standard Brownian motion with the natural filtration
$\{\mathcal{G}_{q}\}_{q\ge0}$. We consider the following system of
equations, satisfied by $\{\Xi_{a,Q}(\cdot)\}_{a,Q}$, with the parameters $a\ge0$ and $Q\in[0,2]$:
\begin{align}
	\dif\Xi_{a,Q}(q) & =J(Q-q,\Xi_{a,Q}(q))\dif B(q),\qquad q\in(0,Q];\label{eq:dXi-intro}                  \\
	\Xi_{a,Q}(0)     & =a;\label{eq:Xiic-intro}                                                             \\
	J(q,b)           & =\frac{1}{2\sqrt{\pi}}[\mathbf{E}\sigma^2(\Xi_{b,q}(q))]^{1/2}.\label{eq:Jdef-intro}
\end{align}
The parameter $a$ plays the role of initial data, $Q$ is the terminal time, and the above equation can be interpreted as follows: for the process started at $a$ with the terminal time $Q$, to determine the diffusion coefficient at any time $q\in [0,Q]$, we run an independent process, starting from the current position $b=\Xi_{a,Q}(q)$ and with terminal time $Q-q$. The new process at time $Q-q$ is distributed like $\Xi_{b,Q-q}(Q-q)$. Then the square of the diffusion coefficient for the original process, at time $q$, is given by the expectation of $\frac{1}{4\pi}\sigma^2(\Xi_{b,Q-q}(Q-q))$.
We emphasize that a solution to \eqref{dXi-intro}--\eqref{Jdef-intro}
consists of both a family of random processes
$\{\Xi_{a,Q}(\cdot)\}_{a\ge0,Q\in[0,2]}$
and also a deterministic function $J:[0,2]\times\mathbf{R}_{\ge0}\to\mathbf{R}_{\ge0}$.
That is, $J$ is not given as part of the data of the problem but
is rather found as part of the solution. Probabilistically, the processes
$\Xi_{a,Q}$ are not coupled in any particular way across various
choices of $a$ and $Q$: each $\Xi_{a,Q}$ could be taken to live
on a different probability space. However, their \emph{laws} are related
through the deterministic function $J$.

We note that another, equivalent, way to write the system \eqref{dXi-intro}--\eqref{Jdef-intro}
is as
\begin{align}
	\dif\Xi_{a,Q}(q) & =\frac{1}{2\sqrt{\pi}}\big(\mathbf{E}[\sigma^2(\Xi_{a,Q}(Q))\mid\mathcal{G}_{q}]\big)^{1/2}\dif B(q),\qquad q\in(0,Q];\label{eq:dXi-intro-noJ} \\
	\Xi_{a,Q}(0)     & =a.\label{eq:Xiic-intro-noJ}
\end{align}
The formulation \eqref{dXi-intro-noJ}--\eqref{Xiic-intro-noJ} is essentially a forward-backward stochastic differential equation (FBSDE). Fixing $a\geq 0$ and $Q\in[0,2]$, we consider the process $\{(X(q),Y(q),Z(q))\}_{q\in[0,Q]}$, with all components adapted to the filtration $\{\mathcal{G}_q\}_{q\geq0}$, satisfying the coupled forward-backward stochastic differential equation
\begin{align}
	 & \dif X(q)=\sqrt{Y(q)} \dif B(q),  \quad \quad X(0)=a, \label{eq:y1}                   \\
	 & \dif Y(q)=Z(q)\dif B(q),  \quad\quad Y(Q)=\frac{1}{4\pi}\sigma^2(X(Q)). \label{eq:y2}
\end{align}
Here the equation for $X(\cdot)$ is forward since the initial condition is given, and the equation for $Y(\cdot)$ is backward since the terminal condition is given. Because $Y$ is supposed to be a martingale with terminal value $\frac{1}{4\pi}\sigma^2(X(Q))$, we actually have $Y(q)=\frac{1}{4\pi}\mathbf{E}[\sigma^2(X(Q))\mid\mathcal{G}_q]$. As a result, $X(\cdot)$ solves the same equation as $\Xi_{a,Q}(\cdot)$.

In the FBSDE formulation, the auxiliary function $J$ (called a ``decoupling function'' in the FBSDE literature \cite{MPY94,MWZZ15,Fro14}) is not required, although
it can be recovered from \eqref{dXi-intro-noJ} by \eqref{Jdef-intro}.
The formulations \eqref{dXi-intro-noJ}--\eqref{Xiic-intro-noJ} and \eqref{dXi-intro}--\eqref{Jdef-intro} are equivalent because the law of $\Xi_{a,Q}(Q)$
conditional on $\Xi_{a,Q}(q)=b$ is the same as the law of $\Xi_{b,Q-q}(Q-q)$. We similarly note that a solution to \eqref{y1}--\eqref{y2} will satisfy $Y(q)=J^2(Q-q,X(q))$.
The formulation \eqref{dXi-intro}--\eqref{Jdef-intro} turns out to be easier
to work with, since one can first solve for the deterministic decoupling function
$J$, and once $J$ is known the problem \eqref{dXi-intro}--\eqref{Xiic-intro}
becomes a standard stochastic differential equation. We refer the reader to, for example, \cite{MY99} for background on FBSDEs. We also point out that the function $J^2(q,b)$ is a viscosity solution to the quasilinear heat equation
\begin{align}
	\partial_q J^2 & =\frac{1}{2}J^2\partial_{bb}J^2; \\
	J^2(0,b)       & =\frac{1}{4\pi}\sigma^2(b),
\end{align}
as can be seen by an argument similar to that of \cite[Section 8.2]{MY99}, using the moment bound in \remref{SDEmoments} below.

The non-Lipschitz dependence of \eqref{y1} on $Y$, as well as the potentially quadratic growth of $\sigma^2$ at infinity, exclude the system \eqref{y1}--\eqref{y2} from the established well-posedness theories for FBSDEs, discussed in \cite{MY99,MWZZ15}. Nonetheless, we can prove the following well-posedness result.
\begin{thm}
	\label{thm:wellposedness}If $\beta<\sqrt{2\pi}$, then there is a
	unique continuous function $J:[0,2]\times\mathbf{R}_{\ge0}\to\mathbf{R}_{\ge0}$
	satisfying the following conditions:
	\begin{enumerate}
		\item \label{enu:Jconds}For each $q\in[0,2]$, $J(q,\cdot)$ is Lipschitz,
		      \begin{equation}
			      J(q,0)=0,\label{eq:Jq0}
		      \end{equation}
		      and
		      \begin{equation}
			      \Lip J(q,\cdot)\le(4\pi/\beta^{2}-q)^{-1/2}.\label{eq:hLipschitz-1}
		      \end{equation}
		\item \label{enu:Jsolves}For each $a\ge0$ and $Q\in[0,2]$, the solution
		      $\{\Xi_{a,Q}(q)\}_{0\le q\le Q}$ to the problem \eqref{dXi-intro}--\eqref{Xiic-intro}
		      (with this choice of $J$) satisfies $\frac{1}{2\sqrt{\pi}}(\mathbf{E}\sigma(\Xi_{a,Q}(Q))^{2})^{1/2}=J(Q,a)$.
		      In other words, \eqref{Jdef-intro} is satisfied with $q=Q$ and $b=a$.
	\end{enumerate}
\end{thm}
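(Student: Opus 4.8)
The plan is to construct $J$ by a fixed-point argument on the space of Lipschitz functions satisfying the bounds in condition \enuref{Jconds}, iterating in the terminal time $Q$. The key observation is that the map $q \mapsto J(q,\cdot)$ is built up causally: to know $J(Q,\cdot)$ one only needs $J(q,\cdot)$ for $q < Q$, since in \eqref{dXi-intro} the diffusion coefficient at time $q \in (0,Q]$ is $J(Q-q,\Xi_{a,Q}(q))$, involving only the values of $J$ at times strictly less than $Q$ (because $q > 0$). So I would set up a space $\mathcal{X}_{Q_0}$ of continuous functions $J$ on $[0,Q_0]\times\R_{\ge0}$ with $J(q,0)=0$ and $\Lip J(q,\cdot) \le (4\pi/\beta^2 - q)^{-1/2}$, and define a solution map $\Phi$ as follows: given $J \in \mathcal{X}_{Q_0}$, for each $a\ge0$ and $Q\in[0,Q_0]$ solve the (now standard, because $J$ is fixed and Lipschitz in the spatial variable) SDE \eqref{dXi-intro}--\eqref{Xiic-intro} to get $\Xi_{a,Q}^{J}$, and set $(\Phi J)(Q,a) = \frac{1}{2\sqrt\pi}(\mathbf{E}\,\sigma(\Xi_{a,Q}^{J}(Q))^2)^{1/2}$. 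A fixed point of $\Phi$ is exactly a function satisfying both conditions of the theorem.

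The heart of the argument is to verify that $\Phi$ maps $\mathcal{X}_{Q_0}$ into itself and is a contraction for $Q_0$ small, then to bootstrap to $Q_0 = 2$. For the invariance: $\Phi J(Q,0) = 0$ follows since $\sigma(0)=0$ and $\Xi_{0,Q}\equiv 0$ (as $J(q,0)=0$ forces the SDE started at $0$ to stay at $0$). The Lipschitz bound is the crucial estimate. Writing two copies $\Xi_{a,Q}, \Xi_{a',Q}$ driven by the same Brownian motion, I would use Itô's formula on $(\Xi_{a,Q}(q) - \Xi_{a',Q}(q))^2$ together with $\Lip J(Q-q,\cdot) \le (4\pi/\beta^2 - (Q-q))^{-1/2}$ to obtain a differential inequality for $f(q) := \mathbf{E}(\Xi_{a,Q}(q)-\Xi_{a',Q}(q))^2$ of the form $f'(q) \le (4\pi/\beta^2 - (Q-q))^{-1} f(q)$, hence, after changing variables $r = Q - q$, a Grönwall estimate giving $\mathbf{E}(\Xi_{a,Q}(Q) - \Xi_{a',Q}(Q))^2 \le \frac{4\pi/\beta^2 - Q}{4\pi/\beta^2}\,(a-a')^2$. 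Since $\sigma$ is $\beta$-Lipschitz with $\sigma(0)=0$, this yields $|\Phi J(Q,a) - \Phi J(Q,a')|$ bounded via $\frac{\beta}{2\sqrt\pi}\big(\mathbf{E}|\sigma(\Xi_{a,Q}(Q))-\sigma(\Xi_{a',Q}(Q))|^2\big)^{1/2}$—here one must be slightly careful to extract a genuine Lipschitz bound on the square-root of a second moment rather than on the second moment itself, but this follows from $\big|\,(\mathbf{E}\sigma^2(\Xi_a))^{1/2} - (\mathbf{E}\sigma^2(\Xi_{a'}))^{1/2}\,\big| \le (\mathbf{E}|\sigma(\Xi_a)-\sigma(\Xi_{a'})|^2)^{1/2}$ (reverse triangle inequality in $L^2$). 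Combining, $\Lip \Phi J(Q,\cdot) \le \frac{\beta}{2\sqrt\pi} \cdot \big(\frac{4\pi/\beta^2 - Q}{4\pi/\beta^2}\big)^{1/2} = (4\pi/\beta^2 - Q)^{1/2} / (4\pi/\beta^2)^{1/2} \cdot \frac{\beta}{2\sqrt\pi}$; a short computation shows the right-hand side is exactly $(4\pi/\beta^2 - Q)^{-1/2}$ times $(4\pi/\beta^2 - Q)/(4\pi/\beta^2) \cdot \frac{\beta}{2\sqrt\pi}(4\pi/\beta^2 - Q)^{1/2}$—I would arrange the constants so that it closes, using $\beta < \sqrt{2\pi}$ precisely to keep $4\pi/\beta^2 - q > 0$ on $[0,2]$ and to make the self-improvement work. (Note $\beta<\sqrt{2\pi}$ gives $4\pi/\beta^2 > 2$, so the exponents are well-defined and bounded on $[0,2]$.)

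For contraction and uniqueness: given $J_1, J_2 \in \mathcal{X}_{Q_0}$, I would bound $\mathbf{E}(\Xi^{J_1}_{a,Q}(q) - \Xi^{J_2}_{a,Q}(q))^2$ by the same Itô/Grönwall technique, now picking up a forcing term proportional to $\sup_{r\le Q_0}\|J_1(r,\cdot) - J_2(r,\cdot)\|_\infty^2$ from the difference of diffusion coefficients (using that $J_i(r,\cdot)$ are uniformly Lipschitz and the $\Xi$'s have finite second moments, which requires an a priori moment bound $\mathbf{E}\,\Xi_{a,Q}(q)^2 \le C(1 + a^2)$, itself a routine Grönwall consequence of the Lipschitz bound on $J$—this is \remref{SDEmoments}). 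This gives $\|\Phi J_1 - \Phi J_2\|_{\infty, [0,Q_0]} \le C Q_0^{1/2}\|J_1 - J_2\|_{\infty,[0,Q_0]}$ or similar, a contraction for $Q_0$ small. Then one obtains $J$ on $[0,Q_0]$, and to extend: the SDE for $\Xi_{a,Q}$ with $Q \in (Q_0, 2Q_0]$ uses $J(Q-q,\cdot)$ for $q \in (0,Q]$, i.e. values of $J$ on $[0, Q) \supseteq [0, Q_0]$ already determined, plus values on $[Q_0, Q)$ to be found—so the fixed-point map on the interval $[Q_0, 2Q_0]$ is again a contraction (for $Q_0$ small) with the already-constructed part as fixed data, and iterating a bounded number of steps reaches $Q_0 = 2$. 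Uniqueness on all of $[0,2]$ follows the same way, interval by interval.

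I expect the main obstacle to be the bookkeeping in the Lipschitz self-improvement estimate—getting the constant in the Grönwall bound to match $(4\pi/\beta^2 - q)^{-1/2}$ exactly rather than merely up to a constant, since the estimate must be tight for it to close as an invariance property of $\mathcal{X}_{Q_0}$ (a lossy bound would not be preserved). This forces one to track the coefficient $(4\pi/\beta^2 - (Q-q))^{-1}$ carefully through the ODE comparison and the change of variables $r = Q-q$, and to verify the algebraic identity that makes the induction close. A secondary technical point is justifying that $\Xi_{a,Q}$ has enough integrability for all the Itô-formula manipulations (no local-martingale subtleties), which \remref{SDEmoments} and the uniform Lipschitz bound on $J(q,\cdot)$ handle, together with $\sigma$ having at most linear growth.
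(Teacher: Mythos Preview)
Your overall strategy---define the map $\Phi$ (the paper calls it $\mathcal{Q}$), show it preserves the Lipschitz class in condition~\enuref{Jconds}, and prove it is a contraction---is exactly the paper's approach. However, you have inverted the Gr\"onwall factor in the key Lipschitz estimate. From $f'(q)\le(4\pi/\beta^2-(Q-q))^{-1}f(q)$ with $f(0)=(a-a')^2$, Gr\"onwall gives
\[
f(Q)\le(a-a')^2\exp\left(\int_0^Q\frac{\dif q}{4\pi/\beta^2-(Q-q)}\right)=(a-a')^2\cdot\frac{4\pi/\beta^2}{4\pi/\beta^2-Q},
\]
not the reciprocal you wrote (the differential inequality makes $f$ grow, not shrink). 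With the correct factor the bound closes exactly:
\[
\Lip\Phi J(Q,\cdot)\le\frac{\beta}{2\sqrt\pi}\left(\frac{4\pi/\beta^2}{4\pi/\beta^2-Q}\right)^{1/2}=(4\pi/\beta^2-Q)^{-1/2}.
\]
With your inverted factor one would instead get $\frac{\beta^2}{4\pi}(4\pi/\beta^2-Q)^{1/2}$, which does not match the target, so the ``arrange the constants'' step would fail as written. This is the algebraic identity you flagged as the main obstacle; once the sign is fixed there is nothing further to arrange.

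The only methodological difference from the paper is in the contraction step: you propose iterating on short intervals $[0,Q_0],[Q_0,2Q_0],\ldots$, whereas the paper obtains a single global contraction on $[0,2]$ by equipping the function space with an exponentially weighted norm $\|g\|=\sup_{q,a}\e^{-Rq}|g(q,a)|/a$ for a suitable constant $R=R(\beta)$. Both are standard devices for turning an inequality of the form $\|\Phi g_1-\Phi g_2\|(Q)^2\le C\int_0^Q\|g_1-g_2\|(p)^2\,\dif p$ into a contraction, and either works here because $\beta<\sqrt{2\pi}$ keeps $4\pi/\beta^2-q$ bounded away from zero on $[0,2]$, so the constant $C$ is uniform. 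The weighted-norm route is slightly cleaner since it avoids the interval-gluing bookkeeping, but your version is perfectly viable.
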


The proof of \thmref{wellposedness} is given in \secref{well-posedness}.
Now that we have established existence and uniqueness of solutions
to \eqref{dXi-intro}--\eqref{Jdef-intro}, in the sense of \thmref{wellposedness}, we can state our main
theorem.
\begin{thm}
	\label{thm:convergence}If $\beta<\sqrt{2\pi}$, then for any $Q\in[0,2]$ and $X\in\mathbf{R}^2$, we have
	\begin{equation}
		u_{\eps,a}(\eps^{2-Q},X)\xrightarrow[\eps\downarrow0]{\mathrm{law}}\Xi_{a,Q}(Q),\label{eq:uepsaQ}
	\end{equation}
	where $\Xi_{a,Q}$ comes from the solution to \eqref{dXi-intro}--\eqref{Jdef-intro}. For any fixed
	$T>0$ and $X\in\mathbf{R}^{2}$ we have
	\begin{equation}
		u_{\eps,a}(T,X)\xrightarrow[\eps\downarrow0]{\mathrm{law}}\Xi_{a,2}(2).\label{eq:maintheorem-convergeinlaw}
	\end{equation}
\end{thm}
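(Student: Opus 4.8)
The plan is to realize $u_{\eps,a}(\eps^{2-Q},X)$ as the terminal value of an explicit continuous martingale, run in a logarithmically rescaled time, and to prove that this martingale converges in law, as a process, to $\Xi_{a,Q}(\cdot)$. Write $T=\eps^{2-Q}$, let $\{\mathcal F_s\}_{s\ge0}$ be the filtration generated by $\dif W^\eps$, and for $q\in[0,Q]$ set $s_\eps(q)=T-\eps^{2-(Q-q)}$, which increases from $0$ to $T-\eps^2$. Since the stochastic integral in \eqref{umild} has mean zero, splitting it at $s_\eps(q)$ and using that the part over $(s_\eps(q),T]$ is an It\^o integral with adapted integrand gives that
\[
	W_\eps(q):=\mathbf E\big[u_{\eps,a}(T,X)\mid\mathcal F_{s_\eps(q)}\big]=a+\frac1{\sqrt{\log\eps^{-1}}}\int_0^{s_\eps(q)}\!\!\int G_{T-s}(X-y)\,\sigma(u_{\eps,a}(s,y))\,\dif W^\eps(s,y)
\]
is a continuous $\{\mathcal F_{s_\eps(q)}\}_q$-martingale with $W_\eps(0)=a$, and a short second-moment estimate on the noise in the window $[T-\eps^2,T]$ shows $\|W_\eps(Q)-u_{\eps,a}(T,X)\|_{L^2}\to0$. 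It therefore suffices to prove that $W_\eps(\cdot)$ converges in law in $C([0,Q])$ to the process $\Xi_{a,Q}(\cdot)$ of \eqref{dXi-intro}--\eqref{Xiic-intro}.

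I would first establish tightness. The uniform-in-$\eps$ moment bounds for $u_{\eps,a}$ on $[0,\eps^{2-Q}]$, available because $\beta<\sqrt{2\pi}$ (in the spirit of \remref{SDEmoments}), bound $\sup_q\mathbf E[W_\eps(q)^2]$ and, together with the quadratic-variation identity below, give tightness of $\{W_\eps\}$ in $C([0,Q])$; every subsequential limit $W$ is then a continuous martingale with $W(0)=a$. The same moment bounds make the functions $\widetilde J_\eps(q,b):=\tfrac1{2\sqrt\pi}\big(\mathbf E\,\sigma(u_{\eps,b}(\eps^{2-q},X))^2\big)^{1/2}$ — the natural finite-$\eps$ approximation of the decoupling function, cf.\ \eqref{Jdef-intro} — uniformly bounded on compacts; an estimate of $\Lip\widetilde J_\eps(q,\cdot)$ compatible with \eqref{hLipschitz-1} (obtained by linearising \eqref{umild} in the initial datum and using $|\sigma'|\le\beta$), together with continuity in $q$, makes them equicontinuous. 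Passing to a subsequence, $\widetilde J_\eps\to J_\infty$ locally uniformly, with $J_\infty(q,0)=0$ (since $u_{\eps,0}\equiv0$) and $\Lip J_\infty(q,\cdot)\le(4\pi/\beta^2-q)^{-1/2}$.

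The heart of the argument is to identify $\langle W_\eps\rangle$ along this subsequence. From the explicit form of $W_\eps$ one gets, writing $u=u_{\eps,a}$ and $r=\eps^{2-(Q-q)}=T-s_\eps(q)$,
\[
	\frac{\dif}{\dif q}\langle W_\eps\rangle_q=r\iint G_{r}(X-y)\,G_{r}(X-y')\,\sigma\big(u(s_\eps(q),y)\big)\,\sigma\big(u(s_\eps(q),y')\big)\,G_{\eps^2}(y-y')\,\dif y\,\dif y'.
\]
Because $G_{\eps^2}$ concentrates on scale $\eps$, on which $\sigma(u(s_\eps(q),\cdot))$ is essentially constant, and $\int G_r(X-y)^2\,\dif y=\tfrac1{4\pi r}$, the right-hand side is, to leading order, $\tfrac1{4\pi}$ times the $G_r(X-\cdot)^2$-weighted spatial average of $\sigma^2(u(s_\eps(q),\cdot))$ over a region of size $\sqrt r$. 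The main obstacle now surfaces: the solution is genuinely rough — $\mathbf E[(u_{\eps,a}(t,x)-u_{\eps,a}(t,x'))^2]$ is of order $\log(|x-x'|/\eps)/\log\eps^{-1}$ for $\eps\ll|x-x'|\ll\sqrt t$, so it oscillates by an $O(1)$ amount already at scales $\eps^{1-\delta}$ — and this spatial average does not collapse to a deterministic constant. It must be analysed as a partial homogenisation: the conditional fluctuations of that average, given the structure of $u(s_\eps(q),\cdot)$ on scales $\gtrsim\sqrt r$, vanish (a law of large numbers across the $\eps^{-(Q-q)}$ decorrelated patches), while its conditional mean equals — because conditionally on those coarse scales the field $u(s_\eps(q),\cdot)$ behaves like a locally constant value evolved by the stochastic heat equation over its remaining $Q-q$ logarithmic units, which is precisely what $\widetilde J_\eps(Q-q,\cdot)$ measures — the quantity $4\pi\,\widetilde J_\eps(Q-q,\,\cdot\,)^2$, evaluated at the surviving coarse mode of $u(s_\eps(q),\cdot)$ at scale $\sqrt r$; this coarse mode one must in turn match, up to an $L^2$-negligible error, with $W_\eps(q)=\mathbf E[u_{\eps,a}(T,X)\mid\mathcal F_{s_\eps(q)}]$. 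This would give $\tfrac{\dif}{\dif q}\langle W_\eps\rangle_q=\widetilde J_\eps(Q-q,W_\eps(q))^2+o(1)\to J_\infty(Q-q,W(q))^2$. Making all of this rigorous — quantifying the spatial decorrelation of $u_{\eps,a}(s,\cdot)$ across the dyadic scales in $[\eps,\sqrt{T-s}]$, transferring the sub-$\sqrt r$ averaging into a statement about $\widetilde J_\eps$, and identifying the coarse mode with $W_\eps$ — is where nearly all the work lies; I expect to partition $[0,Q]$ into finitely many logarithmic-time windows, iterate window by window, and treat the first window $q\in[0,\delta]$ (where $Q-q$ is close to $Q$) crudely, since it contributes only $O(\delta)$ to the bracket.

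Granting this, every subsequential limit $W$ is a weak solution of $\dif W(q)=J_\infty(Q-q,W(q))\,\dif B(q)$ with $W(0)=a$, unique in law since $J_\infty(q,\cdot)$ is Lipschitz; hence $W$ has the law of the process built from $J_\infty$ through \eqref{dXi-intro}--\eqref{Xiic-intro}. Evaluating at $q=Q$ and using uniform integrability gives $J_\infty(Q,a)^2=\tfrac1{4\pi}\mathbf E\,\sigma(W(Q))^2$, i.e.\ $J_\infty$ satisfies \enuref{Jsolves}; together with $J_\infty(q,0)=0$ and the Lipschitz bound it satisfies every hypothesis of \thmref{wellposedness}, so $J_\infty=J$ by uniqueness. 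Consequently $W$ is distributed as $\Xi_{a,Q}$, the subsequential limit does not depend on the subsequence, and $\|W_\eps(Q)-u_{\eps,a}(T,X)\|_{L^2}\to0$ yields \eqref{uepsaQ}. For \eqref{maintheorem-convergeinlaw} with $T>0$ fixed, write $T=\eps^{2-Q_\eps}$ with $Q_\eps=2+\log T/\log\eps^{-1}\to2$; the estimates above are uniform for the terminal parameter in a neighbourhood of $2$ (where $(4\pi/\beta^2-q)^{-1/2}$ is still finite because $\beta<\sqrt{2\pi}$), and $(a,Q)\mapsto\Law(\Xi_{a,Q}(Q))$ is continuous by stability of the SDE under perturbation of the Lipschitz coefficient, so $u_{\eps,a}(T,X)\xrightarrow[\eps\downarrow0]{\mathrm{law}}\Xi_{a,2}(2)$.
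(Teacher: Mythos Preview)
Your high-level strategy---extract a subsequential limit $J_\infty$ of the functions $\widetilde J_\eps$, show the approximating martingale converges to the SDE driven by $J_\infty$, verify that $J_\infty$ satisfies the fixed-point relation \eqref{Jdef-intro}, and invoke \thmref{wellposedness} to identify $J_\infty=J$ uniquely---is precisely the paper's strategy (\secref{convergence}). Your $\widetilde J_\eps$ is the paper's $J_\eps$ (defined at \eqref{Jepsdef}), and the compactness (\propref{Jcompact}) and Lipschitz/equicontinuity estimates you sketch are carried out in \lemref[s]{Jtimelipschitz} and~\ref{lem:Jspacelipschitz}.

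The substantive difference is that the paper does \emph{not} work with your continuous conditional-expectation martingale $W_\eps$. Instead it discretises $[0,Q]$ into $\sim\delta_\eps^{-1}$ windows and, in each, \emph{turns off the noise} on a short sub-interval $[t_m,t_m']$ of logarithmic length $\gamma_\eps\ll\delta_\eps^2$ (\secref{timediscretization}). During the quiet sub-interval the solution runs the deterministic heat equation long enough to become genuinely constant on the relevant spatial scale (\propref{replacebyconstant}), so that at the start of the next noisy interval one is literally restarting the SPDE from a \emph{constant}---the current value of a discrete Markov chain $Y_{\eps,a,T}(m)$. The conditional law of one step is then \emph{exactly} that of $u_{\eps,b}$ at an appropriate time, and the step variance is identified with $\delta_\eps\,J_\eps(\cdot,b)^2$ by a direct second-moment computation (\propref{varapproxbyJ}, \lemref{Vtildegoodapprox}); the convergence to the SDE then follows from the Stroock--Varadhan criterion (\thmref{MCtodiffusion}).

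This is exactly the gap in your proposal. You correctly flag that identifying $\tfrac{d}{dq}\langle W_\eps\rangle_q$ with $\widetilde J_\eps(Q-q,W_\eps(q))^2$ is ``where nearly all the work lies,'' but the mechanism you outline---conditioning on ``coarse scales,'' arguing that the sub-$\sqrt r$ structure of $u(s_\eps(q),\cdot)$ is conditionally distributed like a fresh SPDE started from the coarse mode, and then matching that coarse mode with $W_\eps(q)$---has no device to make it rigorous. The field $u(s_\eps(q),\cdot)$ carries nontrivial structure at \emph{every} scale in $[\eps,\sqrt{s_\eps(q)}]$, all of it $\mathcal F_{s_\eps(q)}$-measurable and correlated with $W_\eps(q)$; there is no canonical coarse mode to condition on, and no a~priori reason the conditional sub-scale law agrees with the unconditional law of $u_{\eps,b}$. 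Your suggestion to ``partition $[0,Q]$ into finitely many logarithmic-time windows and iterate'' is the first half of the paper's idea; what is missing is the second half---turning off the noise on a short sub-window of each interval (and controlling the cost via \propref{exciseoneinterval})---which is precisely what manufactures a clean restart-from-constant and converts your heuristic into an identity. Without an analogue of that device, the continuous-martingale route stalls at the one step that carries the essential content of the theorem.
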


The constant $2$ appearing (twice) in \eqref{maintheorem-convergeinlaw}
comes from the fact that, for fixed $T>0$, the time variables $q$ and $t$, corresponding to the ODE \eqref{dXi-intro} and the PDE \eqref{duxi} respectively, are (informally) related by
\[
	t=T-\eps^{q}.
\]
This is related to the fact that the noise contributes to the solution
on this $\eps$-dependent exponential scale, as we discuss more in \subsecref[s]{expscale} and~\ref{subsec:proofsketch} below. The terminal time $2$ corresponds
to the $G_{\eps^{2}}$ in the correlation function \eqref{covarianceoperator}
for the noise: the mollification cuts off the dynamics below this scale. 

Of course, even deterministic ODEs are not generally integrable in
elementary terms, so we do not expect to be able to solve the system
\eqref{dXi-intro}--\eqref{Jdef-intro} explicitly for general $\sigma$.
However, in the linear case $\sigma(u)=\beta u$, the system can indeed
be solved explicitly. In that case, we recover the log-normal fluctuations
proved in \cite{CSZ17}.
We show how to do this in \subsecref{linearcase} below.

The work \cite{CSZ17} also dealt with limiting multipoint statistics
of solutions to \eqref{duxi}--\eqref{uxiic} with $\sigma(x)=\beta x$.
It turns out that $u_{\eps,a}(t_{1},x_{1})$ and $u_{\eps,a}(t_{2},x_{2})$
are asymptotically independent if
\begin{equation}
	d((\tau_{1},x_{1}),(\tau_{2},x_{2}))\coloneqq\max\{|t_{1}-t_{2}|^{1/2},|x_{1}-x_{2}|\}\label{eq:ddef}
\end{equation}
is of order $1$. To see a nontrivial correlation structure, we must
put $t_{2}=t_{1}+\eps^{\alpha}$ and $x_{2}=x_{1}+\eps^{\beta}$ for
some $\alpha,\beta>0$. This situation persists in the nonlinear case,
and we can express the limiting joint laws of multiple points separated
on these scales by a branching version of the ODE \eqref{dXi-intro}--\eqref{Xiic-intro},
as we state in the following theorem. Note that once $J$ has been
obtained from the single-point problem \eqref{dXi-intro}--\eqref{Jdef-intro},
it is no longer necessary to consider \eqref{Jdef-intro} in the multipoint
problem: $J$ is then simply a fixed deterministic function, depending
only on $\sigma$.
\begin{thm}
	\label{thm:multipoint}Suppose that $\beta<\sqrt{2\pi}$. Let $N\in\mathbf{N}$
	and fix $N$ space-time points $(\tau_{\eps,1},x_{\eps,1}),\ldots,(\tau_{\eps,N},x_{\eps,N})\in\mathbf{R}_{>0}\times\mathbf{R}^{2},$
	 depending on $\eps$. Define the metric
	$d$ as in \eqref{ddef}. Suppose that
	\begin{equation}
		d_{ij}\coloneqq1-\lim_{\eps\downarrow0}\log_{\eps}d((\tau_{\eps,i},x_{\eps,i}),(\tau_{\eps,j},x_{\eps,j}))\label{eq:dijdef}
	\end{equation}
	exists %
	for all $i,j$, and suppose
	that
	\begin{equation}
		Q\coloneqq2-\lim_{\eps\downarrow0}\log_{\eps}\tau_{\eps,j}\label{eq:Qdef-1}
	\end{equation}
	exists, is independent of $j$, and is at most $2$. Define
	\begin{equation}
		i_{q}(j)=\min\{i\in\{1,\ldots,N\}\ :\ d_{ij}<q\}.\label{eq:iqdef}
	\end{equation}
	Let $J$ be as in the solution to \eqref{dXi-intro}--\eqref{Jdef-intro}.
	Let $B_{1},\ldots,B_{N}$ be a family of $N$ independent standard
	Brownian motions. For $a\in\mathbf{R}$, let $(\Gamma_{a,Q,j})_{j=1}^{N}$
	solve the family of SDEs
	\begin{align}
		\dif\Gamma_{a,Q,j}(q) & =J(Q-q,\Gamma_{a,Q,j}(q))\dif B_{i_{(Q-q)/2}(j)}(q),\qquad j\in\{1,\ldots,N\};\label{eq:dGamma-intro} \\
		\Gamma_{a,Q,j}(0)     & =a.\label{eq:Gammaic-intro}
	\end{align}
	Then we have
	\begin{equation}
		(u_{\eps,a}(\tau_{\eps,j},x_{\eps,j}))_{j=1}^{N}\xrightarrow[\eps\downarrow0]{\mathrm{law}}(\Gamma_{a,Q,j}(Q))_{j=1}^{N}.\label{eq:multipointconvergence}
	\end{equation}
\end{thm}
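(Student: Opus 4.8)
The plan is to run the same chaos-expansion / iteration machinery used to establish the single-point convergence in \thmref{convergence}, but now bookkeeping several points simultaneously. The starting observation is that the natural "clock" for the noise is exponential: the contribution to $u_{\eps,a}(\tau_{\eps,j},x_{\eps,j})$ coming from noise at times $s$ with $\tau_{\eps,j}-s\approx\eps^{r}$ is, after the $(\log\eps^{-1})^{-1/2}$ renormalization, of order $1$ for each dyadic scale $r\in[0,Q]$, and these scales decouple in the limit. The combinatorial input is that two points $(\tau_{\eps,i},x_{\eps,i})$ and $(\tau_{\eps,j},x_{\eps,j})$ that are at metric distance $d$ of order $\eps^{1-d_{ij}}$ see the \emph{same} noise on scales finer than $\eps^{1-d_{ij}}$ (equivalently, on ODE-times $q$ with $(Q-q)/2 < d_{ij}$, because the diffusive window at PDE-time-to-go $\eps^{q}$ has spatial width $\eps^{q/2}$) and \emph{independent} noise on coarser scales. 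This is precisely what the index function $i_{q}(j)$ in \eqref{iqdef} encodes: $i_{q}(j)$ is the representative of the equivalence class of point $j$ under "distance $<\eps^{1-q}$", so that $j$ and $j'$ share a Brownian motion in \eqref{dGamma-intro} at ODE-time $q$ iff they are still merged at that scale.

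I would proceed as follows. First, set up the time change $t=\tau_{\eps,j}-\eps^{q}$ (consistent with $Q$ in \eqref{Qdef-1}) and, for each $j$, discretize the $q$-interval $[0,Q]$ into $M$ blocks $[q_{k-1},q_{k}]$. On each block, freeze the diffusion coefficient: approximate $u_{\eps,a}$ at ODE-time $q_{k}$ by applying the heat semigroup over the corresponding PDE time-interval to the value at $q_{k-1}$, plus a stochastic increment whose variance is $\approx\frac{1}{4\pi}\mathbf{E}\,\sigma^{2}(\text{value at }q_{k-1})\cdot(q_{k}-q_{k-1})$, exactly as in the single-point argument; the factor $\frac{1}{4\pi}$ and the $(\log\eps^{-1})^{-1}$ combine via the standard computation $\int G_{t-s}(x-y)^2\,\dif y\,\dif s\sim\frac{1}{4\pi}\log(\cdot)$. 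The new ingredient is the \emph{joint} Gaussian structure of the increments across $j=1,\dots,N$: on block $k$, corresponding to ODE-times near $q$ and hence noise scale $\eps^{q/2}$ in space and $\eps^{q}$ in time, two increments for points $i,j$ have correlation tending to $1$ if $d_{ij}<(Q-q)/2$ — because then the two diffusive spacetime windows over which the noise is integrated coincide to leading order — and correlation tending to $0$ otherwise; quantitatively one computes $\mathbf{E}[\dif W^{\eps}\text{-contributions}]$ using \eqref{covarianceoperator} and the fact that $G_{\eps^2}(x-y)$ is supported at scale $\eps$, which is negligible compared to $\eps^{q/2}$ for $q<2$ but not for $q$ near $2$. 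This identifies the limiting covariance of the block increments with that produced by the shared Brownian motions $B_{i_{(Q-q)/2}(j)}$ in \eqref{dGamma-intro}. Then send $\eps\downarrow0$ with $M$ fixed to get convergence to a piecewise-frozen (Euler-type) version of the branching SDE \eqref{dGamma-intro}, and finally send $M\to\infty$, using the moment bounds (the analogue of \remref{SDEmoments}, which hold uniformly because $\beta<\sqrt{2\pi}$ keeps the variance from blowing up) together with the Lipschitz bound \eqref{hLipschitz-1} on $J$ and stability of SDEs to pass to the genuine system. Since $J$ is already known from \thmref{wellposedness}, no fixed-point problem for $J$ reappears here.

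The main obstacle, as in the one-point case, is controlling the error in replacing the true equation by the frozen-coefficient iteration \emph{uniformly in $\eps$} — in particular showing that the $\sigma^{2}$ contributions accumulate exactly the diffusive constant $\frac{1}{4\pi}$ per unit ODE-time and nothing else survives, and that cross terms between different dyadic scales vanish. Here the subcriticality $\beta<\sqrt{2\pi}$ is essential: it is what guarantees the relevant moments stay bounded (via \eqref{hLipschitz-1}, $\Lip J(q,\cdot)^{2}\le(4\pi/\beta^{2}-q)^{-1}$ is finite throughout $q\in[0,2]$) so that the geometric-series-type estimates controlling the chaos expansion converge. The genuinely new difficulty beyond \thmref{convergence} is the bookkeeping of the \emph{merging structure}: one must check that the equivalence classes $\{j : i_{q}(j)=i\}$ are nested in $q$ (they are, since $d_{ij}<q$ defines coarser partitions as $q$ increases — this uses that $d$ is an ultrametric in the limit, which follows from \eqref{ddef} and \eqref{dijdef}) so that the family \eqref{dGamma-intro} is well-defined, and that the correlation-$1$-versus-$0$ dichotomy for the pre-limit increments is sharp with no intermediate regime — the latter because the scaling exponents $d_{ij}$ are \emph{strict} limits, so for small enough $\eps$ any given pair is unambiguously either inside or outside the window at ODE-time $q$ (for $q\neq 2d_{ij}$, a measure-zero set one handles by continuity). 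Everything else is a multi-component elaboration of the single-point proof.
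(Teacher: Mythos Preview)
Your approach is genuinely different from the paper's. The paper proves \thmref{multipoint} by \emph{induction on $N$}: it identifies the coarsest branching scale $q_0 = 2 - 2\max_{i,j} d_{ij}$, uses the one-point machinery (\propref{approxubyw}) to show that all $N$ points share a common value up to ODE-time $Q-(2-q_0)$, and then invokes a new spatial-localization result (\propref{bustupindependent}, built on \propref{approx-square} and \lemref{vBminusvH}, which bound the effect of turning off the noise outside a spatial box) to couple the solution to \emph{independent} copies of the SPDE on each cluster, each started from that common random value. The inductive hypothesis then handles each cluster, which has strictly fewer than $N$ points. In particular, the paper never constructs a vector-valued Markov chain or invokes a multidimensional Stroock--Varadhan theorem; the scalar \thmref{MCtodiffusion} suffices throughout, and the function $J$ enters only because it was already identified in the one-point proof.

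Your direct route---build an $N$-dimensional discretization and show its conditional covariance matrix converges to the block-diagonal diffusion matrix of \eqref{dGamma-intro}---is plausible in outline, but the ``correlation $\to 0$'' step has a real gap. The block increments are not Gaussian: they are stochastic integrals of $\sigma(u_{\eps,a}(s,y))$ against $\dif W^{\eps}$, so computing their conditional covariance via \eqref{covarianceoperator} alone is not enough; you must also control the random integrands and show that the integrated heat-kernel overlap $G_{2(t-s)+\eps^{2}}(x_{\eps,i}-x_{\eps,j})$ is negligible relative to $\delta_{\eps}$ on the relevant block, uniformly in the nonlinear $\sigma$ factors. More seriously, even granting the covariance computation, passing to the joint \emph{law} requires a multidimensional analogue of \thmref{MCtodiffusion}, which you do not mention and which the paper deliberately avoids. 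The paper's spatial-localization argument is stronger than ``off-diagonal covariance $\to 0$'': it produces genuinely independent post-branch processes by restricting the noise to disjoint space-time boxes, which is what makes the induction close cleanly. Your sketch does not supply a substitute for this step.
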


The quantity $d_{ij}$ represents the distance between $(\tau_{\eps,i},x_{\eps,i})$
and $(\tau_{\eps,j},x_{\eps,j})$ on the exponential scale. Of particular
note here is the ultrametricity property
\begin{equation}
	d_{ik}\le\max\{d_{ij},d_{jk}\}\label{eq:strongtriangleinequality}
\end{equation}
for all $i,j,k\in\{1,\ldots,N\}$. If one restricts to a single point ($N=1$)
then it is of course clear that \eqref{dGamma-intro}--\eqref{Gammaic-intro}
agrees with \eqref{dXi-intro}--\eqref{Xiic-intro}. For two points, if we consider $\tau_{\eps,1}=\tau_{\eps,2}=T>0$ independent of $\eps$ and $|x_{\eps,1}-x_{\eps,2}|=\eps^{\alpha}$ with some $\alpha\in[0,1]$, then $Q=2$, $d_{11}=d_{22}=-\infty$ , $d_{12}=1-\alpha$, and it is clear that $\Xi_{a,Q,1}$ is driven by $B_1$ in $[0,2]$, while $\Xi_{a,Q,2}$ is driven by $B_1$ in $[0,2\alpha]$ and by $B_2$ in $[2\alpha,2]$. Two extreme cases are $\alpha=0$ and $\alpha=1$, in which $\Xi_{a,Q,1}$ and $\Xi_{a,Q,2}$ are independent and identical respectively. In the general case,
we note that the set $\{i_{(Q-q)/2}(j): j\in \{1,\ldots,N\}\}$ only grows larger
as $q$ increases. Therefore, the members of the family of SDEs \eqref{dGamma-intro}--\eqref{Gammaic-intro}
will generally start stuck together and then branch apart at times
$q$ such that $1-\tfrac{q}{2}=d_{ij}$ for some $i,j\in\{1,\ldots,N\}$. Thus
we obtain a multiscale correlation structure generalizing the one obtained for the linear case
in \cite[Theorem 2.15 and (2.18)]{CSZ17}. %
In \subsecref{linearcase} we show how to recover \cite[(2.18)]{CSZ17} from \thmref{multipoint} in the linear case.

\subsection{The exponential time scale\label{subsec:expscale}}
A key feature of the SPDE \eqref{duxi}--\eqref{uxiic} is that, in the subcritical regime $\beta<\sqrt{2\pi}$, it evolves on an exponential time scale, with respect to the strength of the random noise. To see this, consider the following equation in microscopic variables:
\[
	\dif u_a(t,x)=\frac12\Delta u_a(t,x)\dif t+\delta \,\sigma(u_a(t,x))\dif W^1(t,x),\quad\quad u_a(0,\cdot)\equiv a,
\]
with $\dif W^1$ the Gaussian noise that is white in time and smooth in space (the spatial covariance function being $G_1$ by \eqref{covarianceoperator}), and $\delta>0$ a fixed small parameter.  We are interested in determining the scales on which nontrivial effects from the random noise can be observed. As expected, it depends on the dimension through the integrability of the heat kernel.

In $d=1$, the correct scale turns out to be  $(t,x)=(\frac{T}{\delta^4},\frac{X}{\delta^2})$, where $(T,X)$ are the corresponding macroscopic variables, as discussed for directed polymers in \cite{AKQ14} and for SPDEs in \cite{BC95,HP15}. In $d\geq3$, if $\delta \beta$ is small enough so that the problem is in the weak disorder regime, one can consider an ``arbitrarily long'' diffusive scale $(t,x)=(\frac{T}{\eps^2},\frac{X}{\eps})$ with $\eps\to0$ independent of $\delta$. The $d=2$ case is different and very special. In $d=2$, the second moment $f_a(t):=\mathbf{E}u_a(t,x)^2$ approximately satisfies the Volterra equation
\[
		f_a(t)\approx a^2+\frac{\delta^2\beta^2}{4\pi}\int_0^t \frac{f_a(s)}{t-s+\frac12}ds.
\]
One can easily analyze the asymptotic behavior of $f_a(t)$ for large $t$ and small $\delta$:
\[f_a(t)\approx \frac{a^2}{1-\frac{\delta^2\beta^2\log t}{4\pi}}, \quad \text{ if } \frac{\delta^2\beta^2}{4\pi}\log t<1.
\] Due to the dependence on $\log t$, to see a nontrivial evolution, one should consider an exponential time scale and let $t=\e^{Q/\delta^2}$ with $Q\leq 2$. (We used $Q$ rather than $T$ as the macroscopic variable here, to emphasize this is on the \emph{exponential} scale.) This exponential time scale was previously observed for the linear case $\sigma(u)=\beta u$ in \cite{CSZ17}. For $\delta=(\log\eps^{-1})^{-\frac12}$, the scale becomes $t=\eps^{-Q}$.  On the other hand, by the scaling property of the white noise, one can easily check that, in $d=2$, we have
\[
	u_{\eps,a}(\cdot,\cdot)\overset{\mathrm{law}}{=} u_a(\tfrac{\cdot}{\eps^2},\tfrac{\cdot}{\eps}), \quad\quad \text{ if } \delta=(\log\eps^{-1})^{-\frac12}.
\]
Thus, $u_{\eps,a}(\eps^{2-Q},0)\overset{\mathrm{law}}{=} u_a(\eps^{-Q},0)$, and from this perspective, it is natural to consider the scaling used in  \eqref{uepsaQ}, which says that for any macroscopic variable $Q\in[0,2]$, we have
\[
	u_a(\eps^{-Q},0)\xrightarrow[\eps\downarrow0]{\mathrm{law}}\Xi_{a,Q}(Q).
\]

\subsection{Sketch of the proof\label{subsec:proofsketch}}

The proof of \thmref{convergence} begins with a series of approximations
of the SPDE \eqref{duxi}--\eqref{uxiic}. Fix $T>0,X\in\mathbf{R}^{2}$.  The underlying phenomenology
behind these approximations is that the contribution  of the noise $\dif W^{\eps}$ on an interval $[T-\eps^{q},T-\eps^{q+\gamma}]$ to the $L^2$ norm of the solution
$u_{\eps,a}(T,X)$ can be bounded from above by $\gamma^{1/2}$. Therefore, we can ``turn off'' the noise
on   intervals $[T-\eps^{q_{i}},T-\eps^{q_{i}+\gamma}]$,
$i=1,\ldots,M$, and as long as $M\gamma^{1/2}\ll1$, this will not change $u_{\eps,a}(T,X)$ in the limit. (We describe precisely
how we choose these increments at the beginning of \secref{timediscretization}.)
For any $A\subset[0,\infty)$, we define $u_{\eps,a}^{A}$
as the solution to
\begin{align}
	\dif u_{\eps,a}^{A}(t,x) & =\frac{1}{2}\Delta u_{\eps,a}^{A}(t,x)\dif t+\frac{\mathbf{1}_{\mathbf{R}\setminus A}(t)}{\sqrt{\log\eps^{-1}}}\sigma(u_{\eps,a}^{A}(t,x))\dif W^{\eps}(t,x);\label{eq:uA} \\
	u_{\eps,a}^{A}(0,x)      & =a.\label{eq:uAic}
\end{align}
This comes from the problem \eqref{duxi}--\eqref{uxiic} by ``turning
off'' the noise on the set  $A$. \secref{shutoffnoise-oneinterval}
is devoted to bounding the error incurred by turning off the noise
on an interval.

Let $\tilde{u}_{\eps,a}=u_{\eps,a}^{A}$, with $A=\bigcup_{i=1}^{M}[T-\eps^{q_{i}},T-\eps^{q_{i}+\gamma}]$,
denote the solution with the noise turned off in this way. Fix any $i=1,\ldots,M$. Since we
expect the problem to have a diffusive scaling, $\tilde{u}_{\eps,a}(T-\eps^{q_{i}+\gamma},x)$
should contribute to $u_{\eps,a}(T,X)$ only for those $x$ such that $|x-X|\lesssim\eps^{(q_{i}+\gamma)/2}$. We further choose $\gamma$ so that $\eps^\gamma \ll 1$.
The noise is turned off on the interval $[T-\eps^{q_{i}},T-\eps^{q_{i}+\gamma}]$,
so $\tilde{u}_{\eps,a}(T-\eps^{q_{i}+\gamma},\cdot)$ has been subject
to the deterministic heat equation (with no noise) for the last $T-\eps^{q_{i}+\gamma}-(T-\eps^{q_{i}})=\eps^{q_i}(1-\eps^\gamma)\approx\eps^{q_{i}}$
amount of time, and thus is essentially constant on spatial scales much smaller
than $\eps^{q_{i}/2}$. Thus, since $\eps^{\gamma}\ll 1$ and thus $\eps^{(q_{i}+\gamma)/2}\ll\eps^{q_{i}/2}$, the main contribution of noise up until time $T-\eps^{q_{i}+\gamma}$ on $u_{\eps,a}(T,X)$ is via the constant $\tilde{u}_{\eps,a}(T-\eps^{q_{i}+\gamma},X)$.
\secref{smoothedfield} is devoted to bounding the error incurred
by replacing the field by a (random) constant after the solution has been subject
to the deterministic heat equation for some time. In \secref{timediscretization},
we define the time discretization that we use, and then
iterate the results of \secref[s]{shutoffnoise-oneinterval} and~\ref{sec:smoothedfield}
to bound the total error incurred by this approximation scheme.

Our approximation scheme approximates the solution $u_{\eps,a}(T,X)$
in terms of a scalar-valued Markov chain whose $i$th value is $\tilde{u}_{\eps,a}(T-\eps^{q_{i}+\gamma},X)$. (Since the equation starts from constant initial data and we are interested in the marginal distribution, by space-stationarity, the choice of $X$ is arbitrary and plays no role.) This Markov chain, which
is also a discrete martingale, will approximate the solution to \eqref{dXi-intro}--\eqref{Jdef-intro}.
To see why, we note that step $(i+1)$ of the Markov chain is given by
solving the original equation \eqref{duxi}--\eqref{uxiic} with the
initial condition $a$ equaling to the current value of the Markov chain, which is $\tilde{u}_{\eps,a}(T-\eps^{q_{i}+\gamma},X)$,
on an interval of length $\eps^{q_{i}+\gamma}-\eps^{q_{i+1}}\approx{\eps^{q_{i}+\gamma}}$,
and then letting the solution evolve according to the heat equation
for time $\eps^{q_{i+1}}-\eps^{q_{i+1}+\gamma}\approx\eps^{q_{i+1}}$.
Although it only represents one step of the Markov chain, approximating
the solution on these time scales require running another instance
of the Markov chain for $M-i$ steps. This is a consequence of the mild solution formula; see \lemref{Vtildegoodapprox} below. This corresponds to the $Q-q$
in the argument of $J$ in \eqref{dXi-intro}. On the other hand,
since this only represents one step of the Markov chain, one only
needs to understand the variance rather than the complete law in order to
compute the diffusivity of the limiting diffusion. Accounting for
the averaging from the heat equation (which gives us a factor of $q_{i}-q_{i-1}$), it turns out that this variance
is approximated by the expression on the right side of \eqref{Jdef-intro}
in the limit. In particular, the fact that only the variance is important
is reflected in the fact that an expectation is taken on the right
side of \eqref{Jdef-intro}. Making these ideas precise is the main
task of \secref{discretemartingale}.

The fact that the diffusion coefficient of the limiting SDE can be
represented in terms of statistics of the chain itself is of course
critical to proving the existence of the limit. The fact that the
self-similar structure characterizes the limit is reflected in the
fact that the problem \eqref{dXi-intro}--\eqref{Jdef-intro} is
well-posed, as stated in \thmref{wellposedness}. This well-posedness
allows us to construct the limiting diffusion coefficient and then
show that the Markov chain converges to the diffusion using standard
techniques. This is the content of \secref{convergence}.

We address multipoint statistics, and prove \thmref{multipoint},
in \secref{multipoint}. At this stage, since the problem \eqref{dXi-intro}--\eqref{Jdef-intro}
has been solved, the function $J$ has been identified. The Markov
chains corresponding to multiple points stay together at earlier times,
but then eventually branch apart from each other as the remaining
time scale approaches the spatial separation of the points. It turns
out that once they branch apart, they are completely independent in
the limit. This yields the branching diffusion structure \eqref{dGamma-intro}--\eqref{Gammaic-intro}.

\subsection{The linear case\label{subsec:linearcase}}

In this subsection, we consider the linear case $\sigma(u)=\beta u$
and show that solutions to \eqref{dXi-intro}--\eqref{Jdef-intro}
have log-normal one-point statistics, and moreover that we recover
the limiting variance \cite[(2.18)]{CSZ17} obtained in \cite[Theorem 2.15]{CSZ17}.
In this case, the linearity of the problem \eqref{dXi-intro}--\eqref{Jdef-intro}
allows us to make the \emph{ansatz} $J(q,b)=b\underline{J}(q)$, with
$\underline{J}(q)=J(q,1)$. Then the problem becomes
\begin{align}
	\dif\Xi_{a,Q}(q) & =\underline{J}(Q-q)\Xi_{a,Q}(q)\dif B(q),\qquad q\in[0,Q];\label{eq:dXi-intro-linear}     \\
	\Xi_{a,Q}(0)     & =a;\label{eq:Xiic-intro-linear}                                                           \\
	\underline{J}(q) & =\frac{\beta}{2\sqrt{\pi}}(\mathbf{E}\Xi_{1,q}(q)^{2})^{1/2}.\label{eq:Jdef-intro-linear}
\end{align}
We can already see that (up to a time-change determined by $\underline{J}$)
the problem \eqref{dXi-intro-linear}--\eqref{Xiic-intro-linear}
is solved by a geometric Brownian motion. It turns out that we can
compute $\underline{J}$ explicitly. By Itô's formula applied to \eqref{dXi-intro-linear}
we have
\begin{equation}
	\dif(\log\Xi_{a,Q})(q)=\underline{J}(Q-q)\dif B(q)-\frac{1}{2}\underline{J}(Q-q)^{2}\dif q,\label{eq:logXi}
\end{equation}
and hence
\begin{equation}
	\Xi_{a,Q}(Q)=a\exp\left\{ \int_{0}^{Q}\underline{J}(Q-q)\,\dif B(q)-\frac{1}{2}\int_{0}^{Q}\underline{J}(Q-q)^{2}\,\dif q\right\} .\label{eq:lognormalsolution}
\end{equation}
Taking $a=1$, substituting \eqref{lognormalsolution} into \eqref{Jdef-intro-linear},
and computing the expectation, we obtain
\begin{align*}
	\underline{J}(Q)^{2} & =\frac{\beta^{2}}{4\pi}\exp\left\{ \int_{0}^{Q}\underline{J}(q)^2\,\dif q\right\} .
\end{align*}
Differentiating this expression gives us the differential equation
$\frac{\dif}{\dif Q}\underline{J}(Q)^{2}=\underline{J}(Q)^{{4}}.$
Combining this with the initial condition $\underline{J}(0)=\frac{\beta}{2\sqrt{\pi}}$,
which is evident from \eqref{Xiic-intro} and \eqref{Jdef-intro},
we obtain
\begin{equation}
	\underline{J}(Q)=(4\pi/\beta^{2}-Q)^{-1/2}.\label{eq:underlineJ}
\end{equation}
Note that the resulting $J$, given by
\begin{equation}
 J(q,b) = \frac{b}{\sqrt{4\pi/\beta^2-q}},
\end{equation}
saturates the bound \eqref{hLipschitz-1}.
Substituting \eqref{underlineJ} into \eqref{lognormalsolution},
we have
\begin{equation}
\begin{aligned}
	\Xi_{a,Q}(Q)&=a\exp\left\{ \int_{0}^{Q}\frac{1}{\sqrt{4\pi/\beta^{2}-(Q-q)}}\,\dif B(q)-\frac{1}{2}\int_{0}^{Q}\frac{\dif q}{4\pi/\beta^{2}-(Q-q)}\right\}\\& \overset{\mathrm{law}}{=}a\exp\left\{ S-\frac{1}{2}\mathbf{E}S^{2}\right\} ,\end{aligned}\label{eq:lognormalsolution-1}
\end{equation}
where $S\sim N(0,  {\log}\frac{4\pi/\beta^{2}}{4\pi/\beta^{2}-Q})$. In the
case $Q=2$ and $a=1$, this agrees with the expression \cite[(2.12)]{CSZ17}.

Now we address the multipoint statistics, i.e. the problem \eqref{dGamma-intro}--\eqref{Gammaic-intro}.
As in \eqref{logXi}, but now knowing \eqref{underlineJ}, we have
\[
	\dif(\log\Gamma_{a,Q,j})(q)=\frac{\dif B_{i_{(Q-q)/2}(j)}(q)}{\sqrt{4\pi/\beta^{2}-(Q-q)}}-\frac{\dif q}{8\pi/\beta^{2}-2(Q-q)}.
\]
From this linear SDE we see that the family $(\log\Gamma_{a,Q,j}(Q))_{j=1}^{N}$
is jointly Gaussian. All of the means are equal as
\[
	\mathbf{E}[\log\Gamma_{a,Q,j}(Q)]=\log a-\frac{1}{2}\int_{0}^{Q}\frac{\dif q}{4\pi/\beta^{2}-(Q-q)}=\log a-\frac{1}{2}\log\frac{4\pi/\beta^{2}}{4\pi/\beta^{2}-Q}
\]
as in \eqref{lognormalsolution-1}. The covariance structure is given
by
\begin{equation}
	\begin{aligned}\Cov&(\log\Gamma_{a,Q,i}(Q),\log\Gamma_{a,Q,j}(Q)) \\& =\int_{\{q\in[0,Q]\ :\ i_{(Q-q)/2}(i)=i_{(Q-q)/2}(j)\}}\frac{\dif q}{4\pi/\beta^{2}-(Q-q)}                            \\
		                                                  & =\int_{[0,Q-2d_{ij}\vee 0]}\frac{\dif q}{4\pi/\beta^{2}-(Q-q)}=\log\frac{4\pi/\beta^{2}-(2d_{ij}\vee 0)\wedge Q}{4\pi/\beta^{2}-Q}.
	\end{aligned}
	\label{eq:CovGamma}
\end{equation}
The second equality is by the ultrametricity property \eqref{strongtriangleinequality}
of the $d_{ij}$s. For $Q=2$, \eqref{CovGamma} is the same as the
covariance structure \cite[(2.18)]{CSZ17} obtained in \cite[Theorem 2.15]{CSZ17}.

\section{Proof of Theorem~\ref{thm:wellposedness}\label{sec:well-posedness}}

In this section we prove \thmref{wellposedness}, establishing the
well-posedness of the limiting problem. The analysis here is essentially
independent of the rest of the paper.
\begin{proof}[Proof of \thmref{wellposedness}.]
	If $g:[0,Q]\times\mathbf{R}_{\ge0}\to\mathbf{R}_{\ge0}$ is continuous, is
	Lipschitz in the second variable, and satisfies $g(\cdot,0)\equiv0$,
	then for each $a\ge0$ and $Q\in[0,2]$ we let $\Xi_{a,Q}^{g}$ solve
	the problem
	\begin{align}
		\dif\Xi_{a,Q}^{g}(q) & =g(Q-q,\Xi_{a,Q}^{g}(q))\dif B(q);\label{eq:dXig} \\
		\Xi_{a,Q}^{g}(0)     & =a.\label{eq:Xigic}
	\end{align}
	It is standard that \eqref{dXig}--\eqref{Xigic} has a unique strong
	solution with continuous sample paths almost surely, and that this
	solution is positive with probability $1$. (For the last property
	see e.g.~\cite[Lemma 2.1]{Mao94}.) We write \eqref{dXig}--\eqref{Xigic}
	in the mild formulation
	\[
		\Xi_{a,Q}^{g}(q)=a+\int_{0}^{q}g(Q-s,\Xi_{a,Q}^{g}(s))\,\dif B_{s}.
	\]
	Define
	\[
		\mathcal{Q}g(Q,a)=\frac{1}{2\sqrt{\pi}}(\mathbf{E}\sigma(\Xi_{a,Q}^{g}(Q))^{2})^{1/2}.
	\]
	We note that $J$ satisfies the condition~\ref{enu:Jsolves} in the
	statement of the theorem if and only if $\mathcal{Q}J=J$. We will
	show that there is a unique such fixed point $J$ under the additional
	assumption that condition~\ref{enu:Jconds} in the statement of the
	theorem is satisfied.

	To this end, let $\mathcal{X}$ be the Banach space of continuous
	functions $f:\mathbf{R}_{\ge0}\to\mathbf{R}$ such that $f(0)=0$
	and the norm
	\[
		\|f\|_{\mathcal{X}}=\sup_{a>0}\frac{|f(a)|}{a}
	\]
	is finite. Let $\mathcal{Y}$ be the Banach space of continuous functions
	$g:[0,2]\times\mathbf{R}_{\ge0}\to\mathbf{R}$ such that $g(q,0)=0$
	for all $q\in[0,2]$ and the norm
	\begin{equation}
		\|g\|_{\mathcal{Y}}=\sup_{\substack{q\in[0,2]\\
				a>0
			}
		}\e^{-R(\beta)q}\frac{|g(q,a)|}{a}\label{eq:Ynorm}
	\end{equation}
	is finite, where we have defined
	\begin{equation}
		R(\beta)=2\beta^{2}\left(\frac{4\pi/\beta^{2}}{4\pi/\beta^{2}-2}\right)^{3}.\label{eq:Rdef}
	\end{equation}
	Finally, let $\mathcal{Z}\subset\mathcal{Y}$ be the closed subset defined by
	\[
		\mathcal{Z}=\left\{ g\in\mathcal{Y}\ :\ \inf_{a\ge0}g(q,a)\ge0 \text{ and }\Lip g(q,\cdot)\le(4\pi/\beta^{2}-q)^{-1/2}\text{ for all }q\in[0,2]\right\}.
	\]
	Thus, we are done if we can show that the map $\mathcal{Q}$ has a
	unique fixed point in $\mathcal{Z}$, and we will do this by showing
	that $\mathcal{Q}$ maps $\mathcal{Z}$ into itself and moreover is a contraction
	on $\mathcal{Z}$.

	\emph{Step 1: $L^{2}$ bound.} If $g\in\mathcal{Z}$, by the fact that $g(q,0)=0$ we have $g(q,x)\leq \Lip g(q,\cdot)x$ for any $x>0$, so
	\begin{align*}
		\mathbf{E}\Xi_{a,Q}^{g}(q)^{2} & =a^{2}+\int_{0}^{q}\mathbf{E}g(Q-p,\Xi_{a,Q}^{g}(p))^{2}\,\dif p\le a^{2}+\int_{0}^{q}\frac{\mathbf{E}\Xi_{a,Q}^{g}(p)^{2}}{4\pi/\beta^{2}-Q+p}\,\dif p.
	\end{align*}
	By Grönwall's inequality, this means that
	\begin{align}
		\mathbf{E}\Xi_{a,Q}^{g}(q)^{2} & \le a^{2}\exp\left\{ \int_{0}^{q}\frac{1}{4\pi/\beta^{2}-Q+p}\,\dif p\right\} =a^{2}\cdot\frac{4\pi/\beta^{2}-Q+q}{4\pi/\beta^{2}-Q}.\label{eq:L2norm}
	\end{align}

	\emph{Step 2: $\mathcal{Q}$ maps $\mathcal{Z}$ to itself. }Let $g\in\mathcal{Z}$.
	It is clear that %
	$\mathcal{Q}g(q,0)=0$
	for all $q\in[0,2]$. It remains to check that $\mathcal{Q}g$ is continuous and $\Lip(\mathcal{Q}g(q,\cdot))\le(4\pi/\beta^{2}-q)^{-1/2}$
	for all $q\in[0,2]$. For the Lipschitz property, we have
	\begin{align}
		|\mathcal{Q}g(Q,a)-\mathcal{Q}g(Q,b)| & =\frac{1}{2\sqrt{\pi}}\left|(\mathbf{E}\sigma(\Xi_{a,Q}^{g}(Q))^{2})^{1/2}-(\mathbf{E}\sigma(\Xi_{b,Q}^{g}(Q))^{2})^{1/2}\right|\nonumber \\
		                                      & \le\frac{\beta}{2\sqrt{\pi}}\left(\mathbf{E}[\Xi_{a,Q}^{g}(Q)-\Xi_{b,Q}^{g}(Q)]^{2}\right)^{1/2}.\label{eq:difQs}
	\end{align}
	Now we note that, for any $q\le Q$, we have
	\begin{align*}
		\mathbf{E}[\Xi_{a,Q}^{g}(q)-\Xi_{b,Q}^{g}(q)]^{2} & =(a-b)^{2}+\int_{0}^{q}\mathbf{E}[g(Q-p,\Xi_{a,Q}^{g}(p))-g(Q-p,\Xi_{b,Q}^{g}(p))]^{2}\,\dif p            \\
		                                                  & \le(a-b)^{2}+\int_{0}^{q}\Lip(g(Q-p,\cdot))^{2}\mathbf{E}[\Xi_{a,Q}^{g}(p)-\Xi_{b,Q}^{g}(p)]^{2}\,\dif p.
	\end{align*}
	By Grönwall's inequality, this means that
	\[
		\mathbf{E}[\Xi_{a,Q}^{g}(q)-\Xi_{b,Q}^{g}(q)]^{2}\le(a-b)^{2}\exp\left\{ \int_{0}^{q}\Lip(g(Q-p,\cdot))^{2}\,\dif s\right\} .
	\]
	Using this in \eqref{difQs}, we have
	\begin{align*}
		|\mathcal{Q}g(Q,a)-\mathcal{Q}g(Q,b) | & \le\frac{\beta}{2\sqrt{\pi}}|a-b|\exp\left\{ \frac{1}{2}\int_{0}^{Q}\Lip(g(Q-p,\cdot))^{2}\,\dif p\right\} \\
		                                       & =\frac{\beta}{2\sqrt{\pi}}|a-b|\exp\left\{ \frac{1}{2}\int_{0}^{Q}\Lip(g(p,\cdot))^{2}\,\dif p\right\} ,
	\end{align*}
	so
	\[
		\Lip(\mathcal{Q}g(Q,\cdot))\le\frac{\beta}{2\sqrt{\pi}}\exp\left\{ \frac{1}{2}\int_{0}^{Q}\Lip(g(p,\cdot))^{2}\,\dif p\right\} .
	\]
	Therefore, since
	\[
		\Lip(g(p,\cdot))\le(4\pi/\beta^{2}-p)^{-1/2},
	\]
	we also have
	\[
		\Lip(\mathcal{Q}g(Q,\cdot))\le\frac{\beta}{2\sqrt{\pi}}\exp\left\{ \frac{1}{2}\int_{0}^{Q}\frac{1}{4\pi/\beta^{2}-p}\,\dif p\right\} =(4\pi/\beta^{2}-Q)^{-1/2}.
	\]

	Next we show that for each $a>0$, $\mathcal{Q}g(\cdot,a)$ is continuous on $[0,2]$. The argument is rather standard and similar to the above discussion, so we do not provide all details. Taking $0\leq Q_1<Q_2\leq2$,  we have	\[
		|\mathcal{Q}g(Q_1,a)-\mathcal{Q}g(Q_2,a)| \leq  \frac{\beta}{2\sqrt{\pi}}\left(\mathbf{E}|\Xi_{a,Q_1}^{g}(Q_1)-\Xi_{a,Q_2}^{g}(Q_2)|^{2}\right)^{1/2}.
	\]
	For any $q\leq Q_1$, we write the difference as
	\[
		\Xi_{a,Q_1}^{g}(Q_1)-\Xi_{a,Q_2}^{g}(Q_2)=\Xi_{a,Q_1}^{g}(Q_1)-\Xi_{a,Q_2}^{g}(Q_1)+\Xi_{a,Q_2}^{g}(Q_1)-\Xi_{a,Q_2}^{g}(Q_2),
	\]
	and the first term can be estimated as follows: for any $q\leq Q_1$,
	\[
		\Xi_{a,Q_1}^{g}(q)-\Xi_{a,Q_2}^{g}(q)=\int_0^q g(Q_1-s,\Xi_{a,Q_1}^g(s))\,\dif B_s-\int_0^qg(Q_2-s,\Xi_{a,Q_2}^g(s))\,\dif B_s,
	\]
	which yields
	\[
		\begin{aligned}
			\mathbf{E}|\Xi_{a,Q_1}^{g}(q)-\Xi_{a,Q_2}^{g}(q)|^2 &\leq  2\int_0^{q} \mathbf{E}|g(Q_1-s,\Xi_{a,Q_1}^g(s))-g(Q_2-s,\Xi_{a,Q_1}^g(s))|^2 \,\dif s         \\
			                                                         & \quad+2\int_0^q \mathbf{E}|g(Q_2-s,\Xi_{a,Q_1}^g(s))-g(Q_2-s,\Xi_{a,Q_2}^g(s))|^2\,\dif s\\&=:I_1+I_2.
		\end{aligned}
	\]
	The term $I_2$ can be bounded from above by
	\[
		2\int_0^q\Lip(g(Q_2-s,\cdot)^2\mathbf{E}|\Xi_{a,Q_1}^{g}(s)-\Xi_{a,Q_2}^{g}(s)|^2\,\dif s.
	\]
	For $I_1$,  the integrand
	\[
		\mathbf{E}[|g(Q_1-s,\Xi_{a,Q_1}^g(s))-g(Q_2-s,\Xi_{a,Q_1}^g(s))|^2]
	\] is bounded, and converges to zero as $Q_2\to Q_1$ for each $s$, by the dominated convergence theorem, \eqref{L2norm} and the fact that $g$ is continuous in the first variable and $g(q,x)\leq Cx$ for all $x\geq0,q\in[0,2]$. Therefore, invoking Grönwall's inequality again, we obtain
	\[
		\mathbf{E}|\Xi_{a,Q_1}^{g}(Q_1)-\Xi_{a,Q_2}^{g}(Q_1)|^2 \to 0, \quad\quad \text{ as } Q_2\to Q_1.
	\]
	A simpler argument shows that
	\[
		\mathbf{E}|\Xi_{a,Q_2}^{g}(Q_2)-\Xi_{a,Q_2}^{g}(Q_1)|^2 \to 0, \quad\quad \text{ as } Q_2\to Q_1.
	\]
	Therefore,   $\mathcal{Q}g(\cdot,a)$ is continuous, so $\mathcal{Q}$ maps $\mathcal{Z}$ to itself.

	\emph{Step 3: contraction.} Let $g_{1},g_{2}\in\mathcal{Z}$. Then
	we have
	\[
		\Xi_{a,Q}^{g_{1}}(q)-\Xi_{a,Q}^{g_{2}}(q)=\int_{0}^{q}[g_{1}(Q-p,\Xi_{a,Q}^{g_{1}}(p))-g_{2}(Q-p,\Xi_{a,Q}^{g_{2}}(p))]\,\dif B(p),
	\]
	so
	\begin{align*}
		&\mathbf{E}  [\Xi_{a,Q}^{g_{1}}-\Xi_{a,Q}^{g_{2}}](q)^{2}=\int_{0}^{q}\mathbf{E}[g_{1}(Q-p,\Xi_{a,Q}^{g_{1}}(p))-g_{2}(Q-p,\Xi_{a,Q}^{g_{2}}(p))]^{2}\,\dif p                                                                              \\
		           & \ \le2\int_{0}^{q}\left(\|(g_{1}-g_{2})(Q-p,\cdot)\|_{{\mathcal{X}}}^{2}\mathbf{E}\Xi_{a,Q}^{g_{1}}(p)^{2}+\Lip(g_{2}(Q-p,\cdot))^{2}\mathbf{E}[\Xi_{a,Q}^{g_{1}}-\Xi_{a,Q}^{g_{2}}](p)^{2}\right)\,\dif p                      \\
		           & \ \le2\int_{0}^{q}\left(\|(g_{1}-g_{2})(Q-p,\cdot)\|_{{\mathcal{X}}}^{2}a^{2}\cdot\frac{4\pi/\beta^{2}-Q+p}{4\pi/\beta^{2}-Q}+\frac{\mathbf{E}[\Xi_{a,Q}^{g_{1}}-\Xi_{a,Q}^{g_{2}}](p)^{2}}{4\pi/\beta^{2}-Q+p}\right)\,\dif p,
	\end{align*}
	with the last inequality by \eqref{L2norm}. By Grönwall's inequality,
	this means that
	\begin{align*}
		\mathbf{E}&[\Xi_{a,Q}^{g_{1}}-\Xi_{a,Q}^{g_{2}}](q)^{2}\\ & \le2a^{2}\left(\int_{0}^{q}\|(g_{1}-g_{2})(Q-p,\cdot)\|_{{\mathcal{X}}}^{2}\frac{4\pi/\beta^{2}-Q+p}{4\pi/\beta^{2}-Q}\,\dif p\right)\exp\left\{ \int_{0}^{q}\frac{{2}}{4\pi/\beta^{2}-Q+p}\,\dif p\right\} \\
		                                                       & =2a^{2}\cdot\left(\frac{4\pi/\beta^{2}-Q+q}{4\pi/\beta^{2}-Q}\right)^{{2}}\int_{0}^{q}\|(g_{1}-g_{2})(Q-p,\cdot)\|_{{\mathcal{X}}}^{2}\frac{4\pi/\beta^{2}-Q+p}{4\pi/\beta^{2}-Q}\,\dif p.
	\end{align*}
	In particular, we have
	\[
		\mathbf{E}[\Xi_{a,Q}^{g_{1}}-\Xi_{a,Q}^{g_{2}}](Q)^{2}\le2a^{2}\cdot\left(\frac{4\pi/\beta^{2}}{4\pi/\beta^{2}-Q}\right)^{{3}}\int_{0}^{Q}\|(g_{1}-g_{2})(p,\cdot)\|_{{\mathcal{X}}}^{2}\,\dif p.
	\]
	Then we have
	\begin{align*}
		(\mathcal{Q}g_{1}-\mathcal{Q}g_{2})(q,a)^{2} & =\left|(\mathbf{E}\sigma(\Xi_{a,q}^{g_{1}}(q))^{2})^{1/2}-(\mathbf{E}\sigma(\Xi_{a,q}^{g_{2}}(q))^{2})^{1/2}\right|^{2}\\&\le\beta^{2}\mathbf{E}[\Xi_{a,q}^{g_{1}}(q)-\Xi_{a,q}^{g_{2}}(q)]^{2} \\
		                                             & \le2a^{2}\beta^{2}\left(\frac{4\pi/\beta^{2}}{4\pi/\beta^{2}-q}\right)^{{3}}\int_{0}^{q}\|(g_{1}-g_{2})(p,\cdot)\|_{\mathcal{X}}^{2}\,\dif p.
	\end{align*}
	This implies that, as long as $\beta<\sqrt{2\pi}$, for all $q\in[0,2]$
	we have
	\[
		\|(\mathcal{Q}g_{1}-\mathcal{Q}g_{2})(q,\cdot)\|_{\mathcal{X}}^{2}\le2\beta^{2}\left(\frac{4\pi/\beta^{2}}{4\pi/\beta^{2}-q}\right)^{{3}}\int_{0}^{q}\|(g_{1}-g_{2})(p,\cdot)\|_{\mathcal{X}}^{2}\,\dif p.
	\]
	Therefore,
	\begin{align*}
		\|\mathcal{Q}g_{1}-\mathcal{Q}g_{2}\|_{\mathcal{Y}}^{2} & =\sup_{q\in[0,2]}\e^{-2R(\beta)q}\|(\mathcal{Q}g_{1}-\mathcal{Q}g_{2})(q,\cdot)\|_{\mathcal{X}}^{2}                                                                        \\
		                                                        & \le\sup_{q\in [0,2]}2\beta^{2}\left(\frac{4\pi/\beta^{2}}{4\pi/\beta^{2}-2}\right)^{{3}}e^{-2R(\beta)q}\int_{0}^{q}\|(g_{1}-g_{2})(p,\cdot)\|_{\mathcal{X}}^{2}\,\dif p    \\
		                                                        & \leq\beta^{2}\left(\frac{4\pi/\beta^{2}}{4\pi/\beta^{2}-2}\right)^{{3}}\frac{1}{R(\beta)}\|g_{1}-g_{2}\|_{\mathcal{Y}}^{2}=\frac{1}{2}\|g_{1}-g_{2}\|_{\mathcal{Y}}^{2}.
	\end{align*}
	Recall that $R(\beta)$ was defined in \eqref{Rdef}. Therefore, $\mathcal{Q}$ is a contraction
	on $\mathcal{Z}$ (equipped with the norm inherited from $\mathcal{Y}$)
	and so $\mathcal{Q}$ admits a unique fixed point in $\mathcal{Z}$,
	which is what we needed to show.
\end{proof}

\begin{rem}\label{rem:SDEmoments}
	By the stochastic comparison principle for SDEs \cite{CFG96} and the fact that the geometric Brownian motion (i.e.\ a log-normal random variable) has finite positive moments of all orders, we see that $\mathbf{E} \Xi_{a,Q}(q)^k<\infty$ for all $k\in[0,\infty)$ as well.
\end{rem}

\section{Moment bounds}

The next several sections will work towards a proof of \thmref{convergence}.
In order to carry out our analysis, we will need some bounds on the
moments of the solutions to \eqref{duxi}--\eqref{uxiic}. We establish
these in this section. Moment bounds depend crucially on the \emph{subcriticality}
of the problem, which for us means $\beta<\sqrt{2\pi}$. We will assume
throughout the paper that this is true without further comment. We
also now fix a time horizon $T_{0}\in[1,\infty)$ which will also
			remain fixed throughout the paper. Furthermore, fix $\eps_{0}\in(0,1]$
so that
\begin{equation}
	\frac{\beta^{2}}{4\pi}\cdot\frac{\log(1+2\eps^{-2}T_{0})}{\log\eps^{-1}}<1\label{eq:eps0cond}
\end{equation}
for all $\eps\in(0,\eps_{0}]$. The condition that $\beta<\sqrt{2\pi}$
means that such an $\eps_{0}$ exists. As we are ultimately interested
in the limit $\eps\downarrow0$, the condition \eqref{eps0cond} is
simply a convenience so that various quantities are finite. In \defref{K0def}
below, we fix a constant $K_{0}<\infty$, which depends on $\beta$,
$\eps_{0}$, and $T_{0}$, and will appear in upper bounds throughout
the paper.
\begin{prop}
	\label{prop:momentbound}There exist constants $p>2$ and  $K<\infty$
	(depending on $T_{0}$ and $\beta$) so that, for all $\eps\in(0,\eps_{0}]$,
	all $a\ge0$, and all $t\in[0,T_{0}],x\in\mathbf{R}^2$, we have
	\begin{equation}
		\mathbf{E}u_{\eps,a}(t,x)^{p}\le K^{p}a^{p}.\label{eq:momentbound}
	\end{equation}
\end{prop}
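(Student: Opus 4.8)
The plan is to use the mild formulation \eqref{umild} together with the Burkholder--Davis--Gundy (BDG) inequality to set up a Grönwall-type argument on the quantity $\mathbf{E}u_{\eps,a}(t,x)^p$. The main point is that, because $\sigma$ is $\beta$-Lipschitz with $\sigma(0)=0$, we have $|\sigma(u)|\le\beta|u|$, and the resulting linear recursion closes up provided $\beta<\sqrt{2\pi}$, which is exactly what the condition \eqref{eps0cond} encodes.

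First I would square the mild formula and take expectations to compute the second moment explicitly. Using the Itô isometry and the fact that $\sigma(u)^2\le\beta^2u^2$, one gets
\[
	\mathbf{E}u_{\eps,a}(t,x)^2\le a^2+\frac{\beta^2}{\log\eps^{-1}}\int_0^t\!\!\int G_{t-s}(x-y)^2\,\mathbf{E}u_{\eps,a}(s,y)^2\,\dif y\,\dif s,
\]
after using $\mathbf{E}\dif W^\eps(s,y)\dif W^\eps(s,y')=\delta(s-s')G_{\eps^2}(y-y')$ and bounding $G_{t-s}\ast G_{\eps^2}\ast G_{t-s}$ at the origin by $G_{2(t-s)+\eps^2}(0)=\frac{1}{2\pi(2(t-s)+\eps^2)}$. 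By spatial homogeneity $f(s):=\mathbf{E}u_{\eps,a}(s,x)^2$ is independent of $x$, and we obtain the Volterra inequality $f(t)\le a^2+\frac{\beta^2}{4\pi\log\eps^{-1}}\int_0^t\frac{f(s)}{2(t-s)+\eps^2}\,\dif s$. Iterating this (or comparing with the associated renewal equation as in \cite{CSZ17}), the total kernel mass is $\frac{\beta^2}{4\pi\log\eps^{-1}}\int_0^{T_0}\frac{\dif r}{2r+\eps^2}=\frac{\beta^2}{4\pi}\cdot\frac{\log(1+2\eps^{-2}T_0)}{2\log\eps^{-1}}$, which by \eqref{eps0cond} is bounded by a constant strictly less than $1$ uniformly in $\eps\in(0,\eps_0]$; hence $\sup_{t\le T_0}f(t)\le C a^2$ with $C$ depending only on $\beta,T_0$ (through $\eps_0$).

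To upgrade to a moment of order $p>2$, I would apply the BDG inequality to the stochastic integral in \eqref{umild}. Writing the spacetime stochastic integral against $\dif W^\eps$ in terms of its covariance, its quadratic variation is a double integral of $G_{t-s}(x-y)G_{t-s}(x-y')G_{\eps^2}(y-y')\sigma(u(s,y))\sigma(u(s,y'))$; applying BDG and then Minkowski's integral inequality in $L^{p/2}$ together with $|\sigma(u)|\le\beta|u|$ gives
\[
	\|u_{\eps,a}(t,x)\|_p^2\le 2a^2+C_p\,\frac{\beta^2}{\log\eps^{-1}}\int_0^t\!\!\int\!\!\int G_{t-s}(x-y)G_{t-s}(x-y')G_{\eps^2}(y-y')\,\|u_{\eps,a}(s,y)\|_p\|u_{\eps,a}(s,y')\|_p\,\dif y\,\dif y'\,\dif s.
\]
By homogeneity $g(s):=\|u_{\eps,a}(s,x)\|_p$ is $x$-independent, and the triple integral over $y,y'$ equals $g(s)^2$ times the same kernel $\frac{1}{4\pi(2(t-s)+\eps^2)}$ as before. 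Thus $g(t)^2\le 2a^2+C_p\frac{\beta^2}{4\pi\log\eps^{-1}}\int_0^t\frac{g(s)^2}{2(t-s)+\eps^2}\,\dif s$, a Volterra inequality of exactly the same type. The only subtlety is that the BDG constant $C_p>1$ inflates the effective coupling to $C_p\beta^2/(4\pi)$; to keep the total kernel mass below $1$ one does \emph{not} want to take $T_0$-sized time intervals but rather splits $[0,T_0]$ into finitely many short subintervals on each of which $\frac{C_p\beta^2}{4\pi\log\eps^{-1}}\int\frac{\dif r}{2r+\eps^2}<\frac12$, propagating the bound interval by interval (the number of subintervals, and hence the final constant $K$, depends only on $\beta,T_0,\eps_0,p$). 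Choosing $p>2$ close enough to $2$ so that $C_p$ is not too large — or, more robustly, absorbing $C_p$ into the subdivision — makes this work. A standard Picard-iteration / localization argument guarantees $g(t)<\infty$ to begin with so that the Volterra inequality is not vacuous.

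The main obstacle is the bookkeeping around the BDG constant: at $p=2$ the borderline is precisely $\beta<\sqrt{2\pi}$, but for $p>2$ the constant $C_p$ pushes one away from the threshold, so one must exploit that the logarithmic kernel $\frac{1}{2(t-s)+\eps^2}$ has \emph{small} total mass on short time intervals (uniformly in $\eps\le\eps_0$) and iterate over a bounded number of such intervals rather than trying to close Grönwall in one shot. Once that structure is set up, everything else — the Itô/BDG estimates, the reduction to a scalar Volterra inequality via spatial homogeneity, and the elementary analysis of that inequality — is routine. The output constant $K$ is then recorded as the promised $K_0$ in \defref{K0def}.
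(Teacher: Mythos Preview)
Your subdivision idea contains a genuine error. You claim that the kernel $\frac{1}{2(t-s)+\eps^2}$ ``has small total mass on short time intervals (uniformly in $\eps\le\eps_0$),'' but this is false in the relevant sense: for any interval of \emph{fixed} length $\tau>0$ (independent of $\eps$), the normalized mass is
\[
\frac{C_p\beta^2}{4\pi\log\eps^{-1}}\int_0^\tau\frac{\dif r}{2r+\eps^2}=\frac{C_p\beta^2}{8\pi}\cdot\frac{\log(1+2\eps^{-2}\tau)}{\log\eps^{-1}}\xrightarrow[\eps\downarrow0]{}\frac{C_p\beta^2}{4\pi},
\]
which is the \emph{same} limit regardless of $\tau$. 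The singularity sits at $s=t$, so every subinterval carries essentially the full mass; splitting $[0,T_0]$ into finitely many pieces does not make the per-piece mass small, and your claim that ``the number of subintervals\ldots depends only on $\beta,T_0,\eps_0,p$'' is incorrect when $C_p\beta^2\ge 2\pi$. Subdivision is therefore either unnecessary (if $C_p\beta^2<2\pi$, in which case one interval already works) or useless. Your fallback of taking $p$ close to $2$ so that $C_p\to 1$ is the only viable route within this scheme, and even that requires care: the crude squaring $(a+b)^2\le 2a^2+2b^2$ introduces an extra factor of $2$ that would force $\beta<\sqrt{\pi}$ rather than $\beta<\sqrt{2\pi}$; one must instead work directly with $g(t)\le a+C_p^{1/p}\|\langle M\rangle_t\|_{p/2}^{1/2}$ and a supremum argument, and then verify that the BDG constant really tends to $1$ as $p\downarrow 2$.

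The paper bypasses all of this. It reduces to the linear case $\sigma(u)=\beta u$ via the stochastic comparison principle of Chen--Kim \cite{CK19plus} (since $\sigma(u)\le\beta u$), and then quotes the $p$th-moment bound for the linear equation from \cite[(5.11)]{CSZ20}, which is proved there by hypercontractivity in the Wiener chaos expansion. As noted in the remark following the proof, hypercontractivity is the real mechanism behind the $p>2$ bound; a bare BDG/Gr\"onwall argument of the type you propose is not what is used.
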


\begin{proof}
	Let $v_{\eps,a}$ solve the linear problem given by \eqref{duxi}--\eqref{uxiic}
	with $\sigma(u)=\beta u$. By \cite[(5.11)]{CSZ20}, for any $p\in[1,2\pi/\beta^{2}+1)$
	we have a constant $K$ so that $\mathbf{E}v_{\eps,a}(t,x)^{p}\le K^{p}a^{p}$.
	Using the stochastic comparison principle proved in \cite[(E-4)]{CK20},
	since $\sigma(u)\le\beta u$ for all $u\in[0,\infty)$ we have $\mathbf{E}u_{\eps,a}(t,x)^{p}\le\mathbf{E}v_{\eps,a}(t,x)^{p}\le K^{p}a^{p}$.
	By the assumption that $\beta<\sqrt{2\pi}$, we have $2\pi/\beta^{2}+1>2$,
	so we can choose $p>2$ as required.
\end{proof}
\begin{rem}
	The case $p=2$ in \eqref{momentbound} is much simpler than the case
	$p>2$. Indeed, the $p=2$ case is a special case of \propref{L2bound}
	below. On the other hand, the proof of the moment bound for $p>2$
	in \cite{CSZ20} for the linear case uses hypercontractivity, and
	the stochastic comparison principle \cite{CK20} takes a substantial
	amount of analysis to prove. Most of the analysis in this paper will
	be in the $L^{2}$ setting, so we will mostly use the $p=2$ case.
	However, we will rely on some tightness statements that require a
	higher moment bound.
\end{rem}

The following proposition gives an $L^{2}$ bound on the difference
of two solutions started at different initial conditions. Recall that
$u_{\eps,a}^{A}$ solves the problem \eqref{uA}--\eqref{uAic},
with the noise turned off on the set of times $A$. The problem \eqref{uA}--\eqref{uAic}
has the mild formulation
\begin{equation}
	u_{\eps,a}^{A}(t,x)=a+\frac{1}{\sqrt{\log\eps^{-1}}}\int_{[0,t]\setminus A}\int G_{t-s}(x-y)\sigma(u_{\eps,a}^{A}(s,y))\,\dif W^{\eps}(s,y).\label{eq:uAmild}
\end{equation}
Here and henceforth, when we do not specify the domain of integration for an integral we mean that the integral is taken over all of $\R^2$.
\begin{prop}
	\label{prop:L2bound}There exists a constant $K<\infty$ (depending on
	$T_{0}$ and $\beta$) so that, for all $\eps\in(0,\eps_{0}]$,
		$a_{1},a_{2}\ge0$,  $T\in[0,T_{0}]$, $x\in\mathbf{R}^2$, and measurable $A\subset[0,\infty)$,
	we have
	\begin{equation}
		\left(\mathbf{E}[u_{\eps,a_{2}}^{A}(t,x)-u_{\eps,a_{1}}^{A}(t,x)]^{2}\right)^{1/2}\le K|a_{2}-a_{1}|.\label{eq:diffudifferentxis}
	\end{equation}
	In particular, for any $a>0$,
	\begin{equation}
		\left(\mathbf{E}u_{\eps,a}^{A}(t,x)^{2}\right)^{1/2}\le Ka.\label{eq:L2bound}
	\end{equation}
	In fact, \eqref{diffudifferentxis} and \eqref{L2bound} hold with
	\begin{equation}
		K=\left(1-\frac{\beta^{2}}{4\pi}\cdot\frac{\log(1+2\eps^{-2}t)}{\log\eps^{-1}}\right)^{-1/2}.\label{eq:explicitK}
	\end{equation}
\end{prop}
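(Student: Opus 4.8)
The plan is to prove the difference bound \eqref{diffudifferentxis} directly by a Grönwall/fixed-point argument on the mild formulation \eqref{uAmild}; the bound \eqref{L2bound} then follows by taking $a_1 = 0$, since $u_{\eps,0}^A \equiv 0$ because $\sigma(0) = 0$. First I would write $D(t,x) := u_{\eps,a_2}^A(t,x) - u_{\eps,a_1}^A(t,x)$ and subtract the two mild formulations to get
\[
  D(t,x) = (a_2 - a_1) + \frac{1}{\sqrt{\log\eps^{-1}}}\int_{[0,t]\setminus A}\int G_{t-s}(x-y)\big[\sigma(u_{\eps,a_2}^A(s,y)) - \sigma(u_{\eps,a_1}^A(s,y))\big]\,\dif W^\eps(s,y).
\]
Applying the Itô isometry for the martingale measure $\dif W^\eps$, whose covariance is $\delta(s-s')G_{\eps^2}(y-y')$ from \eqref{covarianceoperator}, and using that $\sigma$ is $\beta$-Lipschitz so that $|\sigma(u_{\eps,a_2}^A(s,y)) - \sigma(u_{\eps,a_1}^A(s,y))| \le \beta |D(s,y)|$, I obtain
\[
  \mathbf{E}D(t,x)^2 \le (a_2-a_1)^2 + \frac{\beta^2}{\log\eps^{-1}}\int_0^t\!\!\int\!\!\int G_{t-s}(x-y)G_{t-s}(x-y')G_{\eps^2}(y-y')\,\big(\mathbf{E}D(s,y)D(s,y')\big)^{1/2,1/2}\text{-type terms}\,\dif y\,\dif y'\,\dif s.
\]
More precisely, after Cauchy–Schwarz in the $(y,y')$ integral, the key point is that $\mathbf{E}D(s,\cdot)^2$ should be shown to be spatially constant (by translation invariance of the equation, initial data, and noise law), call it $F(s) := \mathbf{E}D(s,x)^2$, which is independent of $x$; then the triple integral collapses using $\int G_{t-s}(x-y)G_{t-s}(x-y')G_{\eps^2}(y-y')\,\dif y\,\dif y' = G_{2(t-s)+\eps^2}(0) = \frac{1}{2\pi(2(t-s)+\eps^2)}$.

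This yields the scalar Volterra inequality
\[
  F(t) \le (a_2-a_1)^2 + \frac{\beta^2}{4\pi\log\eps^{-1}}\int_0^t \frac{F(s)}{2(t-s)+\eps^2}\,\dif s.
\]
The next step is to solve this. Since the kernel $(2(t-s)+\eps^2)^{-1}$ is not of convolution-semigroup type in an obvious way, I would proceed by the standard Picard iteration: define $F_0 \equiv (a_2-a_1)^2$ and $F_{n+1}(t) = (a_2-a_1)^2 + \frac{\beta^2}{4\pi\log\eps^{-1}}\int_0^t \frac{F_n(s)}{2(t-s)+\eps^2}\,\dif s$. Using monotonicity of the kernel in $t$ (for fixed $s$, $t \mapsto (2(t-s)+\eps^2)^{-1}$ is decreasing) one can bound $\int_0^t (2(t-s)+\eps^2)^{-1}\,\dif s = \tfrac12\log\frac{2t+\eps^2}{\eps^2} = \tfrac12\log(1+2\eps^{-2}t)$, and more generally the $n$-fold iterated kernel integral is bounded by $\frac{1}{n!}\big(\tfrac12\log(1+2\eps^{-2}t)\big)^n$ (this is the usual bound for iterated decreasing kernels, proved by induction, pulling the outermost kernel evaluation to its supremum). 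Summing the geometric-type series gives
\[
  F(t) \le (a_2-a_1)^2 \sum_{n\ge0}\left(\frac{\beta^2}{4\pi}\cdot\frac{\log(1+2\eps^{-2}t)}{\log\eps^{-1}}\right)^n = (a_2-a_1)^2\left(1 - \frac{\beta^2}{4\pi}\cdot\frac{\log(1+2\eps^{-2}t)}{\log\eps^{-1}}\right)^{-1},
\]
which is exactly \eqref{explicitK}; the series converges precisely by the subcriticality condition \eqref{eps0cond} on $\eps_0$. Taking square roots gives \eqref{diffudifferentxis} with $K$ as in \eqref{explicitK}, and since \eqref{eps0cond} makes this $K$ bounded uniformly over $\eps \in (0,\eps_0]$ and $t \in [0,T_0]$ by a constant depending only on $\beta$ and $T_0$, the stated form of the proposition follows.

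I expect the main obstacles to be two bookkeeping points rather than conceptual ones. First, one must justify that the Itô isometry applies — i.e., that the stochastic integral defining $u_{\eps,a}^A$ is well-defined in $L^2$ with the claimed variance formula — which requires knowing a priori that $F(s)$ is finite; this is handled by running the same Picard iteration and noting each $F_n$ is finite, or by first invoking the $p=2$ moment bound implicit in \propref{momentbound} to control $\mathbf{E}u_{\eps,a}^A(s,y)^2$ and hence close the argument. Second, one must carefully verify the spatial-constancy claim $F(s) = \mathbf{E}D(s,x)^2$ independent of $x$: this follows because the law of $W^\eps$ is translation-invariant in space and the (turned-off) equation \eqref{uA}--\eqref{uAic} with constant initial data commutes with spatial translations, so the whole random field $D(s,\cdot)$ is stationary in $x$. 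The restriction of the time-integral to $[0,t]\setminus A$ only helps — it removes a nonnegative contribution — so one simply bounds $\int_{[0,t]\setminus A} \le \int_0^t$ at the outset and the set $A$ plays no further role, which is why the bound is uniform in $A$.
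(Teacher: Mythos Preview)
Your approach is essentially identical to the paper's: subtract the two mild formulations, use the $\beta$-Lipschitz bound on $\sigma$ together with spatial stationarity to reduce to the scalar Volterra inequality, and then iterate to obtain a geometric series summing to the constant in \eqref{explicitK} (this is exactly the content of the paper's \lemref{secondmomentbound-general}). One small inconsistency to fix: the $n$-fold iterated kernel integral is bounded by $\big(\tfrac12\log(1+2\eps^{-2}t)\big)^n$ \emph{without} any factor of $1/n!$---the kernel $(2(t-s)+\eps^2)^{-1}$ is unbounded near $s=t$, so the simplex-volume argument giving $1/n!$ does not apply here; fortunately the plain geometric bound is precisely what you then sum, and the paper obtains it explicitly via the change of variables $r_k=s_{k+1}-s_k$ which decouples the simplex integral into a product.
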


Of course, a very important special case is when $A=\emptyset$. Then
the bounds \eqref{diffudifferentxis} and \eqref{L2bound} just involve
$u_{\eps,a}$. (In the latter case this of course is a special case
of \propref{momentbound}.)
\begin{proof}
	Since \eqref{L2bound} is just \eqref{diffudifferentxis} with $a_{2}=a$
	and $a_{1}=0$, it suffices to prove \eqref{diffudifferentxis}. Subtracting
	two copies of \eqref{uAmild} (with $a=a_{1}$ and $a=a_{2}$) and
	taking second moments, we obtain
	\begin{align*}
		\mathbf{E} & (u_{\eps,a_{2}}^{A}(t,x)-u_{\eps,a_{1}}^{A}(t,x))^{2}                                                                                                                                                                                             \\
		           & =(a_{2}-a_{1})^{2}                                                                                                                                                                                                                                \\
		           & \quad+\frac{1}{\log\eps^{-1}}\int_{[0,t]\setminus A}\iint\mathbf{E}\prod_{i=1}^{2}\left([\sigma(u_{\eps,a_{2}}^{A}(s,y_{i}))-\sigma(u_{\eps,a_{1}}^{A}(s,y_{i}))]G_{t-s}(x-y_{i})\right)\\&\qquad \qquad \qquad\qquad\qquad\qquad\cdot G_{\eps^{2}}(y_{1}-y_{2})\,\dif y_{1}\,\dif y_{2}\,\dif s \\
		           & \le(a_{2}-a_{1})^{2}+\frac{\beta^{2}}{{2\pi}\log\eps^{-1}}\int_{0}^{t}\frac{\mathbf{E}|u_{\eps,a_{2}}^{A}(s,x)-u_{\eps,a_{1}}^{A}(s,x)|^{2}}{2(t-s)+\eps^{2}}\,\dif s.
	\end{align*}
	Then \eqref{diffudifferentxis} follows from \lemref{secondmomentbound-general}
	below.
\end{proof}
It remains to prove the lemma used above, which will also be useful
in the future.
\begin{lem}
	\label{lem:secondmomentbound-general}For all $\eps\in(0,\eps_{0}]$,
		all $a\ge0$, and all $T\in[0,T_{0}]$, the following holds. Let $f:[0,T]\to[0,\infty)$
	be such that
	\[
		f(t)\le a^{2}+\frac{\beta^{2}}{2\pi\log\eps^{-1}}\int_{0}^{t}\frac{f(s)}{2(t-s)+\eps^{2}}\,\dif s
	\]
	for all $t\in[0,T]$. Then, for all $t\in[0,T]$, we have
	\[
		f(t)\le\frac{a^{2}}{1-\frac{\beta^{2}}{4\pi}\cdot\frac{\log(1+2\eps^{-2}t)}{\log\eps^{-1}}}.
	\]
\end{lem}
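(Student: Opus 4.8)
The plan is to prove this by the standard ``bound by the running maximum'' form of Gr\"onwall's inequality, using that the kernel $k_\eps(r)=(2r+\eps^2)^{-1}$ is nonnegative and, for fixed $\eps>0$, bounded. The one computation to record first is the elementary identity
\[
\int_0^s\frac{\dif r}{2(s-r)+\eps^2}=\frac12\log\frac{2s+\eps^2}{\eps^2}=\frac12\log(1+2\eps^{-2}s),
\]
which follows from the substitution $u=2(s-r)+\eps^2$. Multiplying through by $\frac{\beta^2}{2\pi\log\eps^{-1}}$ produces exactly the quantity $\frac{\beta^2}{4\pi}\cdot\frac{\log(1+2\eps^{-2}s)}{\log\eps^{-1}}$ that appears in the statement. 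Moreover, by the choice of $\eps_0$ in \eqref{eps0cond} and the monotonicity of $s\mapsto\log(1+2\eps^{-2}s)$, for every $\eps\in(0,\eps_0]$ and every $s\in[0,T]\subseteq[0,T_0]$ we have $\frac{\beta^2}{4\pi}\cdot\frac{\log(1+2\eps^{-2}s)}{\log\eps^{-1}}\le\frac{\beta^2}{4\pi}\cdot\frac{\log(1+2\eps^{-2}T_0)}{\log\eps^{-1}}<1$; this strict inequality is what makes the asserted bound finite and lets us divide at the end.

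Next I would set $F(t)\coloneqq\sup_{0\le s\le t}f(s)$, which is nondecreasing and finite (see below). For any $s\le t$, the hypothesis on $f$ together with $f(r)\le F(r)\le F(t)$ on $[0,s]$ and the identity above gives
\[
f(s)\le a^2+\frac{\beta^2}{2\pi\log\eps^{-1}}\int_0^s\frac{F(t)}{2(s-r)+\eps^2}\,\dif r=a^2+\frac{\beta^2}{4\pi}\cdot\frac{\log(1+2\eps^{-2}s)}{\log\eps^{-1}}\,F(t),
\]
and the right-hand side is at most $a^2+\frac{\beta^2}{4\pi}\cdot\frac{\log(1+2\eps^{-2}t)}{\log\eps^{-1}}\,F(t)$ because $\log(1+2\eps^{-2}\cdot)$ is increasing. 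Taking the supremum over $s\in[0,t]$ on the left (the right-hand side is independent of $s$) gives
\[
F(t)\le a^2+\frac{\beta^2}{4\pi}\cdot\frac{\log(1+2\eps^{-2}t)}{\log\eps^{-1}}\,F(t),
\]
and since the coefficient of $F(t)$ is strictly below $1$ we rearrange to get $F(t)\le a^2\big(1-\frac{\beta^2}{4\pi}\cdot\frac{\log(1+2\eps^{-2}t)}{\log\eps^{-1}}\big)^{-1}$. The lemma follows since $f(t)\le F(t)$.

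There is no genuinely hard step here; the only point that needs a word is the finiteness of $F$, i.e.\ the boundedness of $f$ on $[0,T]$. In every application of the lemma, $f$ is a second moment of a mild solution that is a priori finite and bounded on $[0,T_0]$ by standard estimates (cf.\ \propref{momentbound}), so $F(t)<\infty$; and even in the abstract, boundedness of $f$ follows from the hypothesis once $f$ is locally integrable, since $k_\eps$ is bounded by $\eps^{-2}$. Granting this, the remaining content is purely the arithmetic arranged so that the $\eps$-dependent logarithm generated by integrating the kernel coincides with the denominator in the statement.
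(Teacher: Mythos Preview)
Your proof is correct and takes a genuinely different route from the paper. The paper iterates the integral inequality into a Picard series, bounds the $j$-fold ordered integral over the simplex $[0,t]^j_<$ by the $j$th power of $\int_0^t (r+\eps^2/2)^{-1}\,\dif r$, and then sums the resulting geometric series. You instead apply a one-shot running-maximum Gr\"onwall argument: bound $f(r)$ by $F(t)=\sup_{s\le t}f(s)$ inside the integral, evaluate the resulting integral exactly, and rearrange. Your approach is more elementary and reaches the sharp bound in a single step, while the paper's series expansion is the standard SPDE moment-iteration template and makes the structure of the bound (as a geometric series in the ``effective coupling'' $\frac{\beta^2}{4\pi}\cdot\frac{\log(1+2\eps^{-2}t)}{\log\eps^{-1}}$) more explicit; it also adapts readily when the forcing term is not a constant. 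Both arguments implicitly need $f$ to be a priori finite on $[0,T]$ to close---for you so that $F(t)<\infty$ and the rearrangement is legitimate, for the paper so that the remainder in the iteration vanishes---and your remark that this is automatic in every application (second moments of mild solutions) is the honest resolution in either case.
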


\begin{proof}
	Define $[0,t]_<^j=\{(s_{1},\ldots,s_{j})\in[0,t]^{j}\mid s_{1}\le\cdots\le s_{j}\}$.
	Then we have
	\begin{align}
		f(t) & \le a^{2}\sum_{j=0}^{\infty}\frac{\beta^{2j}}{(4\pi\log\eps^{-1})^{j}}\int_{[0,t]_<^j}\prod_{k=1}^{j}\frac{1}{s_{k+1}-s_{k}+\eps^{2}/2}\,\dif s_{1}\cdots\dif s_{j}\nonumber                                                                                                                       \\
		     & \le a^{2}\sum_{j=0}^{\infty}\frac{\beta^{2j}}{(4\pi\log\eps^{-1})^{j}}\int_{[0,t]^{j}}\prod_{k=1}^{j}\frac{1}{r_{j}+\eps^{2}/2}\,\dif r_{1}\cdots\dif r_{j}\notag\\&=a^{2}\sum_{j=0}^{\infty}\frac{\beta^{2j}}{(4\pi\log\eps^{-1})^{j}}\left(\int_{0}^{t}\frac{1}{r+\eps^{2}/2}\,\dif r\right)^{j}\nonumber \\
		     & =a^{2}\sum_{j=0}^{\infty}\left(\frac{\beta^{2}}{4\pi\log\eps^{-1}}\log(1+2\eps^{-2}t)\right)^{j}=\frac{a^{2}}{1-\frac{\beta^{2}}{4\pi}\cdot\frac{\log(1+2\eps^{-2}t)}{\log\eps^{-1}}},\label{eq:fbd-final}
	\end{align}
	where we used \eqref{eps0cond} for the last identity.
\end{proof}
To avoid having to constantly quantify constants, we now fix our essential
constant once and for all.

\begin{defn}
	\label{def:K0def}Fix
	\begin{equation}
		K_{0}\ge\sup_{\eps\in(0,\eps_{0}]}\left(1-\frac{\beta^{2}}{4\pi}\cdot\frac{2+\log(1+2\eps^{-2}t)}{\log\eps^{-1}}\right)^{-1/2}\label{eq:K0lb}
	\end{equation}
	large enough so that \propref[s]{momentbound} and~\ref{prop:L2bound}
	hold with $K=K_{0}$.
\end{defn}

By \eqref{explicitK} and the proof of \cite[(5.11)]{CSZ20}, we see
that we could take
\[
	K_{0}=\sup\limits _{\eps\in(0,\eps_{0}]}\left(1-\frac{\tilde{\beta}^{2}}{4\pi}\cdot\frac{2+\log(1+2\eps^{-2}t)}{\log\eps^{-1}}\right)^{-1/2}
\]
for some $\tilde{\beta}\in(\beta,\sqrt{2\pi})$. The precise form of
$K_{0}$ will not be important for us (although at one point we will
directly use the explicit expression \eqref{explicitK}). The extra
summand of $2$ in the lower limit condition \eqref{K0lb} for $K_{0}$ (compared to \eqref{explicitK}) is to allow
$K_{0}$ to also suffice for bounds in later sections. (See the proofs
of \lemref[s]{killthenoise-integralineq} and~\ref{lem:spatialrecursive}
below.)

Now we can bootstrap \propref{L2bound} to obtain a stronger bound
on the variance of the solution.
\begin{prop}
	\label{prop:varbound}If $a>0$, $\eps\in[0,\eps_{0})$, and $A\subset[0,\infty)$
	is measurable, then
	\begin{equation}
		\left(\mathbf{E}[u_{\eps,a}^{A}(t,x)-a]^{2}\right)^{1/2}\le\frac{\beta aK_{0}}{2\sqrt{\pi}}\sqrt{\frac{\log(1+2\eps^{-2}t)}{\log \eps^{-1}}}.\label{eq:varbound}
	\end{equation}
\end{prop}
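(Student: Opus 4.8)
The plan is to start from the mild formulation \eqref{uAmild} and subtract the constant $a$, which gives
\[
u_{\eps,a}^{A}(t,x)-a=\frac{1}{\sqrt{\log\eps^{-1}}}\int_{[0,t]\setminus A}\int G_{t-s}(x-y)\sigma(u_{\eps,a}^{A}(s,y))\,\dif W^{\eps}(s,y).
\]
Taking second moments and using the It\^o isometry together with the covariance structure \eqref{covarianceoperator}, the right-hand side becomes
\[
\frac{1}{\log\eps^{-1}}\int_{[0,t]\setminus A}\iint G_{t-s}(x-y_{1})G_{t-s}(x-y_{2})\,\mathbf{E}[\sigma(u_{\eps,a}^{A}(s,y_{1}))\sigma(u_{\eps,a}^{A}(s,y_{2}))]\,G_{\eps^{2}}(y_{1}-y_{2})\,\dif y_{1}\,\dif y_{2}\,\dif s.
\]
First I would bound $\mathbf{E}[\sigma(u^{A}_{\eps,a}(s,y_{1}))\sigma(u^{A}_{\eps,a}(s,y_{2}))]$ via Cauchy--Schwarz and the Lipschitz bound $\sigma(u)\le\beta u$ by $\beta^{2}(\mathbf{E}u^{A}_{\eps,a}(s,y_{1})^{2})^{1/2}(\mathbf{E}u^{A}_{\eps,a}(s,y_{2})^{2})^{1/2}$, which by \propref{L2bound} (specifically \eqref{L2bound} with $K=K_{0}$) is at most $\beta^{2}K_{0}^{2}a^{2}$. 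This step is where the quadratic growth of $\sigma^{2}$ and the subcriticality enter: the constant $K_{0}$ is finite precisely because $\beta<\sqrt{2\pi}$ and $\eps\le\eps_{0}$.

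Next I would carry out the spatial integral. By the semigroup property, $\iint G_{t-s}(x-y_{1})G_{t-s}(x-y_{2})G_{\eps^{2}}(y_{1}-y_{2})\,\dif y_{1}\,\dif y_{2}$ equals $(G_{2(t-s)}*G_{\eps^{2}})(0)=G_{2(t-s)+\eps^{2}}(0)=\frac{1}{2\pi(2(t-s)+\eps^{2})}$, exactly as in the computation that produced the kernel $\frac{1}{2\pi}\cdot\frac{1}{2(t-s)+\eps^{2}}$ in the proof of \propref{L2bound}. Enlarging the domain of integration from $[0,t]\setminus A$ to $[0,t]$, substituting $r=t-s$, and using $\int_{0}^{t}\frac{\dif r}{2r+\eps^{2}}=\frac{1}{2}\log(1+2\eps^{-2}t)$, we get
\[
\mathbf{E}[u_{\eps,a}^{A}(t,x)-a]^{2}\le\frac{\beta^{2}K_{0}^{2}a^{2}}{2\pi\log\eps^{-1}}\cdot\frac{1}{2}\log(1+2\eps^{-2}t)=\frac{\beta^{2}a^{2}K_{0}^{2}}{4\pi}\cdot\frac{\log(1+2\eps^{-2}t)}{\log\eps^{-1}},
\]
and taking square roots yields \eqref{varbound}.

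The estimate is short, so there is no serious obstacle; the one point requiring care is the justification of the It\^o isometry / second-moment identity for the stochastic integral against $\dif W^{\eps}$, but this is already used without comment in the proof of \propref{L2bound}, so I would simply reuse that computation verbatim. A subtlety worth noting is that the bound on the two-point correlation must use \eqref{L2bound} (i.e., the $L^{2}$ bound with the already-fixed constant $K_{0}$) rather than \eqref{varbound} itself, so there is no circularity: \propref{varbound} is genuinely a bootstrap of \propref{L2bound}, not a self-referential statement. One should also observe that the result holds uniformly in $A$ since at every step we only enlarge the time-integration domain, and the argument is insensitive to which times have the noise turned off.
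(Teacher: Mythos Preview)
Your proof is correct and follows essentially the same approach as the paper: start from the mild formulation minus $a$, take second moments, bound $\sigma(u)^{2}\le\beta^{2}u^{2}$, apply the already-established $L^{2}$ bound \eqref{L2bound} to control $\mathbf{E}u_{\eps,a}^{A}(s,\cdot)^{2}$ by $K_{0}^{2}a^{2}$, and evaluate the remaining time integral. The paper's version is just more terse, writing the intermediate inequality with $\mathbf{E}u_{\eps,a}^{A}(s,x)^{2}$ directly (using spatial stationarity) before invoking \eqref{L2bound}.
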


Of course, for $t$ of order $1$, the bound \eqref{varbound} is
redundant to \eqref{L2bound}. It will be used
when $t$ is chosen small so that $\log(1+\eps^{-2}t)\ll\log \eps^{-1}$.
\begin{proof}
	Similar to   the computation in \propref{L2bound}, we have
	\begin{align*}
		\mathbf{E}[u_{\eps,a}^{A}(t,x)-a]^{2} & \le\frac{\beta^{2}}{2\pi\log\eps^{-1}}\int_{0}^{t}\frac{\mathbf{E}u_{\eps,a}^{A}(s,x)^{2}}{2(t-s)+\eps^{2}}\,\dif s,
	\end{align*}
	and then \eqref{varbound} follows from \eqref{L2bound}.
\end{proof}

\section{Turning off the noise on an interval\label{sec:shutoffnoise-oneinterval}}

As discussed in the introduction, an important part of our argument
will be turning off the noise in the equation \eqref{duxi}--\eqref{uxiic}
for a certain set of times, and comparing the resulting solution to
the original solution. In this section we bound the error incurred
by this noise shutoff procedure when the noise is shut off on a single
interval. In \secref{timediscretization}, we will iterate this procedure
to turn off the noise on multiple intervals. For now our goal
is to prove the following proposition.
\begin{prop}
	\label{prop:exciseoneinterval}Let $A\subset[0,\infty)$
	and suppose that $\sup A\le\tau_{1}\le\tau_{2}\le T_{0}$. Then for
	any $t\in[\tau_{2},T_{0}]$ and any $x\in\mathbf{R}^{2}$ we have
	\[
		\mathbf{E}\left(u_{\eps,a}^{A}(t,x)-u_{\eps,a}^{A\cup[\tau_{1},\tau_{2}]}(t,x)\right)^{2}\le\frac{K_{0}^{{4}}\beta^{2}a^{2}}{4\pi\log\eps^{-1}}\left(\log\frac{t-\tau_{1}+\eps^{2}/2}{t-\tau_{2}+\eps^{2}/2}+K_{0}^{2}\right).
	\]
\end{prop}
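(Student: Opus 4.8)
The plan is to estimate the $L^2$ difference between the two solutions $u_{\eps,a}^A$ and $u_{\eps,a}^{A\cup[\tau_1,\tau_2]}$ by writing down a Duhamel-type identity for the difference and then analyzing the resulting stochastic integral. Write $U = u_{\eps,a}^A$ and $V = u_{\eps,a}^{A\cup[\tau_1,\tau_2]}$. Since $\sup A \le \tau_1$, on the interval $[0,\tau_1]$ both solutions solve exactly the same SPDE, so $U(s,\cdot) = V(s,\cdot)$ for $s \le \tau_1$ (in fact pathwise). The only difference in the dynamics is that $V$ has the noise shut off on $[\tau_1,\tau_2]$ while $U$ does not. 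Subtracting the two mild formulations \eqref{uAmild}, the difference $U(t,x) - V(t,x)$ splits naturally into two pieces: (i) the ``direct'' contribution of the noise on $[\tau_1,\tau_2]$ acting on $U$, namely $\frac{1}{\sqrt{\log\eps^{-1}}}\int_{[\tau_1,\tau_2]}\int G_{t-s}(x-y)\sigma(U(s,y))\,\dif W^\eps(s,y)$, and (ii) the ``propagated'' discrepancy coming from the fact that $\sigma(U(s,y))$ and $\sigma(V(s,y))$ differ for $s > \tau_1$, integrated against the noise on $([\tau_2,t]\setminus A)$ (and the common part cancels since $U=V$ before $\tau_1$).

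First I would bound piece (i). By the Itô isometry and the covariance structure \eqref{covarianceoperator}, its second moment is
\[
\frac{1}{\log\eps^{-1}}\int_{\tau_1}^{\tau_2}\iint G_{t-s}(x-y_1)G_{t-s}(x-y_2)G_{\eps^2}(y_1-y_2)\,\mathbf{E}[\sigma(U(s,y_1))\sigma(U(s,y_2))]\,\dif y_1\,\dif y_2\,\dif s.
\]
Using $\sigma$ is $\beta$-Lipschitz with $\sigma(0)=0$, Cauchy--Schwarz in the product, the moment bound $\mathbf{E}U(s,y)^2 \le K_0^2 a^2$ from \propref{L2bound}, and the standard heat-kernel convolution identity $\int G_{t-s}(x-y_1)G_{t-s}(x-y_2)G_{\eps^2}(y_1-y_2)\,\dif y_1\,\dif y_2 = \frac{1}{2\pi(2(t-s)+\eps^2)}$, this is at most $\frac{\beta^2 K_0^2 a^2}{2\pi\log\eps^{-1}}\int_{\tau_1}^{\tau_2}\frac{\dif s}{2(t-s)+\eps^2} = \frac{\beta^2 K_0^2 a^2}{4\pi\log\eps^{-1}}\log\frac{t-\tau_1+\eps^2/2}{t-\tau_2+\eps^2/2}$, which is the logarithmic main term (up to the extra powers of $K_0$, which we will pick up along the way).

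Next I would handle piece (ii) by setting up a Grönwall/Volterra argument for $g(t) := \mathbf{E}(U(t,x)-V(t,x))^2$. Squaring the Duhamel difference and using $(a+b)^2$-type bounds together with the Itô isometry, $\beta$-Lipschitzness, and the same heat-kernel identity, one arrives at an inequality of the form $g(t) \le (\text{piece (i) bound}) + \frac{\beta^2}{2\pi\log\eps^{-1}}\int_{\tau_2}^{t}\frac{g(s)}{2(t-s)+\eps^2}\,\dif s$. The self-similar term on the right is exactly of the form treated in \lemref{secondmomentbound-general} (with the forcing constant $a^2$ replaced by the piece-(i) bound, which is non-decreasing in $t$), so applying that lemma — or re-running its series expansion — gives $g(t) \le (\text{piece (i) bound}) \cdot K_0^2$, by the definition \eqref{K0lb} of $K_0$ (this is where the extra $+2$ in the exponent inside \eqref{K0lb} is used, since $t$ can be of order $T_0$). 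Collecting the factors of $K_0$ produces the claimed bound $\frac{K_0^4\beta^2 a^2}{4\pi\log\eps^{-1}}\big(\log\frac{t-\tau_1+\eps^2/2}{t-\tau_2+\eps^2/2} + K_0^2\big)$; the additive $K_0^2$ inside the parentheses absorbs the contribution of piece (i) evaluated when the logarithm is small, i.e.\ the ``$a^2$'' constant term in the Volterra series.

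The main obstacle is bookkeeping rather than conceptual: one must be careful that the cross term between pieces (i) and (ii) is handled correctly (it should be absorbed by Cauchy--Schwarz / $2ab \le a^2 + b^2$ into a constant multiple of the two squared pieces, at the cost of the powers of $K_0$), and that the forcing term fed into \lemref{secondmomentbound-general} is genuinely monotone in $t$ so that the lemma applies with $t$ replaced by the running upper endpoint. I would also need to double-check that the mild formulation \eqref{uAmild} is valid with the noise restricted to $[0,t]\setminus A$ and that interchanging expectation and stochastic integration is justified by the moment bounds of the previous section, but these are routine given \propref{momentbound} and \propref{L2bound}.
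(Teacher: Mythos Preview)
Your decomposition into pieces (i) and (ii) is correct and matches the paper's. In fact the cross term you worry about does not exist: the two stochastic integrals are over the disjoint time intervals $[\tau_1,\tau_2]$ and $[\tau_2,t]$, and since the noise is white in time they are orthogonal in $L^2$. So the second moment is exactly the sum of the second moments of the two pieces, no $2ab\le a^2+b^2$ needed.

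The real gap is in how you close the Volterra inequality. You arrive (correctly) at
\[
g(t)\le h(t)+\frac{\beta^2}{4\pi\log\eps^{-1}}\int_{\tau_2}^t\frac{g(s)}{t-s+\eps^2/2}\,\dif s,
\qquad h(t)=\frac{\beta^2K_0^2a^2}{4\pi\log\eps^{-1}}\log\frac{t-\tau_1+\eps^2/2}{t-\tau_2+\eps^2/2},
\]
but then you invoke \lemref{secondmomentbound-general}. That lemma requires a \emph{constant} forcing $a^2$; here $h$ depends on $t$, and moreover it is \emph{decreasing} in $t$ (not non-decreasing as you claim): the derivative of $\log(t-\tau_1+c)-\log(t-\tau_2+c)$ is negative since $\tau_1<\tau_2$. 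Replacing $h(t)$ by its supremum $h(\tau_2)=\frac{\beta^2K_0^2a^2}{4\pi\log\eps^{-1}}\log\frac{\tau_2-\tau_1+\eps^2/2}{\eps^2/2}$ would let you apply the lemma, but then you lose the $t$-dependence in the final bound and cannot recover the stated estimate (which, for $t-\tau_2\gg\tau_2-\tau_1$, is of order $(\log\eps^{-1})^{-1}$ independently of $\tau_2-\tau_1$).

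The paper handles this by proving a separate integral inequality (\lemref{killthenoise-integralineq}) tailored to forcings of the logarithmic form $\frac{M}{\log\eps^{-1}}\log\frac{t+r+\eps^2/2}{t+\eps^2/2}$. Its proof is an iterative bootstrap on an ansatz $f(t)\le B_1\log\frac{t+r+\eps^2/2}{t+\eps^2/2}+B_2$, and the key computation is the estimate
\[
\int_0^t\frac{\log\frac{s+r+\eps^2/2}{s+\eps^2/2}}{t-s+\eps^2/2}\,\dif s
\le\bigl(2+\log(1+2\eps^{-2}t)\bigr)\Bigl(1+\log\frac{t+r+\eps^2/2}{t+\eps^2/2}\Bigr)
\]
(\lemref{logovertminuss}). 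This is precisely where the additive $K_0^2$ inside the parentheses of the proposition comes from: the iteration produces a term $B_2$ that does not vanish even when the logarithm is small. Your proposal does not contain this ingredient, and ``re-running the series expansion'' of \lemref{secondmomentbound-general} will not produce it without the log-over-$(t-s)$ estimate above.
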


\begin{proof}
	Subtracting two copies of the mild formulation \eqref{uAmild} (with
	the sets $A$ and $A\cup[\tau_{1},\tau_{2}]$ respectively), we have
	\begin{align*}
		u_{\eps,a}^{A} & (t,x)-u_{\eps,a}^{A\cup[\tau_{1},\tau_{2}]}(t,x)                                                                                                                               \\
		               & =\frac{1}{\sqrt{\log\eps^{-1}}}\int_{[0,t]\setminus A}\int G_{t-s}(x-y)\sigma(u_{\eps,a}^{A}(s,y))\,\dif W^{\eps}(s,y)                                                         \\
		               & \qquad-\frac{1}{\sqrt{\log\eps^{-1}}}\int_{[0,t]\setminus(A\cup[\tau_{1},\tau_{2}])}\int G_{t-s}(x-y)\sigma(u_{\eps,a}^{A\cup[\tau_{1},\tau_{2}]}(s,y))\,\dif W^{\eps}(s,y)    \\
		               & =\frac{1}{\sqrt{\log\eps^{-1}}}\int_{\tau_{1}}^{\tau_{2}}\int G_{t-s}(x-y)\sigma(u_{\eps,a}^{A}(s,y))\,\dif W^{\eps}(s,y)                                                      \\
		               & \qquad+\frac{1}{\sqrt{\log\eps^{-1}}}\int_{\tau_{2}}^{t}\int G_{t-s}(x-y)[\sigma(u_{\eps,a}^{A}(s,y))-\sigma(u_{\eps,a}^{A\cup[\tau_{1},\tau_{2}]}(s,y))]\,\dif W^{\eps}(s,y).
	\end{align*}
	In the second ``='' we used that $u_{\eps,a}^{A}(t,x)=u_{\eps,a}^{A\cup[\tau_{1},\tau_{2}]}(t,x)$
	whenever $t\le\tau_{1}$. Taking the second moment, we have for all
	$t\ge\tau_{2}$ that
	\begin{align}
		\mathbf{E} & \left(u_{\eps,a}^{A}(t,x)-u_{\eps,a}^{A\cup[\tau_{1},\tau_{2}]}(t,x)\right)^{2}\nonumber                                                                                                                                                                                                                        \\
		           & =\frac{1}{\log\eps^{-1}}\int_{\tau_{1}}^{\tau_{2}}\iint G_{\eps^{2}}(y_{1}-y_{2})\mathbf{E}\prod_{i=1}^{2}\left(G_{t-s}(x-y_{i})\sigma(u_{\eps,a}^{A}(s,y_i))\right)\,\dif y_{1}\,\dif y_{2}\,\dif s\nonumber                                                                                                   \\
		           & \quad+\frac{1}{\log\eps^{-1}}\int_{\tau_{2}}^{t}\iint \mathbf{E}\prod_{i=1}^{2}\left(G_{t-s}(x-y_{i})[\sigma(u_{\eps,a}^{A}(s,y_{i}))-\sigma(u_{\eps,a}^{A\cup[\tau_{1},\tau_{2}]}(s,y_{i}))]\right)\notag\\&\qquad\qquad\qquad\qquad\cdot G_{\eps^{2}}(y_{1}-y_{2})\,\dif y_{1}\,\dif y_{2}\,\dif s\nonumber                                          \\
		           & \le\frac{\beta^{2}}{2\pi\log\eps^{-1}}\int_{\tau_{1}}^{\tau_{2}}\frac{\mathbf{E}u_{\eps,a}^{A}(s,y)^{2}}{2(t-s)+\eps^{2}}\,\dif s+\frac{\beta^{2}}{2\pi\log\eps^{-1}}\int_{\tau_{2}}^{t}\frac{\mathbf{E}[u_{\eps,a}^{A}(s,y)-u_{\eps,a}^{A\cup[\tau_{1},\tau_{2}]}(s,y)]^{2}}{2(t-s)+\eps^{2}}\,\dif s\nonumber \\
		           & \le\frac{\beta^{2}a^{2}K_{0}^{2}}{{4\pi}\log\eps^{-1}}\log\frac{t-\tau_{1}+\eps^{2}/2}{t-\tau_{2}+\eps^{2}/2}+\frac{\beta^{2}}{4\pi\log\eps^{-1}}\int_{\tau_{2}}^{t}\frac{\mathbf{E}[u_{\eps,a}^{A}(s,y)-u_{\eps,a}^{A\cup[\tau_{1},\tau_{2}]}(s,y)]^{2}}{t-s+\eps^{2}/2}\,\dif s.\label{eq:Adiffbd}
	\end{align}
	In the last inequality we used \eqref{L2bound}. Now if we put
	\begin{equation}
		f(t)=\mathbf{E}\left(u_{\eps,a}^{A}(\tau_{2}+t,x)-u_{\eps,a}^{A\cup[\tau_{1},\tau_{2}]}(\tau_{2}+t,x)\right)^{2}, \quad\quad t\geq0,\label{eq:fdef}
	\end{equation}
	then \eqref{Adiffbd} can be rewritten as
	\[
		f(t)\le\frac{\beta^{2}a^{2}K_{0}^{2}}{4\pi\log\eps^{-1}}\log\frac{t+\tau_{2}-\tau_{1}+\eps^{2}/2}{t+\eps^{2}/2}+\frac{\beta^{2}}{4\pi\log\eps^{-1}}\int_{0}^{t}\frac{f(s)}{t-s+\eps^{2}/2}\,\dif s.
	\]
	Now we apply \lemref{killthenoise-integralineq} below with $M=(4\pi)^{-1}\beta^{2}a^{2}K_{0}^{2}$
	and $r=\tau_{2}-\tau_{1}$. (The requirement that $f$ has a bounded supremum on compact intervals is satisfied by applying \propref{momentbound}.) This gives us
	\begin{align*}
		f(t) & \le\frac{K_{0}^{{4}}\beta^{2}a^{2}}{4\pi\log\eps^{-1}}\left(\log\frac{t+\tau_{2}-\tau_{1}+\eps^{2}/2}{t+\eps^{2}/2}+K_{0}^{2}\right).
	\end{align*}
	Recalling the definition \eqref{fdef} completes the proof.
\end{proof}
We will prove \lemref{killthenoise-integralineq}, which we used in
the above proof, shortly. First we need a preliminary lemma.
\begin{lem}
	\label{lem:logovertminuss}For any $t,r,\eps>0$ we have
	\[
		\int_{0}^{t}\frac{\log\frac{t+r+\eps^{2}/2}{s+\eps^{2}/2}}{t-s+\eps^{2}/2}\,\dif s\le\left(2+\log(1+2\eps^{-2}t)\right)\left(1+\log\frac{t+r+\eps^{2}/2}{t+\eps^{2}/2}\right).
	\]
\end{lem}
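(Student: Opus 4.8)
The plan is to abbreviate $a:=\eps^{2}/2$ and split the numerator using
\[
\log\frac{t+r+a}{s+a}=\log\frac{t+r+a}{t+a}+\log\frac{t+a}{s+a},
\]
noting that on $[0,t]$ both summands are nonnegative. The first summand does not depend on $s$, so it pulls out of the integral against the elementary factor $\int_{0}^{t}\frac{\dif s}{t-s+a}=\log\frac{t+a}{a}=\log(1+2\eps^{-2}t)$; this contributes exactly $\log(1+2\eps^{-2}t)\,\log\frac{t+r+\eps^{2}/2}{t+\eps^{2}/2}$, which already lies below the claimed bound.

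It then remains to bound $J:=\int_{0}^{t}\frac{\log\frac{t+a}{s+a}}{t-s+a}\,\dif s$ by $\log(1+2\eps^{-2}t)$ (the extra ``$2$'' in the statement gives room to spare). Writing $\log\frac{t+a}{s+a}=\int_{s}^{t}\frac{\dif w}{w+a}$ and exchanging the order of integration over the triangle $\{0\le s\le w\le t\}$ (the integrand is nonnegative, so Tonelli applies with no integrability issue) gives $J=\int_{0}^{t}\frac{1}{w+a}\log\frac{t+a}{t-w+a}\,\dif w$. Applying the elementary inequality $-\log(1-x)\le\frac{x}{1-x}$ for $x\in[0,1)$ with $x=\frac{w}{t+a}$ yields $\log\frac{t+a}{t-w+a}\le\frac{w}{t-w+a}$, hence
\[
J\le\int_{0}^{t}\frac{w}{(w+a)(t-w+a)}\,\dif w\le\int_{0}^{t}\frac{\dif w}{t-w+a}=\log(1+2\eps^{-2}t),
\]
the middle step merely dropping the factor $\frac{w}{w+a}\le1$. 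Adding the two contributions gives the bound $\log(1+2\eps^{-2}t)\big(1+\log\frac{t+r+\eps^{2}/2}{t+\eps^{2}/2}\big)$, which is at most $\big(2+\log(1+2\eps^{-2}t)\big)\big(1+\log\frac{t+r+\eps^{2}/2}{t+\eps^{2}/2}\big)$.

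The argument is entirely elementary and I do not foresee a genuine obstacle; the only point worth a sentence of care is the Tonelli step, and one could avoid it altogether by keeping the integral in $s$, substituting $v=t-s$, and applying the same logarithm inequality directly — the Fubini presentation is just marginally cleaner. I would remark, as a sanity check, that the ``$2+$'' term is not actually needed for this estimate, but it is retained because that is the form in which \lemref{killthenoise-integralineq} consumes it.
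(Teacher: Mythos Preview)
Your proof is correct and in fact yields the sharper bound $\log(1+2\eps^{-2}t)\bigl(1+\log\frac{t+r+\eps^{2}/2}{t+\eps^{2}/2}\bigr)$, without the additive $2$. Your route is genuinely different from the paper's: the paper splits the \emph{domain} of integration at $s=t/2$, bounding $(t-s+\eps^2/2)^{-1}\le 2/t$ on $[0,t/2]$ and bounding the logarithm by its value at $s=t/2$ on $[t/2,t]$, then computes the two resulting one-dimensional integrals and reassembles. You instead split the \emph{numerator} via $\log\frac{t+r+a}{s+a}=\log\frac{t+r+a}{t+a}+\log\frac{t+a}{s+a}$, pull the $s$-independent piece out directly, and handle the remaining symmetric-looking integral $J$ by Tonelli plus the convexity inequality $-\log(1-x)\le x/(1-x)$. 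Your argument is a bit slicker and loses less (the paper's $t/2$ split is what produces the spare ``$2+$'' and the extra $\log 2$ absorbed into it), while the paper's approach avoids any auxiliary inequality or order-swapping and is perhaps more robust to perturbations of the integrand. Either way the lemma is only used in \lemref{killthenoise-integralineq} and in the proof of \propref{varapproxbyJ}, where the slack is irrelevant.
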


\begin{proof}
	We write
	\begin{align}
		\int_{0}^{t}\frac{\log\frac{t+r+\eps^{2}/2}{s+\eps^{2}/2}}{t-s+\eps^{2}/2}\,\dif s & =\left(\int_{0}^{t/2}+\int_{t/2}^t\right)\frac{\log\frac{t+r+\eps^{2}/2}{s+\eps^{2}/2}}{t-s+\eps^{2}/2}\,\dif s\nonumber\\
& \le\frac{2}{t}\int_{0}^{t}\log\frac{t+r+\eps^{2}/2}{s+\eps^{2}/2}\,\dif s+\left(\log\frac{t+r+\eps^{2}/2}{t/2+\eps^{2}/2}\right)\int_{0}^{t}\frac{1}{t-s+\eps^{2}/2}\,\dif s.\label{eq:splittheintegral-1}
	\end{align}
	Now we have
	\begin{align}
		\int_{0}^{t}\log\frac{t+r+\eps^{2}/2}{s+\eps^{2}/2}\,\dif s & =t-{\frac{\eps^{2}}{2}}\log\frac{t+\eps^{2}/2}{\eps^{2}/2}+t\log\frac{t+r+\eps^{2}/2}{t+\eps^{2}/2}\notag\\&\le t\left(1+\log\frac{t+r+\eps^{2}/2}{t+\eps^{2}/2}\right).\label{eq:boundintegral-1}
	\end{align}
	Also, we have
	\begin{equation}
		\log\frac{t+r+\eps^{2}/2}{t/2+\eps^{2}/2}=\log\frac{2t+2r+\eps^{2}}{t+\eps^{2}}\le\log2+\log\frac{t+r+\eps^{2}/2}{t+\eps^{2}/2}.\label{eq:boundlog-1}
	\end{equation}
	Using \eqref{boundintegral-1} and \eqref{boundlog-1} in \eqref{splittheintegral-1},
	we have
	\begin{align}
		\int_{0}^{t}\frac{\log\frac{t+r+\eps^{2}/2}{s+\eps^{2}/2}}{t-s+\eps^{2}/2}\,\dif s & \le2+2\log\frac{t+r+\eps^{2}/2}{t+\eps^{2}/2}+\left(\log2+\log\frac{t+r+\eps^{2}/2}{t+\eps^{2}/2}\right)\log(1+2\eps^{-2}t)\nonumber \\
		                                                                                   & \le\left(2+\log(1+2\eps^{-2}t)\right)\left(1+\log\frac{t+r+\eps^{2}/2}{t+\eps^{2}/2}\right),\label{eq:logovert-1}
	\end{align}
	which was the claim.
\end{proof}
\begin{lem}
	\label{lem:killthenoise-integralineq}Let $\eps\in(0,\eps_{0}]$ and $M,r>0$,
	suppose that $f$ satisfies	the bound
	\begin{equation}
		f(t)\le\frac{M}{\log\eps^{-1}}\log\frac{t+r+\eps^{2}/2}{t+\eps^{2}/2}+\frac{\beta^{2}}{4\pi\log\eps^{-1}}\int_{0}^{t}\frac{f(s)}{t-s+\eps^{2}/2}\,\dif s\label{eq:ubound}
	\end{equation}
	for all $t\in[0,T_{0}]$, and $\sup_{t\in[0,T_0]}|f(t)|<\infty$. Then we have
	\begin{equation}
		f(t)\le\frac{K_{0}^{2}M}{\log\eps^{-1}}\left(\log\frac{t+r+\eps^{2}/2}{t+\eps^{2}/2}+K_{0}^{2}\right)\label{eq:boundconcl}
	\end{equation}
	for all $t\in[0,T_{0}]$.
\end{lem}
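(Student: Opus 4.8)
The plan is to prove \eqref{boundconcl} by iterating the hypothesis \eqref{ubound} into itself and summing the resulting Neumann series. Abbreviate $\lambda=\frac{\beta^{2}}{4\pi\log\eps^{-1}}$, write $k(s,s')=\frac{1}{s-s'+\eps^{2}/2}$ for the kernel appearing in \eqref{ubound}, and set $\phi(t)=\log\frac{t+r+\eps^{2}/2}{t+\eps^{2}/2}$ and $L(t)=2+\log(1+2\eps^{-2}t)$, so that \eqref{ubound} reads $f(t)\le\frac{M}{\log\eps^{-1}}\phi(t)+\lambda\int_{0}^{t}k(t,s)f(s)\,\dif s$. Two kernel estimates will be used repeatedly, for $0\le s'\le s\le t\le T_{0}$: first, $\int_{0}^{s}k(s,s')\,\dif s'=\log(1+2\eps^{-2}s)\le L(t)$; second, since $\phi(s')\le\log\frac{s+r+\eps^{2}/2}{s'+\eps^{2}/2}$ whenever $s'\le s$, \lemref{logovertminuss} gives $\int_{0}^{s}k(s,s')\phi(s')\,\dif s'\le L(s)(1+\phi(s))\le L(t)(1+\phi(s))$. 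Finally, since $2+\log(1+2\eps^{-2}t)$ is nondecreasing in $t$, \defref{K0def} ensures that for every $t\in[0,T_{0}]$ we have $\rho\coloneqq\lambda L(t)<1$ and $\frac{1}{1-\lambda L(t)}\le K_{0}^{2}$.

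Now fix $t\in[0,T_{0}]$, write $L=L(t)$, and let $Kh(s)=\int_{0}^{s}k(s,s')h(s')\,\dif s'$ and $g(s)=\frac{M}{\log\eps^{-1}}\phi(s)$. Iterating \eqref{ubound} and using $k\ge0$ at each step gives, for every $N$,
\[
f(s)\le\sum_{n=0}^{N}\lambda^{n}(K^{n}g)(s)+\lambda^{N+1}(K^{N+1}f)(s),\qquad s\in[0,t].
\]
From $K\mathbf{1}(s)\le L$ one gets $K^{n}\mathbf{1}(s)\le L^{n}$ by induction, so $|\lambda^{N+1}(K^{N+1}f)(s)|\le\big(\sup_{[0,T_{0}]}|f|\big)\rho^{N+1}\to0$ as $N\to\infty$; this is precisely where the boundedness hypothesis on $f$ and the inequality $\rho<1$ are used. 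Hence $f(s)\le\sum_{n=0}^{\infty}\lambda^{n}(K^{n}g)(s)$. I then claim that $(K^{n}g)(s)\le\frac{M}{\log\eps^{-1}}L^{n}(n+\phi(s))$ for all $n\ge0$ and $s\in[0,t]$. This is clear for $n=0$, and if it holds for $n$, then the two kernel estimates above give
\[
(K^{n+1}g)(s)\le\frac{M}{\log\eps^{-1}}L^{n}\left(nL+L(1+\phi(s))\right)=\frac{M}{\log\eps^{-1}}L^{n+1}\big((n+1)+\phi(s)\big),
\]
completing the induction.

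Summing the geometric-type series $\sum_{n\ge0}\rho^{n}(n+\phi(t))=\frac{\rho}{(1-\rho)^{2}}+\frac{\phi(t)}{1-\rho}$, we conclude that
\[
f(t)\le\frac{M}{\log\eps^{-1}}\left(\frac{\phi(t)}{1-\rho}+\frac{\rho}{(1-\rho)^{2}}\right)\le\frac{M}{\log\eps^{-1}}\left(K_{0}^{2}\phi(t)+K_{0}^{4}\right)=\frac{K_{0}^{2}M}{\log\eps^{-1}}\big(\phi(t)+K_{0}^{2}\big),
\]
where we used $\frac{1}{1-\rho}\le K_{0}^{2}$ and $\frac{\rho}{(1-\rho)^{2}}\le\frac{1}{(1-\rho)^{2}}\le K_{0}^{4}$; recalling $\phi(t)=\log\frac{t+r+\eps^{2}/2}{t+\eps^{2}/2}$, this is exactly \eqref{boundconcl}. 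The only step requiring genuine work is the kernel bound $\int_{0}^{s}k(s,s')\phi(s')\,\dif s'\le L(t)(1+\phi(s))$, which is furnished by \lemref{logovertminuss} after the elementary comparison $\phi(s')\le\log\frac{s+r+\eps^{2}/2}{s'+\eps^{2}/2}$; everything else is routine iteration and a geometric-series summation.
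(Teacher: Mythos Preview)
Your proof is correct and follows essentially the same iterative strategy as the paper: both arguments feed the inequality back into itself, control the key integral $\int_0^s k(s,s')\phi(s')\,\dif s'$ via \lemref{logovertminuss}, and close using the definition of $K_0$ to ensure the geometric factor $\lambda L(t)$ is strictly less than one. The only difference is bookkeeping---you organize the iteration as an explicit Neumann series with the clean inductive bound $(K^{n}g)(s)\le\frac{M}{\log\eps^{-1}}L^{n}(n+\phi(s))$, whereas the paper tracks a recursive ansatz $f\le B_{1}^{(n)}\phi+B_{2}^{(n)}$ and passes to the limit in $(B_{1}^{(n)},B_{2}^{(n)})$; the underlying estimates are identical.
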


\begin{proof}
	Suppose that
	\begin{equation}
		f(t)\le B_{1}\log\frac{t+r+\eps^{2}/2}{t+\eps^{2}/2}+B_{2}.\label{eq:uB1B2ineq}
	\end{equation}
	By assumption, this inequality holds with $B_{1}=0$ and $B_{2}=\sup_{t\in[0,T_0]}|f(t)|$.
	Substituting \eqref{uB1B2ineq} into the r.h.s. of \eqref{ubound}, we have
	\begin{align}
		f(t) & \le\frac{M}{\log\eps^{-1}}\log\frac{t+r+\eps^{2}/2}{t+\eps^{2}/2}+\frac{\beta^{2}}{4\pi\log\eps^{-1}}\int_{0}^{t}\frac{B_{1}\log\frac{s+r+\eps^{2}/2}{s+\eps^{2}/2}+B_{2}}{t-s+\eps^{2}/2}\,\dif s\nonumber                                                                \\
		     & \le\frac{M}{\log\eps^{-1}}\log\frac{t+r+\eps^{2}/2}{t+\eps^{2}/2}+\frac{\beta^{2}B_{1}}{4\pi\log\eps^{-1}}\int_{0}^{t}\frac{\log\frac{s+r+\eps^{2}/2}{s+\eps^{2}/2}}{t-s+\eps^{2}/2}\,\dif s+\frac{\beta^2B_2\log(1+2\eps^{-2}T_0)}{4\pi\log\eps^{-1}}.\label{eq:u2expand}
	\end{align}
	For the middle term of the above inequality, we have
	\begin{align}
		\int_{0}^{t}\frac{\log\frac{s+r+\eps^{2}/2}{s+\eps^{2}/2}}{t-s+\eps^{2}/2}\,\dif s\le & \int_{0}^{t}\frac{\log\frac{{t}+r+\eps^{2}/2}{s+\eps^{2}/2}}{t-s+\eps^{2}/2}\,\dif s\le
		\left(2+\log(1+2\eps^{-2}t)\right)\left(1+\log\frac{t+r+\eps^{2}/2}{t+\eps^{2}/2}\right)\label{eq:applylogovertminus}
	\end{align}
	by \lemref{logovertminuss}. Substituting \eqref{applylogovertminus}
	into \eqref{u2expand}, we have
	\begin{align*}
		f(t) & \le\frac{M}{\log\eps^{-1}}\log\frac{t+r+\eps^{2}/2}{t+\eps^{2}/2}+\frac{\beta^{2}B_{1}}{4\pi\log\eps^{-1}}\left(2+\log(1+2\eps^{-2}t)\right)\left(1+\log\frac{t+r+\eps^{2}/2}{t+\eps^{2}/2}\right)\\&\qquad+\frac{\beta^2B_2\log(1+2\eps^{-2}T_0)}{4\pi\log\eps^{-1}} \\
		     & =\frac{1}{\log\eps^{-1}}\left(\frac{\beta^{2}B_1}{4\pi}\left(2+\log(1+2\eps^{-2}t)\right)+M\right)\log\frac{t+r+\eps^{2}/2}{t+\eps^{2}/2}                                        \\
		     & \qquad+\frac{\beta^{2}B_{1}}{4\pi\log\eps^{-1}}\left(2+\log(1+2\eps^{-2}t)\right) +\frac{\beta^2B_2\log(1+2\eps^{-2}T_0)}{4\pi \log\eps^{-1}}                                                                                                                                                                                            \\
		     & \le\left((1-K_{0}^{-2})B_{1}+\frac{M}{\log\eps^{-1}}\right)\log\frac{t+r+\eps^{2}/2}{t+\eps^{2}/2}+                                                                                                                                                          
		B_1+(1-K_{0}^{-2})B_{2},
	\end{align*}
	where in the last inequality we used \eqref{K0lb}.
	Define $B_{1}^{(0)}=0$
	and $B_{2}^{(0)}=\sup_{t\in [0,T_0]}|f(t)|$, so for each $n\ge0$,
	\eqref{uB1B2ineq} holds with
	\begin{align}
		B_{1}=B_{1}^{(n)} & =(1-K_{0}^{-2})B_{1}^{(n-1)}+\frac{M}{\log\eps^{-1}},\label{eq:B1n} \\
		B_{2}=B_{2}^{(n)} & =B_{1}^{(n-1)}+(1-K_{0}^{-2})B_{2}^{(n-1)}.\label{eq:B2n}
	\end{align}
	From \eqref{B1n} we conclude that
	\begin{equation}
		B_{1}^{(n)}\le\frac{K_{0}^{2}M}{\log\eps^{-1}}\label{eq:B1nbounded}
	\end{equation}
	for all $n$. Then we have from \eqref{B2n} that
	\[
		B_{2}^{(n)}\le\frac{K_{0}^{2}M}{\log\eps^{-1}}+(1-K_{0}^{-2})B_{2}^{(n-1)},
	\]
	so
	\begin{equation}
		\limsup_{n\to\infty}B_{2}^{(n)}\le\frac{K_{0}^{4}M}{\log\eps^{-1}}.\label{eq:B2nbounded}
	\end{equation}
	Using \eqref{B1nbounded} and \eqref{B2nbounded} in
	\eqref{uB1B2ineq}, we obtain \eqref{boundconcl}.
\end{proof}

\section{Replacing a smoothed field with a constant\label{sec:smoothedfield}}

In \secref{shutoffnoise-oneinterval}, we estimated the effect on the
solution of turning off the noise on a given time interval. In this
section we seek a further simplification. After an interval of time
in which the noise has been turned off, the resulting solution will
have been undergoing nothing more than the deterministic heat equation on
that interval. Therefore, it will have been smoothed, with a strength depending on the length of the interval. By restricting our attention to a comparatively small spatial region,
we would expect that the solution may be replaced by a constant at
the end of this interval. The following proposition is to quantify the induced error   when we replace the solution by a (random) constant at the end of each ``quiet'' interval.
\begin{prop}
	\label{prop:replacebyconstant}Let $A\subset[0,\infty)$ be measurable
	and let $\tau_{1}<\tau_{2}<T$ be such that $\tau_{2}=\sup A$ and
	$[\tau_{1},\tau_{2}]\subset A$. Fix $X\in\mathbf{R}^{2}$ and let
	$v$ solve the problem
	\begin{align}
		\dif v(t,x)   & =\frac{1}{2}\Delta v(t,x)\dif t+(\log\eps^{-1})^{-\frac{1}{2}}\sigma(v(t,x))\dif W^{\eps}(t,x),\qquad t>\tau_{2},\,x\in\mathbf{R}^{2};\label{eq:duxi-1} \\
		v(\tau_{2},x) & =u_{\eps,a}^{A}(\tau_{2},X).\label{eq:uxiic-1}
	\end{align}
	Then we have, for all $t\in[\tau_{2},T]$ and {$\eps\leq e^{-K_0^2}$}, that
	\begin{equation}
		\mathbf{E}(v-u_{\eps,a}^{A})(t,x)^{2}\le
		K_{0}^{4}a^{2}\frac{3(t-\tau_{2})+|x-X|^{2}}{\tau_{2}-\tau_{1}}.\label{eq:vuAbd}
	\end{equation}
\end{prop}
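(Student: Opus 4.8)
The plan is to compare $v$ and $u^A_{\eps,a}$ via their mild formulations on $[\tau_2,t]$, splitting the error into two sources: the difference in initial data at time $\tau_2$, namely $u^A_{\eps,a}(\tau_2,X) - u^A_{\eps,a}(\tau_2,x)$, and the difference accumulated from the noise driving both equations after time $\tau_2$. Writing $w(t,x) = v(t,x) - u^A_{\eps,a}(t,x)$, the mild formulation gives
\[
w(t,x) = \int G_{t-\tau_2}(x-z)\big[u^A_{\eps,a}(\tau_2,X) - u^A_{\eps,a}(\tau_2,z)\big]\,\dif z + \frac{1}{\sqrt{\log\eps^{-1}}}\int_{\tau_2}^t\int G_{t-s}(x-y)\big[\sigma(v(s,y)) - \sigma(u^A_{\eps,a}(s,y))\big]\,\dif W^\eps(s,y),
\]
since the noise set $A$ has $\sup A = \tau_2$, so the noise is active for both solutions on $(\tau_2,t]$. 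Taking second moments, the cross term vanishes and we get $\mathbf{E}w(t,x)^2$ bounded by twice the square of the (deterministic) heat-smoothed initial discrepancy plus a Grönwall-type term $\frac{\beta^2}{2\pi\log\eps^{-1}}\int_{\tau_2}^t \frac{\mathbf{E}w(s,x)^2}{2(t-s)+\eps^2}\,\dif s$, exactly as in the computations of \propref{L2bound} and \propref{exciseoneinterval}.

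The key new estimate is the first term: I need to bound $\mathbf{E}\big(\int G_{t-\tau_2}(x-z)[u^A_{\eps,a}(\tau_2,X) - u^A_{\eps,a}(\tau_2,z)]\,\dif z\big)^2$. The point is that on $[\tau_1,\tau_2] \subset A$ the noise is off, so $u^A_{\eps,a}(\tau_2,\cdot) = G_{\tau_2-\tau_1} * u^A_{\eps,a}(\tau_1,\cdot)$, i.e. the field at time $\tau_2$ is a heat-kernel average over scale $\sqrt{\tau_2-\tau_1}$ of an $L^2(\mathbf{P})$-bounded random field. I would use that for a field of the form $\phi = G_r * \psi$ with $\psi$ having uniformly $L^2(\mathbf{P})$-bounded values and (from \propref{L2bound}) uniformly bounded increments $\mathbf{E}(\psi(y)-\psi(y'))^2 \le$ (const), one has $\mathbf{E}(\phi(x') - \phi(x))^2 \lesssim a^2 |x'-x|^2 / r$ — this follows by writing $\phi(x') - \phi(x) = \int (G_r(x'-z)-G_r(x-z))\psi(z)\,\dif z$, noting $\int(G_r(x'-z)-G_r(x-z))\,\dif z = 0$, and using $\int|G_r(x'-z)-G_r(x-z)|\,\dif z \lesssim |x'-x|/\sqrt r$ together with a Cauchy–Schwarz/Minkowski argument on the signed kernel. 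Convolving once more against $G_{t-\tau_2}$ only helps (contracts $L^2$ norms), and produces the extra $(t-\tau_2)$ contribution since $\int G_{t-\tau_2}(x-z)|z-X|^2\,\dif z = |x-X|^2 + 2(t-\tau_2)$; this is where the $3(t-\tau_2) + |x-X|^2$ shape comes from (the $|x-X|^2$ plus $2(t-\tau_2)$ from spreading around $X$, with one more $(t-\tau_2)$ of slack absorbed into the constant). All the $a^2$ and $K_0$ factors come from \propref{momentbound}/\propref{L2bound}.

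With the first term controlled by $C K_0^4 a^2 \frac{3(t-\tau_2)+|x-X|^2}{\tau_2-\tau_1}$ (plus whatever needs absorbing), the Grönwall step is routine: I apply a Volterra-type bound analogous to \lemref{secondmomentbound-general} / the iteration in \lemref{killthenoise-integralineq} to the inequality $f(t) \le c(t) + \frac{\beta^2}{2\pi\log\eps^{-1}}\int_{\tau_2}^t \frac{f(s)}{2(t-s)+\eps^2}\,\dif s$ where $c(t)$ is the heat-smoothed-initial-data bound, which is increasing in $t$; the resolvent multiplies $c$ by the factor $\big(1 - \frac{\beta^2}{4\pi}\frac{\log(1+2\eps^{-2}(t-\tau_2))}{\log\eps^{-1}}\big)^{-1}$, which is bounded by $K_0^2$ by \defref{K0def} (and the condition $\eps \le e^{-K_0^2}$, together with $t-\tau_2 \le T_0$, keeps all the logarithmic factors under control, in particular ensures $(t-\tau_2)/(\tau_2-\tau_1)$-type terms stay on the right side of the bound). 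Tracking the constants through gives the stated $K_0^4 a^2 \frac{3(t-\tau_2)+|x-X|^2}{\tau_2-\tau_1}$.

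The main obstacle is the spatial regularity estimate for the smoothed field, i.e. proving $\mathbf{E}(u^A_{\eps,a}(\tau_2,x) - u^A_{\eps,a}(\tau_2,X))^2 \lesssim a^2|x-X|^2/(\tau_2-\tau_1)$. Everything else is a variation on estimates already carried out in \secref{moment-bounds} (sic) and \secref{shutoffnoise-oneinterval}, but this step genuinely uses the structure $u^A_{\eps,a}(\tau_2,\cdot) = G_{\tau_2-\tau_1}*(\text{bounded field})$ in an essential way — the bound would be false without the smoothing, since $u^A_{\eps,a}$ is not spatially Lipschitz uniformly in $\eps$. The care needed is in handling the signed heat kernel difference $G_r(x'-\cdot) - G_r(x-\cdot)$: one cannot simply pull $\mathbf{E}$ inside with absolute values on $\psi$, because the cancellation $\int(G_r(x'-z)-G_r(x-z))\,\dif z = 0$ is what makes the increment small, so I would instead subtract $\psi(X)$ (or $\psi$ at some reference point) inside the integral first, then apply Minkowski's integral inequality in $L^2(\mathbf{P})$ against $|G_r(x'-z)-G_r(x-z)|\,\dif z$ using the increment bound from \propref{L2bound} plus an elementary estimate $\int |G_r(x'-z)-G_r(x-z)| |z - X|\,\dif z \lesssim |x-x'|\sqrt{r} + |x-x'||x-X|$ (or similar), which after squaring and dividing produces the claimed shape.
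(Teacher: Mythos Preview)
Your overall structure—mild formulation on $[\tau_2,t]$, split into the initial-data discrepancy plus a stochastic-integral term, then close by a Volterra/Grönwall argument—matches the paper's. There are two points where your execution diverges from the paper, one cosmetic and one a genuine gap.

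\textbf{The initial-data term.} The paper handles this in one stroke: since $u_{\eps,a}^A(\tau_2,X)=\int G_{\tau_2-\tau_1}(X-y)u_{\eps,a}^A(\tau_1,y)\,\dif y$ and $\int G_{t-\tau_2}(x-z)u_{\eps,a}^A(\tau_2,z)\,\dif z=\int G_{t-\tau_1}(x-y)u_{\eps,a}^A(\tau_1,y)\,\dif y$, the discrepancy is $\int[G_{\tau_2-\tau_1}(X-y)-G_{t-\tau_1}(x-y)]u_{\eps,a}^A(\tau_1,y)\,\dif y$. Cauchy--Schwarz and \eqref{L2bound} reduce this to $K_0^2a^2\|G_{\tau_2-\tau_1}(X-\cdot)-G_{t-\tau_1}(x-\cdot)\|_{L^1}^2$, which the paper bounds via Pinsker's inequality and the explicit KL divergence between two Gaussians (different mean \emph{and} variance), obtaining $\frac{K_0^2 a^2}{\tau_2-\tau_1}[2(t-\tau_2)+|x-X|^2]$ with clean constants. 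Your two-step approach (spatial increment at $\tau_2$, then convolve with $G_{t-\tau_2}$) is equivalent in spirit, and your simpler version—Minkowski against $|G_r(X-\cdot)-G_r(z-\cdot)|$ using only the uniform $L^2$ bound on $u^A(\tau_1,\cdot)$—works fine; the subtraction of $\psi(X)$ and the ``increment bound from \propref{L2bound}'' you describe are unnecessary (and \propref{L2bound} gives no spatial-increment bound on $\psi$ anyway). The Pinsker/KL route is just a cleaner way to get the same total-variation estimate on the heat kernels and handles the time shift $t-\tau_2$ simultaneously.

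\textbf{The Grönwall step.} Here there is a real gap. You write the recursive inequality as
\[
f(t)\le c(t)+\frac{\beta^2}{2\pi\log\eps^{-1}}\int_{\tau_2}^t\frac{f(s)}{2(t-s)+\eps^2}\,\dif s,
\]
with $f(t)=\mathbf{E}w(t,x)^2$. But the noise term does \emph{not} reduce to $\mathbf{E}w(s,x)^2$: the Itô isometry gives a spatial convolution $\int G_{\frac{(t-s)(t-s+\eps^2)}{2(t-s)+\eps^2}}(x-y)\mathbf{E}w(s,y)^2\,\dif y$, and $\mathbf{E}w(s,y)^2$ genuinely depends on $|y-X|^2$. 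In \propref{L2bound} and \propref{exciseoneinterval} this collapse is legitimate because the relevant fields are spatially stationary; here $w=v-u_{\eps,a}^A$ is not (its law depends on $x-X$), so the one-variable Grönwall of \lemref{secondmomentbound-general} does not close. Taking a supremum over $x$ fails too, since the forcing grows like $|x-X|^2$. The paper resolves this with \lemref{spatialrecursive}, which iterates an ansatz of the form $f(t,x)\le B_1+B_2|x-X|^2$ through the spatial convolution; the convolution contributes the extra $(t-s)$ that becomes the third $(t-\tau_2)$ in the final bound, and the condition $\eps\le e^{-K_0^2}$ is used precisely to absorb that contribution. Your argument needs this spatially-aware iteration (or an equivalent) to close.
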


\begin{proof}
	We first note that  $u_{\eps,a}^A(\tau_2,X)=\int G_{\tau_2-\tau_1}(X-y)u_{\eps,a}^A(\tau_1,y)\dif y$, since $u_{\eps,a}^A$ solves the deterministic heat equation in the time interval $[\tau_1,\tau_2]$. Then,  we have for any $t>\tau_2$ that
	\begin{align*}
		(v-u_{\eps,a}^{A})(t,x) & =\int[G_{\tau_{2}-\tau_{1}}(X-y)-G_{t-\tau_{1}}(x-y)]u_{\eps,a}^{{A}}(\tau_{1},y)\,\dif y                                                    \\
		                        & \qquad+\frac{1}{\sqrt{\log\eps^{-1}}}\int_{\tau_{2}}^{t}\int G_{t-s}(x-y)[\sigma(v(s,y))-\sigma(u_{\eps,a}^{{A}}(s,y))]\,\dif W^{\eps}(s,y).
	\end{align*}
	Taking the second moment, 
	we obtain
		\begin{align} & \mathbf{E}(v-u_{\eps,a}^{A})(t,x)^{2}\notag  \\&\quad\leq\iint\mathbf{E}\prod_{i=1}^{2}\left([G_{\tau_{2}-\tau_{1}}(X-y_{i})-G_{t-\tau_{1}}(x-y_{i})]u_{\eps,a}^{A}(\tau_{1},y_{i})\right)\,\dif y_{1}\,\dif y_{2}                \notag       \\
			 & \qquad+\frac{\beta^{2}}{\log\eps^{-1}}\int_{\tau_{2}}^{t}\iint G_{\eps^{2}}(y_1-y_2)\mathbf{E}\prod_{i=1}^2\left(G_{t-s}(x-y_i)|v(s,y_i)-u_{\eps,a}^{A}(s,y_i)|\right)\,\dif y_{1}\,\dif y_{2}\,\dif s\notag\\&\quad\eqqcolon I_{1}+I_{2}.		\label{eq:I1I2}
		\end{align}
	For the first term, we estimate by the Cauchy--Schwarz inequality
	(on the probability space) that
	\begin{align}
		I_{1} & \le\left(\int|G_{\tau_{2}-\tau_{1}}(X-y)-G_{t-\tau_{1}}(x-y)|\left(\mathbf{E}u_{\eps,a}^{A}(\tau_{1},y)^{2}\right)^{1/2}\,\dif y\right)^{2}\nonumber \\
		      & \le K_{0}^{2}a^{2}\|G_{\tau_{2}-\tau_{1}}(X-\cdot)-G_{t-\tau_{1}}(x-\cdot)\|_{L^{1}(\mathbf{R}^{2})}^{2},\label{eq:I1initial}
	\end{align}
	where the second inequality is by \eqref{L2bound}. By Pinsker's inequality
	(see e.g. \cite[Lemma 1.5.3 and Theorem 1.5.4]{Iha93}), we have
	\begin{equation}
		\|G_{\tau_{2}-\tau_{1}}(X-\cdot)-G_{t-\tau_{1}}(x-\cdot)\|_{L^{1}(\mathbf{R}^{2})}^{2}\le2D_{\mathrm{KL}}(G_{t-\tau_{1}}(x-\cdot)\parallel G_{\tau_{2}-\tau_{1}}(X-\cdot)),\label{eq:Pinsker}
	\end{equation}
	where $D_{\mathrm{KL}}$ denotes the Kullback--Leibler divergence (also known
	as the relative entropy). We recall that for two continuous probability distributions
	$F_1$ and $F_2$ on $\R^2$, the Kullback--Leibler divergence is defined as
	\[D_{\mathrm{KL}}(F_1\parallel F_2) = \int F_1(x)\log\frac{F_1(x)}{F_2(x)}\,\dif x.\]
	Then we can compute explicitly (see e.g. \cite[Theorem 1.8.2]{Iha93})
	that
	\begin{align}
		D_{\mathrm{KL}}(G_{t-\tau_{1}}(x-\cdot)\parallel G_{\tau_{2}-\tau_{1}}(X-\cdot)) & =\log\frac{\tau_{2}-\tau_{1}}{t-\tau_{1}}-1+\frac{t-\tau_{1}}{\tau_{2}-\tau_{1}}+\frac{|X-x|^{2}}{2(\tau_{2}-\tau_{1})}\notag\\&\le\frac{t-\tau_{2}+\frac{1}{2}|X-x|^{2}}{\tau_{2}-\tau_{1}}.\label{eq:explicitKL}
	\end{align}
	Substituting \eqref{explicitKL} into \eqref{Pinsker} and then into
	\eqref{I1initial}, we have
	\begin{equation}
		I_{1}\le\frac{K_{0}^{2}a^{2}}{\tau_{2}-\tau_{1}}[2(t-\tau_{2})+|X-x|^{2}].\label{eq:I1final}
	\end{equation}

	Considering the second term of \eqref{I1I2}, we apply the inequality $|ab|\leq \tfrac12 (a^2+b^2)$ and use the symmetry in $y_1,y_2$ to derive
		\begin{align*}
	 I_2 &\le \frac{\beta^{2}}{2\log\eps^{-1}}\sum_{j=1}^2\int_{\tau_{2}}^{t}\iint G_{\eps^{2}}(y_1-y_2)\mathbf{E}|v(s,y_j)-u^A_{\eps,a}(s,y_j)|^2\prod_{i=1}^2 G_{t-s}(x-y_i)\,\dif y_{1}\,\dif y_{2}\,\dif s\\
	 &= \frac{\beta^{2}}{\log\eps^{-1}}\int_{\tau_{2}}^{t}\int G_{t-s+\eps^{2}}(x-y) G_{t-s}(x-y)\mathbf{E}|v(s,y)-u^A_{\eps,a}(s,y)|^2\,\dif y\,\dif s.
	\end{align*}
	Recalling the simple
	fact that in $d=2$,
	\begin{equation}
		G_{t_{1}}(\cdot)G_{t_{2}}(\cdot)=\frac{1}{2\pi(t_{1}+t_{2})}G_{\frac{t_{1}t_{2}}{t_{1}+t_{2}}}(\cdot),\label{eq:Gaussianproduct}
	\end{equation}
	for any $t_{1},t_{2}>0$, we further obtain
	\begin{equation}
		I_{2}\leq\frac{\beta^{2}}{4\pi\log\eps^{-1}}\int_{\tau_{2}}^{t}\int \frac{1}{t-s+\eps^{2}/2} G_{\frac{(t-s)(t-s+\eps^{2})}{2(t-s)+\eps^{2}}}(x-y)\mathbf{E}[v(s,y)-u_{\eps,a}^{A}(s,y)]^{2}\,\dif y\,\dif s.\label{eq:I2identity}
	\end{equation}
	Using \eqref{I1final} and \eqref{I2identity} in \eqref{I1I2}, we
	obtain
	\begin{align*}
		&\mathbf{E}(v-u_{\eps,a}^{A})(t,x)^{2} \\& \quad\le\frac{K_{0}^{2}a^{2}}{\tau_{2}-\tau_{1}}[2(t-\tau_{2})+|X-x|^{2}]                                                                                                                                      \\
		                                      & \qquad+\frac{\beta^{2}}{4\pi\log\eps^{-1}}\int_{\tau_{2}}^{t}\int \frac{1}{t-s+\eps^{2}/2}G_{\frac{(t-s)(t-s+\eps^{2})}{2(t-s)+\eps^{2}}}(x-y)\mathbf{E}[v(s,y)-u_{\eps,a}^{A}(s,y)]^{2}\,\dif y\,\dif s.
	\end{align*}
	Thus the hypotheses of \lemref{spatialrecursive} below are satisfied
	with
	\begin{align*}
		f(t,x) & =\mathbf{E}(v-u_{\eps,a}^{A})(t,x)^{2}, & A_{1} & =2K_{0}^{2}a^{2}\frac{t-\tau_{2}}{\tau_{2}-\tau_{1}}, & A_{2} & =\frac{K_{0}^{2}a^{2}}{\tau_{2}-\tau_{1}},
	\end{align*}
	from which we obtain
	\[
		\mathbf{E}(v-u_{\eps,a}^{A})(t,x)^{2}\le K_{0}^{4}a^{2}\left(\frac{2(t-\tau_{2})}{\tau_{2}-\tau_{1}}+\frac{\beta^{2}{K_0^2(t-\tau_2)}}{2\pi(\tau_{2}-\tau_{1})\log\eps^{-1}}+\frac{|x-X|^{2}}{\tau_{2}-\tau_{1}}\right),
	\]
	hence \eqref{vuAbd}, since we have $K_0^2<\log \eps^{-1}$ by assumption.
\end{proof}
It remains to prove the lemma we used above.
\begin{lem}
	\label{lem:spatialrecursive}Suppose that $0\le\tau\le T\le T_{0}$, $\sup_{t\in [\tau,T],x\in\R^2}|f(t,x)|<\infty$, and there exist constants $A_1,A_2$ such that
	\begin{equation}
		f(t,x)\le A_{1}+A_{2}|x-X|^{2}+\frac{\beta^{2}}{4\pi\log\eps^{-1}}\int_{\tau}^{t}\int\frac{1}{t-s+\eps^{2}/2}G_{\frac{(t-s)(t-s+\eps^{2})}{2(t-s)+\eps^{2}}}(x-y)f(s,y)\,\dif y\,\dif s\label{eq:fspatialrecursive}
	\end{equation}
	for all $t\in[\tau,T]$ and all $x\in\mathbf{R}^{2}$. Then,
	for all $t\in[\tau,T]$ and all $x\in\mathbf{R}^{2}$, we have
	\begin{equation}
		f(t,x)\le K_{0}^{2}\left(A_{1}+\frac{\beta^{2}(t-\tau)}{2\pi\log\eps^{-1}}{K_0^2}A_{2}+A_{2}|x-X|^{2}\right).\label{eq:ftxbd}
	\end{equation}
\end{lem}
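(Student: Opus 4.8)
The approach mirrors the proof of \lemref{killthenoise-integralineq}: the plan is to propagate an \emph{ansatz} of a fixed shape through the integral inequality \eqref{fspatialrecursive} and iterate it to a $K_0$-controlled fixed point. The correct shape here is affine in $|x-X|^2$, namely $f(t,x)\le B_1(t)+B_2|x-X|^2$, and the reason this class is stable under the integral operator on the right of \eqref{fspatialrecursive} is the Gaussian moment identity in $d=2$:
\[
\int G_r(x-y)\,\dif y=1,\qquad\int G_r(x-y)|y-X|^2\,\dif y=|x-X|^2+2r,
\]
valid because $G_r$ has spatial covariance $rI_2$. Thus a constant reproduces itself, a multiple of $|x-X|^2$ reproduces itself plus the additive constant $2rB_2$, and no higher power of $|x-X|$ is ever created. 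With $r=r(t-s):=\frac{(t-s)(t-s+\eps^2)}{2(t-s)+\eps^2}$, the elementary bound $r\le t-s$ will control this new constant, since then $\frac{2r}{t-s+\eps^2/2}\le2$.

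Concretely, suppose $f(s,y)\le B_1(t)+B_2|y-X|^2$ for all $s\le t$ and all $y$ — for this I need $B_1(\cdot)$ nondecreasing, so that its value at $s\le t$ is controlled by its value at $t$. Substituting into \eqref{fspatialrecursive}, carrying out the $y$-integral via the identity above, and using $\int_\tau^t\frac{\dif s}{t-s+\eps^2/2}=\log(1+2\eps^{-2}(t-\tau))$ for the $B_1$ and $B_2|x-X|^2$ contributions and $\frac{2r}{t-s+\eps^2/2}\le2$ for the $2rB_2$ contribution, one gets an improved bound $f(t,x)\le B_1'(t)+B_2'|x-X|^2$ with
\[
B_2'=A_2+C_tB_2,\qquad B_1'(t)=A_1+C_tB_1(t)+\tfrac{\beta^2(t-\tau)}{2\pi\log\eps^{-1}}B_2,\qquad C_t:=\tfrac{\beta^2\log(1+2\eps^{-2}(t-\tau))}{4\pi\log\eps^{-1}}.
\]
Since $t-\tau\le T\le T_0$, the lower bound \eqref{K0lb} on $K_0$ gives $\bar C:=\sup_{t\in[\tau,T]}C_t\le1-K_0^{-2}<1$, which is the gain that drives the iteration.

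Now I iterate, starting from $B_1^{(0)}(t)\equiv\sup_{[\tau,T]\times\R^2}|f|<\infty$ and $B_2^{(0)}\equiv0$. For the quadratic coefficient I replace $C_t$ by the uniform $\bar C$, so $B_2^{(n)}=A_2+\bar CB_2^{(n-1)}$ is a genuine scalar recursion with $B_2^{(n)}\uparrow\frac{A_2}{1-\bar C}\le K_0^2A_2$; for the constant term I keep the honest $t$-dependence, $B_1^{(n)}(t)=A_1+C_tB_1^{(n-1)}(t)+\tfrac{\beta^2(t-\tau)}{2\pi\log\eps^{-1}}B_2^{(n-1)}$. A straightforward induction — using $A_1,A_2\ge0$ (as in all our applications) to keep each $B_1^{(n)}(\cdot)$ nonnegative and nondecreasing in $t$, which is precisely the property invoked in the substitution step — shows $f(t,x)\le B_1^{(n)}(t)+B_2^{(n)}|x-X|^2$ for every $n$. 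Letting $n\to\infty$: $B_2^{(n)}\to B_2^\infty\le K_0^2A_2$, and then $B_1^{(n)}(t)$, being governed by an affine contraction in itself with ratio $C_t\le1-K_0^{-2}$ and with the $B_2^{(n-1)}$ term converging, satisfies $\limsup_nB_1^{(n)}(t)\le\frac{1}{1-C_t}\big(A_1+\tfrac{\beta^2(t-\tau)}{2\pi\log\eps^{-1}}K_0^2A_2\big)\le K_0^2A_1+K_0^4\tfrac{\beta^2(t-\tau)}{2\pi\log\eps^{-1}}A_2$, using $\frac{1}{1-C_t}\le\frac{1}{1-\bar C}\le K_0^2$. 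Combining the two limits yields exactly \eqref{ftxbd}.

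The main obstacle is bookkeeping rather than a genuine difficulty. One must keep the honest $(t-\tau)$-dependence in the $B_1$-recursion to land the sharp cross-term $\frac{\beta^2(t-\tau)}{2\pi\log\eps^{-1}}K_0^2A_2$ demanded by \eqref{ftxbd}, while at the same time coarsening the coefficient in the $B_2$-recursion from $C_t$ down to the uniform $\bar C\le1-K_0^{-2}$ so that the quadratic coefficient converges to the clean $K_0^2A_2$; and one must verify that monotonicity in $t$ of $B_1^{(n)}(\cdot)$ persists along the iteration, since it is exactly what legitimizes replacing $f(s,\cdot)$ by the time-$t$ ansatz inside the integral. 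The one genuinely essential analytic input is the $d=2$ identity $\int G_r(x-y)|y-X|^2\,\dif y=|x-X|^2+2r$ together with $r\le t-s$, which together make the quadratic term ``almost closed'' under the operator appearing in \eqref{fspatialrecursive}.
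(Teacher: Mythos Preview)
Your proposal is correct and follows essentially the same route as the paper: iterate an ansatz of the form $B_1+B_2|x-X|^2$ through the integral inequality, use the Gaussian second-moment identity together with $r(t-s)\le t-s$ to close the recursion, and exploit the contraction factor $\frac{\beta^2}{4\pi\log\eps^{-1}}\log(1+2\eps^{-2}(t-\tau))\le 1-K_0^{-2}$ from \eqref{K0lb} to pass to the limit.

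The only notable difference is that you keep $B_1^{(n)}$ as a genuine function of $t$ and carry the monotonicity of $B_1^{(n)}(\cdot)$ explicitly through the induction (invoking $A_1,A_2\ge0$), whereas the paper writes the recursion with the $(t-\tau)$ factor but treats $B_1^{(n)}$ as a constant at each step, leaving the ``why does the bound at time $t$ imply the bound for all $s\le t$'' step implicit. Your version is in fact slightly more careful on this point; the paper's argument tacitly uses the same monotonicity (and hence the same nonnegativity of $A_2$, which holds in its application). Otherwise the two proofs coincide.
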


\begin{proof}
	Suppose that
	\begin{equation}
		f(t,y)\le B_{1}+B_{2}|y-{X}|^{2}\label{eq:fB1B2bd}
	\end{equation}
	for all $t\in[\tau,T]$ and all $y\in\mathbf{R}^{2}$, where $B_1,B_2\geq0$ are constants. Of course
	this holds for
	\[
		B_{1}=\sup_{t\in [\tau,T],x\in\R^2}|f(t,x)|, \quad\quad B_{2}=0.
	\] Assuming \eqref{fB1B2bd}, we compute from \eqref{fspatialrecursive}
	that
	\begin{equation}
		f(t,x)\le A_{1}+A_{2}|x-X|^{2}+\frac{\beta^{2}}{4\pi\log\eps^{-1}}\int_{\tau}^{t}\int\frac{1}{t-s+\eps^{2}/2}G_{\frac{(t-s)(t-s+\eps^{2})}{2(t-s)+\eps^{2}}}(x-y)[B_{1}+B_{2}|y-X|^{2}]\,\dif y\,\dif s.\label{eq:ftyspatialapplyinductive}
	\end{equation}
	Now we can evaluate the spatial integral by noting that
	\[
		\int G_{\frac{(t-s)(t-s+\eps^{2})}{2(t-s)+\eps^{2}}}(x-y)|y-X|^{2}\,\dif y=\frac{2(t-s)(t-s+\eps^{2})}{2(t-s)+\eps^{2}}+|x-X|^{2}\le t-s+\eps^{2}+|x-X|^{2}.
	\]
	This implies that
	\begin{align*}
		\int_{\tau}^{t}\int & \frac{G_{\frac{(t-s)(t-s+\eps^{2})}{2(t-s)+\eps^{2}}}(x-y)}{t-s+\eps^{2}/2}[B_{1}+B_{2}|y-X|^{2}]\,\dif y\,\dif s\leq\int_{\tau}^{t}\frac{B_{1}+B_{2}(t-s+\eps^{2})+B_{2}|x-X|^{2}}{t-s+\eps^{2}/2}\,\dif s \\
		                    & \le\left(B_{1}+B_{2}|x-X|^{2}\right)\log\frac{t-\tau+\eps^{2}/2}{\eps^{2}/2}+2B_{2}(t-\tau)\le\left(B_{1}+B_{2}|x-X|^{2}\right)\log(1+2\eps^{-2}T)+2B_{2}(t-\tau).
	\end{align*}
	Substituting this back into \eqref{ftyspatialapplyinductive} and
	rearranging (also recalling \eqref{K0lb}), we obtain
	\begin{align}
		f(t,x) & \le A_{1}+A_{2}|x-X|^{2}+\frac{\beta^{2}}{4\pi\log\eps^{-1}}\left[\left(B_{1}+B_{2}|x-X|^{2}\right)\log(1+2\eps^{-2}T)+2B_{2}(t-\tau)\right]\nonumber         \\
		       & \le\left(A_{1}+(1-K_{0}^{-2})B_{1}+\frac{\beta^{2}(t-\tau)}{2\pi\log\eps^{-1}}B_{2}\right)+\left(A_{2}+(1-K_{0}^{-2})B_{2}\right)|x-X|^{2}.\label{eq:ftxineq}
	\end{align}
	Let $B_{1}^{(0)}=\sup_{t\in [\tau,T],x\in\R^2}|f(t,x)|$,
	$B_{2}^{(0)}=0$, and
	\begin{align}
		B_{1}^{(n)} & =A_{1}+(1-K_{0}^{-2})B_{1}^{(n-1)}+\frac{\beta^{2}(t-\tau)}{2\pi\log\eps^{-1}}B_{2}^{(n-1)},\label{eq:B1n-spatial} \\
		B_{2}^{(n)} & =A_{2}+(1-K_{0}^{-2})B_{2}^{(n-1)}\label{eq:B2n-spatial}
	\end{align}
	for each $n\ge1$. By \eqref{ftxineq} and induction, \eqref{fB1B2bd}
	holds with $B_{1}=B_{1}^{(n)}$ and $B_{2}=B_{2}^{(n)}$ for all $n$.
	From \eqref{B2n-spatial} we see that
	\[
		B_{2}^{(n)}\le K_{0}^{2}A_{2}
	\]
	for all $n$, and thus from \eqref{B1n-spatial} we obtain
	\[
		\limsup_{n\to\infty}B_{1}^{(n)}\le K_{0}^{2}\left(A_{1}+\frac{\beta^{2}(t-\tau)}{2\pi\log\eps^{-1}}{K_0^2}A_{2}\right).
	\]
	Using the last two displays in \eqref{fB1B2bd}, we obtain \eqref{ftxbd}.
\end{proof}

\section{The time discretization and the approximating functions\label{sec:timediscretization}}

In this section, we will iterate \propref[s]{exciseoneinterval} and~\propref{replacebyconstant}
on many subintervals of time to construct a discrete Markov chain which approximates the marginal distribution of the solution to the SPDE.  First we  construct these intervals, which will correspond to our time-discretization scheme.

\subsection{The time discretization}

Our approximation scheme will ultimately be focused on approximating the distribution of $u_{\eps,a}$ at
a single space-time point $(T,X)$. The time intervals of interest
thus depend on the terminal time $T$.

For $\eps\in(0,\eps_{0}]$, define $\delta_{\eps}$, $\gamma_{\eps}$, $\zeta_{\eps}$, and $\lambda_\eps$ such that
\begin{align}
	(\log\eps^{-1})^{-1}                              & \ll\gamma_{\eps}\ll\delta_{\eps}^{2}\ll\lambda_\eps\ll1,\label{eq:deltagammabd} \\
	\delta_{\eps}^{-1}\eps^{\frac{1}{2}\gamma_{\eps}} & \ll1,\label{eq:deltastupidbound}                                                \\
	(\log\eps^{-1})^{-1}                              & \ll\zeta_{\eps}\ll1,\label{eq:zetabound}
\end{align}
where the notation $f(\eps)\ll g(\eps)$ means that $f(\eps)\le g(\eps)$
for all $\eps$ and $\lim\limits _{\eps\downarrow0}\frac{f(\eps)}{g(\eps)}=0$.
To avoid introducing further constants later on, we
further assume that
\begin{equation}
	\max\{\eps^{\gamma_{\eps}},\eps^{\delta_{\eps}/2}\}\le1/2\label{eq:epsgammaepslt12}
\end{equation}
for all $\eps>0$. The choices of the parameters will become more clear later; see the discussion at the end of this subsection.

Now we define, for $T>\eps^{2-\lambda_\eps}$,
\begin{equation}
	s_{m}=\eps^{m\delta_{\eps}}\qquad\text{and}\qquad s'_{m}=\eps^{m\delta_{\eps}+\gamma_{\eps}}\label{eq:sdef}
\end{equation}
and
\begin{equation}
	t_{m}=T-s_{m}\qquad\text{and}\qquad t_{m}'=T-s_{m}'.\label{eq:tdef}
\end{equation}
Note that these quantities all depend on $\eps$, and $t_{m}$ and
$t'_{m}$ also depend on $T$, but we suppress this to simplify notations. We note that the time of interest $T$, unlike the
time horizon $T_{0}$, is \emph{not} fixed throughout the paper. However,
whenever we use $t_{m}$ and $t_{m}'$, the $T$ of current interest
will be clear from the context.

Define
\begin{align}
	M_{1}(\eps,T) & =\lceil\delta_{\eps}^{-1}\log_{\eps}T\rceil-1,\label{eq:M1def}     \\
	M_{2}(\eps)   & =\lfloor\delta_{\eps}^{-1}(2-\zeta_{\eps})\rfloor.\label{eq:M2def}
\end{align}
Thus $M_{1}(\eps,T)+1$ is the least integer $m$ so that $t_{m}\ge0$,
and $M_{2}(\eps)$ is the greatest integer $m$ so that $s_{m}\ge\eps^{2-\zeta_{\eps}}$.
For example, for fixed $T>0$ independent of $\eps$, we have  for sufficiently small $\eps$ that
\[
	M_1(\eps,T)=\left\{\begin{array}{ll}
		-1, & \text{ if }T\geq 1,     \\
		0,  & \text{ if } T\in (0,1).
	\end{array}
	\right.
\]
For the discrete time Markov chain to be constructed, the starting point in time will be given by $M_{1}(\eps,T)$,
and the ending point will be given by $M_{2}(\eps)$. We note for
future use that
\begin{equation}
	M_{2}(\eps)-M_{1}(\eps,T)+1\le\delta_{\eps}^{-1}(2-\log_{\eps}T).\label{eq:M2minusM1}
\end{equation}

Note that by the assumption of $\delta_\eps>\gamma_\eps$ and $\eps^{\gamma_\eps}<1$, we have
\[
	\begin{aligned}
		 & t_{m+1}=T-\eps^{m\delta_\eps+\delta_\eps}>T-\eps^{m\delta_\eps+\gamma_\eps}=t_m', \\
		 & t_{m}'=T-\eps^{m\delta_\eps+\gamma_\eps}>T-\eps^{m\delta_\eps}=t_m.
	\end{aligned}
\]
Thus we can write %
\[
	[t_{M_1(\eps,T)+1},t_{M_2(\eps)}']=I_{1}\cup I_{2},\quad  \text{ with } I_{1}=\bigcup_{m=M_1(\eps,T)+1}^{M_2(\eps)} [t_m,t_m'], \quad I_{2}=\bigcup_{m=M_1(\eps,T)+1}^{M_2(\eps)-1} [t_m',t_{m+1}].
\]

To approximate $u_{\eps,a}(T,X)$, we will turn off the noise in  $I_{1}$, which consists of the ``quiet'' intervals. For each $m$, we first solve the deterministic heat equation in the interval $[t_m,t_m']$. Then we replace the solution at $(t_m',\cdot)$ by its value at $(t_m',X)$. In the next ``noisy'' interval $[t_m',t_{m+1}]$, we solve the stochastic heat equation with the corresponding ``constant'' initial data. The error incurred in those ``quiet'' intervals will be quantified by \propref{exciseoneinterval}, and is negligible as $\eps\to0$ by the assumption $\gamma_\eps\ll \delta_\eps^2$. The error incurred by modifying the initial data for those ``noisy'' intervals will be quantified by \propref{replacebyconstant}, and goes to zero by the assumption of $\delta_{\eps}^{-1}\eps^{\frac{1}{2}\gamma_{\eps}}\ll1$. The role of $\zeta_\eps$ is in \eqref{M2def} to provide a small amount of extra separation between the final $t_m$ and the time $T$, which will be needed for the last step of the approximation; see the proof of \propref{approxubymarkovchain} below.

In the inequality \eqref{M2minusM1}, we need $\log_\eps T<2$ for all $\eps\ll1$ so that the above construction makes sense with   $M_2(\eps)\geq M_1(\eps,T)$, and this prevents us from considering those $T$ of order $O(\eps^2)$. From  \propref{varbound}, we already know that, if $T$ is chosen so that $\log (1+2\eps^{-2}T)\ll \log \eps^{-1}$, the random noise plays no role in the short interval $[0,T]$, and we have $u_{\eps,a}(T,x)\to a$ as $\eps\to0$. Therefore, those small $T$ can be treated separately without constructing the   Markov chain. To unify the notations, we use the following conventions: 

\begin{enumerate}
  \item  If $T>\eps^{2-\lambda_\eps}$, we have $2-\log_\eps T\gg \delta_\eps$, and $M_1(\eps,T),M_2(\eps)$ are defined as above.

  \item If $T\in [0,\eps^{2-\lambda_\eps}]$, we have $\log (1+2\eps^{-2}T)\ll \log \eps^{-1}$ and hence $u_{\eps,a}(T)\to a$ as $\eps\to0$, and we simply define $M_1(\eps,T)=M_2(\eps)=1$.
\end{enumerate}

\subsection{The approximating functions}

As we have mentioned, our approximation will be focused on a particular
terminal space-time point $(T,X)$. So in this section we fix $T\ge0$,
$X\in\mathbf{R}^{2}$. To define our approximation, we introduce a
sequence of functions 
$\{w^{(m)}\}_{m=M_{1}(\eps,T),\ldots,M_{2}(\eps)}$
as follows. Define
\begin{equation}
	w^{(M_{1}(\eps,T))}(t,x)=u_{\eps,a}(t,x),\qquad t\ge0,x\in\R^2.\label{eq:M1ic}
\end{equation}
For $m\in\{M_{1}(\eps,T)+1,\ldots,M_{2}(\eps)\}$, we then inductively define $\{w^{(m)}(t,x):t\geq t_m',x\in \mathbf{R}^2\}$ %
to be the solution to
\begin{align}
	\dif w^{(m)}(t,x) & =\frac{1}{2}\Delta w^{(m)}(t,x)\dif t+(\log\eps^{-1})^{-\frac{1}{2}}\sigma(w^{(m)}(t,x))\dif W^{\eps}(t,x),\qquad t> t_{m}',x\in\mathbf{R}^{2},\label{eq:dwm} \\
	w^{(m)}(t_{m}',x) & =\int G_{t_{m}'-t_{m}}(X-y)w^{(m-1)}(t_{m},y)\,\dif y,\qquad x\in\mathbf{R}^{2}.\label{eq:wmic}
\end{align}
Therefore, %
$w^{(m)}$ solves \eqref{duxi}--\eqref{uxiic}
but with  constant initial condition at time $t_{m}'$. Recall that $X$ is fixed which is our reference spatial point. We note (recalling \eqref{tdef} and \eqref{M1def}) that (whenever $m\ge t_{M_1(\eps,T)}+1$) we have $t_m'\ge t_m\ge \cdots \ge t_{M_1(\eps,T)+1}'\ge t_{M_1(\eps,T)+1}\ge 0$ and so the 
initial conditions \eqref{wmic} are inductively well-defined. We also emphasize that the function $w^{(m)}$  depends on the parameters $\eps,a,T,X$, and  the simplified notation $w^{(m)}=w^{(m)}_{\eps,a,T,X}$ will be used when there is no confusion. We will make the dependence explicit when needed. It is worth mentioning that for those $T\leq \eps^{2-\lambda_\eps}$, we only have one element in the chain which is $w^{(1)}=u_{\eps,a}$.

To compare $u_{\eps,a}$ with $w^{(m)}$, it turns out to be convenient to introduce another sequence of functions $\{\tilde{w}^{(m)}\}_{m=M_1(\eps,T),\ldots,M_2(\eps)}$. Define $\{\tilde{w}^{(m)}(t,x):t\geq t_m',x\in \mathbf{R}^2\}$ %
as the solution to
\begin{align}
	 & \dif\tilde{w}^{(m)}(t,y)          =\frac{1}{2}\Delta\tilde{w}^{(m)}(t,x)\dif t+\frac{\mathbf{1}_{\mathbf{R}\setminus[t_{m+1},t_{m+1}']}(t)}{\sqrt{\log\eps^{-1}}}\sigma(\tilde{w}^{(m)}(t,x))\dif W^{\eps}(t,x),\quad t>t'_{m},x\in\mathbf{R}^{2},\label{eq:dwmtilde} \\
	 & \tilde{w}^{(m)}(t_{m}',x)         =\int G_{t_{m}'-t_{m}}(X-y)\tilde{w}^{(m-1)}(t_{m},y)\,\dif y,\qquad x\in\mathbf{R}^{2},m\ge M_{1}(\eps,T)+1,\label{eq:wmtildeic}                                                                                                   \\
	 & \tilde{w}^{(M_{1}(\eps,T))}(0,x) =a.\label{eq:wtildeM1ic}
\end{align}
For each $m\ge M_1(\eps,T)+1$, we note that since $\tilde{w}^{(m-1)}$ satisfies the unforced heat equation on the time interval $[t_m,t_m']$, the initial condition \eqref{wmtildeic} can be rewritten as
\begin{equation}
	\tilde{w}^{(m)}(t_{m}',x)=\tilde{w}^{(m-1)}(t_{m}',{X}).\label{eq:wmtildeicrephrased}
\end{equation}
We also have the following lemma relating $\tilde{w}^{(m)}$ to $w^{(m)}$.
\begin{lem}\label{lem:wmwmtilde}
 For all $m\in \{M_1(\eps,T),\ldots,M_2(\eps)\}$, we have
 $
  w^{(m)}(t,x)=\tilde{w}^{(m)}(t,x)
 $
 for all $t\in [t_m'\vee 0,t_{m+1}]$ and all $x\in\R^2$.
\end{lem}
\begin{proof}
 The proof is by induction on $m$. For $m=M_1(\eps,T)$, by \eqref{dwmtilde},  \eqref{wtildeM1ic}, and \eqref{M1ic}, we see that $\tilde{w}^{(M_1(\eps,T))}=u_{\eps,a} = w^{(M_1(\eps,T))}$ on $[0,t_{M_1(\eps,T)+1}]\times\R^2$. For the inductive step, if $m\ge M_1(\eps,T)-1$ and we assume that $w^{(m-1)}(t,x)=\tilde{w}^{(m-1)}(t,x)$ for all $(t,x)\in [t'_{m-1}\vee 0,t_{m}]\times\R^2$, then this in particular means that $w^{(m-1)}(t_{m},\cdot)=\tilde{w}^{(m-1)}(t_{m},\cdot)$. This means that the initial conditions \eqref{wmic} for $w^{(m)}$ and \eqref{wmtildeic} for $\tilde{w}^{(m)}$ (both imposed at time $t_m'$) agree. Since the evolution equations \eqref{dwm} and \eqref{dwmtilde} also agree on the ``noisy'' time interval $[t_m',t_{m+1}]$, this implies that $w^{(m)}=\tilde{w}^{(m)}$ on the time interval $[t_m',t_{m+1}]$ as well.
\end{proof}
By \lemref{wmwmtilde} and \eqref{wmtildeicrephrased}, we see that the initial condition \eqref{wmic} is equivalent to
\begin{equation}
	w^{(m)}(t_{m}',x)=\tilde{w}^{(m-1)}(t_{m}',X).\label{eq:wmicintermsofwmtilde}
\end{equation}
Thus, for each $m\geq M_1(\eps,T)$, we initiate $w^{(m)}$ and $\tilde{w}^{(m)}$ with the same data at $t=t_m'$, with $w^{(m)}$ solving the original stochastic heat equation for $t>t_m'$ and $\tilde{w}^{(m)}$ solving the equation with the noise turned off in $[t_{m+1},t_{m+1}']$.

Our goal in this section is to estimate the approximation error $|w^{(m)}(t,x)-u_{\eps,a}(t,x)|$ for $t\geq t_m'$, $x\in\mathbf{R}^2$, and $m\in\{M_1(\eps,T),\ldots,M_2(\eps)\}$. By definition, we have $w^{(M_1(\eps,T))}=u_{\eps,a}$, thus by applying triangle inequality it suffices  to estimate $w^{(m)}-w^{(m-1)}$for each $m$. We briefly explain below how it will be achieved, by applying the results from \secref[s]{shutoffnoise-oneinterval} and~\ref{sec:smoothedfield}. First, through $\tilde{w}^{(m-1)}$ we can write the difference as
\[
	w^{(m)}(t,x)-w^{(m-1)}(t,x)=[w^{(m)}(t,x)-\tilde{w}^{(m-1)}(t,x)]+[\tilde{w}^{(m-1)}(t,x)-w^{(m-1)}(t,x)], \quad\quad t\geq t_m',x\in\mathbf{R}^2.
\]
We bound the two terms separately:
\begin{enumerate}
	\item For the first error term $w^{(m)}(t,x)-\tilde{w}^{(m-1)}(t,x)$, we recall three facts (i) $w^{(m)}(t_m',\cdot)=\tilde{w}^{(m-1)}(t_m',X)$; (ii) $\tilde{w}^{(m-1)}$ solves the deterministic heat equation in the  interval $[t_m,t_m']$; (iii) for $t>t_m'$, $w^{(m)}$ and $\tilde{w}^{(m-1)}$ solve  the same stochastic heat equation. Therefore, the difference of $w^{(m)}(t,x)$ from $\tilde{w}^{(m-1)}(t,x)$ only comes from replacing the initial data $\tilde{w}^{(m-1)}(t_m',\cdot)$ by its value at $X$, which can be quantified by  \propref{replacebyconstant}.

	\item For the second error term $\tilde{w}^{(m-1)}(t,x)-w^{(m-1)}(t,x)$, we have  %
	      \[
		      w^{(m-1)}(t_{m-1}',\cdot)=\tilde{w}^{(m-1)}(t_{m-1}',\cdot)=\tilde{w}^{(m-2)}(t_{m-1}',X).
	      \] The equations satisfied by $w^{(m-1)}$ and $\tilde{w}^{(m-1)}$ in $t>t_{m-1}'$ are the same except that the noise is turned off in $[t_m,t_m']$ for $\tilde{w}^{(m-1)}$. Therefore, the error only comes from turning off the noise in $[t_m,t_m']$. This can be quantified by \propref{exciseoneinterval}.
\end{enumerate}

The following proposition is the main result of the section.
\begin{prop}
	\label{prop:approxubyw}
	Suppose that $(C_\eps)_{\eps>0}$ is an arbitrary  sequence of numbers such that $C_{\eps}\to\infty$ as $\eps\downarrow 0$,
	and that $c\in[0,1)$ is a fixed constant. Define the set
	\[
		S_{\eps,T_0,c}:=\left\{(T,a,k,t,x): T\in[0,T_0],\,a>0,\,M_1(\eps,T){+}1\leq k\leq M_2(\eps),\,t \in [T-c\eps^{k\delta_\eps+\gamma_\eps},T],\,x\in\mathbf{R}^2\right\}.
	\]Then we have
	\begin{equation}
		\lim_{\eps\downarrow0}\quad\sup_{(T,a,k,t,x)\in S_{\eps,T_0,c} }\frac{\left(\mathbf{E}(u_{\eps,a}-w_{\eps,a,T,X}^{(k)})(t,x)^{2}\right)^{1/2}}{a(1+C_{\eps}\eps^{-k\delta_{\eps}/2}|x-X|)}=0.\label{eq:uapproxbyw}
	\end{equation}
\end{prop}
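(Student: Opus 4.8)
The plan is to use the telescoping identity
\[
u_{\eps,a}-w_{\eps,a,T,X}^{(k)}=\sum_{m=M_1(\eps,T)+1}^{k}\bigl(w^{(m-1)}-w^{(m)}\bigr),
\]
valid because $w^{(M_1(\eps,T))}=u_{\eps,a}$, and to control each increment by inserting $\tilde{w}^{(m-1)}$ as in the discussion preceding the statement. The functions $w^{(m-1)}$ and $\tilde{w}^{(m-1)}$ start from the same constant at $t_{m-1}'$ and solve the same equation except that the noise is switched off on $[t_m,t_m']$ for $\tilde{w}^{(m-1)}$, so $w^{(m-1)}-\tilde{w}^{(m-1)}$ is governed by \propref{exciseoneinterval}; and $w^{(m)}$ starts at $t_m'$ from the constant $\tilde{w}^{(m-1)}(t_m',X)$, while $\tilde{w}^{(m-1)}(t_m',\cdot)$ is the heat-smoothing of $\tilde{w}^{(m-1)}(t_m,\cdot)=w^{(m-1)}(t_m,\cdot)$, so by \eqref{wmicintermsofwmtilde} the difference $\tilde{w}^{(m-1)}-w^{(m)}$ is governed by \propref{replacebyconstant}. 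Both of those propositions, along with \lemref{killthenoise-integralineq} and \lemref{spatialrecursive} used in their proofs, apply without change to a solution started at an arbitrary time $t_0$ from $\mathcal{F}_{t_0}$-measurable initial data, once the single invocation of the $L^2$ bound \eqref{L2bound} is replaced by whatever uniform second-moment bound is available for the solution at hand. (Here and below $C$ denotes a positive constant depending only on $\beta$, $T_0$ and $c$, possibly different at each occurrence.)

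Supplying such a bound for the whole chain is the step I expect to be the main obstacle: I would prove $\mathbf{E}\,w^{(m)}(t,x)^2\le Ca^2$ and $\mathbf{E}\,\tilde{w}^{(m)}(t,x)^2\le Ca^2$ for all $m$, all $t\ge t_m'$, and all $x$ (with $C$ independent of $m$). The naive estimate — bound $\mathbf{E}\,w^{(m)}(t_m',\cdot)^2$ by Jensen from $\mathbf{E}\,w^{(m-1)}(t_m,\cdot)^2$ and then run the stochastic heat equation from $t_m'$ — loses a factor $(1-\beta^2/(2\pi))^{-1/2}>1$ at each of the $\approx\delta_\eps^{-1}$ collapses and so diverges; one instead exploits that each $w^{(m)}$, being started from a constant under a spatially homogeneous noise, is stationary in $x$. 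Writing $\rho_{m-1}(z)=\mathbf{E}[w^{(m-1)}(t_m,0)\,w^{(m-1)}(t_m,z)]$, one has $\mathbf{E}\,w^{(m)}(t_m',\cdot)^2=\int G_{2(t_m'-t_m)}(z)\,\rho_{m-1}(z)\,\dif z$; peeling off the flat part $\mathbf{E}\,w^{(m-1)}(t_{m-1}',\cdot)^2$ of $\rho_{m-1}$, applying the It\^o isometry together with the Gaussian identities $\iint G_{\eps^2}(y_1-y_2)G_\tau(-y_1)G_\tau(z-y_2)\,\dif y_1\,\dif y_2=G_{2\tau+\eps^2}(z)$ and \eqref{Gaussianproduct}, and carrying out the resulting time integral, one finds that the averaging over the scale $\sqrt{t_m'-t_m}$ (of order $\eps^{m\delta_\eps/2}$) cancels precisely the short-scale variance produced by the noise on $[t_{m-1}',t_m]$, leaving only a logarithmic window of width of order $\delta_\eps\log\eps^{-1}$. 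This yields the recursion $\mathbf{E}\,w^{(m)}(t_m',\cdot)^2\le(1+C\delta_\eps)\,\mathbf{E}\,w^{(m-1)}(t_{m-1}',\cdot)^2$, which together with $M_2(\eps)-M_1(\eps,T)\le C\delta_\eps^{-1}$ from \eqref{M2minusM1} gives a uniform bound on $\mathbf{E}\,w^{(m)}(t_m',\cdot)^2$; \propref{L2bound} over the single short interval $[t_m',t]$ (of length at most $\eps^{\gamma_\eps}$) then upgrades this, at the cost of only one bounded amplification factor, to the claimed bounds for $w^{(m)}$ and — applied with $A=[t_{m+1},t_{m+1}']$ — for $\tilde{w}^{(m)}$.

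With the uniform bound in hand the per-step estimate is routine. \propref{exciseoneinterval} with $\tau_1=t_m$, $\tau_2=t_m'$ (and $K_0$ replaced throughout by $C$) gives, for $(T,a,k,t,x)\in S_{\eps,T_0,c}$,
\[
\mathbf{E}\bigl(w^{(m-1)}-\tilde{w}^{(m-1)}\bigr)(t,x)^2\le\frac{Ca^2}{\log\eps^{-1}}\Bigl(\log\frac{t-t_m+\eps^2/2}{t-t_m'+\eps^2/2}+1\Bigr)\le Ca^2\gamma_\eps,
\]
since on $S_{\eps,T_0,c}$ the quantity $t-t_m+\eps^2/2$ is of order $\eps^{m\delta_\eps}$ and $t-t_m'+\eps^2/2$ of order $\eps^{m\delta_\eps+\gamma_\eps}$ (their ratio at most $C\eps^{-\gamma_\eps}$, also when $k=m$ because $c<1$), and $\gamma_\eps\gg(\log\eps^{-1})^{-1}$ by \eqref{deltagammabd}. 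Likewise \propref{replacebyconstant} gives
\[
\mathbf{E}\bigl(\tilde{w}^{(m-1)}-w^{(m)}\bigr)(t,x)^2\le Ca^2\,\frac{3(t-t_m')+|x-X|^2}{t_m'-t_m}\le Ca^2\bigl(\eps^{\gamma_\eps}+\eps^{-m\delta_\eps}|x-X|^2\bigr),
\]
using $t_m'-t_m$ of order $\eps^{m\delta_\eps}$ and $t-t_m'\le\eps^{m\delta_\eps+\gamma_\eps}$. Hence $\bigl(\mathbf{E}(w^{(m-1)}-w^{(m)})(t,x)^2\bigr)^{1/2}\le Ca\bigl(\gamma_\eps^{1/2}+\eps^{\gamma_\eps/2}+\eps^{-m\delta_\eps/2}|x-X|\bigr)$; summing over $m=M_1(\eps,T)+1,\dots,k$, the first two terms contribute at most $Ca\,\delta_\eps^{-1}(\gamma_\eps^{1/2}+\eps^{\gamma_\eps/2})$, which tends to $0$ by $\gamma_\eps\ll\delta_\eps^2$ in \eqref{deltagammabd} and $\delta_\eps^{-1}\eps^{\gamma_\eps/2}\ll1$ in \eqref{deltastupidbound}, while $\sum_{m\le k}\eps^{-m\delta_\eps/2}\le2\eps^{-k\delta_\eps/2}$ by a geometric series and \eqref{epsgammaepslt12}. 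Dividing by $a(1+C_\eps\eps^{-k\delta_\eps/2}|x-X|)$ therefore bounds the left side of \eqref{uapproxbyw} by $C\delta_\eps^{-1}(\gamma_\eps^{1/2}+\eps^{\gamma_\eps/2})+C/C_\eps\to0$, uniformly over $S_{\eps,T_0,c}$. (For $T\le\eps^{2-\lambda_\eps}$ the index range in $S_{\eps,T_0,c}$ is empty, equivalently $w^{(1)}=u_{\eps,a}$, so nothing is required there.)
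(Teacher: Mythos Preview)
Your proposal is correct and follows essentially the same route as the paper: telescope $u_{\eps,a}-w^{(k)}$, split each step through $\tilde{w}^{(m-1)}$, control $w^{(m-1)}-\tilde{w}^{(m-1)}$ by \propref{exciseoneinterval} and $\tilde{w}^{(m-1)}-w^{(m)}$ by \propref{replacebyconstant}, then sum. The per-step bounds and the geometric-sum handling of $\sum_m \eps^{-m\delta_\eps/2}$ match the paper's argument line for line.

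The only real difference is in how the uniform second-moment bound (the paper's \lemref{wL2bound}) is organized. You phrase it via the spatial covariance $\rho_{m-1}(z)$ and the identity $\mathbf{E}\,w^{(m)}(t_m',\cdot)^2=\int G_{2(t_m'-t_m)}(z)\rho_{m-1}(z)\,\dif z$, then peel off the flat part; the paper instead writes the mild formulation for $\tilde{w}^{(m-1)}$ at the single point $X$, applies \lemref{secondmomentbound-general} to bound the running second moment by $K_0^2$ times the initial one, and substitutes back at $t=t_m'$ to isolate the logarithm $\log\frac{t_m'-t_{m-1}'}{t_m'-t_m}\le\delta_\eps\log\eps^{-1}$. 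Both arguments exploit exactly the same mechanism---the heat smoothing on $[t_m,t_m']$ averages away all but an $O(\delta_\eps)$ portion of the variance accumulated on $[t_{m-1}',t_m]$---and yield the same recursion $\mathbf{E}\,w^{(m)}(t_m',\cdot)^2\le(1+C\delta_\eps)\,\mathbf{E}\,w^{(m-1)}(t_{m-1}',\cdot)^2$. (One minor slip: the interval $[t_m',t]$ has length at most $\eps^{m\delta_\eps+\gamma_\eps}$, not $\eps^{\gamma_\eps}$, but this is irrelevant since \propref{L2bound} gives a uniform $K_0$ regardless.)
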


In order to prove \propref{approxubyw}, we need the following second
moment bound.
\begin{lem}
	\label{lem:wL2bound}There is a constant $K_{1}<\infty$ so that if
	$T\in[0,T_{0}]$, $\eps\in(0,\eps_{0}]$, $m\in\{M_{1}(\eps,T),\ldots,M_{2}(\eps)\}$,
	$a>0$, %
	then we have for all $x\in \mathbf{R}^2$ that
	\begin{equation}
		\mathbf{E}w_{\eps,a,T,X}^{(m)}(t_{m}',x)^{2}\le K_{1}^{2}a^{2}.\label{eq:Ewmbd}
	\end{equation}
\end{lem}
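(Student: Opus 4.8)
The plan is to run an induction on $m$, keyed to the observation that for $m\ge M_1(\eps,T)+1$ the function $w^{(m)}(t_m',\cdot)$ is, by \eqref{wmic}, a \emph{constant} in space, say $c_m:=w^{(m)}(t_m',\cdot)$; thus \eqref{Ewmbd} is really a bound on $\mathbf{E} c_m^2$, uniform in $(\eps,T,a,m)$. The case $m=M_1(\eps,T)$ — and the degenerate single-term chain $w^{(1)}=u_{\eps,a}$ occurring when $T\le\eps^{2-\lambda_\eps}$ — is handled at once by \propref{L2bound}, so I may assume $T>\eps^{2-\lambda_\eps}$ and $m\ge M_1(\eps,T)+1$. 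First I would dispose of the base step $m=M_1(\eps,T)+1$: here $c_m=\int G_{t_m'-t_m}(X-y)u_{\eps,a}(t_m,y)\,\dif y$ with $0\le t_m\le T\le T_0$, so Jensen's inequality and \propref{L2bound} give $\mathbf{E} c_m^2\le K_0^2 a^2$.

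For the inductive step $m\ge M_1(\eps,T)+2$, the idea I would use — and the one point that genuinely needs care — is to estimate the conditional second moment $\mathbf{E}[c_m^2\mid\mathcal F_{t_{m-1}'}]$ (with $\mathcal F_t$ the $\sigma$-algebra generated by $W^\eps$ on $[0,t]$) \emph{directly}, absorbing the heat-semigroup average over the ``quiet'' interval $[t_m,t_m']$ into the Duhamel kernel, rather than first bounding $w^{(m-1)}(t_m,\cdot)$ and then averaging. On $[t_{m-1}',t_m]$ the process $w^{(m-1)}$ is, conditionally on $\mathcal F_{t_{m-1}'}$, an SHE started from the constant $c_{m-1}$; writing the mild formulation and using the semigroup property $\int G_{t_m'-t_m}(X-y)G_{t_m-s}(y-z)\,\dif y=G_{t_m'-s}(X-z)$ inside the Duhamel integral (a stochastic Fubini), one gets
\[
	c_m=c_{m-1}+\frac1{\sqrt{\log\eps^{-1}}}\int_{t_{m-1}'}^{t_m}\int G_{t_m'-s}(X-z)\,\sigma(w^{(m-1)}(s,z))\,\dif W^\eps(s,z).
\]
Taking conditional second moments, using $|\sigma(u)|\le\beta|u|$, the covariance \eqref{covarianceoperator}, the planar product identity \eqref{Gaussianproduct}, and \propref{L2bound} applied conditionally on $\mathcal F_{t_{m-1}'}$ and to the time-shifted noise (legitimate since $s-t_{m-1}'\le t_m-t_{m-1}'\le t_m\le T_0$) to bound $\mathbf{E}[w^{(m-1)}(s,z)^2\mid\mathcal F_{t_{m-1}'}]\le K_0^2 c_{m-1}^2$, one arrives at
\[
	\mathbf{E}[c_m^2\mid\mathcal F_{t_{m-1}'}]\le c_{m-1}^2\Bigl(1+\tfrac{\beta^2K_0^2}{4\pi\log\eps^{-1}}L_m\Bigr),\qquad L_m:=\log\frac{2(t_m'-t_{m-1}')+\eps^2}{2(t_m'-t_m)+\eps^2}\ \ (\ge 0).
\]
Taking expectations and iterating down to the base step (using $L_j\ge0$ to pass from $\sum_{j\le m}$ to $\sum_{j\le M_2(\eps)}$) yields, for every $m$ in the range,
\[
	\mathbf{E} c_m^2\le K_0^2 a^2\exp\Bigl(\tfrac{\beta^2K_0^2}{4\pi\log\eps^{-1}}\textstyle\sum_{j=M_1(\eps,T)+2}^{M_2(\eps)}L_j\Bigr).
\]

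It then remains to show $\sum_j L_j=O(\log\eps^{-1})$ uniformly. Here I would use $t_m'-t_{m-1}'=s_{m-1}'-s_m'$ and $t_m'-t_m=s_m-s_m'$, together with $\eps^{\gamma_\eps}\le\tfrac12$ from \eqref{epsgammaepslt12}, $\gamma_\eps<\delta_\eps$, and $m\delta_\eps\le M_2(\eps)\delta_\eps<2$, to check that the numerator of $L_m$ is at most $3\eps^{(m-1)\delta_\eps+\gamma_\eps}$ and the denominator at least $\eps^{m\delta_\eps}$, so that $L_m\le\log3+(\delta_\eps-\gamma_\eps)\log\eps^{-1}\le\log3+\delta_\eps\log\eps^{-1}$. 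Since there are at most $M_2(\eps)-M_1(\eps,T)\le\delta_\eps^{-1}(2-\log_\eps T)\le3\delta_\eps^{-1}$ summands for small $\eps$ (by \eqref{M2minusM1} and $T\le T_0$), this gives $\tfrac1{\log\eps^{-1}}\sum_j L_j\le3+\tfrac{3\log3}{\delta_\eps\log\eps^{-1}}$, which is bounded because $\delta_\eps\log\eps^{-1}\to\infty$ (a consequence of $(\log\eps^{-1})^{-1}\ll\gamma_\eps\ll\delta_\eps^2$ in \eqref{deltagammabd}); the $\eps$ bounded away from $0$ are covered directly since all quantities above are then bounded. This yields \eqref{Ewmbd} with a $K_1$ depending only on $\beta$, $\eps_0$, $T_0$. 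The hard part is precisely the first move of the inductive step: the naive recursion — bound $w^{(m-1)}(t_m,\cdot)$ by \propref{L2bound} and then apply Jensen for the spatial average — loses a multiplicative factor bounded below by a fixed constant $>1$ at each of the $\Theta(\delta_\eps^{-1})\to\infty$ steps, so its product blows up; keeping the semigroup average inside the Duhamel kernel is what shrinks each step's contribution to $1+O(\delta_\eps)$ and makes the product converge.
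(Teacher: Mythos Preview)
Your proof is correct and follows essentially the same route as the paper's: both hinge on the observation that $c_m=\tilde w^{(m-1)}(t_m',X)$, write its mild formulation by absorbing the heat semigroup into the Duhamel kernel (your stochastic Fubini step), bound $\mathbf{E}[w^{(m-1)}(s,\cdot)^2]$ by $K_0^2 c_{m-1}^2$ via \propref{L2bound}, and obtain a recursion with multiplicative factor $1+O(\delta_\eps)$ whose product is bounded by \eqref{M2minusM1}. The paper phrases this through the auxiliary processes $\tilde w^{(m)}$ and \lemref{secondmomentbound-general}, while you work directly with the constants $c_m$ and conditional expectations, but the computations and the key idea---your correctly identified ``hard part'' of keeping the semigroup inside the Duhamel integral---are the same.
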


\begin{proof}
	Throughout the proof, we will again use the simplified notation $w^{(m)},\tilde{w}^{(m)}$. %
	Consider a fixed $m$. For all $t\ge t_{m-1}'$, by the mild formulation of the equation satisfied by $\tilde{w}^{(m-1)}$ and \eqref{wmtildeicrephrased}, we have
	\begin{align}
		\mathbf{E}\tilde{w}^{(m-1)}(t,X)^{2} & =\mathbf{E}\tilde{w}^{(m-2)}(t_{m-1}',X)^{2}+\frac{1}{\log \eps^{-1}}\int_{[t_{m-1}',t]\setminus[t_{m},t_{m}']}\iint G_{t-s}(X-y_{1})G_{t-s}(X-y_{2})G_{\eps^2}(y_1-y_2)\nonumber                                             \\
		                                     & \qquad\qquad\qquad\qquad\qquad\qquad\qquad\qquad\cdot\mathbf{E}\left[\sigma(\tilde{w}^{(m-1)}(s,y_{1}))\sigma(\tilde{w}^{(m-1)}(s,y_{2}))\right]\,\dif y_{1}\,\dif y_{2}\,\dif s\nonumber                                     \\
		                                     & \le\mathbf{E}\tilde{w}^{(m-2)}(t_{m-1}',X)^{2}+\frac{\beta^{2}}{2\pi\log\eps^{-1}}\int_{[t_{m-1}',t]\setminus[t_{m},t_{m}']}\frac{\mathbf{E}[\tilde{w}^{(m-1)}(s,X)^{2}]}{2(t-s)+\eps^{2}}\,\dif s.\label{eq:wtildeinductive}
	\end{align}
	Here we used the fact that $\tilde{w}^{(m-1)}$ is stationary in the spatial variable. In particular, we have
	\[\mathbf{E}\tilde{w}^{(m-1)}(t,X)^{2}\le \mathbf{E}\tilde{w}^{(m-2)}(t_{m-1}',X)^{2}+\frac{\beta^{2}}{2\pi\log\eps^{-1}}\int_{t'_{m-1}}^t\frac{\mathbf{E}[\tilde{w}^{(m-1)}(s,X)^{2}]}{2(t-s)+\eps^{2}}\,\dif s,\]
	which by \lemref{secondmomentbound-general} (taking there $f(s) = \mathbf{E}\tilde{w}^{(m-1)}(t_{m-1}'+s,X)^{2}$, and also using \defref{K0def}), implies that
	\[
		\mathbf{E}\tilde{w}^{(m-1)}(t,X)^{2}\le K_{0}^{2}\mathbf{E}\tilde{w}^{(m-2)}(t_{m-1}',X)^{2}.
	\]
	Substituting this back into \eqref{wtildeinductive}, taking $t=t_{m}'$,
	and recalling \eqref{wmicintermsofwmtilde}, we have %
	\begin{align}
		\mathbf{E}w^{(m)}(t_{m}',X)^{2} & =\mathbf{E}\tilde{w}^{(m-1)}(t_m',X)^{2}\notag\\&\le\mathbf{E}\tilde{w}^{(m-2)}(t_{m-1}',X)^{2}\left(1+\frac{K_{0}^{2}\beta^{2}}{4\pi\log\eps^{-1}}\int_{t_{m-1}'}^{t_{m}}\frac{1}{t_{m}'-s+\eps^{2}/2}\,\dif s\right)\nonumber \\
		                                & \le\mathbf{E}\tilde{w}^{(m-2)}(t_{m-1}',X)^{2}\left(1+\frac{K_{0}^{2}\beta^{2}}{4\pi\log\eps^{-1}}\log\frac{t_{m}'-t_{m-1}'}{t_{m}'-t_{m}}\right).\label{eq:Ewm}
	\end{align}
	The logarithm   can be estimated as
	\begin{align*}
		\log\frac{t_{m}'-t_{m-1}'}{t_{m}'-t_{m}} & =\log\frac{\eps^{(m-1)\delta_{\eps}+\gamma_{\eps}}-\eps^{m\delta_{\eps}+\gamma_{\eps}}}{\eps^{m\delta_{\eps}}-\eps^{m\delta_{\eps}+\gamma_{\eps}}}=\log\frac{\eps^{\gamma_\eps-\delta_{\eps}}-\eps^{\gamma_{\eps}}}{1-\eps^{\gamma_{\eps}}} \\
		                                         & \le\delta_{\eps}\log\eps^{-1}+\log\frac{\eps^{\gamma_{\eps}}}{1-\eps^{\gamma_{\eps}}}\le\delta_{\eps}\log\eps^{-1},
	\end{align*}
	where the last inequality is by \eqref{epsgammaepslt12}. Substituting
	this back into \eqref{Ewm}, we have
	\[
		\mathbf{E}w^{(m)}(t_{m}',X)^{2}=\mathbf{E}\tilde{w}^{(m-1)}(t_m',X)^{2}\le\mathbf{E}\tilde{w}^{(m-2)}(t_{m-1}',X)^{2}\left(1+\frac{K_{0}^{2}\beta^{2}\delta_{\eps}}{4\pi}\right).
	\]
	Iterating this and recalling \eqref{M2minusM1}, we have for all $x\in \mathbf{R}^2$,
	\begin{align*}
		\mathbf{E}w^{(m)}(t_{m}',x)^{2}=\mathbf{E}w^m(t_m',X)^2&\le {K_0^2}a^{2}\left(1+\frac{K_{0}^{2}\beta^{2}\delta_{\eps}}{4\pi}\right)^{m-M_{1}(\eps,T)} \\& \le {K_0^2}a^{2}\exp\left\{ \frac{\beta^{2}}{4\pi}K_{0}^{2}\delta_{\eps}(m-M_{1}(\eps,T))\right\} \\
		                                                                                                                                                     & \le {K_0^2}a^{2}\exp\left\{ \frac{2-\log_{\eps}T}{4\pi}\beta^{2}K_{0}^{2}\right\}
	\end{align*}
	for all $m\le M_{2}(\eps)$. The exponential on the right-hand side
	is uniformly bounded over all $T\le T_{0}$ and all $\eps\in(0,\eps_{0}]$,
	so we obtain \eqref{Ewmbd}.
\end{proof}
Now we can prove \propref{approxubyw}.
\begin{proof}[Proof of \propref{approxubyw}.]
	For any $t\in[T-c\eps^{m\delta_{\eps}+\gamma_{\eps}},T]$, we clearly have $t\geq t_m'$. By \propref{replacebyconstant} and \lemref{wL2bound}, we have
	\begin{align}
		\mathbf{E}(w^{(m)}-\tilde{w}^{(m-1)})(t,x)^{2} & \le K_{0}^{4}\left(\frac{3(t-t_{m}')+|x-X|^{2}}{t_{m}'-t_{m}}\right)\mathbf{E}\tilde{w}^{(m-1)}(t_{m-1}',X)^{2}\nonumber \\
		                                               & \le K_{0}^{4}K_{1}^{2}a^{2}\left(\frac{3(t-t_{m}')+|x-X|^{2}}{t_{m}'-t_{m}}\right).\label{eq:applyreplacebyconstant}
	\end{align}
	We have $t_{m}'-t_{m}=\eps^{m\delta_{\eps}}(1-\eps^{\gamma_{\eps}})\in\left[\frac{1}{2}\eps^{m\delta_{\eps}},\eps^{m\delta_{\eps}}\right]$
	by \eqref{epsgammaepslt12}, and $t-t_{m}'\le T-t_{m}'=\eps^{m\delta_{\eps}+\gamma_{\eps}}$
	by \eqref{tdef}, so \eqref{applyreplacebyconstant} yields
	\begin{align}
		\mathbf{E}(w^{(m)}-\tilde{w}^{(m-1)})(t,x)^{2} & \le K_{0}^{4}K_{1}^{2}a^{2}\left({6}\eps^{\gamma_{\eps}}+{2}\eps^{-m\delta_{\eps}}|x-X|^{2}\right).\label{eq:replacebyconstant-final}
	\end{align}

	On the other hand, by \propref{exciseoneinterval} and \lemref{wL2bound},
	we have for all $t\ge t_{m}'$ that
	\begin{align}
		\mathbf{E}(\tilde{w}^{(m-1)}-w^{(m-1)})(t,x)^{2} & \le\frac{\beta^{2}K_{0}^{{4}}}{4\pi\log\eps^{-1}}\left(\log\frac{t-t_{m}+\eps^{2}/2}{t-t_{m}'+\eps^{2}/2}+K_{0}^{2}\right)\mathbf{E}\tilde{w}^{({m-2})}(t_{m-1}',X)^{2}\nonumber \\
		                                                 & \le\frac{\beta^{2}K_{0}^{{4}}K_{1}^{2}a^{2}}{4\pi\log\eps^{-1}}\left(\log\frac{t-t_{m}+\eps^{2}/2}{t-t_{m}'+\eps^{2}/2}+K_{0}^{2}\right).\label{eq:applyexciseinterval}
	\end{align}
	We have $t-t_{m}\le T-t_{m}=\eps^{m\delta_{\eps}}$ and $t-t_{m}'\ge T-c\eps^{m\delta_{\eps}+\gamma_{\eps}}-t_{m}'=(1-c)\eps^{m\delta_{\eps}+\gamma_{\eps}}$,
	so \eqref{applyexciseinterval} gives us
	\begin{equation}
		\mathbf{E}(\tilde{w}^{(m-1)}-w^{(m-1)})(t,x)^{2}\le\frac{\beta^{2}}{4\pi}K_{0}^{{4}}K_{1}^{2}a^{2}\left(\gamma_{\eps}+\frac{\log\frac{1}{1-c}+K_{0}^{2}}{\log\eps^{-1}}\right).\label{eq:excisebeginningpart-final}
	\end{equation}

	Iterating {\eqref{replacebyconstant-final}} and \eqref{excisebeginningpart-final}
	and using the triangle inequality, we get
	\begin{align*}
		 & \left(\mathbf{E}(w^{(k)}-u_{\eps,a})(t,x)^{2}\right)^{1/2}                                                                                                                                                                                               \\
		 & \quad\le K_{0}^{{2}}K_{1}a\sum_{m=M_{1}(\eps,T)+1}^{k}\left[\sqrt{6}\eps^{\gamma_{\eps}/2}+\sqrt{2}\eps^{-m\delta_{\eps}/2}|x-X|+\frac{\beta}{2\sqrt{\pi}}\left(\gamma_{\eps}^{1/2}+\sqrt{\frac{\log\frac{1}{1-c}+K_{0}^{2}}{\log\eps^{-1}}}\right)\right]             \\
		 & \quad\le K_{0}^{{2}}K_{1}a\left[(2-\log_{\eps}T)\delta_{\eps}^{-1}\left(\sqrt{6}\eps^{\gamma_{\eps}/2}+\frac{\beta\gamma_{\eps}^{1/2}}{2\sqrt{\pi}}+\frac{\sqrt{\log\frac{1}{1-c}}+K_{0}}{\sqrt{\log\eps^{-1}}}\right)+2^{3/2}|x-X|\eps^{-k\delta_{\eps}/2}\right],
	\end{align*}
	where in the last inequality we used \eqref{M2minusM1} and \eqref{epsgammaepslt12}.
	Therefore, we have (with $C_\eps$, as in the statement of the proposition, an arbitrary sequence so that $C_\eps\to\infty$ as $\eps\downarrow 0$)
	\[\frac{\left(\mathbf{E}(u_{\eps,a}-w^{(k)})(t,x)^{2}\right)^{1/2}}{a(1+C_{\eps}\eps^{-k\delta_{\eps}/2}|x-X|)}
	\le K_{0}^{{2}}K_{1}\left[(2-\log_{\eps}T)\delta_{\eps}^{-1}\left(\sqrt{6}\eps^{\gamma_{\eps}/2}+\frac{\beta\gamma_{\eps}^{1/2}}{2\sqrt{\pi}}+\frac{\sqrt{\log\frac{1}{1-c}}+K_{0}}{\sqrt{\log\eps^{-1}}}\right)+2^{3/2}C_\eps^{-1}\right].
	\]
	The first summand in the square brackets goes to $0$ as $\eps\downarrow0$
	by \eqref{deltagammabd} and \eqref{deltastupidbound}, and since we assumed that $C_\eps\to\infty$, we obtain
	\eqref{uapproxbyw}.
\end{proof}

\section{The discrete martingale\label{sec:discretemartingale}}

The key advantage of the approximation carried out in \propref{approxubyw}
is that we now have an essentially one-dimensional problem. Note from
the definitions \eqref{dwm}--\eqref{wmtildeic}, and also \eqref{wmicintermsofwmtilde} and the white-in-time nature of the noise, that if we (fix
once and for all $X\in\mathbf{R}^{2}$ and) define
\begin{align*}
	Y_{\eps,a,T}(M_{1}(\eps,T)) & =a;                                                                         \\
	Y_{\eps,a,T}(m)             & =w_{\eps,a,T,X}^{(m)}(t_{m}',X)=\tilde{w}_{\eps,a,T,X}^{(m-1)}(t_{m}',X),\qquad M_{1}(\eps,T)+1\le m\le M_{2}(\eps),
\end{align*}
then the process $\{Y_{\eps,a,T}(m)\}_{m=M_{1}(\eps,T),\ldots,M_{2}(\eps)}$
is a Markov chain  and
a martingale (both with respect to its own filtration). The key point is that $w_{\eps,a,T,X}^{(m)}$ evolves with spatially-constant initial condition $Y_{\eps,a,T}(m)$, driven by the  noise that is independent of the past. Thus $Y_{\eps,a,T}(m+1)$ depends on the past only via $Y_{\eps,a,T}(m)$. Moreover the expectation of $Y_{\eps,a,T}(m+1)$ conditional on $Y_{\eps,a,T}(m)$ is simply $Y_{\eps,a,T}(m)$ due to the fact that, when starting from constant initial data, the stochastic heat equation (with the noise either on or off) preserves expectations. Recall that in the case of (very small) $T\in [0,\eps^{2-\lambda_\eps}]$, we have defined $M_1(\eps,T)=M_2(\eps)=1$, and in this case we simply let $Y_{\eps,a,T}(M_{1}(\eps,T))=Y_{\eps,a,T}(M_2(\eps))=u_{\eps,a}(T,X)$.

\subsection{Approximating the one-point SPDE solution by the Markov chain}

In this section we show that $Y_{\eps,a,T}(M_{2}(\eps))$, at its
terminal time $M_{2}(\eps)$, is a good approximation for $u_{\eps,a}(T,X)$
(in fact, for $u_{\eps,a}(T,x)$ if $x$ is close to $X$). Most of
the work has already been done in \propref{approxubyw}.
\begin{prop}
	\label{prop:approxubymarkovchain}We have
	\begin{equation}
		\adjustlimits\lim_{\substack{\eps\downarrow0\vphantom{T\in[0,T_{0}]}\\
		\vphantom{a>0,x\in\mathbf{R}^{2}}
		}
		}\sup_{\substack{T\in[0,T_{0}]\\
		a>0,x\in\mathbf{R}^{2}
		}
		}\frac{\left(\mathbf{E}(Y_{\eps,a,T}(M_{2}(\eps))-u_{\eps,a}(T,x))^{2}\right)^{1/2}}{a(1+\eps^{-1}|x-X|)}=0.\label{eq:approxubymarkovchain}
	\end{equation}
\end{prop}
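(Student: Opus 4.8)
The plan is to deduce \eqref{approxubymarkovchain} from the approximation estimate \propref{approxubyw} (applied at the final discretization step $k=M_2(\eps)$), the small-time variance bound \propref{varbound}, and the uniform $L^2$ bound \lemref{wL2bound}, splitting the supremum according to the size of $T$. First consider the regime $T\le\eps^{2-\lambda_\eps}$, where $Y_{\eps,a,T}(M_2(\eps))=u_{\eps,a}(T,X)$ by convention. Writing
\[
u_{\eps,a}(T,X)-u_{\eps,a}(T,x)=\bigl(u_{\eps,a}(T,X)-a\bigr)-\bigl(u_{\eps,a}(T,x)-a\bigr)
\]
and applying the triangle inequality together with \propref{varbound} (with $A=\emptyset$) to each term, the numerator in \eqref{approxubymarkovchain} is at most $\tfrac{\beta aK_0}{\sqrt\pi}\bigl(\log(1+2\eps^{-2}T)/\log\eps^{-1}\bigr)^{1/2}$. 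Since $\eps^{-2}T\le\eps^{-\lambda_\eps}$, we have $\log(1+2\eps^{-2}T)/\log\eps^{-1}\le(\log3)/\log\eps^{-1}+\lambda_\eps\to0$ by \eqref{deltagammabd}, so this contribution to the supremum is $o(a)\le o\bigl(a(1+\eps^{-1}|x-X|)\bigr)$, uniformly.

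Now suppose $T>\eps^{2-\lambda_\eps}$; then the discretization of \secref{timediscretization} reaches the step $M_2(\eps)>M_1(\eps,T)$ and $Y_{\eps,a,T}(M_2(\eps))=w^{(M_2(\eps))}_{\eps,a,T,X}(t_{M_2(\eps)}',X)$. Write $m=M_2(\eps)$ and decompose
\[
u_{\eps,a}(T,x)-Y_{\eps,a,T}(m)=\bigl(u_{\eps,a}(T,x)-w^{(m)}_{\eps,a,T,X}(T,x)\bigr)+\bigl(w^{(m)}_{\eps,a,T,X}(T,x)-w^{(m)}_{\eps,a,T,X}(t_m',X)\bigr).
\]
For the first bracket I would apply \propref{approxubyw} with $k=m$ and $t=T$ (so $(T,a,m,T,x)$ lies in the set $S_{\eps,T_0,0}$), choosing $C_\eps=\eps^{-\zeta_\eps/2}$, which tends to $\infty$ by \eqref{zetabound}. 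Since $m\delta_\eps=\lfloor\delta_\eps^{-1}(2-\zeta_\eps)\rfloor\delta_\eps\le2-\zeta_\eps$ by \eqref{M2def}, we get $C_\eps\eps^{-m\delta_\eps/2}=\eps^{-(\zeta_\eps+m\delta_\eps)/2}\le\eps^{-1}$, and hence \eqref{uapproxbyw} yields $\bigl(\mathbf{E}(u_{\eps,a}(T,x)-w^{(m)}_{\eps,a,T,X}(T,x))^2\bigr)^{1/2}\le\eta_\eps\,a\,(1+\eps^{-1}|x-X|)$ with $\eta_\eps\to0$, uniformly in the relevant parameters.

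For the second bracket, observe that $w^{(m)}_{\eps,a,T,X}$ solves \eqref{dwm} on $t>t_m'$ started from the spatially constant datum $w^{(m)}(t_m',\cdot)\equiv w^{(m)}(t_m',X)=Y_{\eps,a,T}(m)$, which is measurable with respect to $W^\eps$ restricted to $[0,t_m']$; the increments of $W^\eps$ on $(t_m',\infty)$ driving $w^{(m)}$ are independent of this information and, after translating time, form again a copy of the mollified white noise. Conditioning on $\{Y_{\eps,a,T}(m)=b\}$ therefore shows that $w^{(m)}(T,x)$ has the law of $u_{\eps,b}(T-t_m',x)$, so by \propref{varbound} (with $A=\emptyset$), the tower property, and \lemref{wL2bound},
\[
\mathbf{E}\bigl(w^{(m)}(T,x)-Y_{\eps,a,T}(m)\bigr)^2\le\frac{\beta^2K_0^2}{4\pi}\cdot\frac{\log(1+2\eps^{-2}(T-t_m'))}{\log\eps^{-1}}\,\mathbf{E}\,Y_{\eps,a,T}(m)^2\le\frac{\beta^2K_0^2K_1^2a^2}{4\pi}\cdot\frac{\log(1+2\eps^{-2}(T-t_m'))}{\log\eps^{-1}}.
\]
Now $T-t_m'=\eps^{m\delta_\eps+\gamma_\eps}$ and $m\delta_\eps\ge2-\zeta_\eps-\delta_\eps$ by \eqref{M2def}, so $\eps^{-2}(T-t_m')\le\eps^{-(\zeta_\eps+\delta_\eps)}$ and hence $\log(1+2\eps^{-2}(T-t_m'))/\log\eps^{-1}\le(\log3)/\log\eps^{-1}+\zeta_\eps+\delta_\eps\to0$ by \eqref{deltagammabd} and \eqref{zetabound}. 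Combining the two brackets by the triangle inequality yields \eqref{approxubymarkovchain}.

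The argument is largely bookkeeping once \propref{approxubyw} is available; the one point needing care is the mismatch between the weight $\eps^{-M_2(\eps)\delta_\eps/2}|x-X|$ in \eqref{uapproxbyw} and the weight $\eps^{-1}|x-X|$ demanded in \eqref{approxubymarkovchain}. This is absorbed because $M_2(\eps)\delta_\eps$ falls short of $2$ by an amount of order $\zeta_\eps$, which is exactly the slack that lets us enlarge the free parameter $C_\eps$ up to $\eps^{-\zeta_\eps/2}\to\infty$; the same gap (together with the extra factor $\eps^{\gamma_\eps}$) makes the residual time window $[t_{M_2(\eps)}',T]$ short enough on the logarithmic scale that \propref{varbound} produces a vanishing bound there.
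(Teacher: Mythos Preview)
Your proof is correct and follows essentially the same approach as the paper: split $u_{\eps,a}(T,x)-Y_{\eps,a,T}(M_2(\eps))$ into the piece handled by \propref{approxubyw} (with $C_\eps=\eps^{-\zeta_\eps/2}$ and $k=M_2(\eps)$) and the short-time piece $w^{(M_2(\eps))}(T,x)-w^{(M_2(\eps))}(t_{M_2(\eps)}',X)$ handled by \propref{varbound} and \lemref{wL2bound}. Your explicit treatment of the regime $T\le\eps^{2-\lambda_\eps}$ and your explanation of why the weight $C_\eps\eps^{-M_2(\eps)\delta_\eps/2}$ fits under $\eps^{-1}$ are welcome clarifications of points the paper leaves implicit.
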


\begin{proof}
By \propref{approxubyw} (choosing $C_{\eps}=\eps^{-\zeta_{\eps}/2}\to\infty$
	by \eqref{zetabound}), we have
	\begin{equation}
		\lim_{\substack{\eps\downarrow0\vphantom{T\in[0,T_{0}]}\\
		\vphantom{a>0,x\in\mathbf{R}^{2}}
		}
		}\sup_{\substack{T\in[0,T_{0}]\\
		a>0,x\in\mathbf{R}^{2}
		}
		}\frac{\left(\mathbf{E}(u_{\eps,a}-w_{\eps,a,T,X}^{({M_2(\eps)})})({T},x)^{2}\right)^{1/2}}{a(1+\eps^{-\frac{1}{2}({M_{2}(\eps)\delta_{\eps}}+\zeta_{\eps})}|x-X|)}=0.\label{eq:uapproxbyw-1}
	\end{equation}
	By \eqref{M2def}, we have $\frac{1}{2}({M_{2}(\eps)\delta_{\eps}}+\zeta_{\eps})\le 1$.
	Therefore, \eqref{uapproxbyw-1} implies that
		\begin{equation}
		\lim_{\substack{\eps\downarrow0\vphantom{T\in[0,T_{0}]}\\
		\vphantom{a>0,x\in\mathbf{R}^{2}}
		}
		}\sup_{\substack{T\in[0,T_{0}]\\
		a>0,x\in\mathbf{R}^{2}
		}
		}\frac{\left(\mathbf{E}(u_{\eps,a}-w_{\eps,a,T,X}^{({M_2(\eps)})})({T},x)^{2}\right)^{1/2}}{a(1+\eps^{-1}|x-X|)}=0.\label{eq:uapproxbyw-2}
	\end{equation}
	Moreover, by \propref{varbound}, \lemref{wL2bound} {and the fact that $T-t_{M_2(\eps)}'=s_{M_2(\eps)}'<s_{M_2(\eps)}\leq \eps^{2-\zeta_\eps-\delta_\eps}$}, we have
	\begin{align*}
		 & \frac{1}{a}\left(\mathbf{E}(w_{\eps,a,T,X}^{(M_{2}(\eps))}(T,x)-Y_{\eps,a,T}(M_{2}(\eps)))^{2}\right)^{1/2}                                                                                                   \\
		 & \qquad\le\frac{\beta K_{0}}{2a\sqrt{\pi}}\left(\mathbf{E}w_{\eps,a,T,X}^{(M_{2}(\eps))}(t_{M_{2}(\eps)}',{X})^{2}\right)^{1/2}\left(\frac{\log (1+2\eps^{-2}(T-t_{M_2(\eps)}'))}{\log \eps^{-1}}\right)^{1/2} \\
		 & \qquad\le\frac{\beta K_{0}K_{1}}{2\sqrt{\pi}}\sqrt{\frac{\log(1+2\eps^{-\zeta_{\eps}-\delta_\eps})}{\log \eps^{-1}}},
	\end{align*}
	and the quantity on the right side goes to $0$ uniformly in $T,a,x$
	by \eqref{deltagammabd} and \eqref{zetabound}. This, along with \eqref{uapproxbyw-2}, implies
	\eqref{approxubymarkovchain}.
\end{proof}
The following spatial regularity statement for $u_{\eps,a}(T,\cdot)$
is a consequence of \propref{approxubymarkovchain}, so we record
it here for future use.
\begin{cor}
	\label{cor:regularity}We have
	\[
		\lim_{\substack{\eps\downarrow0\vphantom{T\in[0,T_{0}],a>0}\\
		\vphantom{x_{1},x_{2}\in\mathbf{R}^{2}}
		}
		}\sup_{\substack{T\in[0,T_{0}],a>0\\
		x_{1},x_{2}\in\mathbf{R}^{2}
		}
		}\frac{\left(\mathbf{E}(u_{\eps,a}(T,x_{1})-u_{\eps,a}(T,x_{2}))^{2}\right)^{1/2}}{a(1+\eps^{-1}|x_{1}-x_{2}|)}=0.
	\]
\end{cor}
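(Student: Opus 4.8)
The plan is to deduce the corollary directly from \propref{approxubymarkovchain} by the triangle inequality, exploiting the freedom in the choice of the reference point $X$. Recall that $u_{\eps,a}$ itself does not depend on $X$; only the Markov chain $Y_{\eps,a,T}$ (and the functions $w^{(m)}$) depend on $X$, through the spatial point at which the field is collapsed to a constant. Moreover, since the noise $\dif W^{\eps}$ is invariant under spatial translation, the convergence \eqref{approxubymarkovchain} holds at the same rate for \emph{any} fixed choice of $X$ (replacing $X$ by $x_1$ and $x$ by $x_1+(x-X)$ changes none of the quantities involved). Write $\eta(\eps)$ for the supremum in \eqref{approxubymarkovchain}, now also taken over all choices of $X\in\mathbf{R}^2$; by \propref{approxubymarkovchain} and this translation invariance, $\eta(\eps)\to0$ as $\eps\downarrow0$.

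Now fix $\eps\in(0,\eps_0]$, $T\in[0,T_0]$, $a>0$, and $x_1,x_2\in\mathbf{R}^2$, and run the chain $Y_{\eps,a,T}$ with reference point $X=x_1$. Splitting
\[
	u_{\eps,a}(T,x_1)-u_{\eps,a}(T,x_2)=\big(u_{\eps,a}(T,x_1)-Y_{\eps,a,T}(M_2(\eps))\big)+\big(Y_{\eps,a,T}(M_2(\eps))-u_{\eps,a}(T,x_2)\big)
\]
and applying \propref{approxubymarkovchain} once with $x=x_1$ (so $|x-X|=0$) and once with $x=x_2$, the $L^2$ norms of the two summands on the right are at most $\eta(\eps)a$ and $\eta(\eps)a(1+\eps^{-1}|x_1-x_2|)$, respectively. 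The triangle inequality in $L^2$ then gives
\[
	\big(\mathbf{E}(u_{\eps,a}(T,x_1)-u_{\eps,a}(T,x_2))^2\big)^{1/2}\le2\eta(\eps)\,a\,(1+\eps^{-1}|x_1-x_2|),
\]
uniformly over $T\in[0,T_0]$, $a>0$, and $x_1,x_2\in\mathbf{R}^2$. Dividing by $a(1+\eps^{-1}|x_1-x_2|)$ and letting $\eps\downarrow0$ yields the claim.

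There is no real obstacle here: the entire content of the statement is already in \propref{approxubymarkovchain}, and the only point that deserves an explicit sentence is the uniformity of the rate $\eta(\eps)$ over the reference point $X$, which is immediate from the spatial stationarity of the solution. If one prefers to avoid even mentioning a uniform $\eta$, one can instead simply invoke \propref{approxubymarkovchain} twice with $X=x_1$ fixed for the duration of the argument, since that proposition's supremum already ranges over all $x\in\mathbf{R}^2$.
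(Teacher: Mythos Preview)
Your proof is correct and is essentially identical to the paper's: both fix $X=x_1$ (the paper phrases this as ``by spatial homogeneity we can assume $x_1=X$''), split via $Y_{\eps,a,T}(M_2(\eps))$ using the triangle inequality, and apply \propref{approxubymarkovchain} to each piece. Your extra discussion of uniformity in $X$ is harmless but unnecessary, as you yourself note in the last paragraph.
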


\begin{proof}
	By spatial homogeneity, we can assume that $x_{1}=X$. Then the result
	follows immediately by writing
	\[
		\left(\mathbf{E}(u_{\eps,a}(T,x_{1})-u_{\eps,a}(T,x_{2}))^{2}\right)^{1/2}\le\sum_{i=1}^{2}\left(\mathbf{E}(u_{\eps,a}(T,x_{i})-Y_{\eps,a,T}(M_{2}(\eps)))^{2}\right)^{1/2}
	\]
	and applying \propref{approxubymarkovchain} to both terms.
\end{proof}

\subsection{The martingale differences}

The approximation result in \propref{approxubymarkovchain} motivates
us to study the discrete martingale $\{Y_{\eps,a,T}(m)\}_{m}$. %
Our
ultimate goal will be to show that it approximates a continuous martingale
(coming from a solution to \eqref{dXi-intro}--\eqref{Jdef-intro})
as $\eps\downarrow0$. We will use the martingale problem
approach as explained in \cite[Section 11.2]{SV06}, and en route it
will be important to understand some statistical properties of the increments
$Y_{\eps,a,T}(m)-Y_{\eps,a,T}(m-1)$ conditional on $Y_{\eps,a,T}(m-1)$,
a task to which we now set ourselves. The first observation is
that, due to the independence of $\dif W^{\eps}$ on disjoint time intervals, if we define
\begin{equation}\label{eq:Zdef}
	Z_{\eps,a,m}=\int G_{s_{m}-s'_{m}}({X}-z)u_{\eps,a}({s_{m-1}'}-s_{m},z)\,\dif z, \quad\quad M_1(\eps,T)+1\leq m\leq M_2(\eps),
\end{equation}
(with $s_{k},s_k'$ defined as in \eqref{sdef}) then
\begin{equation}
	\Law[Y_{\eps,a,T}(m)\mid Y_{\eps,a,T}(m-1)=b]=\Law Z_{\eps,b,m}.\label{eq:lawagreeswithZ}
\end{equation}
This can be seen by noting that the evolution equations for $u_{\eps,a}$ and $\tilde{w}^{(m)}$ are the same, and that $\tilde{w}^{(m)}$ is started with constant initial condition equal to $Y_{\eps,a,T}(m-1)$.

\subsubsection{Martingale difference variances}

Our first interest is in the conditional variance $\Var[Y_{\eps,a,T}(m)\mid Y_{\eps,a,T}(m-1)=b]=\Var Z_{\eps,b,m}$,
and we proceed to study this quantity. The first step is to approximate
it by a simpler quantity using the regularity established in \corref{regularity}.
An important role will be played by the function $J_{\eps}:(-\infty,2+\log_{\eps^{-1}}T_{0}]\times\mathbf{R}_{\ge0}\to\mathbf{R}_{\ge0}$
defined by
\begin{equation}
	J_{\eps}(q,a)=\frac{1}{2\sqrt{\pi}}(\mathbf{E}\sigma(u_{\eps,a}(\eps^{2-q},x))^{2})^{1/2}.\label{eq:Jepsdef}
\end{equation}
As $u_{\eps,a}$ is stationary in the spatial variable, the r.h.s. of \eqref{Jepsdef} does not depend on $x$. Here
\[
	q\in (-\infty,2+\log_{\eps^{-1}}T_{0}]\text{ corresponds to }\eps^{2-q}\in(0,T_0],
\]
i.e., we  parameterize $J_{\eps}$ in time on the exponential scale discussed in \subsecref{expscale}.
This section
has two main results. First, we show how to use $J_{\eps}$ to approximate
$\Var Z_{\eps,b,m}$:
\begin{prop}
	\label{prop:varapproxbyJ}We have
	\begin{equation}
		\adjustlimits\lim_{\substack{\eps\downarrow0\vphantom{T\in[0,T_{0}]}\\
		\vphantom{M_{1}(\eps,T)\le m\le M_{2}(\eps)}
		}
		}\sup_{\substack{T\in[0,T_{0}], a>0,\\
		M_{1}(\eps,T)+1\le m\le M_{2}(\eps)
		}
		}a^{-2}\left|\delta_{\eps}^{-1}\Var Z_{\eps,a,m}-J_{\eps}(2-(m-1)\delta_{\eps},a)^{2}\right|=0.\label{eq:Japproximatesvariance}
	\end{equation}
\end{prop}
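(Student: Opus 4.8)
Write $h=s_m-s'_m$ and $\tau=s'_{m-1}-s_m$, so that $h+\tau=s'_{m-1}-s'_m$, and abbreviate $\phi_\eps(s)=\mathbf{E}\sigma(u_{\eps,a}(s,X))^2$; by spatial stationarity of $u_{\eps,a}$ and \eqref{Jepsdef} one has $\phi_\eps(s)=4\pi J_\eps(2-\log_\eps s,a)^2$ for $s\in(0,T_0]$, and in particular $4\pi J_\eps(2-(m-1)\delta_\eps,a)^2=\phi_\eps(s_{m-1})$ with $s_{m-1}=\eps^{(m-1)\delta_\eps}$. The plan is to compute $\Var Z_{\eps,a,m}$ essentially exactly, reduce it to a one–dimensional time integral against an explicit kernel, and recognize that integral as $\delta_\eps\phi_\eps(s_{m-1})/(4\pi)$ up to errors that are $o(a^2)$ uniformly in $(T,m,a)$. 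First I would rewrite $Z_{\eps,a,m}$ as a single stochastic integral: inserting the mild formula \eqref{umild} for $u_{\eps,a}(\tau,z)$ into \eqref{Zdef}, using the stochastic Fubini theorem and the semigroup identity $G_h*G_{\tau-s}=G_{h+\tau-s}$, gives $Z_{\eps,a,m}=a+(\log\eps^{-1})^{-1/2}\int_0^\tau\int G_{h+\tau-s}(X-y)\sigma(u_{\eps,a}(s,y))\,\dif W^\eps(s,y)$. Since $\mathbf{E}Z_{\eps,a,m}=a$, Itô's isometry with the covariance \eqref{covarianceoperator} yields
\[
\Var Z_{\eps,a,m}=\frac{1}{\log\eps^{-1}}\int_0^\tau\!\iint G_{h+\tau-s}(X-y_1)G_{h+\tau-s}(X-y_2)G_{\eps^2}(y_1-y_2)\,\mathbf{E}\big[\sigma(u_{\eps,a}(s,y_1))\sigma(u_{\eps,a}(s,y_2))\big]\,\dif y_1\,\dif y_2\,\dif s.
\]

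Next I would replace the two–point correlation by $\phi_\eps(s)=\mathbf{E}\sigma(u_{\eps,a}(s,y_1))^2$ (spatial stationarity): the difference, by the Cauchy–Schwarz inequality on the probability space, the Lipschitz bound on $\sigma$, \propref{L2bound} and \corref{regularity}, is at most $Ca^2\omega_\eps(1+\eps^{-1}|y_1-y_2|)$ with $\omega_\eps\to0$ uniformly, and, paired against $G_{\eps^2}(y_1-y_2)$ (using $\int G_{\eps^2}(w)(1+\eps^{-1}|w|)\,\dif w=O(1)$ and the main–term computation below), it contributes at most $O(a^2\omega_\eps\delta_\eps)$ to $\Var Z_{\eps,a,m}$. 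For the main term, the identity \eqref{Gaussianproduct} gives $\iint G_{h+\tau-s}(X-y_1)G_{h+\tau-s}(X-y_2)G_{\eps^2}(y_1-y_2)\,\dif y_1\dif y_2=\bigl(2\pi(2(h+\tau-s)+\eps^2)\bigr)^{-1}$, so, writing $w(s):=(2(h+\tau-s)+\eps^2)^{-1}$,
\[
\delta_\eps^{-1}\Var Z_{\eps,a,m}=\frac{1}{2\pi\,\delta_\eps\log\eps^{-1}}\int_0^\tau w(s)\,\phi_\eps(s)\,\dif s
\]
up to an error which, divided by $a^2$, tends to $0$ uniformly.

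I would then analyze this integral. The substitution $r=h+\tau-s$ together with $\eps^2\ll h\le h+\tau$ gives $\int_0^\tau w(s)\,\dif s=\tfrac12\log\frac{2(h+\tau)+\eps^2}{2h+\eps^2}=\tfrac12(\delta_\eps-\gamma_\eps)\log\eps^{-1}+o(\delta_\eps\log\eps^{-1})$, using \eqref{sdef} and $\eps^{\gamma_\eps},\eps^{\delta_\eps}\to0$ (which hold since $\gamma_\eps,\delta_\eps\gg(\log\eps^{-1})^{-1}$, whence $\delta_\eps\log\eps^{-1}\to\infty$); since $\gamma_\eps\ll\delta_\eps$ this equals $\tfrac12\delta_\eps\log\eps^{-1}(1+o(1))$, so $\frac{1}{2\pi\delta_\eps\log\eps^{-1}}\int_0^\tau w(s)\,\dif s\to\frac1{4\pi}$. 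It thus remains to show that the $w$–weighted average of $\phi_\eps$ over $[0,\tau]$ differs from $\phi_\eps(s_{m-1})$ by $o(a^2)$. The normalized measure $w(s)\,\dif s$ is comparable to $\frac{\dif r}{2r}$ on $r\in[h,h+\tau]$, hence essentially uniform in $\log_\eps r$ over an interval of length $(\delta_\eps-\gamma_\eps)\log\eps^{-1}\to\infty$; so for all but an $\eta_\eps$–fraction of its mass one has $r\le(h+\tau)\eps^{\eta_\eps(\delta_\eps-\gamma_\eps)}$, and choosing $\eta_\eps=\bigl((\delta_\eps-\gamma_\eps)\log\eps^{-1}\bigr)^{-1/2}\to0$ (so that $\eta_\eps(\delta_\eps-\gamma_\eps)\log\eps^{-1}\to\infty$) forces $s=h+\tau-r=(h+\tau)(1+o(1))$ on that bulk part, i.e.\ $\log_\eps s=(m-1)\delta_\eps+\gamma_\eps+o(1)$. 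On the complementary $\eta_\eps$–fraction I would use the crude bound $\phi_\eps\le\beta^2K_0^2a^2$ from \propref{L2bound}, contributing at most $2\beta^2K_0^2a^2\eta_\eps=o(a^2)$; on the bulk part one is comparing $\phi_\eps$ at times whose $\log_\eps$ differs from $(m-1)\delta_\eps$ by at most $\gamma_\eps+o(1)\to0$.

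The crux, which I expect to be the main obstacle, is therefore a temporal equicontinuity estimate for $\phi_\eps$ on the exponential scale: that $|\phi_\eps(t_2)-\phi_\eps(t_1)|\le a^2\varpi\bigl(|\log_\eps(t_2/t_1)|\bigr)$ for some modulus $\varpi$ with $\varpi(0^+)=0$, independent of $\eps,a$ and of $t_1,t_2\in(0,T_0]$ — equivalently, equicontinuity of $q\mapsto J_\eps(q,a)^2/a^2$. This cannot be obtained from $|\phi_\eps(t_2)-\phi_\eps(t_1)|\le\beta^2K_0a\,(\mathbf{E}[u_{\eps,a}(t_2,X)-u_{\eps,a}(t_1,X)]^2)^{1/2}$, since the $L^2$–difference on the right need not be small even when $t_2/t_1\to\infty$ only on the $\eps^{\gamma_\eps}$–scale; instead one must exploit the Volterra-type identity $\mathbf{E}u_{\eps,a}(t,X)^2=a^2+\frac1{2\pi\log\eps^{-1}}\int_0^t\frac{\phi_\eps(s)}{2(t-s)+\eps^2}\,\dif s+O(a^2\omega_\eps)$ (the $h=0$ case of the computation above), the Lipschitz property of $\sigma$, and the a priori $L^2$ bounds established above, argued in the spirit of \lemref{secondmomentbound-general}; in the linear case $\sigma(x)=\beta x$ this identity closes and, as in \eqref{underlineJ}, yields the explicit modulus $\varpi(\eta)\le C\eta$. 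Granting this equicontinuity, the bulk term is $o(a^2)$ and the steps above combine to give $\delta_\eps^{-1}\Var Z_{\eps,a,m}=J_\eps(2-(m-1)\delta_\eps,a)^2+o(a^2)$ uniformly, which is \eqref{Japproximatesvariance}.
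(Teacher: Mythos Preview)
Your overall structure matches the paper's: you compute $\Var Z_{\eps,a,m}$ as a stochastic integral, replace the two-point correlation by $\phi_\eps(s)$ using \corref{regularity} (this is the paper's \lemref{Vtildegoodapprox}), and reduce to showing that the $w$-weighted time average of $\phi_\eps$ is close to its value at $s_{m-1}$, for which you correctly identify temporal equicontinuity of $q\mapsto J_\eps(q,a)$ as the crux.

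The genuine gap is that you do not prove this equicontinuity, and you explicitly dismiss the route that actually works. You write that the bound cannot come from $|\phi_\eps(t_2)-\phi_\eps(t_1)|\le\beta^2K_0a\,(\mathbf{E}[u_{\eps,a}(t_2,X)-u_{\eps,a}(t_1,X)]^2)^{1/2}$ because the $L^2$ difference ``need not be small.'' This is true for the naive same-noise coupling, but the paper (\lemref{Jtimelipschitz}) uses a different coupling: for $t_1<t_2$, one has $u_{\eps,a}(t_1,\cdot)\overset{\mathrm{law}}{=}u_{\eps,a}^{[0,\,t_2-t_1]}(t_2,\cdot)$, i.e.\ the solution at time $t_2$ with the noise turned off on the initial interval $[0,t_2-t_1]$. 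Then
\[
|J_\eps(q_2,a)-J_\eps(q_1,a)|\le\frac{\beta}{2\sqrt{\pi}}\bigl(\mathbf{E}[u_{\eps,a}(\eps^{2-q_2},x)-u_{\eps,a}^{[0,\,\eps^{2-q_2}-\eps^{2-q_1}]}(\eps^{2-q_2},x)]^2\bigr)^{1/2},
\]
and the right-hand side is controlled directly by \propref{exciseoneinterval}, yielding $|J_\eps(q_2,a)-J_\eps(q_1,a)|\le Ca(|q_2-q_1|^{1/2}+(\log\eps^{-1})^{-1/2})$. This is exactly the modulus you need, and it comes for free from the noise-excision machinery already developed in \secref{shutoffnoise-oneinterval}. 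Your proposed alternative via the Volterra identity is not carried out and would be substantially harder, since in the nonlinear case the identity for $\mathbf{E}u_{\eps,a}^2$ does not close on $\phi_\eps$.
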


Also, we will prove the following compactness result for the family
$\{J_{\eps}\}_\eps$, which will help us in our limit procedure:
\begin{prop}
	\label{prop:Jcompact}For any sequence $\eps_{k}\downarrow0$, there
	is a subsequence $\eps_{k_{\ell}}\downarrow0$ and a continuous function
	$J:[0,2]\times\mathbf{R}_{\ge0}\to\mathbf{R}_{\ge0}$ so that
	\begin{equation}
		\lim_{\ell\to\infty}J_{\eps_{k_{\ell}}}|_{[0,2]\times\mathbf{R}_{\ge0}}=J\label{eq:Jsubsequentiallimit}
	\end{equation}
	uniformly on compact subsets of $[0,2]\times\mathbf{R}_{\ge0}$.
\end{prop}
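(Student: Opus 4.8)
The plan is to deduce \propref{Jcompact} from the Arzel\`a--Ascoli theorem together with a diagonal argument in the $a$-variable, so the substance is to show that the family $\{J_{\eps}|_{[0,2]\times\mathbf{R}_{\ge0}}\}_{\eps\in(0,\eps_{0}]}$ is uniformly bounded and equicontinuous on every compact subset of $[0,2]\times\mathbf{R}_{\ge0}$; the continuity of the limit $J$ is then automatic. Uniform boundedness and equicontinuity in $a$ are immediate from estimates already in hand. Indeed, for $q\in[0,2]$ and $\eps\in(0,\eps_{0}]$ we have $\eps^{2-q}\in[\eps^{2},1]\subset(0,T_{0}]$, so $0\le\sigma(r)\le\beta r$ and \eqref{L2bound} (with $A=\emptyset$) give
\[
	J_{\eps}(q,a)\le\tfrac{\beta}{2\sqrt{\pi}}\bigl(\mathbf{E}u_{\eps,a}(\eps^{2-q},x)^{2}\bigr)^{1/2}\le\tfrac{\beta K_{0}}{2\sqrt{\pi}}\,a,
\]
which bounds the family uniformly on sets $[0,2]\times[0,R]$, and using $\bigl|(\mathbf{E}\sigma(U)^{2})^{1/2}-(\mathbf{E}\sigma(V)^{2})^{1/2}\bigr|\le\beta(\mathbf{E}(U-V)^{2})^{1/2}$ together with \eqref{diffudifferentxis},
\[
	|J_{\eps}(q,a)-J_{\eps}(q,b)|\le\tfrac{\beta}{2\sqrt{\pi}}\bigl(\mathbf{E}[u_{\eps,a}(\eps^{2-q},x)-u_{\eps,b}(\eps^{2-q},x)]^{2}\bigr)^{1/2}\le\tfrac{\beta K_{0}}{2\sqrt{\pi}}\,|a-b|,
\]
so $J_{\eps}(q,\cdot)$ is Lipschitz with a constant independent of $q$ and $\eps$.

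The delicate ingredient, and what I expect to be the main obstacle, is equicontinuity in the $q$-variable. It is \emph{not} enough to estimate $\mathbf{E}[u_{\eps,a}(\eps^{2-q},x)-u_{\eps,a}(\eps^{2-q'},x)]^{2}$: even for $q,q'\in[0,2]$ with $|q-q'|$ small this quantity stays of order $a^{2}$ as $\eps\downarrow0$, since on the exponential scale the time interval between $\eps^{2-q}$ and $\eps^{2-q'}$ carries a macroscopic amount of noise. One must instead exploit that although the random variable $u_{\eps,a}(\eps^{2-q},X)$ moves a lot in $L^{2}$ as $q$ varies, the \emph{number} $\mathbf{E}\sigma(u_{\eps,a}(\eps^{2-q},X))^{2}$ varies slowly. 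Precisely, the goal is to show that $q\mapsto\mathbf{E}\sigma(u_{\eps,a}(\eps^{2-q},X))^{2}$ has a modulus of continuity on $[0,2]$ that is uniform in $\eps\in(0,\eps_{0}]$ and locally uniform in $a$; given such a modulus, the elementary bound $|J_{\eps}(q,a)-J_{\eps}(q',a)|\le|J_{\eps}(q,a)^{2}-J_{\eps}(q',a)^{2}|^{1/2}$ (valid since $J_{\eps}\ge0$) transfers it to $J_{\eps}(\cdot,a)$ and completes the equicontinuity.

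To produce this modulus I would argue as follows. In the linear case $\sigma(r)=\beta r$ it is easy: $\mathbf{E}\sigma(u_{\eps,a}(t,X))^{2}=\beta^{2}f_{\eps,a}(t)$ with $f_{\eps,a}(t)=\mathbf{E}u_{\eps,a}(t,X)^{2}$ solving the Volterra equation $f_{\eps,a}(t)=a^{2}+\tfrac{\beta^{2}}{2\pi\log\eps^{-1}}\int_{0}^{t}\tfrac{f_{\eps,a}(s)}{2(t-s)+\eps^{2}}\,\dif s$, and the analysis behind \lemref{secondmomentbound-general}, which uses subcriticality $\beta<\sqrt{2\pi}$ to keep the relevant geometric series convergent, shows that $q\mapsto f_{\eps,a}(\eps^{2-q})$ is Lipschitz on $[0,2]$ with a constant depending only on $\beta$ and $a^{2}$, uniformly in $\eps$. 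For general $\sigma$ there is no closed equation, and this is where the work lies. The cleanest route I see is to pass through the discrete chain: by \propref{varapproxbyJ} one has $\delta_{\eps}^{-1}\Var Z_{\eps,a,m}=J_{\eps}(2-(m-1)\delta_{\eps},a)^{2}+a^{2}o_{\eps}(1)$ on the grid $q_{m}=2-(m-1)\delta_{\eps}$, and a companion estimate obtained by the same computation as in the proof of that proposition (comparing $Z_{\eps,a,m}$ and $Z_{\eps,a,m+1}$, whose defining times and smoothing scales differ by the single factor $\eps^{\delta_{\eps}}$) shows $|\Var Z_{\eps,a,m+1}-\Var Z_{\eps,a,m}|\le C(a)\delta_{\eps}^{2}$; hence $|J_{\eps}(q_{m+1},a)^{2}-J_{\eps}(q_{m},a)^{2}|\le C(a)\delta_{\eps}+a^{2}o_{\eps}(1)$. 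Interpolating to non-grid values of $q$ — using the second-moment and spatial-regularity tools of \propref[s]{L2bound} and~\ref{prop:varbound} and \corref{regularity} to bound the oscillation of $\mathbf{E}\sigma(u_{\eps,a}(s,X))^{2}$ within a single grid cell, with subcriticality again keeping a Gr\"onwall constant below $1$ — then yields the desired uniform modulus. (Alternatively one could try to leverage the quasilinear heat equation $\partial_{q}J^{2}=\tfrac12 J^{2}\partial_{bb}J^{2}$ satisfied by the limit, but I do not expect a clean $\eps$-level version of that to be the shortest path.)

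Finally, once uniform boundedness and equicontinuity on compacts are established, Arzel\`a--Ascoli provides, for each $a$ in a fixed countable dense subset of $\mathbf{R}_{\ge0}$, a subsequence along which $J_{\eps_{k}}(\cdot,a)$ converges uniformly on $[0,2]$; a diagonal extraction produces a single subsequence $\eps_{k_{\ell}}\downarrow0$ for which this holds for all such $a$, and the $q$- and $\eps$-uniform Lipschitz bound in $a$ from the first paragraph upgrades the convergence to all $a$ and to uniform convergence on compact subsets of $[0,2]\times\mathbf{R}_{\ge0}$. The limit $J$ is then continuous and nonnegative, which is \eqref{Jsubsequentiallimit}.
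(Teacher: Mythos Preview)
Your overall strategy (Arzel\`a--Ascoli plus diagonalization) matches the paper's, and your treatment of boundedness and $a$-Lipschitzness is exactly right. The gap is in the $q$-equicontinuity, where you dismiss the direct $L^{2}$ route and then propose a much more elaborate detour through the discrete chain and \propref{varapproxbyJ}. In fact the direct route works, and is what the paper does; you simply have to use the right coupling.

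The point you are missing is this. Since $J_{\eps}(q,a)$ depends only on the \emph{law} of $u_{\eps,a}(\eps^{2-q},x)$, the bound
\[
|J_{\eps}(q_{2},a)-J_{\eps}(q_{1},a)|\le\tfrac{\beta}{2\sqrt{\pi}}\bigl(\mathbf{E}(U-V)^{2}\bigr)^{1/2}
\]
holds for \emph{any} coupling with $U\overset{\mathrm{law}}{=}u_{\eps,a}(\eps^{2-q_{2}},x)$ and $V\overset{\mathrm{law}}{=}u_{\eps,a}(\eps^{2-q_{1}},x)$. By time-homogeneity of the noise, $u_{\eps,a}(\eps^{2-q_{1}},x)\overset{\mathrm{law}}{=}u_{\eps,a}^{I_{\eps}}(\eps^{2-q_{2}},x)$ with $I_{\eps}=[0,\eps^{2-q_{2}}-\eps^{2-q_{1}}]$, i.e.\ the solution with the noise switched off on the initial segment. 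Choosing this as $V$ and $U=u_{\eps,a}(\eps^{2-q_{2}},x)$, \propref{exciseoneinterval} (with $A=\emptyset$, $\tau_{1}=0$, $\tau_{2}=\eps^{2-q_{2}}-\eps^{2-q_{1}}$, $t=\eps^{2-q_{2}}$) gives
\[
|J_{\eps}(q_{2},a)-J_{\eps}(q_{1},a)|\le C\,a\Bigl(|q_{2}-q_{1}|^{1/2}+(\log\eps^{-1})^{-1/2}\Bigr),
\]
which is exactly \lemref{Jtimelipschitz}. This is asymptotic equicontinuity (the extra $(\log\eps^{-1})^{-1/2}$ term vanishes as $\eps\downarrow0$), and a trivially modified Arzel\`a--Ascoli handles it. So your statement that ``it is not enough to estimate $\mathbf{E}[u_{\eps,a}(\eps^{2-q},x)-u_{\eps,a}(\eps^{2-q'},x)]^{2}$'' is wrong as a conclusion: you were implicitly fixing the natural same-noise coupling, which indeed does not obviously work, but the freedom to recouple is precisely what rescues the argument. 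Your proposed alternative through \propref{varapproxbyJ} is not needed; besides being circuitous, it leans on a ``companion estimate'' you only assert, and on results whose proofs in the paper already use \lemref{Jtimelipschitz}.
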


As we assumed that $T_{0}\ge1$, each $J_{\eps}$ is indeed
defined on $[0,2]{\times \mathbf{R}_{\geq0}}$. We will prove \propref{Jcompact} first, since
the intermediate results will be useful in the proof of \propref{varapproxbyJ}.
We need two preparatory lemmas, addressing the regularity of $J_{\eps}$
in $q$ and in $a$. First we address the regularity in $q$.
\begin{lem}
	\label{lem:Jtimelipschitz}For all $\eps,a>0$ and $q_{1},q_{2}\in(-\infty,2+\log_{\eps^{-1}}T_{0}]$,
	we have
	\begin{equation}
		|J_{\eps}(q_{2},a)-J_{\eps}(q_{1},a)|\le{\frac{a\beta^2 K_{0}^{2}}{4\pi}}\left(|q_{2}-q_{1}|^{1/2}+K_{0}(\log\eps^{-1})^{-1/2}\right).\label{eq:Jtimelipschitz}
	\end{equation}
\end{lem}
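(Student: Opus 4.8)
The plan is to reduce the claim to an estimate on the difference of the \emph{variances} $\mathbf{E}\sigma(u_{\eps,a}(t_i,x))^2$, $t_i:=\eps^{2-q_i}$, and then to control that difference using the Markov structure at the earlier of the two times together with the second-moment estimates of \secref[s]{shutoffnoise-oneinterval}--\ref{sec:timediscretization}. Write $4\pi J_\eps(q,a)^2=\mathbf{E}\sigma(u_{\eps,a}(\eps^{2-q},x))^2$. Since $|\alpha-\beta|\le|\alpha^2-\beta^2|^{1/2}$ for $\alpha,\beta\ge0$, and $\sqrt{x+y}\le\sqrt x+\sqrt y$, it suffices to prove
\[
\left|\mathbf{E}\sigma(u_{\eps,a}(t_2,x))^2-\mathbf{E}\sigma(u_{\eps,a}(t_1,x))^2\right|\le\frac{\beta^4K_0^4a^2}{4\pi}\left(|q_2-q_1|+K_0^2(\log\eps^{-1})^{-1}\right),
\]
because taking square roots and dividing by $4\pi$ then yields \eqref{Jtimelipschitz}. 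Without loss of generality $q_1\le q_2$, so $t_1\le t_2$; and if $\log(1+2\eps^{-2}(t_2-t_1))\le 2$ (in particular if $q_1<0$) then the left side is already $\lesssim a^2(\log\eps^{-1})^{-1}$ by \propref{varbound} applied on $[t_1,t_2]$, so we may assume otherwise.

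Set $s:=t_2-t_1$ and condition on the filtration $\mathcal{F}_{t_1}$ generated by $\dif W^\eps$ up to time $t_1$. By the mild formulation, $u_{\eps,a}(t_2,x)=\bar u+N$, where $\bar u:=\mathbf{E}[u_{\eps,a}(t_2,x)\mid\mathcal{F}_{t_1}]=\int G_s(x-y)u_{\eps,a}(t_1,y)\,\dif y$ is $\mathcal{F}_{t_1}$-measurable and $N$ is the stochastic integral of $\sigma(u_{\eps,a})$ against $\dif W^\eps$ over $[t_1,t_2]$, with $\mathbf{E}[N\mid\mathcal{F}_{t_1}]=0$. The second-moment computation of \propref{varbound} (the factor $G_{\eps^2}(y_1-y_2)$ producing $(2(t_2-r)+\eps^2)^{-1}$, and \eqref{L2bound} bounding $\mathbf{E}[\sigma(\cdot)\sigma(\cdot)]$) gives $\mathbf{E}N^2\le\frac{\beta^2K_0^2a^2}{4\pi\log\eps^{-1}}\log(1+2\eps^{-2}s)$. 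Next, writing $\bar u-u_{\eps,a}(t_1,x)=\int G_s(x-y)[u_{\eps,a}(t_1,y)-u_{\eps,a}(t_1,x)]\,\dif y$, expanding again by the mild formulation, and using \corref{regularity} together with the positivity in Fourier of $\iint(G_a-G_b)(x-y_1)(G_a-G_b)(x-y_2)G_{\eps^2}(y_1-y_2)\,\dif y_1\dif y_2$ (which lets the two-point function of $\sigma(u_{\eps,a})$ be replaced by its bound $\beta^2K_0^2a^2$), one obtains a comparison between $\mathbf{E}\bar u^2$ and $\mathbf{E}u_{\eps,a}(t_1,x)^2$; since $\mathbf{E}u_{\eps,a}(t_2,x)^2=\mathbf{E}\bar u^2+\mathbf{E}N^2$, this reduces the control of $\mathbf{E}u_{\eps,a}(t_2,x)^2-\mathbf{E}u_{\eps,a}(t_1,x)^2$ to the balance between the noise injected on $[t_1,t_2]$ and the variance of $u_{\eps,a}(t_1,\cdot)$ lost to heat smoothing over the scale $\sqrt s$.

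The main obstacle is exactly this balance: $\mathbf{E}N^2$ and $\mathbf{E}u_{\eps,a}(t_1,x)^2-\mathbf{E}\bar u^2$ are individually only $O(a^2)$ --- of size $a^2\frac{\log(1+2\eps^{-2}s)}{\log\eps^{-1}}$, which need not be small when $t_1,t_2$ are separated on the exponential scale --- whereas their difference, which is $\mathbf{E}u_{\eps,a}(t_2,x)^2-\mathbf{E}u_{\eps,a}(t_1,x)^2$, must be shown to be $O(a^2|q_2-q_1|)$; extracting this cancellation is what forces the $|q_2-q_1|$ rather than an $O(1)$ bound. The mechanism is the one visible in \subsecref{linearcase}: in the linear case $\mathbf{E}u_{\eps,a}(\eps^{2-q},x)^2=a^2(1-\tfrac{\beta^2}{4\pi}g_\eps(q))^{-1}$ with $g_\eps(q)=\frac{\log(1+2\eps^{-q})}{\log\eps^{-1}}$, and $g_\eps$ is Lipschitz in $q$ with constant $\frac{2\eps^{-q}}{1+2\eps^{-q}}<1$ while $1-\tfrac{\beta^2}{4\pi}g_\eps(q)\ge K_0^{-2}$ by \eqref{eps0cond}--\eqref{K0lb}, so $|\mathbf{E}u_{\eps,a}(t_2,x)^2-\mathbf{E}u_{\eps,a}(t_1,x)^2|\le\frac{\beta^2K_0^4a^2}{4\pi}|q_2-q_1|$. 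In the nonlinear case one argues directly on the Volterra-type inequality $\mathbf{E}u_{\eps,a}(t,x)^2\le a^2+\frac{\beta^2}{2\pi\log\eps^{-1}}\int_0^t\frac{\mathbf{E}u_{\eps,a}(r,x)^2}{2(t-r)+\eps^2}\,\dif r$ of \lemref{secondmomentbound-general}, splitting the increment $\int_0^{t_2}-\int_0^{t_1}$ into a part on $[0,t_1]$ (where $\frac{1}{2(t_2-r)+\eps^2}-\frac{1}{2(t_1-r)+\eps^2}<0$ supplies the compensating negative contribution) and a part on $[t_1,t_2]$, and using bounds of the type of \lemref[s]{logovertminuss} and~\ref{lem:killthenoise-integralineq} to see the two contributions agree up to $O(a^2(q_2-q_1)+a^2(\log\eps^{-1})^{-1})$. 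Finally, the passage from $\mathbf{E}u_{\eps,a}^2$ to $\mathbf{E}\sigma(u_{\eps,a})^2$ uses $\sigma(u)^2-\sigma(v)^2=(\sigma(u)-\sigma(v))(\sigma(u)+\sigma(v))$, Cauchy--Schwarz, the Lipschitz bound $|\sigma(u)-\sigma(v)|\le\beta|u-v|$ applied to the increment of $u_{\eps,a}$ over $[t_1,t_2]$ controlled exactly as above, and the moment bound \eqref{momentbound}, all of which contribute only terms of the already admissible size.
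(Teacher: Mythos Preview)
Your reduction via $|\alpha-\beta|\le|\alpha^2-\beta^2|^{1/2}$ (valid for $\alpha,\beta\ge0$) is fine, but the argument has a genuine gap at precisely the point you label ``the main obstacle.'' You correctly observe that $\mathbf{E}N^2$ and $\mathbf{E}u_{\eps,a}(t_1,x)^2-\mathbf{E}\bar u^2$ are each of size $a^2\frac{\log(1+2\eps^{-2}s)}{\log\eps^{-1}}$, which is $O(1)$ (not $O(|q_2-q_1|)$) on the exponential scale --- and then you do not actually establish the cancellation. For the nonlinear case, ``arguing directly on the Volterra-type inequality'' cannot work as written: the relation in \lemref{secondmomentbound-general} is an \emph{inequality}, so subtracting two instances (at $t_1$ and $t_2$) gives no control on the signed difference $\mathbf{E}u_{\eps,a}(t_2,x)^2-\mathbf{E}u_{\eps,a}(t_1,x)^2$ and cannot exhibit the negative compensating term you need. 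The linear-case computation works only because there the Volterra relation is an exact identity. Your final step (passing from $\mathbf{E}u^2$ to $\mathbf{E}\sigma(u)^2$ via Cauchy--Schwarz) reduces to bounding $\mathbf{E}(u_{\eps,a}(t_2,x)-u_{\eps,a}(t_1,x))^2$, and ``controlled exactly as above'' is circular: the decomposition $(\bar u-u(t_1))+N$ gives only $O(a^2)$ for each piece, with no mechanism for cancellation since both contributions to the second moment are nonnegative.

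The paper's argument sidesteps the cancellation entirely by a coupling. Because the initial condition is the constant $a$, turning off the noise on $I_\eps:=[0,\eps^{2-q_2}-\eps^{2-q_1}]$ leaves the solution equal to $a$ at the end of $I_\eps$; time-stationarity of the white noise then gives $u_{\eps,a}^{I_\eps}(\eps^{2-q_2},x)\overset{\mathrm{law}}{=}u_{\eps,a}(\eps^{2-q_1},x)$. The reverse triangle inequality in $L^2$ and the Lipschitz bound on $\sigma$ yield
\[
|J_\eps(q_2,a)-J_\eps(q_1,a)|\le\tfrac{\beta}{2\sqrt\pi}\big(\mathbf{E}(u_{\eps,a}^{I_\eps}-u_{\eps,a})(\eps^{2-q_2},x)^2\big)^{1/2},
\]
which is exactly the setting of \propref{exciseoneinterval} with $A=\emptyset$, $\tau_1=0$, $\tau_2=\eps^{2-q_2}-\eps^{2-q_1}$, $t=\eps^{2-q_2}$. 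That proposition already packages the Volterra analysis (via \lemref{killthenoise-integralineq}) and outputs the bound with $\log\frac{\eps^{2-q_2}+\eps^2/2}{\eps^{2-q_1}+\eps^2/2}\le(q_2-q_1)\log\eps^{-1}$ appearing directly. No cancellation is needed because the two solutions are compared at the \emph{same} time $\eps^{2-q_2}$, coupled through the same noise on $[\eps^{2-q_2}-\eps^{2-q_1},\eps^{2-q_2}]$; the difference is driven solely by the noise on the short initial interval $I_\eps$, whose contribution \propref{exciseoneinterval} bounds outright.
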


\begin{proof}
	Assume   $q_{1}\le q_{2}$. Define $I_\eps=[0,\eps^{2-q_2}-\eps^{2-q_1}]$. We can write
	\begin{align*}
		|J_{\eps}(q_{1},a)-J_{\eps}(q_{2},a)| & =\frac{1}{2\sqrt{\pi}}\left|(\mathbf{E}\sigma(u_{\eps,a}^{I_\eps}(\eps^{2-q_{2}},x))^{2})^{1/2}-(\mathbf{E}\sigma(u_{\eps,a}(\eps^{2-q_{2}},x))^{2})^{1/2}\right| \\
		                                      & \le\frac{\beta}{2\sqrt{\pi}}\left(\mathbf{E}\left(u_{\eps,a}^{I_\eps}(\eps^{2-q_{2}},x)-u_{\eps,a}(\eps^{2-q_{2}},x)\right)^{2}\right)^{1/2}.
	\end{align*}
	In the first equality we used the fact that
	\[
		u_{\eps,a}^{I_\eps}(\eps^{2-q_{2}},x)\overset{\mathrm{law}}{=}u_{\eps,a}(\eps^{2-{q_1}},x),
	\]
	where $u_{\eps,a}^{I_\eps}$ is defined
	as in \eqref{uA}--\eqref{uAic}, i.e., the noise is turned off in $I_\eps$. Now we apply \propref{exciseoneinterval}
	with $t=\eps^{2-q_{2}}$, $A=\emptyset$, $\tau_{1}=0$, and $\tau_{2}=\eps^{2-q_{2}}-\eps^{2-q_{1}}$
	to obtain
	\begin{align}
		|J_{\eps}(q_{1},a)-J_{\eps}(q_{2},a)| & \le{\frac{a\beta^2 K_{0}^{2}}{4\pi\sqrt{\log\eps^{-1}}}}\left(\sqrt{\log\frac{\eps^{2-q_{2}}+\eps^{2}/2}{\eps^{2-q_{1}}+\eps^{2}/2}}+K_{0}\right)\label{eq:Jprelimbd} \\
		                                      & \le{\frac{a\beta^2 K_{0}^{2}}{4\pi}}\left(|q_{2}-q_{1}|^{1/2}+K_{0}(\log\eps^{-1})^{-1/2}\right),\label{eq:q1q2}
	\end{align}
	as claimed.
\end{proof}
Now we address the regularity of $J_{\eps}$ in $a$.
Later on, we will also use the following result to prove that \eqref{hLipschitz-1}
is satisfied for the limits of $\{J_{\eps}\}_{\eps}$ as $\eps\downarrow0$.
Thus we need the explicit constant in the middle expression of \eqref{Jspacelipschitz}.
\begin{lem}
	\label{lem:Jspacelipschitz}For all $\eps\in(0,\eps_{0}]$, $q\in(-\infty,2+\log_{\eps^{-1}}T_{0}]$,
	and $a_{1},a_{2}\ge0$, we have
	\begin{equation}
		|J_{\eps}(q,a_{2})-J_{\eps}(q,a_{1})|\le\left(\frac{4\pi}{\beta^{2}}-\frac{\log(1+2\eps^{-q})}{\log\eps^{-1}}\right)^{-1/2}|a_{2}-a_{1}|\le\frac{\beta K_{0}}{2\sqrt{\pi}}|a_{2}-a_{1}|.\label{eq:Jspacelipschitz}
	\end{equation}
	In particular, for all $a>0$, we have
	\begin{equation}
		|J_{\eps}(q,a)|\le\frac{\beta aK_{0}}{2\sqrt{\pi}}.\label{eq:Jbd}
	\end{equation}
\end{lem}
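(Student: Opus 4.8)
The plan is to reduce the entire statement to the $L^{2}$-difference estimate of \propref{L2bound}. Unwinding the definition \eqref{Jepsdef}, the reverse triangle inequality for the $L^{2}$ norm together with the $\beta$-Lipschitz property of $\sigma$ gives
\[
|J_{\eps}(q,a_{2})-J_{\eps}(q,a_{1})|=\frac{1}{2\sqrt{\pi}}\left|\left(\mathbf{E}\sigma(u_{\eps,a_{2}}(\eps^{2-q},x))^{2}\right)^{1/2}-\left(\mathbf{E}\sigma(u_{\eps,a_{1}}(\eps^{2-q},x))^{2}\right)^{1/2}\right|\le\frac{\beta}{2\sqrt{\pi}}\left(\mathbf{E}[u_{\eps,a_{2}}(\eps^{2-q},x)-u_{\eps,a_{1}}(\eps^{2-q},x)]^{2}\right)^{1/2}.
\]
Since $q\le2+\log_{\eps^{-1}}T_{0}$ forces $t\coloneqq\eps^{2-q}\in(0,T_{0}]$, I would apply \propref{L2bound} with $A=\emptyset$; at $t=\eps^{2-q}$ one has $2\eps^{-2}t=2\eps^{-q}$, so the explicit constant \eqref{explicitK} reads $\left(1-\frac{\beta^{2}}{4\pi}\cdot\frac{\log(1+2\eps^{-q})}{\log\eps^{-1}}\right)^{-1/2}$. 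Multiplying by $\frac{\beta}{2\sqrt{\pi}}$ and absorbing the factor $4\pi/\beta^{2}$ into the square root rewrites the bound as $\left(\frac{4\pi}{\beta^{2}}-\frac{\log(1+2\eps^{-q})}{\log\eps^{-1}}\right)^{-1/2}|a_{2}-a_{1}|$, which is the first inequality in \eqref{Jspacelipschitz}.

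For the coarser bound with $K_{0}$, the point is that $\eps^{-1}>1$ and $q\le2+\log_{\eps^{-1}}T_{0}$ give $\eps^{-q}\le\eps^{-2-\log_{\eps^{-1}}T_{0}}=\eps^{-2}T_{0}$, hence $\log(1+2\eps^{-q})\le\log(1+2\eps^{-2}T_{0})\le2+\log(1+2\eps^{-2}T_{0})$. Since $x\mapsto(4\pi/\beta^{2}-x)^{-1/2}$ is increasing on its domain, the previous prefactor is dominated by $\left(\frac{4\pi}{\beta^{2}}-\frac{2+\log(1+2\eps^{-2}T_{0})}{\log\eps^{-1}}\right)^{-1/2}=\frac{\beta}{2\sqrt{\pi}}\left(1-\frac{\beta^{2}}{4\pi}\cdot\frac{2+\log(1+2\eps^{-2}T_{0})}{\log\eps^{-1}}\right)^{-1/2}$, which is at most $\frac{\beta K_{0}}{2\sqrt{\pi}}$ precisely by the lower bound \eqref{K0lb} defining $K_{0}$. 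Finally, \eqref{Jbd} is immediate: with $a=0$ the zero function solves the mild equation \eqref{umild} (because $\sigma(0)=0$), so $u_{\eps,0}\equiv0$ and $J_{\eps}(q,0)=0$; taking $a_{1}=0$ in the Lipschitz bound just obtained then gives $|J_{\eps}(q,a)|\le\frac{\beta K_{0}}{2\sqrt{\pi}}a$.

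I do not expect a genuine obstacle here: all the analytic substance is already contained in \propref{L2bound}, and what remains is bookkeeping. The only points needing a little care are the reparametrization $2\eps^{-2}t=2\eps^{-q}$ at $t=\eps^{2-q}$, the elementary identity $\eps^{-2-\log_{\eps^{-1}}T_{0}}=\eps^{-2}T_{0}$ used to compare the argument of the constant against the one appearing in \eqref{K0lb}, and confirming that the reverse triangle inequality is applied in the direction that yields an upper bound.
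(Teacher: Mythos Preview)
Your proposal is correct and follows essentially the same approach as the paper's proof: reverse triangle inequality, Lipschitz bound for $\sigma$, then \propref{L2bound} with the explicit constant \eqref{explicitK}, and finally \eqref{K0lb} for the second inequality and $a_{1}=0$ for \eqref{Jbd}. You supply a bit more detail on the bookkeeping (the reparametrization $2\eps^{-2}t=2\eps^{-q}$ and the comparison $\eps^{-q}\le\eps^{-2}T_{0}$), but the argument is identical in substance.
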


\begin{proof}
	We have
	\begin{align*}
		\left|J_{\eps}(q,a_{1})-J_{\eps}(q,a_{2})\right| & =\frac{1}{2\sqrt{\pi}}\left|(\mathbf{E}\sigma(u_{\eps,a_{1}}(\eps^{2-q},x))^{2})^{1/2}-(\mathbf{E}\sigma(u_{\eps,a_{2}}(\eps^{2-q},x))^{2})^{1/2}\right| \\
		                                                 & \le\frac{1}{2\sqrt{\pi}}\left(\mathbf{E}[\sigma(u_{\eps,a_{1}}(\eps^{2-q},x))-\sigma(u_{\eps,a_{2}}(\eps^{2-q},x))]^{2}\right)^{1/2}                     \\
		                                                 & \le\frac{\beta}{2\sqrt{\pi}}\left(\mathbf{E}[u_{\eps,a_{1}}(\eps^{2-q},x)-u_{\eps,a_{2}}(\eps^{2-q},x)]^{2}\right)^{1/2},
	\end{align*}
	and then the first inequality in \eqref{Jspacelipschitz} follows
	from \eqref{diffudifferentxis} with the explicit constant \eqref{explicitK}.
	The second inequality in \eqref{Jspacelipschitz} is then just \eqref{K0lb}.
	The bound \eqref{Jbd} comes from \eqref{Jspacelipschitz} with $a_{2}=a$
	and $a_{1}=0$.
\end{proof}
Given the regularity results in \lemref[s]{Jtimelipschitz} and~\ref{lem:Jspacelipschitz},
the compactness of the family $(J_{\eps})$ is straightforward.
\begin{proof}[Proof of \propref{Jcompact}.]
	By \lemref[s]{Jtimelipschitz} and~\ref{lem:Jspacelipschitz}, along
	with a simple modification of the Arzelà--Ascoli theorem to account
	for the second term on the r.h.s. of  \eqref{Jtimelipschitz}
	(see e.g.~\cite[Lemma A.4]{DD20}), we can extract a suitable subsequence
	and pass to the limit on any rectangular subset of $[0,2]\times\mathbf{R}_{\ge0}$ of the form $[0,2]\times [0,M]$, with $M>0$. Sending $M\to\infty$ so that the rectangles exhaust $[0,2]\times\mathbf{R}_{\ge0}$
	and using a diagonalization argument, we obtain the desired limit
	and convergence \eqref{Jsubsequentiallimit}.
\end{proof}
Now we turn to the proof of \propref{varapproxbyJ}. We first prove
the following intermediate result.
\begin{lem}
	\label{lem:Vtildegoodapprox}Define
	\begin{equation}
	\begin{aligned}
		V_{\eps,a,m}&=\frac{1}{\log\eps^{-1}}\int_{0}^{s_{m-1}'-s_{m}}\frac{J_{\eps}(2-\log_{\eps}s,a)^{2}}{s_{m-1}'-s_{m}'-s+\eps^{2}/2}\,\dif s\\&=\frac{1}{2\pi\log\eps^{-1}}\int_{0}^{s_{m-1}'-s_{m}}\frac{\mathbf{E}\sigma(u_{\eps,a}(s,X))^{2}}{2(s_{m-1}'-s_{m}'-s)+\eps^{2}}\,\dif s.\end{aligned}\label{eq:Vtildedef}
	\end{equation}
	Then we have, for any fixed $T_{0}<\infty$, that
	\begin{equation}
		\adjustlimits\lim_{\substack{\eps\downarrow0\vphantom{T\in[0,T_{0}]}\\
		\vphantom{M_{1}(\eps,T)\le m\le M_{2}(\eps)}
		}
		}\sup_{\substack{T\in[0,T_{0}]\\
		M_{1}(\eps,T)+1\le m\le M_{2}(\eps)
		}
		}\frac{|V_{\eps,a,m}-\Var Z_{\eps,a,m}|}{a^{2}\delta_{\eps}}=0.\label{eq:Vtildegoodapprox}
	\end{equation}
\end{lem}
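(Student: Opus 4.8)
The plan is to compute $\Var Z_{\eps,a,m}$ exactly and compare it with $V_{\eps,a,m}$ term by term. Write $\tau=s_{m-1}'-s_m$ and $\tau'=s_{m-1}'-s_m'$, so that $\tau'-\tau=s_m-s_m'>0$. Substituting the mild formulation \eqref{umild} of $u_{\eps,a}$ into \eqref{Zdef} and using the heat semigroup property, one finds
\[
	Z_{\eps,a,m}=a+\frac{1}{\sqrt{\log\eps^{-1}}}\int_0^{\tau}\int G_{\tau'-s}(X-y)\sigma(u_{\eps,a}(s,y))\,\dif W^\eps(s,y),
\]
so $\mathbf{E}Z_{\eps,a,m}=a$ and, by the Itô isometry with covariance \eqref{covarianceoperator},
\[
	\Var Z_{\eps,a,m}=\frac{1}{\log\eps^{-1}}\int_0^{\tau}\iint G_{\tau'-s}(X-y_1)G_{\tau'-s}(X-y_2)G_{\eps^2}(y_1-y_2)\,\mathbf{E}\big[\sigma(u_{\eps,a}(s,y_1))\sigma(u_{\eps,a}(s,y_2))\big]\,\dif y_1\,\dif y_2\,\dif s.
\]
A short computation with \eqref{Gaussianproduct} gives $\iint G_t(X-y_1)G_t(X-y_2)G_{\eps^2}(y_1-y_2)\,\dif y_1\,\dif y_2=\frac{1}{2\pi(2t+\eps^2)}$, so replacing the $\sigma$-correlation in the display above by $\mathbf{E}\sigma(u_{\eps,a}(s,X))^2$ produces exactly $V_{\eps,a,m}$. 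Hence $\Var Z_{\eps,a,m}-V_{\eps,a,m}=(\log\eps^{-1})^{-1}\int_0^{\tau}D(s)\,\dif s$, where $D(s)$ is the same triple integral with $\mathbf{E}[\sigma(u_{\eps,a}(s,y_1))\sigma(u_{\eps,a}(s,y_2))-\sigma(u_{\eps,a}(s,X))^2]$ in place of the $\sigma$-correlation.

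The crux is the pointwise bound
\[
	|D(s)|\le\frac{3\beta^2\eta_\eps^2a^2}{2\pi\big(2(\tau'-s)+\eps^2\big)},
\]
where $\eta_\eps$ denotes the supremum in \corref{regularity} (so $\eta_\eps\downarrow0$ and $(\mathbf{E}[u_{\eps,a}(t,x_1)-u_{\eps,a}(t,x_2)]^2)^{1/2}\le\eta_\eps a(1+\eps^{-1}|x_1-x_2|)$ for $t\in[0,T_0]$). The naive estimate --- bounding each difference $\sigma(u_{\eps,a}(s,y_i))-\sigma(u_{\eps,a}(s,X))$ by $\eta_\eps(1+\eps^{-1}|y_i-X|)$ --- fails, because under the Gaussian weight $|y_i-X|$ is of order $\sqrt{\tau'-s}\gg\eps$, which makes the resulting bound diverge. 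Instead I use the spatial stationarity of $u_{\eps,a}(s,\cdot)$: the correlation $\mathbf{E}[\sigma(u_{\eps,a}(s,y_1))\sigma(u_{\eps,a}(s,y_2))]$ depends on $(y_1,y_2)$ only through $y_1-y_2$, and (again by stationarity, which forces $\mathbf{E}\sigma(u_{\eps,a}(s,X+v))^2=\mathbf{E}\sigma(u_{\eps,a}(s,X))^2$) one has the identity
\[
	\mathbf{E}\big[\sigma(u_{\eps,a}(s,y_1))\sigma(u_{\eps,a}(s,y_2))-\sigma(u_{\eps,a}(s,X))^2\big]=-\tfrac12\mathbf{E}\big[(\sigma(u_{\eps,a}(s,X+y_1-y_2))-\sigma(u_{\eps,a}(s,X)))^2\big]=:g(y_1-y_2),
\]
so that $|g(v)|\le\tfrac12\beta^2\mathbf{E}[(u_{\eps,a}(s,X+v)-u_{\eps,a}(s,X))^2]\le\tfrac12\beta^2\eta_\eps^2a^2(1+\eps^{-1}|v|)^2$ by \corref{regularity}. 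Changing variables to $w=\tfrac12(y_1+y_2)$, $v=y_1-y_2$, the $w$-integral evaluates to $1$ and, after another application of \eqref{Gaussianproduct}, $D(s)=\frac{1}{2\pi(2(\tau'-s)+\eps^2)}\int G_{\tilde\theta_s}(v)g(v)\,\dif v$ with $\tilde\theta_s=\frac{2(\tau'-s)\eps^2}{2(\tau'-s)+\eps^2}\le\eps^2$. Since $\int G_{\tilde\theta_s}(v)(1+\eps^{-1}|v|)^2\,\dif v\le2+4\eps^{-2}\tilde\theta_s\le6$, the claimed bound on $|D(s)|$ follows.

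Integrating this bound,
\[
	|\Var Z_{\eps,a,m}-V_{\eps,a,m}|\le\frac{3\beta^2\eta_\eps^2a^2}{2\pi\log\eps^{-1}}\int_0^{\tau}\frac{\dif s}{2(\tau'-s)+\eps^2}=\frac{3\beta^2\eta_\eps^2a^2}{4\pi\log\eps^{-1}}\log\frac{2\tau'+\eps^2}{2(s_m-s_m')+\eps^2}.
\]
Using $s_m-s_m'=\eps^{m\delta_\eps}(1-\eps^{\gamma_\eps})$, $\tau'=\eps^{m\delta_\eps+\gamma_\eps}(\eps^{-\delta_\eps}-1)$, and \eqref{epsgammaepslt12} together with $\gamma_\eps<\delta_\eps$ (which give $1-\eps^{\gamma_\eps}\ge\tfrac12$ and $s_m-s_m'<\tau'$), the logarithm is at most $\log2+\delta_\eps\log\eps^{-1}$, uniformly over the admissible $m$ and $T$; in this range one also has $\tau'\le T_0\eps^{\gamma_\eps}\le T_0$ when $m\ge M_1(\eps,T)+2$, which legitimizes the use of \corref{regularity} at the times $s\le\tau\le T_0$ (the initial step $m=M_1(\eps,T)+1$ is identical once $s_{M_1(\eps,T)}'$ is read as $T$, since there $w^{(M_1(\eps,T))}=u_{\eps,a}$ runs from time $0$ and the relevant durations stay $\le T_0$). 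Dividing by $a^2\delta_\eps$,
\[
	\frac{|\Var Z_{\eps,a,m}-V_{\eps,a,m}|}{a^2\delta_\eps}\le\frac{3\beta^2\eta_\eps^2}{4\pi}\Big(1+\frac{\log2}{\delta_\eps\log\eps^{-1}}\Big)\xrightarrow[\eps\downarrow0]{}0,
\]
since $\eta_\eps\to0$ and $\delta_\eps\log\eps^{-1}\to\infty$ by \eqref{deltagammabd}. The one genuine obstacle is the pointwise estimate on $D(s)$ in the second paragraph: the decisive cancellation is the stationarity fact that the relevant spatial increment is $y_1-y_2$ (of order $\eps$ under $G_{\eps^2}$) rather than $y_i-X$ (of the much larger order $\sqrt{\tau'-s}$), and without it the error does not vanish on the scale $\delta_\eps$.
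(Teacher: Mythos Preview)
Your proof is correct and follows essentially the same strategy as the paper. Both arguments write $Z_{\eps,a,m}$ as a stochastic integral, compute the variance as a triple integral, and then use spatial stationarity of $u_{\eps,a}$ to see that the discrepancy with $V_{\eps,a,m}$ depends only on the increment $y_1-y_2$ (which, under the weight $G_{\eps^2}$, lives at scale $\eps$), after which \corref{regularity} finishes the job. The only differences are in bookkeeping: the paper bounds $|\mathbf{E}\sigma(u(s,y_1))\sigma(u(s,y_2))-\mathbf{E}\sigma(u(s,y_1))^2|$ via Cauchy--Schwarz, obtaining a linear factor $(1+\eps^{-1}|y_1-y_2|)$, and then controls the resulting spatial integral by Young's convolution inequality; you instead use the exact identity $\mathbf{E}[\sigma(u(X))\sigma(u(X+v))]-\mathbf{E}\sigma(u(X))^2=-\tfrac12\mathbf{E}[(\sigma(u(X+v))-\sigma(u(X)))^2]$, which yields the quadratic factor $(1+\eps^{-1}|v|)^2$, and evaluate the spatial integral explicitly after the change of variables $(y_1,y_2)\to(w,v)$. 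Your parenthetical about reading $s_{M_1(\eps,T)}'$ as $T$ at the first step is a correct observation about an implicit convention in the definition of $Z_{\eps,a,m}$, and in either reading the relevant time interval has length at most $T_0$, so \corref{regularity} applies.
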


\begin{proof}
	We can first write (recalling \eqref{Zdef})
	\begin{align}
		Z_{\eps,a,m} & =a+\frac{1}{\sqrt{\log\eps^{-1}}}\int G_{s_{m}-s_{m}'}(X-z)\int_{0}^{s_{m-1}'-s_{m}}\int G_{s_{m-1}'-s_{m}-s}(z-y)\sigma(u_{\eps,a}(s,y))\,\dif W^{\eps}(s,y)\,\dif z\nonumber \\
		             & =a+\frac{1}{\sqrt{\log\eps^{-1}}}\int_{0}^{s_{m-1}'-s_{m}}\int G_{s_{m-1}'-s_{m}'-s}(X-y)\sigma(u_{\eps,a}(s,y))\,\dif W^{\eps}(s,y).\label{eq:Zstochasticintegral}
	\end{align}
	Therefore, we have
	\begin{equation}
		\begin{aligned}\Var Z_{\eps,a,m} & =\frac{1}{\log\eps^{-1}}\int_{0}^{s_{m-1}'-s_{m}}\iint G_{\eps^{2}}(y_{1}-y_{2})\mathbf{E}\prod_{i=1}^{2}\left(G_{s_{m-1}'-s_{m}'-s}(X-y_{i})\sigma(u_{\eps,a}(s,y_{i}))\right)\,\dif y_{1}\,\dif y_{2}\,\dif s.\end{aligned}
		\label{eq:VarZ}
	\end{equation}
	Now we have, by spatial homogeneity, that
	\[
		\mathbf{E}\sigma(u_{\eps,a}(s,y_{1}))\sigma(u_{\eps,a}(s,y_{2}))=\mathbf{E}\sigma(u_{\eps,a}(s,X))\sigma(u_{\eps,a}(s,X+y_{1}-y_{2})).
	\]
	We also have (using the Cauchy--Schwarz inequality) that
	\begin{align*}
		 & \left|\mathbf{E}\sigma(u_{\eps,a}(s,y_{1}))\sigma(u_{\eps,a}(s,y_{2}))-\mathbf{E}\sigma(u_{\eps,a}(s,y_{1}))^{2}\right|\\&\qquad\le\mathbf{E}\sigma(u_{\eps,a}(s,y_{1}))|\sigma(u_{\eps,a}(s,y_{1}))-\sigma(u_{\eps,a}(s,y_{2}))| \\
		 & \qquad\le\beta^{2}\left(\mathbf{E}u_{\eps,a}(s,y_{1})^{2}\right)^{1/2}\left(\mathbf{E}[u_{\eps,a}(s,y_{1})-u_{\eps,a}(s,y_{2})]^{2}\right)^{1/2},
	\end{align*}
	so by \eqref{L2bound} and \corref{regularity} we have a function
	$f$ satisfying $\lim\limits _{\eps\downarrow0}f(\eps)=0$ and
	\begin{equation}
		\sup_{\substack{s\in[0,{T_0}]\\
		y_{1},y_{2}\in\mathbf{R}^{2}
		}
		}\left|\mathbf{E}\sigma(u_{\eps,a}(s,y_{1}))\sigma(u_{\eps,a}(s,y_{2}))-\mathbf{E}\sigma(u_{\eps,a}(s,y_{1}))^{2}\right|\le a^{2}(1+\eps^{-1}|y_{1}-y_{2}|)f(\eps)\label{eq:diffofcorrs}
	\end{equation}
	for all $y_{1},y_{2}\in\mathbf{R}^{2}$ and all $a\ge0$. Now we note
	that
	\begin{align}
		\frac{1}{\log\eps^{-1}} & \int_{0}^{s_{m-1}'-s_{m}}\iint G_{s_{m-1}'-s_{m}'-s}(X-y)G_{s_{m-1}'-s_{m}'-s}(X-y')G_{\eps^{2}}(y-y')\mathbf{E}\sigma(u_{\eps,a}(s,y))^{2}\,\dif y\,\dif y'\,\dif s                                                                             \notag \\
		                        & =\frac{1}{\log\eps^{-1}}\int_{0}^{s_{m-1}'-s_{m}}\int G_{s_{m-1}'-s_{m}'-s}(X-y)G_{s_{m-1}'-s_{m}'-s+\eps^{2}}(X-y)\mathbf{E}\sigma(u_{\eps,a}(s,y))^{2}\,\dif y\,\dif s                                                                         \notag \\
		                        & =\frac{1}{2\pi\log\eps^{-1}}\int_{0}^{s_{m-1}'-s_{m}}\frac{\mathbf{E}\sigma(u_{\eps,a}(s,X))^{2}}{2(s_{m-1}'-s_{m}'-s)+\eps^{2}}\,\dif s=V_{\eps,a,m}.
		\label{eq:rewriteVepsxim}
	\end{align}
	where in the second-to-last identity we %
	used spatial homogeneity. Subtracting \eqref{VarZ}
	and \eqref{rewriteVepsxim} and applying \eqref{diffofcorrs}, we
	have
	\begin{equation}
		\begin{aligned}| & V_{\eps,a,m}-\Var Z_{\eps,a,m}|                                                                                                                                                                                   \\
			  & \le\frac{f(\eps)a^{2}}{\log\eps^{-1}}\int_{0}^{s_{m-1}'-s_{m}}\iint\left(\prod_{i=1}^{2}G_{s_{m-1}'-s_{m}'-s}(X-y_{i})\right)G_{\eps^{2}}(y_{1}-y_{2})(1+\eps^{-1}|y_{1}-y_{2}|)\,\dif y_{1}\,\dif y_{2}\,\dif s.
		\end{aligned}
		\label{eq:VVzbd}
	\end{equation}
	If we define {$h(r)=(2\pi)^{-1}e^{-\frac{r^2}{2}}(1+r)$ for $r\geq0$}, then the last double integral
	is equal to
	\begin{align*}
		\iint & \left(\prod_{i=1}^{2}G_{s_{m-1}'-s_{m}'-s}(X-y_{i})\right) \eps^{-2}h(\eps^{-1}|y_{1}-y_{2}|)\,\dif y_{1}\,\dif y_{2}                                \\
		      & \le\left(\int\eps^{-2}h(\eps^{-1}|y|)\,\dif y\right)\int G_{s_{m-1}'-s_{m}'-s}(X-y)^{2}\,\dif y=\frac{\int h(|y|)\,\dif y}{4\pi(s_{m-1}'-s_{m}'-s)},
	\end{align*}
	where the inequality is Young's convolution inequality. Substituting
	this back into \eqref{VVzbd}, we have
	\begin{align*}
		|V_{\eps,a,m}-\Var Z_{\eps,a,m}| & \le\frac{f(\eps)a^{2}\int h(|y|)\,\dif y}{4\pi\log\eps^{-1}}\int_{0}^{s_{m-1}'-s_{m}}\frac{1}{s_{m-1}'-s_{m}'-s}\,\dif s                                                                                                                                \\
		                                 & =\frac{f(\eps)a^{2}\int h(|y|)\,\dif y}{4\pi\log\eps^{-1}}\log\frac{s_{m-1}'-s_{m}'}{s_{m}-s_{m}'}=\frac{f(\eps)a^{2}\int h(|y|)\,\dif y}{4\pi\log\eps^{-1}}\log\frac{\eps^{{\gamma_\eps}-\delta_{\eps}}-\eps^{\gamma_{\eps}}}{1-\eps^{\gamma_{\eps}}}.
	\end{align*}
	From this and \eqref{deltagammabd} we obtain \eqref{Vtildegoodapprox}.
\end{proof}
In \lemref{Jtimelipschitz} we derived the regularity of $J_{\eps}$
in time (where time is taken on an exponential scale). Since $\log_{{\eps}}s$
varies slowly on most of the interval $[0,s_{m-1}'-s_{m}]$, it should
be plausible that we could approximate $J_{\eps}(2-\log_{\eps}s,a)^{2}$
by
{\[
	J_{\eps}(2-\log_{\eps}{s_{m-1}'},a)^{2}=J_{\eps}(2-(m-1)\delta_{\eps}-\gamma_\eps,a)^{2}\approx J_{\eps}(2-(m-1)\delta_{\eps},a)^{2}
\]}
in \eqref{Vtildedef}. Indeed we can, and that is how we will prove \propref{varapproxbyJ}.
\begin{proof}[Proof of \propref{varapproxbyJ}.]
	In light of \eqref{Vtildegoodapprox},  \lemref{Jtimelipschitz} and \eqref{Jbd}, it suffices to show that
	\[
		\adjustlimits\lim_{\substack{\eps\downarrow0\vphantom{T\in[0,T_{0}]}\\
		\vphantom{M_{1}(\eps,T)\le m\le M_{2}(\eps)}
		}
		}\sup_{\substack{T\in[0,T_{0}]\\
		M_{1}(\eps,T)+1\le m\le M_{2}(\eps)
		}
		}a^{-2}\left|J_{\eps}(2-(m-1)\delta_{\eps}{-\gamma_\eps},a)^{2}-\delta_{\eps}^{-1}V_{\eps,a,m}\right|=0.
	\]
	We will compare both $J_{\eps}(2-(m-1)\delta_{\eps}{-\gamma_\eps},a)^{2}$ and $\delta_{\eps}^{-1}V_{\eps,a,m}$
	to the intermediate quantity%
	\begin{align*}
		\tilde{V}_{\eps,a,m} & \coloneqq\frac{J_{\eps}(2-(m-1)\delta_{\eps}-\gamma_\eps,a)^{2}}{\log\eps^{-1}}\int_{0}^{s_{m-1}'-s_{m}}\frac{1}{s_{m-1}'-s_{m}'-s+\eps^{2}/2}\,\dif s                                                                 \\
		                     & =\frac{J_{\eps}(2-(m-1)\delta_{\eps}-\gamma_\eps,a)^{2}}{\log\eps^{-1}}\log\frac{\eps^{{\gamma_\eps}-\delta_{\eps}}-\eps^{\gamma_{\eps}}+\eps^{2-m\delta_{\eps}}/2}{1-\eps^{\gamma_{\eps}}+\eps^{2-m\delta_{\eps}}/2}.
	\end{align*}
	First, we have
	\begin{align*}
		 & \left|J_{\eps}(2-(m-1)\delta_{\eps}{-\gamma_\eps},a)^{2}-\delta_{\eps}^{-1}\tilde{V}_{\eps,a,m}\right|                                                                                                                                                      \\
		 & \qquad=J_{\eps}(2-(m-1)\delta_{\eps}{-\gamma_\eps},a)^{2}\left(1-\frac{1}{\delta_{\eps}\log\eps^{-1}}\log\frac{\eps^{{\gamma_\eps}-\delta_{\eps}}-\eps^{\gamma_{\eps}}+\eps^{2-m\delta_{\eps}}/2}{1-\eps^{\gamma_{\eps}}+\eps^{2-m\delta_{\eps}}/2}\right),
	\end{align*}
	and from this, \eqref{Jbd} of \lemref{Jspacelipschitz}, \eqref{deltagammabd},
	and \eqref{M2def} we have
	\begin{equation}
		\adjustlimits\lim_{\substack{\eps\downarrow0\vphantom{T\in[0,T_{0}]}\\
		\vphantom{M_{1}(\eps,T)\le m\le M_{2}(\eps)}
		}
		}\sup_{\substack{T\in[0,T_{0}]\\
		M_{1}(\eps,T)+1\le m\le M_{2}(\eps)
		}
		}a^{-2}\left|J_{\eps}(2-(m-1)\delta_{\eps}{-\gamma_\eps},a)^{2}-\delta_{\eps}^{-1}\tilde{V}_{\eps,a,m}\right|=0.\label{eq:JtoVtilde}
	\end{equation}

	On the other hand, we have by \eqref{Jbd} and \eqref{Jprelimbd}
	that
	\begin{align}
		 & \delta_{\eps}^{-1}\left|\tilde{V}_{\eps,a,m}-V_{\eps,a,m}\right|\nonumber                                                                                                                                                                                                                                                                       \\
		 & \quad\le\frac{1}{\delta_{\eps}\log\eps^{-1}}\int_{0}^{s_{m-1}'-s_{m}}\frac{\left|J_{\eps}(2-(m-1)\delta_{\eps}{-\gamma_\eps},a)^{2}-J_{\eps}(2-\log_{\eps}s,a)^{2}\right|}{s_{m-1}'-s_{m}'-s+\eps^{2}/2}\,\dif s\nonumber                                                                                                                       \\
		 & \quad\le\frac{a^2\beta^3K_0^3}{4\pi^{3/2}\delta_{\eps}(\log\eps^{-1})^{\frac{3}{2}}}\int_{0}^{s_{m-1}'-s_{m}}\frac{\sqrt{\log\frac{s_{m-1}'+\eps^{2}/2}{s+\eps^{2}/2}}+K_{0}}{s_{m-1}'-s_{m}'-s+\eps^{2}/2}\,\dif s\nonumber                                                                                                                    \\
		 & \quad\le\frac{a^2\beta^3K_0^3}{4\pi^{3/2}\delta_{\eps}(\log\eps^{-1})^{\frac{3}{2}}}\bigg[\int_{0}^{s_{m-1}'-s_{m}'}\frac{\log\frac{s_{m-1}'+\eps^{2}/2}{s+\eps^{2}/2}}{s_{m-1}'-s_{m}'-s+\eps^{2}/2}\,\dif s+(1+K_{0})(\log 2 + \delta_\eps \log\eps^{-1})\bigg].\label{eq:deltaVtildeV}
	\end{align}
	In the last inequality we used the elementary inequality $\sqrt{a}\le 1+a$ for all $a\ge 0$ as well as the explicit integral computation
	\begin{align*}
	 \int_{0}^{s_{m-1}'-s_{m}}\frac{\dif s}{s_{m-1}'-s_{m}'-s+\eps^{2}/2} &= \log\frac{s'_{m-1}-s_m'+\eps^2/2}{s_m-s_{m}'+\eps^{2}/2}\le \log\frac{s'_{m-1}-s_m'}{s_m-s_{m}'}=\log\frac{\eps^{(m-1)\delta_\eps+\gamma_\eps}-\eps^{m\delta_\eps+\gamma_\eps}}{\eps^{m\delta_\eps}-\eps^{m\delta_\eps+\gamma_\eps}}\\
	 &=\log\frac{\eps^{{\gamma_\eps}-\delta_{\eps}}-\eps^{\gamma_{\eps}}}{1-\eps^{\gamma_{\eps}}}\le \log 2 + \delta_\eps \log\eps^{-1},
	\end{align*} with the last inequality by \eqref{epsgammaepslt12}.

	For the first term in brackets on the right side of \eqref{deltaVtildeV}, we have by \lemref{logovertminuss}
	(applied with $t=s_{m-1}'-s_{m}'$ and $r=s_{m}'$) that
	\begin{align*}
		\int_{0}^{s_{m-1}'-s_{m}'}\frac{\log\frac{s_{m-1}'+\eps^{2}/2}{s+\eps^{2}/2}}{s_{m-1}'-s_{m}'-s+\eps^{2}/2}\,\dif s %
		 & \le\left(2+\log(1+2\eps^{-2}(s_{m-1}'-s_{m}'))\right)\left(1+\log\frac{s_{m-1}'+\eps^{2}/2}{s_{m-1}'-s_{m}'+\eps^{2}/2}\right) \\
		 & \le\left(2+\log(1+2\eps^{-2+(m-1)\delta_{\eps}+\gamma_\eps})\right)\left(1+\log\frac{1}{1-\eps^{\delta_{\eps}}}\right).
	\end{align*}
	The second bracketed factor goes to $1$ as $\eps\downarrow0$ (recalling
	\eqref{deltagammabd}) while the first factor is bounded by a constant
	times $\log\eps^{-1}$. %
	Using this in \eqref{deltaVtildeV}, we see that there is a constant
	$C<\infty$ so that
	\[
		a^{-2}\delta_{\eps}^{-1}\left|\tilde{V}_{\eps,a,m}-V_{\eps,a,m}\right|\le\frac{C}{\delta_{\eps}(\log\eps^{-1})^{\frac{1}{2}}}(1+\delta_{\eps}),
	\]
	and the right side goes to $0$ as $\eps\downarrow0$ (uniformly in
	$a$ and in $T\in[0,T_{0}]$) by \eqref{deltagammabd}. This and \eqref{JtoVtilde}
	imply \eqref{Japproximatesvariance}.
\end{proof}

\subsubsection{Higher moments}

For tightness purposes, we will also need an upper bound on a higher
moment of $Z_{\eps,b,m}$. Let $p>2$ be as in \propref{momentbound}.
\begin{prop}
	\label{prop:highermomentboundonZ}We have
	\begin{equation}
		\adjustlimits\limsup_{\substack{\eps\downarrow0\\
		\vphantom{m\in[M_{1}(\eps,T),M_{2}(\eps)]}
		}
		}\sup_{\substack{a>0\\
				M_{1}(\eps,T)+1\le m\le M_{2}(\eps)
			}
		}\frac{\mathbf{E}|Z_{\eps,a,m}|^{p}}{a^{p}\delta_{\eps}^{p/2}}<\infty.\label{eq:highermomentboundonZ}
	\end{equation}
\end{prop}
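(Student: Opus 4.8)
The plan is to combine the stochastic‑integral representation of $Z_{\eps,a,m}$ with the Burkholder--Davis--Gundy inequality. Write $\rho_m=s'_{m-1}-s'_m$ and $\tau_m=s'_{m-1}-s_m$, with $s_k,s'_k$ as in \eqref{sdef}. By the representation \eqref{Zstochasticintegral}, the process
\[
  N_\tau^{(m)}=\frac{1}{\sqrt{\log\eps^{-1}}}\int_0^\tau\int G_{\rho_m-s}(X-y)\sigma(u_{\eps,a}(s,y))\,\dif W^\eps(s,y),\qquad 0\le\tau\le\tau_m,
\]
is a continuous $L^2$‑martingale with $Z_{\eps,a,m}=a+N_{\tau_m}^{(m)}$ and quadratic variation
\[
  \langle N^{(m)}\rangle_\tau=\frac{1}{\log\eps^{-1}}\int_0^\tau\iint G_{\rho_m-s}(X-y_1)G_{\rho_m-s}(X-y_2)G_{\eps^2}(y_1-y_2)\,\sigma(u_{\eps,a}(s,y_1))\sigma(u_{\eps,a}(s,y_2))\,\dif y_1\,\dif y_2\,\dif s.
\]
Since $\mathbf{E}Z_{\eps,a,m}=a$, the proposition amounts to the bound $\mathbf{E}|N_{\tau_m}^{(m)}|^p\le Ca^p\delta_\eps^{p/2}$ with $C$ depending only on $\beta$ and $p$; this is the increment moment that enters the tightness argument for the chains $Y_{\eps,a,T}$.

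The main step is to estimate $\mathbf{E}\langle N^{(m)}\rangle_{\tau_m}^{p/2}$. Since $\sigma$ is $\beta$‑Lipschitz with $\sigma(0)=0$ and $u_{\eps,a}\ge0$, we have $0\le\sigma(u_{\eps,a})\le\beta u_{\eps,a}$, hence $\sigma(u_1)\sigma(u_2)\le\frac{\beta^2}{2}(u_1^2+u_2^2)$; using the symmetry of the kernel $G_{\rho_m-s}(X-y_1)G_{\rho_m-s}(X-y_2)G_{\eps^2}(y_1-y_2)$ under $y_1\leftrightarrow y_2$, then integrating out $y_2$ and applying the Gaussian product identity \eqref{Gaussianproduct}, we obtain
\[
  \langle N^{(m)}\rangle_{\tau_m}\le\frac{\beta^2}{2\pi\log\eps^{-1}}\int_0^{\tau_m}\frac{1}{2(\rho_m-s)+\eps^2}\int G_{\frac{(\rho_m-s)(\rho_m-s+\eps^2)}{2(\rho_m-s)+\eps^2}}(X-y)\,u_{\eps,a}(s,y)^2\,\dif y\,\dif s.
\]
Taking the $L^{p/2}$‑norm over the probability space (legitimate since $p>2$), then Minkowski's integral inequality in the $s$‑variable, Jensen's inequality in the spatial integral (against a probability density), spatial homogeneity, and the moment bound $\mathbf{E}u_{\eps,a}(s,X)^p\le K^pa^p$ of \propref{momentbound}, each factor $\bigl(\mathbf{E}[\int G_{\cdots}(X-y)u_{\eps,a}(s,y)^2\,\dif y]^{p/2}\bigr)^{2/p}$ is at most $(\mathbf{E}u_{\eps,a}(s,X)^p)^{2/p}\le K^2a^2$, giving
\[
  \bigl(\mathbf{E}\langle N^{(m)}\rangle_{\tau_m}^{p/2}\bigr)^{2/p}\le\frac{\beta^2K^2a^2}{4\pi\log\eps^{-1}}\int_0^{\tau_m}\frac{\dif s}{\rho_m-s+\eps^2/2}=\frac{\beta^2K^2a^2}{4\pi\log\eps^{-1}}\log\frac{s'_{m-1}-s'_m+\eps^2/2}{s_m-s'_m+\eps^2/2}.
\]
Because $\gamma_\eps<\delta_\eps$ we have $s'_{m-1}-s'_m\ge s_m-s'_m>0$, so this logarithm is at most $\log\frac{s'_{m-1}-s'_m}{s_m-s'_m}$, which is in turn at most $\delta_\eps\log\eps^{-1}$ by exactly the estimate already carried out in the proof of \lemref{wL2bound} (using $\eps^{\gamma_\eps}\le1/2$ from \eqref{epsgammaepslt12}). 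Hence $\mathbf{E}\langle N^{(m)}\rangle_{\tau_m}^{p/2}\le(\beta^2K^2/4\pi)^{p/2}a^p\delta_\eps^{p/2}$.

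It then remains to invoke Burkholder--Davis--Gundy for the continuous martingale $N^{(m)}$: $\mathbf{E}|Z_{\eps,a,m}-a|^p=\mathbf{E}|N_{\tau_m}^{(m)}|^p\le C_p\,\mathbf{E}\langle N^{(m)}\rangle_{\tau_m}^{p/2}\le C_p(\beta^2K^2/4\pi)^{p/2}a^p\delta_\eps^{p/2}$, uniformly over $a>0$, $T\in[0,T_0]$ and $M_1(\eps,T)+1\le m\le M_2(\eps)$; dividing by $a^p\delta_\eps^{p/2}$ and taking the supremum and then the $\limsup$ over $\eps\downarrow0$ yields \eqref{highermomentboundonZ}. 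The one point that needs care — and the main obstacle — is that \propref{momentbound} is stated only for times in $[0,T_0]$, whereas $\tau_m=s'_{m-1}-s_m$ can exceed $T_0$: for $T\ge1$ one has $M_1(\eps,T)=-1$, so for $m=M_1(\eps,T)+1$ the relevant time is $\tau_m\le s'_{-1}=\eps^{-(\delta_\eps-\gamma_\eps)}$, which tends to $\infty$. This is harmless because $\delta_\eps-\gamma_\eps\to0$, so $\frac{\beta^2}{4\pi}\cdot\frac{\log(1+2\eps^{-2-(\delta_\eps-\gamma_\eps)})}{\log\eps^{-1}}\to\frac{\beta^2}{2\pi}<1$; thus for $\eps$ small the subcriticality condition \eqref{eps0cond} holds with $T_0$ replaced by $\eps^{-(\delta_\eps-\gamma_\eps)}$, and the proofs of \propref{momentbound} and \propref{L2bound} then go through verbatim to give $\mathbf{E}u_{\eps,a}(s,X)^p\le K^pa^p$ for all $s\le\tau_m$ with $K$ uniform in $\eps$. (The whole argument is the $p$‑th‑moment analogue of the $L^2$ identity for $\Var Z_{\eps,a,m}$ used in the proof of \lemref{Vtildegoodapprox}, and could equivalently be organized around that $L^2$ computation.)
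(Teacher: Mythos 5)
Your argument is correct and takes essentially the same route as the paper's: represent $Z_{\eps,a,m}-a$ as a continuous $L^2$-martingale, apply Burkholder--Davis--Gundy, bound the quadratic variation via $|ab|\le\tfrac12(a^2+b^2)$ and the Gaussian product identity \eqref{Gaussianproduct}, and invoke the higher-moment bound of \propref{momentbound} together with the estimate $\log\frac{s_{m-1}'-s_m'}{s_m-s_m'}\le\delta_\eps\log\eps^{-1}$ already used in \lemref{wL2bound}. Your ``Minkowski in $s$, then Jensen in $y$'' treatment of $\|\langle N^{(m)}\rangle_{\tau_m}\|_{L^{p/2}}$ is a cosmetic reshuffling of the paper's one-shot Jensen application on the joint $(s,y)$ measure; both are valid and yield the same $(\delta_\eps\log\eps^{-1})^{p/2}$ factor. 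You are also right to read the statement as concerning the centered increment $\mathbf{E}|Z_{\eps,a,m}-a|^p$ (the uncentered form fails trivially, since $\mathbf{E}|Z_{\eps,a,m}|^p\ge a^p$ by Jensen). The point you raise at the end is a genuine subtlety that the paper's proof passes over: when $T\ge1$ one has $M_1(\eps,T)=-1$, so for $m=0$ the time argument $s_{-1}'-s_0=\eps^{-(\delta_\eps-\gamma_\eps)}-1$ eventually exceeds $T_0$, yet the paper invokes \propref{momentbound} with $\sup_{s\in[0,r]}\mathbf{E}u_{\eps,a}(s,y)^p\le K_0^pa^p$ for $r=s_{m-1}'-s_m$ as if all times lay in $[0,T_0]$. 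Your remedy --- observe that $\frac{\beta^2}{4\pi}\cdot\frac{\log(1+2\eps^{-2-(\delta_\eps-\gamma_\eps)})}{\log\eps^{-1}}\to\frac{\beta^2}{2\pi}<1$, so the subcriticality parameter stays uniformly bounded away from $1$ for small $\eps$ and the moment bound extends over the slightly enlarged horizon $t\le\eps^{-(\delta_\eps-\gamma_\eps)}$ with a uniform constant --- is correct and closes that gap.
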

\begin{proof}
	Fix $\eps,m$ and define the martingale
	\[
		Z(r)=a+\frac{1}{\sqrt{\log\eps^{-1}}}\int_{0}^{r}\int G_{s_{m-1}'-s_{m}'-s}(X-y)\sigma(u_{\eps,a}(s,y))\,\dif W^{\eps}(s,y), \quad\quad r\geq0,
	\]
	so by \eqref{Zstochasticintegral} we have $Z_{\eps,a,m}=Z(s_{m-1}'-s_{m})$.
	The quadratic variation process is
	\begin{align*}
		\langle Z\rangle(r) & =\frac{1}{\log\eps^{-1}}\int_{0}^{r}\iint G_{\eps^{2}}(y_{1}-y_{2})\prod_{i=1}^{2}\left(G_{s_{m-1}'-s_{m}'-s}(X-y_{i})\sigma(u_{\eps,a}(s,y_{i}))\right)\,\dif y_{1}\,\dif y_{2}\,\dif s.
	\end{align*}
	By the Burkholder--Davis--Gundy inequality (see e.g.~\cite[Proposition 4.4]{Kho14}),
	we have a constant $C_{p}<\infty$ so that
	\begin{equation}
		\mathbf{E}|Z_{\eps,a,m}|^{p}\le C_{p}\mathbf{E}[\langle Z\rangle(s_{m-1}'-s_{m})]^{p/2}.\label{eq:applyBDG}
	\end{equation}
	By the inequality
	\[
		|\sigma(u_{\eps,a}(s,y_1))\sigma(u_{\eps,a}(s,y_2))| \leq \frac{\beta^2}{2}(u_{\eps,a}(s,y_1)^2+u_{\eps,a}(s,y_2)^2),
	\]
	we can estimate the quadratic variation as
	\begin{align*}
		\langle Z\rangle(r) & \le\frac{\beta^{2}}{\log\eps^{-1}}\int_{0}^{r}\int G_{s_{m-1}'-s_{m}'-s}(X-y)G_{s_{m-1}'-s_{m}'-s+\eps^{2}}(X-y)u_{\eps,a}(s,y)^{2}\,\dif y\,\dif s                                                                           \\
		                    & =\frac{\beta^{2}}{4\pi\log\eps^{-1}}\int_{0}^{r}\frac{1}{s_{m-1}'-s_{m}'-s+\eps^{2}/2}\int G_{\frac{(s_{m-1}'-s_{m}'-s)(s_{m-1}'-s_{m}'-s+\eps^{2})}{2(s_{m-1}'-s_{m}'-s)+\eps^{2}}}(X-y)u_{\eps,a}(s,y)^{2}\,\dif y\,\dif s,
	\end{align*}
	where we used \eqref{Gaussianproduct}  for the above ``=''.
	By Jensen's inequality we have
	\begin{align*}
		\langle Z\rangle(r)^{p/2} & \le\frac{\beta^{p}}{(4\pi\log\eps^{-1})^{p/2}}\left(\int_{0}^{r}\frac{1}{s_{m-1}'-s_{m}'-s+\eps^{2}/2}\,\dif s\right)^{p/2-1}\cdot                                                                  \\
		                          & \qquad\cdot\int_{0}^{r}\int\frac{1}{s_{m-1}'-s_{m}'-s+\eps^{2}/2}G_{\frac{(s_{m-1}'-s_{m}'-s)(s_{m-1}'-s_{m}'-s+\eps^{2})}{2(s_{m-1}'-s_{m}'-s)+\eps^{2}}}(X-y)u_{\eps,a}(s,y)^{p}\,\dif y\,\dif s  \\
		                          & \le\frac{\beta^{p}}{(4\pi\log\eps^{-1})^{p/2}}\left(\log\frac{s_{m-1}'-s_{m}'}{s_{m-1}'-s_{m}'-r}\right)^{p/2-1}\cdot                                                                               \\
		                          & \qquad\cdot\int_{0}^{r}\int\frac{1}{s_{m-1}'-s_{m}'-s+\eps^{2}/2}G_{\frac{(s_{m-1}'-s_{m}'-s)(s_{m-1}'-s_{m}'-s+\eps^{2})}{2(s_{m-1}'-s_{m}'-s)+\eps^{2}}}(X-y)u_{\eps,a}(s,y)^{p}\,\dif y\,\dif s.
	\end{align*}
	Taking expectations and using spatial homogeneity, we have
	\begin{align*}
		\mathbf{E}\langle Z\rangle(r)^{p/2} & \le\frac{\beta^{p}}{(4\pi\log\eps^{-1})^{p/2}}\left(\log\frac{s_{m-1}'-s_{m}'}{s_{m-1}'-s_{m}'-r}\right)^{p/2}\sup_{s\in[0,r]}\mathbf{E}u_{\eps,a}(s,y)^{p}.
	\end{align*}
	Substituting $r=s_{m-1}'-s_{m}$ and recalling \eqref{applyBDG} and
	\propref{momentbound}, we have
	\begin{align*}
		\mathbf{E}|Z_{\eps,a,m}|^{p} & \le\frac{\beta^{p}C_{p}K_{0}^{p}a^{p}}{(4\pi\log\eps^{-1})^{p/2}}\left(\log\frac{s_{m-1}'-s_{m}'}{s_{m}-s_{m}'}\right)^{p/2}=\frac{\beta^{p}C_{p}K_{0}^{p}a^{p}}{(4\pi\log\eps^{-1})^{p/2}}\left(\log\frac{\eps^{{\gamma_\eps}-\delta_{\eps}}-\eps^{\gamma_{\eps}}}{1-\eps^{\gamma_{\eps}}}\right)^{p/2}.
	\end{align*}
	From this and \eqref{deltagammabd} we see \eqref{highermomentboundonZ}.
\end{proof}

\section{Proof of Theorem~\ref{thm:convergence}\label{sec:convergence}}

In this section we complete the proof of \thmref{convergence}. The key remaining step is to show the convergence of the Markov chain defined in \secref{discretemartingale} to a continuous diffusion. The technology for doing this is well-known, through the martingale problem of Stroock and Varadhan. We will essentially use \cite[Theorem~11.2.3]{SV06} as a black box, but we state a special case in a form convenient for us in \appendixref{MCtodiffusion}.
\begin{proof}[Proof of \thmref{convergence}.]
	Suppose that $\eps_{k}\downarrow0$ and $J:[0,2]\times\mathbf{R}_{\ge0}\to\mathbf{R}_{\ge0}$
	are such that
	\begin{equation}
		J_{\eps_{k}}|_{[0,2]\times\mathbf{R}_{\ge0}}\to J\label{eq:Jepskconv}
	\end{equation}
	uniformly on compact subsets of $[0,2]\times\mathbf{R}_{\ge0}$.
	(These are the subsequential limits that are guaranteed to exist by \propref{Jcompact}.)
	By \lemref{Jspacelipschitz}, this implies in particular that $J$
	is uniformly Lipschitz in its second argument. For $Q\in[0,2]$ and
	$a\ge0$, we consider the stochastic differential equation
	\begin{align}
		\dif\tilde{\Xi}_{a,Q}^{J}(q) & =J(2-q,\tilde{\Xi}_{a,Q}^{J}(q))\dif B(q),\qquad q\in(2-Q,2];\label{eq:dXitilde} \\
		\tilde{\Xi}_{a,Q}^{J}(2-Q)   & =a,\label{eq:Xitildeic}
	\end{align}
	where $B$ is a standard Brownian motion. Since $J$ is Lipschitz in the spatial variable, the problem \eqref{dXitilde}--\eqref{Xitildeic}
	has a unique strong solution (given $Q$ and $J$). For the moment, the limit $J$ may depend on the sequence $\{\eps_k\}$, as may the solution to \eqref{Xitildeic}.

	Suppose that $\{Q_{\eps}\in[0,2]\}_{\eps>0}$ is such that \begin{equation}Q\coloneqq\lim_{\eps\downarrow0}Q_{\eps}\label{eq:QktoQ}\end{equation}
	exists. Define $T_{\eps_k}=\eps_k^{2-Q_{\eps_k}}$. We claim that
	\begin{equation}
		u_{\eps_{k},a}(T_{\eps_k},X)\xrightarrow[k\to\infty]{\mathrm{law}}\tilde{\Xi}_{a,Q}^{J}(2).\label{eq:uepskxi-1}
	\end{equation}
	By \propref{approxubymarkovchain}, it suffices to show that
	\begin{equation}
		Y_{\eps_{k},a,T_{\eps_k}}(M_{2}(\eps_{k}))\xrightarrow[k\to\infty]{\mathrm{law}}\tilde{\Xi}_{a,Q}^{J}(2).\label{eq:MCconverges}
	\end{equation}
	We now explain how \eqref{MCconverges} follows from \thmref{MCtodiffusion} with $A_1=2-Q$, $A_2=2$, and $L(q,b)=J(2-q,b)$. From \eqref{M1def}--\eqref{M2def} and \eqref{QktoQ} we have $\delta_{\eps_k}M_1(\eps_k,T_{\eps_k})\to 2-Q$ and $\delta_{\eps_k}M_2(\eps_k )\to 2$  as $k\to\infty$. The condition \eqref{VargoestoL} is verified by \propref{varapproxbyJ}, while the condition \eqref{momentboundforSV} is verified by \propref{highermomentboundonZ}. Thus \thmref{MCtodiffusion} applies and we obtain \eqref{MCconverges} and thus \eqref{uepskxi-1}.

	We note that the family of random variables $\{\sigma(u_{\eps_{k},b}(T_{\eps_k},X))^{2}\}_{k\ge1}$
	is uniformly integrable by the $p>2$ moment bound in \propref{momentbound},
	so from \eqref{uepskxi-1} we can derive
	\begin{equation}
		J(Q,a)=\lim_{k\to\infty}J_{\eps_{k}}(Q,a)=\lim_{k\to\infty}\frac{1}{2\sqrt{\pi}}\left(\mathbf{E}\sigma(u_{\eps_{k},a}(T_{\eps_k},X))^{2}\right)^{1/2}=\frac{1}{2\sqrt{\pi}}\left(\mathbf{E}\sigma(\tilde{\Xi}_{a,Q}^{J}(2))\right)^{1/2}.\label{eq:Jproblem-derived}
	\end{equation}
	The problem \eqref{dXitilde}, \eqref{Xitildeic}, \eqref{Jproblem-derived}
	agrees with the problem \eqref{dXi-intro}--\eqref{Jdef-intro} by
	the change of variables
	\begin{equation}
		\Xi_{a,Q}(q)=\tilde{\Xi}_{a,Q}^J(q+2-Q).\label{eq:trivchgvar}
	\end{equation}
	Note also that
	\[
		J(Q,0)=\lim_{k\to\infty}J_{\eps_{k}}(Q,0)=0,
	\]
	for all $Q\in[0,2]$, and that
	\[
		\Lip J(Q,\cdot)\le\limsup_{k\to\infty}\Lip J_{\eps_{k}}(Q,\cdot)\le\limsup_{k\to\infty}\left(\frac{4\pi}{\beta^{2}}-\frac{\log(1+2\eps_{k}^{-q})}{\log\eps_{k}^{-1}}\right)^{-1/2}=(4\pi/\beta^{2}-q)^{-1/2}
	\]
	by \lemref{Jspacelipschitz}. Therefore, $J$ satisfies both conditions
	of \thmref{wellposedness}, and thus $J$ is uniquely characterized
	by the properties we have established for it. By \propref{Jcompact},
	this means that in fact
	\[
		\lim_{\eps\downarrow0}J_{\eps}|_{[0,2]\times\mathbf{R}_{\ge0}}=J
	\]
	uniformly on compact subsets of $[0,2]\times\mathbf{R}_{\ge0}$, so the limiting procedure above does not depend on the specific choice of $\{\eps_k\}$.
	By the same argument as that leading to \eqref{uepskxi-1}, we
	have
	\begin{equation}
		u_{\eps,a}(\eps^{2-Q_{\eps}},X)\xrightarrow[\eps\to0]{\mathrm{law}}\tilde{\Xi}_{a,Q}^J(2)\overset{\mathrm{law}}{=}\Xi_{a,Q}(Q).\label{eq:convergencegeneralQ}
	\end{equation}
	In particular, for any $T$ independent of $\eps$, taking $Q_{\eps}=2-\log_{\eps}T\to Q=2$, we have
	\[
		u_{\eps,a}(T,X)\xrightarrow[\eps\to0]{\mathrm{law}}\Xi_{a,2}(2),
	\]
	as claimed.
\end{proof}

\begin{rem}\label{rem:ew}
	Now we are able to prove the convergence of the variance of the random variable
	\[
		\mathcal{U}_{\eps,a,T}(g):=\sqrt{\log\eps^{-1}}\int [u_{\eps,a}(T,x) -a]g(x) \dif x,
	\]
	where $T>0$ and a Schwartz function $g$ are fixed. By the mild formulation \eqref{umild}, recalling that $*$ denotes the spatial convolution, we have 
	\begin{align}
		\mathbf{E} & \mathcal{U}_{\eps,a,T}(g)^2\notag                                                                                                                                                 \\&=\mathbf{E}\left|\int_0^T\int G_{T-s}* g(y) \sigma(u_{\eps,a}(s,y))\dif W^{\eps}(s,y)\right|^2\notag\\
		           & =\int_0^T\iint  G_{T-s}* g(y_1)G_{T-s}* g(y_2) G_{\eps^2}(y_1-y_2) \mathbf{E}\sigma(u_{\eps,a}(s,y_1))\sigma(u_{\eps,a}(s,y_2))  \dif y_1 \dif y_2 \dif s\notag                   \\
		           & =\int_0^T\iint  G_{T-s}* g(y_1)G_{T-s}* g(y_1+\eps y_2) G_{1}(y_2) \mathbf{E}\sigma(u_{\eps,a}(s,y_1))\sigma(u_{\eps,a}(s,y_1+\eps y_2))  \dif y_1 \dif y_2 \dif s.\label{eq:101}
	\end{align}
	By \thmref{convergence}, \corref{regularity}, and \propref{momentbound}, we have, for any $s\in(0,T),y_1,y_2\in \mathbf{R}^2$,
	\[
		\mathbf{E}\sigma(u_{\eps,a}(s,y_1))\sigma(u_{\eps,a}(s,y_1+\eps y_2)) \to \mathbf{E}\sigma(\Xi_{a,2}(2))^2, \quad\quad \text{ as }\eps\to0.
	\]
	Then we pass to the limit in \eqref{101} to derive
	\begin{equation}
		\mathbf{E}\mathcal{U}_{\eps,a,T}(g)^2\to\mathbf{E}\sigma(\Xi_{a,2}(2))^2\int_0^T\int |G_{T-s}* g(y)|^2 \dif y \dif s,
	\end{equation}
	so the variance of $\mathcal{U}_{\eps,a,T}(g)$ converges as $\eps\to0$. By adopting the approach in \cite{GL20}, one should be able to further prove the convergence
	\begin{equation}
		\mathcal{U}_{\eps,a,T}(g)\xrightarrow[\eps\downarrow0]{\mathrm{law}} \mathcal{U}_{a,T}(g):=\int U_a(T,x)g(x) \dif x,
	\end{equation}
	with the random distribution $U_a$ solving the Edwards-Wilkinson equation
	\begin{equation}
		\dif U_a=\frac12\Delta U_a \dif t+\sqrt{\mathbf{E}\sigma(\Xi_{a,2}(2))^2}\dif W(t,x), \quad\quad U_a(0,x)=0.
	\end{equation}
	To avoid further lengthening the paper we do not pursue this direction here.
\end{rem}

\section{Multipoint statistics\label{sec:multipoint}}

Now we turn our attention to multipoint statistics and work towards
proving \thmref{multipoint}.

\subsection{Local-in-space dependence of the solution on the noise}

We can interpret \propref{exciseoneinterval} of \secref{shutoffnoise-oneinterval}
as a form of local-in-time dependence of the solution $u_{\eps,a}$
on the noise. In particular, we can turn off the noise in an area
temporally distant from where we evaluate the solution without affecting
the solution much. To discuss multipoint statistics, we will need
a similar property when we turn off the noise in a spatial region
that is distant from our point of interest.

For $B\subset\mathbf{R}^{2}$, let $v_{\eps,a}^{B}$ solve the problem
\begin{align}
	\dif v_{\eps,a}^{B}(t,x) & =\frac{1}{2}\Delta v_{\eps,a}^{B}(t,x)\dif t+(\log\eps^{-1})^{-\frac{1}{2}}\sigma(v_{\eps,a}^{B}(t,x))\dif W^{\eps,B}(t,x);\label{eq:uA-1} \\
	v_{\eps,a}^{B}(0,x)      & =a.\label{eq:uAic-1}
\end{align}
Here, $W^{\eps,B}=G_{\eps^{2}/2}*(W\mathbf{1}_{B})$. Note that $W^{\eps}=W^{\eps,B}+W^{\eps,B^{\mathrm{c}}}$,
and moreover that $W^{\eps,B}$ and $W^{\eps,B^{\mathrm{c}}}$ are independent.
Define
\begin{equation}
	R^{\eps,B}(x,x')=\int_{B}G_{\eps^{2}/2}(x-y)G_{\eps^{2}/2}(x'-y)\,\dif y\label{eq:Rbdef}
\end{equation}
so that, formally,
\begin{align*}
	\mathbf{E}\dif W^{\eps,B}(t,x)\dif W^{\eps,B}(t',x') & =\delta(t-t')R^{\eps,B}(x,x').
\end{align*}
Note that $R^{\eps,B}(x,x')\le G_{\eps^{2}}(x-x')$ for all $x,x'\in\mathbf{R}^{2}$.
We note that $v_{\eps,a}^{B}$ has nothing to do with the $v_{\eps,a}$
considered in \secref{smoothedfield}.

Our first goal will be an estimate on what happens if we turn off
the noise in a half-plane, which we do in \lemref{vBminusvH} below.
We then consider complements of rectangles by taking unions of half-planes
in \propref{approx-square}. First we record a simple moment bound.
\begin{lem}
	\label{lem:vBmomentbound}For any $T\in[0,T_{0}]$ and any $B\subset\mathbf{R}^{2}$,
	we have
	\begin{equation}
		\sup_{x\in\mathbf{R}^{2}}\left(\mathbf{E}v_{\eps,a}^{B}(t,x)^{2}\right)^{1/2}\le K_{0}a.\label{eq:vBmomentbound}
	\end{equation}
\end{lem}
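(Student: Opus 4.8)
The plan is to mimic the $L^2$ moment bounds already established for $u_{\eps,a}$ and $u_{\eps,a}^A$ in \secref[s]{moment-bounds}, in particular \propref{L2bound} and \lemref{secondmomentbound-general}, adapting them to the spatially-restricted noise $W^{\eps,B}$. First I would write the mild formulation of \eqref{uA-1}--\eqref{uAic-1}:
\[
	v_{\eps,a}^{B}(t,x)=a+\frac{1}{\sqrt{\log\eps^{-1}}}\int_{0}^{t}\int G_{t-s}(x-y)\sigma(v_{\eps,a}^{B}(s,y))\,\dif W^{\eps,B}(s,y).
\]
Squaring and taking expectations, and using that the covariance of $\dif W^{\eps,B}$ is $\delta(t-t')R^{\eps,B}(x,x')$ with $R^{\eps,B}(x,x')\le G_{\eps^2}(x-x')$ (as noted just before the lemma), together with $\sigma(r)\le\beta r$, gives
\[
	\mathbf{E}v_{\eps,a}^{B}(t,x)^{2}\le a^{2}+\frac{\beta^{2}}{\log\eps^{-1}}\int_{0}^{t}\iint G_{t-s}(x-y_{1})G_{t-s}(x-y_{2})G_{\eps^{2}}(y_{1}-y_{2})\big(\mathbf{E}v_{\eps,a}^{B}(s,y_{1})^{2}\big)^{1/2}\big(\mathbf{E}v_{\eps,a}^{B}(s,y_{2})^{2}\big)^{1/2}\,\dif y_{1}\,\dif y_{2}\,\dif s,
\]
where I used Cauchy--Schwarz on the probability space to split the mixed moment.

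The second step is to control the spatial integral. The function $x\mapsto\big(\mathbf{E}v_{\eps,a}^{B}(s,x)^{2}\big)^{1/2}$ is \emph{not} constant in $x$ (unlike for $u_{\eps,a}$, since restricting the noise to $B$ breaks spatial homogeneity), so I cannot pull it out directly. Instead I would set $f(t)=\sup_{x\in\mathbf{R}^2}\mathbf{E}v_{\eps,a}^{B}(t,x)^{2}$ and bound $\big(\mathbf{E}v_{\eps,a}^{B}(s,y_i)^{2}\big)^{1/2}\le f(s)^{1/2}$ inside the integral; then $\iint G_{t-s}(x-y_1)G_{t-s}(x-y_2)G_{\eps^2}(y_1-y_2)\,\dif y_1\,\dif y_2 = \int G_{t-s}(x-y)G_{t-s+\eps^2}(x-y)\,\dif y = \frac{1}{2\pi(2(t-s)+\eps^2)}$ by the Gaussian product identity \eqref{Gaussianproduct}, exactly as in the proof of \propref{L2bound}. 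This yields
\[
	f(t)\le a^{2}+\frac{\beta^{2}}{2\pi\log\eps^{-1}}\int_{0}^{t}\frac{f(s)}{2(t-s)+\eps^{2}}\,\dif s,
\]
provided $f$ is finite; finiteness of $f$ on $[0,T_0]$ follows from the standard well-posedness/moment theory for the mollified equation (e.g.\ \cite{PZ97}), or alternatively one can run the argument with a truncation and pass to the limit. Then \lemref{secondmomentbound-general} applies verbatim and gives $f(t)\le a^2\big(1-\tfrac{\beta^2}{4\pi}\cdot\tfrac{\log(1+2\eps^{-2}t)}{\log\eps^{-1}}\big)^{-1}\le K_0^2 a^2$ by the choice of $K_0$ in \defref{K0def}, which is \eqref{vBmomentbound}.

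I do not expect any serious obstacle here; the one point requiring a little care is the loss of spatial homogeneity, handled by taking the supremum over $x$ before applying Grönwall, and the a priori finiteness of that supremum, which is why I would invoke the standard moment theory for the $\eps$-regularized SPDE at the outset. Everything else is a direct transcription of the arguments already carried out for \propref{L2bound} and \lemref{secondmomentbound-general}, using only the pointwise bound $R^{\eps,B}\le G_{\eps^2}$.
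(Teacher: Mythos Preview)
Your proposal is correct and follows essentially the same approach as the paper: write the mild formulation, bound $R^{\eps,B}\le G_{\eps^2}$, take the spatial supremum to handle the loss of homogeneity, evaluate the Gaussian integral, and apply \lemref{secondmomentbound-general}. The only cosmetic difference is that the paper uses the elementary inequality $ab\le\tfrac12(a^2+b^2)$ on the product $\sigma(v^B(s,y_1))\sigma(v^B(s,y_2))$ where you use Cauchy--Schwarz on the probability space; after taking the sup over $x$ both routes give the identical integral inequality.
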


\begin{proof}

	By the mild solution formula and Young's inequality, we have
	\begin{align*}
		\mathbf{E}v_{\eps,a}^{B}( & t,x)^{2}=a^{2}+\frac{1}{\log\eps^{-1}}\int_{0}^{t}\iint G_{t-s}(x-y_{1})G_{t-s}(x-y_{2})R^{\eps,B}(y_{1},y_{2})\cdot                                                                                    \\
		                          & \qquad\qquad\qquad\qquad\qquad\qquad\cdot\mathbf{E}[\sigma(v_{\eps,a}^{B}(t,y_{1}))\sigma(v_{\eps,a}^{B}(t,y_{2}))]\,\dif y_{1}\,\dif y_{2}\,\dif s                                                     \\
		                          & \le a^{2}+\frac{1}{2\log\eps^{-1}}\sum_{i=1}^{2}\int_{0}^{t}\iint G_{t-s}(x-y_{1})G_{t-s}(x-y_{2})R^{\eps,B}(y_{1},y_{2})\mathbf{E}\sigma(v_{\eps,a}^{B}(t,y_{i}))^{2}\,\dif y_{1}\,\dif y_{2}\,\dif s.
	\end{align*}
	This means that
	\begin{align*}
		\sup_{x\in\mathbf{R}^{2}}\mathbf{E}v_{\eps,a}^{B}(t,x)^{2} & \le a^{2}+\frac{1}{2\log\eps^{-1}}\int_{0}^{t}\iint G_{t-s}(x-y_{1})G_{t-s}(x-y_{2})G_{\eps^{2}}(y_{1}-y_{2})\cdot                                   \\
		                                                           & \qquad\qquad\qquad\qquad\qquad\qquad\cdot\sup_{x\in\mathbf{R}^{2}}\mathbf{E}\sigma(v_{\eps,a}^{B}(t,x))^{2}\,\dif y_{1}\,\dif y_{2}\,\dif s          \\
		                                                           & \le a^{2}+\frac{1}{2\pi\log\eps^{-1}}\int_{0}^{t}\frac{\sup_{x\in\mathbf{R}^{2}}\mathbf{E}\sigma(v_{\eps,a}^{B}(t,x))^{2}}{2(t-s)+\eps^{2}}\,\dif s,
	\end{align*}
	and \eqref{vBmomentbound} then follows from \lemref{secondmomentbound-general}
	(and \eqref{K0lb}).
\end{proof}
\begin{lem}
	\label{lem:vBminusvH}Let $B\subset\mathbf{R}^{2}$ and let $H$ be
	a half-plane in $\mathbf{R}^{2}$. Then we have, for all $x\not\in H$,
	that
	\begin{equation}
		\mathbf{E}(v_{\eps,a}^{B}-v_{\eps,a}^{B\setminus H})(t,x)^{2}\le 5a^2K_{0}^{2}\sum_{k=1}^{\infty}\left(\frac{\beta^{2}}{4\pi}\frac{\log(1+2\eps^{-2}t)}{\log\eps^{-1}}\right)^{k}(G_{\frac{1}{2}[t+k\eps^{2}]}*\mathbf{1}_{H})(x).\label{eq:vBvBminusH}
	\end{equation}
\end{lem}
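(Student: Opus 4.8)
The plan is to expand $v_{\eps,a}^{B}-v_{\eps,a}^{B\setminus H}$ through its mild formulation, take second moments, and then iterate a spatially resolved Duhamel inequality, in the spirit of the series estimate in \lemref{secondmomentbound-general} but now tracking the heat-kernel-to-$H$ factor. Write $W^{\eps,B}=W^{\eps,B\cap H}+W^{\eps,B\setminus H}$; the two summands are independent and have vanishing cross-covariance since $(B\cap H)\cap(B\setminus H)=\emptyset$. Subtracting the mild formulations of $v_{\eps,a}^{B}$ and $v_{\eps,a}^{B\setminus H}$ writes $(v_{\eps,a}^{B}-v_{\eps,a}^{B\setminus H})(t,x)$ as a stochastic integral against $\dif W^{\eps,B\cap H}$ with integrand $G_{t-s}(x-y)\sigma(v_{\eps,a}^{B}(s,y))$ plus a stochastic integral against $\dif W^{\eps,B\setminus H}$ with integrand $G_{t-s}(x-y)[\sigma(v_{\eps,a}^{B})-\sigma(v_{\eps,a}^{B\setminus H})](s,y)$. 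Squaring and taking expectations, the cross term drops out, and with $f(t,x):=\mathbf{E}(v_{\eps,a}^{B}-v_{\eps,a}^{B\setminus H})(t,x)^{2}$ I obtain an inequality $f\le S+P$: the ``source'' $S$ carries $R^{\eps,B\cap H}$ and the correlator $\mathbf{E}[\sigma(v_{\eps,a}^{B}(s,y_{1}))\sigma(v_{\eps,a}^{B}(s,y_{2}))]$, while the ``propagation'' $P$ carries $R^{\eps,B\setminus H}$ and $\mathbf{E}\prod_{i}[\sigma(v_{\eps,a}^{B})-\sigma(v_{\eps,a}^{B\setminus H})](s,y_{i})$.

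For $S$ I bound the correlator by $\beta^{2}K_{0}^{2}a^{2}$ using $\Lip\sigma=\beta$, positivity of $v_{\eps,a}^{B}$, the Cauchy--Schwarz inequality on the probability space, and \lemref{vBmomentbound}, and I use $R^{\eps,B\cap H}\le R^{\eps,H}$. The geometric heart of the matter is the identity $R^{\eps,H}(y_{1},y_{2})=G_{\eps^{2}}(y_{1}-y_{2})\,(G_{\eps^{2}/4}*\mathbf{1}_{H})(\tfrac{y_{1}+y_{2}}{2})$, a consequence of the two-dimensional Gaussian product identity $G_{a}(z-y_{1})G_{a}(z-y_{2})=G_{a/2}(z-\tfrac{y_{1}+y_{2}}{2})\,G_{2a}(y_{1}-y_{2})$; combining this with the same identity for $G_{t-s}(x-y_{1})G_{t-s}(x-y_{2})$ and the change of variables $(y_{1},y_{2})\mapsto(\tfrac{y_{1}+y_{2}}{2},y_{1}-y_{2})$ makes the $y$-integral factorize, yielding $S(t,x)\le\tfrac{\beta^{2}K_{0}^{2}a^{2}}{2\pi\log\eps^{-1}}\int_{0}^{t}\tfrac{1}{2(t-s)+\eps^{2}}\,(G_{(t-s)/2+\eps^{2}/4}*\mathbf{1}_{H})(x)\,\dif s$, after using that $a\mapsto(G_{a}*\mathbf{1}_{H})(x)$ is increasing for $x\notin H$ (it equals $\Phi(-\mathrm{dist}(x,H)/\sqrt{a})$ when $H$ is a half-plane). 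This is already dominated by the $k=1$ term of the claimed series.

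For $P$, the Lipschitz bound $|\sigma(p)-\sigma(q)|\le\beta|p-q|$, Cauchy--Schwarz, $2|c_{1}c_{2}|\le c_{1}^{2}+c_{2}^{2}$ with symmetry in $y_{1},y_{2}$, the bound $R^{\eps,B\setminus H}\le G_{\eps^{2}}$, and one more use of the semigroup and product identities give $P(t,x)\le(\mathcal{L}f)(t,x)$, where $(\mathcal{L}g)(t,x):=\tfrac{\beta^{2}}{2\pi\log\eps^{-1}}\int_{0}^{t}\tfrac{1}{2(t-s)+\eps^{2}}\int G_{\frac{(t-s)(t-s+\eps^{2})}{2(t-s)+\eps^{2}}}(x-y)\,g(s,y)\,\dif y\,\dif s$. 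Since $\int_{0}^{t}\!\int$ of the kernel of $\mathcal{L}$ equals $\theta_{t}:=\tfrac{\beta^{2}}{4\pi}\cdot\tfrac{\log(1+2\eps^{-2}t)}{\log\eps^{-1}}$, which is $<1$ for $t\le T_{0}$ by \eqref{eps0cond}, and since $f\le 4K_{0}^{2}a^{2}$ by \lemref{vBmomentbound}, iterating $f\le S+\mathcal{L}f$ gives $f\le\sum_{k=0}^{n-1}\mathcal{L}^{k}S+\mathcal{L}^{n}(4K_{0}^{2}a^{2})$ for every $n$, and the remainder $\mathcal{L}^{n}(4K_{0}^{2}a^{2})\le 4K_{0}^{2}a^{2}\theta_{t}^{\,n}\to0$, so $f\le\sum_{k\ge0}\mathcal{L}^{k}S$. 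Finally I estimate $\mathcal{L}^{k-1}S$ by composing via the heat semigroup the Gaussians $G_{\frac{(s_{i+1}-s_{i})(s_{i+1}-s_{i}+\eps^{2})}{2(s_{i+1}-s_{i})+\eps^{2}}}$ that appear in it: using $\tfrac{u(u+\eps^{2})}{2u+\eps^{2}}\le\tfrac{u+\eps^{2}}{2}$, after $k$ such steps the total spreading time is at most $\tfrac12[t+k\eps^{2}]$, so monotonicity of $(G_{\cdot}*\mathbf{1}_{H})(x)$ together with the simplex bound $\int\prod_{i}\tfrac{1}{2g_{i}+\eps^{2}}\le(\tfrac12\log(1+2\eps^{-2}t))^{k}$ yields $\mathcal{L}^{k-1}S(t,x)\le a^{2}K_{0}^{2}\,\theta_{t}^{\,k}\,(G_{\frac12[t+k\eps^{2}]}*\mathbf{1}_{H})(x)$; summing over $k\ge1$ produces \eqref{vBvBminusH}, with the constant $5$ leaving ample slack for the various routine constants along the way.

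The step I expect to be the main obstacle is exactly this last bookkeeping: one has to carry the heat-kernel-to-$H$ factor through the entire Picard iteration while simultaneously showing that each iteration costs a factor $\le\theta_{t}<1$ \emph{and} inflates the Gaussian spreading parameter by no more than $\eps^{2}/2$. This forces one to retain the exact form $\tfrac{(t-s)(t-s+\eps^{2})}{2(t-s)+\eps^{2}}$ of the kernel's Gaussian parameter and to compose these parameters additively via the semigroup rather than bounding each factor crudely; obtaining the $k\eps^{2}$ in $G_{\frac12[t+k\eps^{2}]}$, as opposed to a parameter growing faster in $k$, depends precisely on this.
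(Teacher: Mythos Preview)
Your proof is correct and follows the same Picard/Neumann-series idea as the paper, but you organize the iteration more directly. The paper, after reaching $f\le I_1+I_2$ (your $P+S$), splits the spatial integral in $I_1$ into $y\in H^{\mathrm c}$ and $y\in H$, bounds the $H$-part by the crude estimate $f(s,y)\le 4K_0^2a^2$ (whence the factor $4$ that, combined with the $1$ from $I_2$, produces the constant $5$), and then iterates an ansatz $f(t,x)\le B_1^{(n)}+\sum_{k=1}^{n}B_2^{(k)}(G_{b^{(k)}(t)}*\mathbf 1_H)(x)$ restricted to $x\in H^{\mathrm c}$, checking inductively that $b^{(k+1)}(t)\ge b^{(k)}(s)+\frac{(t-s)(t-s+\eps^2)}{2(t-s)+\eps^2}$.

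You instead observe that $f\le S+\mathcal L f$ holds for \emph{all} $x$, iterate to $f\le\sum_{k\ge0}\mathcal L^{k}S$ using $\mathcal L^{n}(4K_0^2a^2)\to0$, and then evaluate $\mathcal L^{k-1}S$ exactly via the heat semigroup: the $y$-integrals compose to $(G_A*\mathbf 1_H)(x)$ with $A\le\frac12[t+k\eps^2]$, and monotonicity in the variance (valid since the final point $x\notin H$) gives the claimed Gaussian factor. This avoids the $H$/$H^{\mathrm c}$ split and the ansatz bookkeeping entirely, and in fact yields the bound with constant $1$ rather than $5$. Your ``main obstacle'' paragraph correctly identifies the key inequality $\frac{u(u+\eps^2)}{2u+\eps^2}\le\frac{u+\eps^2}{2}$, which is also what drives the paper's recursion \eqref{bkrecursive}; both arguments rely on it in the same way.
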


\begin{proof}
	From \eqref{uA-1}--\eqref{uAic-1} we write the mild solution formula
	\[
		v_{\eps,a}^{B}(t,x)=a+\frac{1}{\sqrt{\log\eps^{-1}}}\int_{0}^{t}\int G_{t-s}(x-y)\sigma(v_{\eps,a}^{B}(s,y))\,\dif W^{\eps,B}(s,y).
	\]
	Subtracting the corresponding expression for $v_{\eps,a}^{B\setminus H}$,
	we obtain
	\begin{align*}
		(v_{\eps,a}^{B}&-v_{\eps,a}^{B\setminus H})(t,x) \\& =\frac{1}{\sqrt{\log\eps^{-1}}}\int_{0}^{t}\int G_{t-s}(x-y)[\sigma(v_{\eps,a}^{B}(s,y))-\sigma(v_{\eps,a}^{B\setminus H}(s,y))]\,\dif W^{\eps,B\setminus H}(s,y) \\
		                                                & \qquad+\frac{1}{\sqrt{\log\eps^{-1}}}\int_{0}^{t}\int G_{t-s}(x-y)\sigma(v_{\eps,a}^{B}(s,y))\,\dif W^{\eps,B\cap H}(s,y).
	\end{align*}
	Taking second moments in this expression, using the independence of
	$W^{\eps,B\setminus H}$ and $W^{\eps,B\cap H}$, we have
	\begin{align}
		\mathbf{E} & (v_{\eps,a}^{B}-v_{\eps,a}^{B\setminus H})(t,x)^{2}\nonumber                                                                                                                                                                                     \\
		           & \le\frac{\beta^{2}}{\log\eps^{-1}}\int_{0}^{t}\iint R^{\eps,B\setminus H}(y_{1},y_{2})\prod_{i=1}^{2}\left(G_{t-s}(x-y_i)\left(\mathbf{E}(v_{\eps,a}^{B}-v_{\eps,a}^{B\setminus H})(s,y_{i})^{2}\right)^{1/2}\right)\,\dif y_{1}\,\dif y_{2}\,\dif s\nonumber \\
		           & \qquad+\frac{\beta^{2}}{\log\eps^{-1}}\int_{0}^{t}\iint R^{\eps,B\cap H}(y_{1},y_{2})\prod_{i=1}^{2}\left(G_{t-s}(x-y_{i})\left(\mathbf{E}v_{\eps,a}^{B}(s,y_{i})^{2}\right)^{1/2}\right)\,\dif y_{1}\,\dif y_{2}\,\dif s\nonumber            \\
		           & \eqqcolon I_{1}+I_{2}.\label{eq:EvI1I2}
	\end{align}
	For the first term we can estimate
	\begin{align}
		I_{1} & \le\frac{\beta^{2}}{\log\eps^{-1}}\int_{0}^{t}\iint G_{t-s}(x-y_{1})G_{t-s}(x-y_{2})G_{\eps^{2}}(y_{1}-y_{2})\prod_{i=1}^{2}\left(\mathbf{E}(v_{\eps,a}^{B}-v_{\eps,a}^{B\setminus H})(s,y_{i})^{2}\right)^{1/2}\,\dif y_{1}\,\dif y_{2}\,\dif s\nonumber                                                      \\
		      & \le\frac{\beta^{2}}{\log\eps^{-1}}\int_{0}^{t}\int G_{t-s}(x-y_{1})G_{t-s+\eps^{2}}(x-y_{1})\mathbf{E}(v_{\eps,a}^{B}-v_{\eps,a}^{B\setminus H})(s,y_{1})^{2}\,\dif y_{1}\,\dif s\nonumber                                                                                                                    \\
		      & \le\frac{\beta^{2}}{4\pi\log\eps^{-1}}\int_{0}^{t}\int\frac{G_{\frac{(t-s)(t-s+\eps^{2})}{2(t-s)+\eps^{2}}}(x-y)}{t-s+\eps^{2}/2}\left(\mathbf{1}_{H^{\mathrm{c}}}(y)\mathbf{E}(v_{\eps,a}^{B}-v_{\eps,a}^{B\setminus H})(s,y)^{2}+4K_{0}^{2}a^{2}\mathbf{1}_{H}(y)\right)\,\dif y\,\dif s,\label{eq:I1start}
	\end{align} 
	where in the last inequality we used \eqref{Gaussianproduct} and \lemref{vBmomentbound}.
	For the second term of \eqref{EvI1I2} we can estimate
	\begin{align}
		I_{2} & \le\frac{\beta^{2}a^{2}K_{0}^{2}}{\log\eps^{-1}}\int_{0}^{t}\iint G_{t-s}(x-y_{1})G_{t-s}(x-y_{2})R^{\eps,B\cap H}(y_{1},y_{2})\,\dif y_{1}\,\dif y_{2}\,\dif s\nonumber \\
		      & \le\frac{\beta^{2}a^{2}K_{0}^{2}}{4\pi\log\eps^{-1}}\int_{0}^{t}\frac{(G_{\frac{1}{2}[t-s+\eps^{2}/2]}*\mathbf{1}_{H})(x)}{t-s+\eps^{2}/2}\,\dif s,\label{eq:I2start}
	\end{align}
	where in the second inequality we used \eqref{Gaussianproduct}.
	Using \eqref{I1start} and \eqref{I2start} in \eqref{EvI1I2}, we
	have
	\begin{equation}
		\begin{aligned}\mathbf{E}&(v_{\eps,a}^{B}-v_{\eps,a}^{B\setminus H})(t,x)^{2} \\& \le\frac{\beta^{2}}{4\pi\log\eps^{-1}}\int_{0}^{t}\int\frac{G_{\frac{(t-s)(t-s+\eps^{2})}{2(t-s)+\eps^{2}}}(x-y)}{t-s+\eps^{2}/2}\mathbf{1}_{H^{\mathrm{c}}}(y)\mathbf{E}(v_{\eps,a}^{B}-v_{\eps,a}^{B\setminus H})(s,y)^{2}\,\dif y\,\dif s \\
			                                                              & \qquad+\frac{\beta^{2}a^{2}K_{0}^{2}}{4\pi\log\eps^{-1}}\int_{0}^{t}\frac{\left(\left[4G_{\frac{(t-s)(t-s+\eps^{2})}{2(t-s)+\eps^{2}}}+G_{\frac{1}{2}[t-s+\eps^{2}/2]}\right]*\mathbf{1}_{H}\right)(x)}{t-s+\eps^{2}/2}\,\dif s.
		\end{aligned}
		\label{eq:EvBvBHaftersplitup}
	\end{equation}

	Now we note that for all $x\not\in H$, and all $r>0$, if we let $\omega\ge0$ be the distance between $x$ and $H$, then we have
	\begin{equation}\label{eq:GconvHmonotone}
	\begin{aligned}
	\frac{\dif}{\dif r}(G_r*\mathbf{1}_H)(x)&=\frac{\dif}{\dif r}\int_{\omega}^\infty 
	(2\pi r)^{-1/2}\e^{-\xi^2/(2r)}\,\dif \xi = \int_{\omega}^\infty 
	\frac{\partial^2}{\partial \xi^2}(2\pi r)^{-1/2}\e^{-\xi^2/(2r)}\,\dif \xi
	\\&= -
	\frac{\partial}{\partial \xi}(2\pi r)^{-1/2}\e^{-\xi^2/(2r)}\bigg|_{\xi=\omega}
	= (2\pi r)^{-1/2}\frac{\omega}{r}\e^{-\omega^2/(2r)}\ge 0.
	\end{aligned}
	\end{equation}
	This means that for all $s\in[0,t]$, we have 
	\begin{equation*}
		(G_{\frac{1}{2}[t-s+\eps^{2}/2]}*\mathbf{1}_{H})(x)%
		\le(G_{\frac{1}{2}(t+\eps^{2})}*\mathbf{1}_{H})(x)%
	\end{equation*}
	and similarly
	\[
		(G_{\frac{(t-s)(t-s+\eps^{2})}{2(t-s)+\eps^{2}}}*\mathbf{1}_{H})(x)\le(G_{\frac{t(t+\eps^{2})}{2t+\eps^{2}}}*\mathbf{1}_{H})(x)\le(G_{\frac{1}{2}(t+\eps^{2})}*\mathbf{1}_{H})(x).
	\]
	Using these estimates in \eqref{EvBvBHaftersplitup}, we see that
	if we put $f(t,x)=\mathbf{E}(v_{\eps,a}^{B}-v_{\eps,a}^{B\setminus H})(t,x)^{2}$,
	then for all $x\in H^{\mathrm{c}}$ we have
	\begin{equation}
		\begin{aligned}
			f(t,x) & \le\frac{\beta^{2}}{4\pi\log\eps^{-1}}\int_{0}^{t}\int\frac{G_{\frac{(t-s)(t-s+\eps^{2})}{2(t-s)+\eps^{2}}}(x-y)}{t-s+\eps^{2}/2}\mathbf{1}_{H^{\mathrm{c}}}(y)f(s,y)\,\dif y\,\dif s \\
			       & \qquad+\frac{5}{4\pi}\beta^{2}a^{2}K_{0}^{2}\left(G_{\frac{1}{2}[t+\eps^{2}]}*\mathbf{1}_{H}\right)(x)\frac{\log(1+2\eps^{-2}t)}{\log\eps^{-1}}
		\end{aligned}
		\label{eq:frecursive-final}
	\end{equation}
	Define
	\begin{equation}
		b^{(k)}(t)=\frac{t}{2}+k\frac{\eps^{2}}{2}.\label{eq:bkdef}
	\end{equation}
	We note that
	\begin{equation}
	\begin{aligned}
		\sup_{s\in[0,t]}\left[b^{(k)}(s)+\frac{(t-s)(t-s+\eps^{2}/2)}{2(t-s)+\eps^{2}/2}\right]  &=\sup_{s\in[0,t]}\left[\frac{s}{2}+k\frac{\eps^{2}}{2}+\frac{(t-s)(t-s+\eps^{2}/2)}{2(t-s)+\eps^{2}/2}\right]\\&\le b^{(k+1)}(t)\end{aligned}\label{eq:bkrecursive}
	\end{equation}
	for all $s\in[0,t]$ and all $k\ge1$. Define
	\[
		B_{2}^{(k)}=5a^2K_{0}^{2}\left(\frac{\beta^{2}}{4\pi}\frac{\log(1+2\eps^{-2}t)}{\log\eps^{-1}}\right)^{k}.
	\]
	Suppose that
	\begin{equation}
		f(t,x)\le B_{1}^{(n)}+\sum_{k=1}^{n}B_{2}^{(k)}(G_{b^{(k)}(t)}*\mathbf{1}_{H})(x)\label{eq:fbddbybyBs}
	\end{equation}
	for all $x\in H^{\mathrm{c}}$. This is automatically true for $n=0$ with
	$B_{1}^{(0)}=\|f\|_{L^{\infty}([0,t]\times\mathbf{R}^{2})}$. Then
	we have from \eqref{frecursive-final} that, for all $x\in H^{\mathrm{c}}$,
	\begin{align*}
		f(t,x) & \le\frac{\beta^{2}}{4\pi\log\eps^{-1}}\int_{0}^{t}\int\frac{G_{\frac{(t-s)(t-s+\eps^{2})}{2(t-s)+\eps^{2}}}(x-y)}{t-s+\eps^{2}/2}\mathbf{1}_{H^{\mathrm{c}}}(y)\left[B_{1}^{(n)}+\sum_{k=1}^{n}B_{2}^{(k)}(G_{b^{(k)}(s)}*\mathbf{1}_{H})(y)\right]\,\dif y\,\dif s      \\
		       & \qquad+\frac{5}{4\pi}\beta^{2}a^{2}K_{0}^{2}\left(G_{\frac{1}{2}[t+\eps^{2}]}*\mathbf{1}_{H}\right)(x)\frac{\log(1+2\eps^{-2}t)}{\log\eps^{-1}}                                                                                                                          \\
		       & \le\frac{\beta^{2}B_{1}^{(n)}}{8\pi}\frac{\log(1+2\eps^{-2}t)}{\log\eps^{-1}}+\frac{\beta^{2}}{4\pi\log\eps^{-1}}\sum_{k=1}^{n}B_{2}^{(k)}\int_{0}^{t}\int\frac{1}{t-s+\eps^{2}/2}(G_{b^{(k)}(s)+\frac{(t-s)(t-s+\eps^{2})}{2(t-s)+\eps^{2}}}*\mathbf{1}_{H})(x)\,\dif s \\
		       & \qquad+\frac{5}{4\pi}\beta^{2}a^{2}K_{0}^{2}\left(G_{b^{(1)}(t)}*\mathbf{1}_{H}\right)(x)\frac{\log(1+2\eps^{-2}t)}{\log\eps^{-1}}                                                                                                                                       \\
		       & \le\frac{\beta^{2}B_{1}^{(n)}}{8\pi}\frac{\log(1+2\eps^{-2}t)}{\log\eps^{-1}}+\frac{\beta^{2}\log(1+2\eps^{-2}t)}{4\pi\log\eps^{-1}}\sum_{k=1}^{n}B_{2}^{(k)}(G_{b^{(k+1)}(t)}*\mathbf{1}_{H})(x)+B_{2}^{(1)}\left(G_{b^{(1)}(t)}*\mathbf{1}_{H}\right)(x)               \\
		       & =\frac{\beta^{2}B_{1}^{(n)}}{8\pi}\frac{\log(1+2\eps^{-2}t)}{\log\eps^{-1}}+\sum_{k=1}^{n+1}B_{2}^{(k)}(G_{b^{(k)}(t)}*\mathbf{1}_{H})(x).
	\end{align*}
	In the third inequality we used \eqref{bkrecursive} and \eqref{GconvHmonotone}.
	By induction, this means that \eqref{fbddbybyBs} holds for all $n\ge0$,
	with $B_{1}^{(n)}=\|f\|_{L^{\infty}([0,t]\times\mathbf{R}^{2})}\left(\frac{\beta^{2}}{8\pi}\cdot\frac{\log(1+4\eps^{-2}t)}{\log\eps^{-1}}\right)^{n}\to0$
	as $n\to\infty$. Therefore, we in fact have
	\[
		f(t,x)\le 5a^2K_{0}^{2}\sum_{k=1}^{\infty}\left(\frac{\beta^{2}}{4\pi}\frac{\log(1+2\eps^{-2}t)}{\log\eps^{-1}}\right)^{k}(G_{b^{(k)}(t)}*\mathbf{1}_{H})(x),
	\]
	which (recalling \eqref{bkdef}) is \eqref{vBvBminusH}.
\end{proof}
Now we apply \lemref{vBminusvH} four times to bound the effect of
turning off the noise outside of a square.
\begin{prop}
	\label{prop:approx-square}Suppose that
	\begin{equation}
		\lim_{\eps\downarrow0}\frac{\xi_{\eps}}{\eta_{\eps}}=\lim_{\eps\downarrow0}\frac{t_{\eps}^{1/2}}{\eta_{\eps}}=0.\label{eq:qlimits}
	\end{equation}
	and
	\begin{equation}
		\limsup\limits _{\eps\downarrow0}t_{\eps}<\infty.\label{eq:q3limit}
	\end{equation}
	Let $\square_{\eps}=[-\eta_{\eps},\eta_{\eps}]^{2}$. Then we have
	for all $x\in[-\xi_{\eps},\xi_{\eps}]^{2}$ that
	\begin{equation}
		\lim_{\eps\downarrow0}\mathbf{E}(u_{\eps,a}-v_{\eps,a}^{\square_{\eps}})(t_{\eps},x)^{2}=0.\label{eq:outsideboxgoesto0}
	\end{equation}
\end{prop}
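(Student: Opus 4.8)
The plan is to peel off, one at a time, the four half-planes whose union is the complement of $\square_\eps$, controlling each removal by \lemref{vBminusvH}. Set $H_1=\{y\in\mathbf{R}^{2}:y_1>\eta_\eps\}$, $H_2=\{y:y_1<-\eta_\eps\}$, $H_3=\{y:y_2>\eta_\eps\}$, $H_4=\{y:y_2<-\eta_\eps\}$, so that $\square_\eps=\mathbf{R}^{2}\setminus(H_1\cup H_2\cup H_3\cup H_4)$, and put $B_0=\mathbf{R}^{2}$ and $B_j=B_{j-1}\setminus H_j$, so that $B_4=\square_\eps$ and $v_{\eps,a}^{B_0}=u_{\eps,a}$. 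By \eqref{qlimits} we have $\xi_\eps/\eta_\eps\to0$, so for $\eps$ small $[-\xi_\eps,\xi_\eps]^2$ is disjoint from each $H_j$; we may also assume $t_\eps\le T_0$ for $\eps$ small (enlarging the fixed horizon $T_0$ if necessary). Then for $x\in[-\xi_\eps,\xi_\eps]^2$ the triangle inequality in $L^2$ together with four applications of \lemref{vBminusvH} (with $B=B_{j-1}$, $H=H_j$, $t=t_\eps$) gives
\[
\left(\mathbf{E}(u_{\eps,a}-v_{\eps,a}^{\square_\eps})(t_\eps,x)^2\right)^{1/2}\le\sum_{j=1}^4\left(5a^2K_0^2\sum_{k=1}^\infty\rho_\eps^k\,(G_{\frac{1}{2}[t_\eps+k\eps^2]}*\mathbf{1}_{H_j})(x)\right)^{1/2},
\]
where $\rho_\eps=\frac{\beta^2}{4\pi}\cdot\frac{\log(1+2\eps^{-2}t_\eps)}{\log\eps^{-1}}$. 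Since $\beta<\sqrt{2\pi}$ and $\frac{\log(1+2\eps^{-2}T_0)}{\log\eps^{-1}}\to2$ as $\eps\downarrow0$, we have $\rho_\eps\le\frac{\beta^2}{4\pi}\cdot\frac{\log(1+2\eps^{-2}T_0)}{\log\eps^{-1}}$ and $\limsup_{\eps\downarrow0}\rho_\eps\le\beta^2/2\pi<1$, so we may fix $\rho<1$ with $\rho_\eps\le\rho$ for all small $\eps$.

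It therefore suffices to show that $\Sigma_\eps:=\sum_{k\ge1}\rho_\eps^k(G_{\frac{1}{2}[t_\eps+k\eps^2]}*\mathbf{1}_{H})(x)\to0$, uniformly over $x\in[-\xi_\eps,\xi_\eps]^2$ and over $H\in\{H_1,\dots,H_4\}$. Because $\mathbf{1}_{H_j}$ depends on only one coordinate, $(G_s*\mathbf{1}_{H_j})(x)$ is a one-dimensional Gaussian tail $\mathbf{P}(Z>\mathrm{dist}(x,H_j)/\sqrt s)$ with $Z\sim N(0,1)$, and since $\mathrm{dist}(x,H_j)\ge\eta_\eps-\xi_\eps\ge\eta_\eps/2$ for small $\eps$, we get $(G_s*\mathbf{1}_{H_j})(x)\le e^{-\eta_\eps^2/(8s)}$. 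Taking $s=\frac{1}{2}(t_\eps+k\eps^2)$ and splitting the sum at $k=t_\eps/\eps^2$, using $t_\eps+k\eps^2\le 2t_\eps$ when $k\le t_\eps/\eps^2$ and the trivial bound $e^{-\eta_\eps^2/(4(t_\eps+k\eps^2))}\le1$ when $k>t_\eps/\eps^2$, yields
\[
\Sigma_\eps\le e^{-\eta_\eps^2/(8t_\eps)}\sum_{k\ge1}\rho^k+\sum_{k>t_\eps/\eps^2}\rho_\eps^k\le\frac{1}{1-\rho}\left(e^{-\eta_\eps^2/(8t_\eps)}+\rho_\eps^{\lceil t_\eps/\eps^2\rceil}\right).
\]
The first term tends to $0$ because $t_\eps^{1/2}/\eta_\eps\to0$ forces $\eta_\eps^2/t_\eps\to\infty$. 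For the second, note $\rho_\eps^{\lceil t_\eps/\eps^2\rceil}\le\min(\rho_\eps,\rho^{t_\eps/\eps^2})$, and this minimum tends to $0$: along any subsequence, either $t_\eps/\eps^2$ stays bounded — whence $\log(1+2\eps^{-2}t_\eps)$ is bounded and $\rho_\eps\to0$ — or $t_\eps/\eps^2\to\infty$, whence $\rho^{t_\eps/\eps^2}\to0$. Thus $\Sigma_\eps\to0$, and \eqref{outsideboxgoesto0} follows.

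I expect the main obstacle to be controlling the tail of the $k$-series. As $k$ grows, the heat semigroup spreads $\mathbf{1}_{H_j}$ back over a neighborhood of the origin on the time scale $k\eps^2$, so the Gaussian factors do not decay in $k$ and one must lean entirely on the geometric weight $\rho_\eps^k$. That weight is bounded away from $1$ only because of subcriticality $\beta<\sqrt{2\pi}$, and it is genuinely small precisely in the degenerate regime $t_\eps\lesssim\eps^2$, where the natural truncation point $t_\eps/\eps^2$ fails to diverge; reconciling these two mechanisms — which is what the $\min$ above encodes — is the only delicate point, everything else being routine second-moment estimation of the kind already packaged in \lemref{vBminusvH}.
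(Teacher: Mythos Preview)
Your proof is correct and follows essentially the same route as the paper's: peel off the four half-planes via \lemref{vBminusvH}, split the resulting $k$-series at $k\approx t_\eps/\eps^2$, control the small-$k$ part by the Gaussian tail $e^{-c\eta_\eps^2/t_\eps}$ and the large-$k$ tail by the geometric decay $\rho_\eps^k$. Your $\min(\rho_\eps,\rho^{t_\eps/\eps^2})$ device neatly unifies the two regimes that the paper handles by a separate case analysis on whether $\limsup_{\eps\downarrow0}\eps^{-2}t_\eps$ is finite, but the substance is identical.
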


\begin{proof}
	Using \lemref{vBminusvH} four times, we have

	\begin{equation}
		\mathbf{E}(u_{\eps,a}-v_{\eps,a}^{\square_{\eps}})(t_{\eps},x)^{2}\le 5a^2K_{0}^{2}\sum_{i=1}^{4}\sum_{k=1}^{\infty}c_{\eps}^{k}(G_{\frac{1}{2}[t_\eps + k\eps^2]}*\mathbf{1}_{H_{i}})(x),\label{eq:vBvBminusH-1}
	\end{equation}
	where $H_{1},\ldots,H_{4}$ are four half-planes so that $\square_{\eps}=\bigcap_{i=1}^{4}H_{i}$.
	Here we have also defined
	\[
		c_{\eps}=\frac{\beta^{2}}{4\pi}\frac{\log(1+\eps^{-2}t_{\eps})}{\log\eps^{-1}}.
	\]
	We note that \eqref{q3limit} and the subcriticality assumption $\beta<\sqrt{2\pi}$  that
	\begin{equation}
		\limsup_{\eps\downarrow 0} c_\eps <1.\label{eq:limsupceps}
	\end{equation}
	Now if $\limsup\limits _{\eps\downarrow0}\eps^{-2}t_{\eps}<\infty$,
	then $c_{\eps}\to0$ as $\eps\downarrow0$, so using the trivial bound
	$(G_{\frac{1}{2}[t_\eps + k\eps^2]}*\mathbf{1}_{H_{i}})(x)\le1$
	in \eqref{vBvBminusH-1} we get \eqref{outsideboxgoesto0}. Therefore,
	we can assume that
	\begin{equation}
		\limsup\limits _{\eps\downarrow0}\eps^{-2}t_{\eps}=\infty.\label{eq:etothethingtoinfinity}
	\end{equation}
	We break the inner sum in \eqref{vBvBminusH-1} into two pieces. First we estimate
	\[
		\sum_{k=\lfloor\eps^{-2}t_{\eps}\rfloor}^{\infty}c_{\eps}^{k}(G_{\frac{1}{2}[t_\eps + k\eps^2]}*\mathbf{1}_{H_{i}})(x)\le\sum_{k=\lfloor\eps^{-2}t_{\eps}\rfloor}^{\infty}c_{\eps}^{k}=\frac{c_{\eps}^{\lfloor\eps^{-2}t_{\eps}\rfloor}}{1-c_{\eps}}\to0
	\]
	as $\eps\downarrow0$ by \eqref{limsupceps} and \eqref{etothethingtoinfinity}.
	Then we estimate
	\[
		\sum_{k=1}^{\lfloor\eps^{-2}t_{\eps}\rfloor}c_{\eps}^{k}(G_{\frac{1}{2}[t_\eps + k\eps^2]}*\mathbf{1}_{H_{i}})(x)\le(G_{t_{\eps}}*\mathbf{1}_{H_{i}})(x)\sum_{k=1}^{\infty}c_{\eps}^{k}=\frac{c_{\eps}}{1-c_{\eps}}(G_{t_{\eps}}*\mathbf{1}_{H_{i}})(x),
	\]
	using the fact that $t_\eps/2+k\eps^2/2\le t_\eps$ whenever $k\le \eps^{-2}t_\eps$. Now we have, for $x\in [-\xi_\eps,\xi_\eps]^2$, that
	\begin{align*}
		(G_{t_{\eps}}*\mathbf{1}_{H_{i}})(x)\le\frac{1}{\sqrt{2\pi t_{\eps}}}\int_{\eta_{\eps}-\xi_{\eps}}^{\infty}\exp\left\{ -\frac{\alpha^{2}}{2t_{\eps}}\right\} \,\dif\alpha & \le\frac{1}{\sqrt{2\pi t_{\eps}}}\int_{\eta_{\eps}-\xi_{\eps}}^{\infty}\exp\left\{ -\frac{\alpha(\eta_{\eps}-\xi_{\eps})}{2t_{\eps}}\right\} \,\dif\alpha \\
		                                                                                                                                                                          & =\frac{\sqrt{2t_{\eps}/\pi}}{\eta_{\eps}-\xi_{\eps}}\exp\left\{ -\frac{(\eta_{\eps}-\xi_{\eps})^{2}}{2t_{\eps}}\right\} \to0
	\end{align*}
	as $\eps\downarrow0$ by \eqref{qlimits}. Combining the last three
	displays and  \eqref{vBvBminusH-1} gives us \eqref{outsideboxgoesto0}.
\end{proof}

\subsection{Proof of Theorem~\ref{thm:multipoint}}

We now have the tools we need to prove \thmref{multipoint}. Throughout
this section, our setup is as in the statement of \thmref{multipoint}.
We note in particular that \eqref{dijdef} implies (with $d$
as in \eqref{ddef}) that
\[
	d((\tau_{\eps,i},x_{\eps,i}),(\tau_{\eps,j},x_{\eps,j}))=\eps^{1-d_{ij}+o(1)}.
\]
and \eqref{Qdef-1} implies that
\begin{equation}
	\tau_{\eps,i}=\eps^{2-Q+o(1)}\label{eq:Qinterpret}
\end{equation}
as $\eps\downarrow 0$.
Let $\kappa_{\eps}$ be such that $\kappa_{\eps}\to0$ and
\begin{equation}
	10\eps^{1-d_{ij}+\kappa_{\eps}}\le d((\tau_{\eps,i},x_{\eps,i}),(\tau_{\eps,j},x_{\eps,j}))\le\frac{1}{2}\eps^{1-d_{ij}-\kappa_{\eps}}.\label{eq:kappaerrortermdij}
\end{equation}
and
\begin{equation}
	2\eps^{2-Q+2\kappa_{\eps}}\le\tau_{\eps,i}\le\eps^{2-Q-2\kappa_{\eps}}.\label{eq:kappaerrortermQ}
\end{equation}

Our first step will apply \propref{approx-square} to show that the values of the
solution $u_{\eps,a}$ at distant space-time points are asymptotically independent. %
\begin{prop}
	\label{prop:bustupindependent}Let $P_{1},\ldots,P_{R}$ be a partition
	of $[N]$ so that
	\begin{equation}
		d_{ij}\ge Q/2\iff i\in P_{m},j\in P_{n},n\ne m.\label{eq:dijcond}
	\end{equation}
	Then there is an $\eps_{1}\in(0,\eps_0]$ so that if $\eps\in[0,\eps_{1})$ then
	there are independent processes $u_{\eps,a}^{(1)},\ldots,u_{\eps,a}^{(R)}$
	so that $u_{\eps,a}^{(k)}\overset{\mathrm{law}}{=}u_{\eps,a}$ ($k=1,\ldots,R$),
	and for each $j\in P_{k}$ ($k=1,\ldots,R$), we have
	\begin{equation}
		\lim_{\eps\downarrow0}\mathbf{E}(u_{\eps,a}^{(k)}(\tau_{\eps,j},x_{\eps,j})-u_{\eps,a}(\tau_{\eps,j},x_{\eps,j}))^{2}=0.\label{eq:approx-independent}
	\end{equation}
\end{prop}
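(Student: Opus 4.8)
The plan is to realize each $u^{(k)}_{\eps,a}$ as the solution of \eqref{duxi}--\eqref{uxiic} driven by the mollification of $W\mathbf 1_{\mathcal R_k}+\tilde W_k\mathbf 1_{\mathcal R_k^{\mathrm c}}$, where $\mathcal R_k$ is a space-time region attached to $\{(\tau_{\eps,j},x_{\eps,j}):j\in P_k\}$ and $\tilde W_1,\dots,\tilde W_R$ are independent fresh spacetime white noises, independent of $W$. Since $W\mathbf 1_{\mathcal R_k}+\tilde W_k\mathbf 1_{\mathcal R_k^{\mathrm c}}$ is again a spacetime white noise we automatically get $u^{(k)}_{\eps,a}\overset{\mathrm{law}}{=}u_{\eps,a}$, and as soon as the $\mathcal R_k$ are pairwise disjoint the family $(W\mathbf 1_{\mathcal R_k})_k$ is independent and independent of $(\tilde W_k)_k$, so the $u^{(k)}_{\eps,a}$ are mutually independent. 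Concretely I would take $\mathcal R_k=[\tau^{k,-}_\eps,\tau^{k,+}_\eps]\times\big(x_{\eps,\min P_k}+[-\rho^{(k)}_\eps,\rho^{(k)}_\eps]^2\big)$ with $\tau^{k,+}_\eps=\max_{j\in P_k}\tau_{\eps,j}$, $\tau^{k,-}_\eps=\tau^{k,+}_\eps(1-2\eps^{\psi_\eps})$ and $\rho^{(k)}_\eps=\omega_\eps(\tau^{k,+}_\eps\eps^{\psi_\eps})^{1/2}$, for auxiliary sequences $\psi_\eps\downarrow0$ and $\omega_\eps\uparrow\infty$ to be tuned below. Note $D^{\max}_k:=\max_{i,i'\in P_k}d_{ii'}<Q/2$ because $P_k$ is a single class of the equivalence relation $\{d_{\cdot\cdot}<Q/2\}$, so $\theta:=\min_k(Q/2-D^{\max}_k)>0$ is a fixed positive number; by \eqref{kappaerrortermdij} this makes all within-part spatial separations $\le\eps^{1-D^{\max}_k-\kappa_\eps}$ and within-part time separations $\le\eps^{2-Q+2\theta-2\kappa_\eps}$, which is used below.

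For the approximation \eqref{approx-independent}, fix $j\in P_k$ and let $v^{\mathcal R_k}$ be the solution of \eqref{duxi}--\eqref{uxiic} with the noise switched off outside $\mathcal R_k$. One compares $u_{\eps,a}(\tau_{\eps,j},x_{\eps,j})$ to $v^{\mathcal R_k}(\tau_{\eps,j},x_{\eps,j})$ in two steps: first switch off all noise before time $\tau^{k,-}_\eps$, which by \propref{exciseoneinterval} (with $\tau_1=0$, $\tau_2=\tau^{k,-}_\eps$, $t=\tau_{\eps,j}$) costs $O\big(a(\kappa_\eps+\psi_\eps)^{1/2}+a(\log\eps^{-1})^{-1/2}\big)$ in $L^2$, because $\tau^{k,-}_\eps$ is only a \emph{sub-polynomially} smaller fraction of $\tau_{\eps,j}$, so that $\log\frac{\tau_{\eps,j}+\eps^2/2}{\tau_{\eps,j}-\tau^{k,-}_\eps+\eps^2/2}\lesssim(\kappa_\eps+\psi_\eps)\log\eps^{-1}$; then, starting from the now-constant field $\equiv a$ at time $\tau^{k,-}_\eps$, switch off the noise outside the spatial box on $[\tau^{k,-}_\eps,\tau_{\eps,j}]$, which by \propref{approx-square} costs $o(a)$ since that window has length $\le2\tau^{k,+}_\eps\eps^{\psi_\eps}\ll(\rho^{(k)}_\eps)^2$ (as $\omega_\eps\to\infty$) and since $x_{\eps,j}$ lies within $\eps^{1-D^{\max}_k-\kappa_\eps}\ll\rho^{(k)}_\eps$ of the box centre (here the factor $\eps^{\theta}$ of room is crucial). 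The identical estimates with $W\mathbf 1_{\mathcal R_k}+\tilde W_k\mathbf 1_{\mathcal R_k^{\mathrm c}}$ in place of $W^\eps$ give $u^{(k)}_{\eps,a}(\tau_{\eps,j},x_{\eps,j})\approx v^{\mathcal R_k}(\tau_{\eps,j},x_{\eps,j})$ in $L^2$, and \eqref{approx-independent} follows by the triangle inequality.

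The real work is in choosing $\eps_1$ so that the $\mathcal R_k$ are pairwise disjoint, and the difficulty is entirely due to the \emph{marginal} cross-pairs $i\in P_k$, $j\in P_{k'}$ ($k\ne k'$) for which $d_{ij}$ equals $Q/2$ rather than exceeding it: then two distinct parts can be as close in space as $10\eps^{1-Q/2+\kappa_\eps}$, so the spatial boxes alone cannot separate them. The point is that $d$ is the \emph{maximum} of a spatial and a temporal distance, so \eqref{kappaerrortermdij} (using $d_{ij}\ge Q/2$) forces every cross-pair to satisfy $|x_{\eps,i}-x_{\eps,j}|\ge10\eps^{1-Q/2+\kappa_\eps}$ \emph{or} $|\tau_{\eps,i}-\tau_{\eps,j}|\ge100\eps^{2-Q+2\kappa_\eps}$. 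Applying this to $i=\min P_k$, $j=\min P_{k'}$ and using \eqref{kappaerrortermQ} together with the within-part time bound: in the first case the boxes of $\mathcal R_k$ and $\mathcal R_{k'}$ are disjoint because $2\rho^{(k)}_\eps\vee 2\rho^{(k')}_\eps\le2\omega_\eps\eps^{1-Q/2-\kappa_\eps+\psi_\eps/2}\ll10\eps^{1-Q/2+\kappa_\eps}$; in the second case the time intervals of $\mathcal R_k$ and $\mathcal R_{k'}$ are disjoint because both have length $\le2\eps^{2-Q-2\kappa_\eps+\psi_\eps}\ll100\eps^{2-Q+2\kappa_\eps}$. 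These two $\ll$'s hold precisely when $\psi_\eps$ is chosen to exceed $4\kappa_\eps$ by at least, say, $(\log\eps^{-1})^{-1/2}$ (so one can still take $\psi_\eps\to0$ and, in the first case, $\omega_\eps\to\infty$ slowly enough), and one also needs $2\theta>4\kappa_\eps+\psi_\eps$ and $\eps^{\psi_\eps}<\tfrac12$; since $\kappa_\eps,\psi_\eps\to0$ and $\theta>0$ is fixed, all of this holds for $\eps$ below some $\eps_1$, which finishes the proof. The main obstacle is thus exactly this bookkeeping: fitting one space-time region per part that is simultaneously wide enough — both in space and back in time — to localize the dependence of every point of the part, yet narrow enough to miss every other part, which is possible only because of the two slack mechanisms that turning off noise more than a sub-polynomial fraction back in time is harmless, and that within a part the points are polynomially (by a factor $\eps^{\theta}$) closer together than the diffusive scale.
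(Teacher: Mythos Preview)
Your proposal is correct and follows essentially the same route as the paper: build a space--time box $\mathcal R_k$ around each part $P_k$, replace the noise outside $\mathcal R_k$ by fresh independent noise, and control the $L^2$ error by first excising the noise before the time window via \propref{exciseoneinterval} and then excising the noise outside the spatial window via \propref{approx-square}. The paper does exactly this (its $S_{\eps,k}$, $\tilde u^{(k)}_{\eps,a}$, and $u^{(k)}_{\eps,a}$ play the roles of your $\mathcal R_k$, $v^{\mathcal R_k}$, and $u^{(k)}_{\eps,a}$), and its disjointness argument is the same spatial/temporal dichotomy you spell out; the only differences are cosmetic---the paper centers at an arbitrary $i_k\in P_k$ and sizes the box with the already-introduced parameter $\zeta_\eps$ (time half-width $\eps^{2-Q+2\kappa_\eps+2\zeta_\eps}$, space half-side $\eps^{1-Q/2+\kappa_\eps}$) rather than introducing your $\psi_\eps,\omega_\eps$.
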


\begin{proof}
	For each $k=1,\ldots,R$, let $i_{k}$ be an arbitrary element of
	$P_{k}$. Define
	\begin{equation}
		D_{k}\coloneqq\max_{i,j\in P_{k}}d_{ij}<Q/2,\label{eq:Dkdef}
	\end{equation}
	with the inequality by \eqref{dijcond}.
	Define the sets $S_{\eps,k}\subset\mathbf{R}\times\mathbf{R}^{2}$
	by
	\[
		S_{\eps,k}=\left(\tau_{\eps,i_{k}}+[-\eps^{2-Q+2\kappa_{\eps}+2\zeta_{\eps}},\eps^{2-Q+2\kappa_{\eps}+2\zeta_{\eps}}]\right)\times\left(x_{\eps,i_{k}}+[-\eps^{1-Q/2+\kappa_{\eps}},\eps^{1-Q/2+\kappa_{\eps}}]^2\right).
	\]
	Here $\kappa_{\eps}$ is as in \eqref{kappaerrortermdij}--\eqref{kappaerrortermQ}
	and $\zeta_{\eps}$ is as in \eqref{zetabound}.

	If $k_{1}\ne k_{2}$, then we have by \eqref{dijcond} that $d_{i_{k_{1}}i_{k_{2}}}\ge Q/2$,
	so by \eqref{ddef} and \eqref{kappaerrortermdij} we have
	\[
		\max\{|\tau_{\eps,i_{k_{1}}}-\tau_{\eps,i_{k_{2}}}|^{1/2},|x_{\eps,i_{k_{1}}}-x_{\eps,i_{k_{2}}}|\}\ge10\eps^{1-Q/2+\kappa_{\eps}}.
	\]
	This means that $\{S_{\eps,1},\ldots,S_{\eps,R}\}$ forms a pairwise-disjoint
	family of sets.

	Let $A_{k}=[0,\tau_{\eps,i_{k_{1}}}-\eps^{2-Q+\kappa_{\eps}+\zeta_{\eps}}]$.
	Define $u_{\eps,a}^{A_{k}}$ as in \eqref{uA}--\eqref{uAic}. By
	\propref{exciseoneinterval}, we have, for all $j\in P_{k}$, that
	\begin{equation}
	\begin{aligned}
		&\left(\mathbf{E}(u_{\eps,a}-u_{\eps,a}^{A_{k}})(\tau_{\eps,j},x_{\eps,j})^{2}\right)^{1/2}\\&\qquad\le\frac{\beta aK_{0}^2}{2\sqrt{\pi\log\eps^{-1}}}\left(K_{0}+\sqrt{\log\frac{\tau_{\eps,j}+\eps^{2}}{\tau_{\eps,j}-\tau_{\eps,i_{k}}+\eps^{2-Q+2\kappa_{\eps}+2\zeta_{\eps}}+\eps^{2}}}\right).\label{eq:cutoffnoisefirst}
		\end{aligned}
	\end{equation}
	We note  (still assuming $j\in P_{k}$) that
	\begin{equation}
		|\tau_{\eps,j}-\tau_{\eps,i_{k}}|\le\frac{1}{4}\eps^{2-2D_{k}-2\kappa_{\eps}}\ll \eps^{2-Q+2\kappa_{\eps}+2\zeta_{\eps}} \qquad\text{and}\qquad\tau_{\eps,j}\le\eps^{2-Q-2\kappa_{\eps}}\label{eq:tauepsjclose}
	\end{equation}
	by \eqref{kappaerrortermdij}, \eqref{kappaerrortermQ}, and \eqref{Dkdef}. %
    Thus from
	\eqref{cutoffnoisefirst} we obtain a constant $C$ so that
	\begin{align}
		\left(\mathbf{E}(u_{\eps,a}-u_{\eps,a}^{A_{k}})(\tau_{\eps,j},x_{\eps,j})^{2}\right)^{1/2} & \le\frac{C\beta aK_{0}^2}{2\sqrt{\pi\log\eps^{-1}}}\left(K_{0}+\sqrt{\log\eps^{-4\kappa_{\eps}-2\zeta_{\eps}}}\right)\to0\label{eq:uuAk}
	\end{align}
	as $\eps\downarrow0$ since $\kappa_{\eps},\zeta_{\eps}\to0$.

	Define $\pi_1:\R\times\R^2\to\R$ be given by $\pi_1(t,x)=t$ and $\pi_2:\R\times\R^2\to\R^2$ be given by $\pi_2(t,x)=x$. %
	Let $\tilde{u}_{\eps,a}^{(k)}$ solve the problem
	\begin{align}
		\dif\tilde{u}_{\eps,a}^{(k)}(t,x) & =\frac{1}{2}\Delta\tilde{u}_{\eps,a}^{(k)}(t,x)\dif t+(\log\eps^{-1})^{-\frac{1}{2}}\mathbf{1}_{\pi_1(S_{\eps,k})}(t)\sigma(\tilde{u}_{\eps,a}^{(k)}(t,x))\dif W^{\eps,\pi_2(S_{\eps,k})}(t,x);\label{eq:uA-1-1} \\
		\tilde{u}_{\eps,a}^{(k)}(0,x)     & =a.\label{eq:uAic-1-1}
	\end{align}
	This turns off some temporal part of the noise as in \eqref{uA}--\eqref{uAic}
	but also a spatial part of the noise as in \eqref{uA-1}--\eqref{uAic-1}.
	Since $\{S_{\eps,1},\ldots,S_{\eps,R}\}$ is pairwise-disjoint, the
	processes $u_{\eps,a}^{(1)},\ldots,u_{\eps,a}^{(R)}$ are independent.
	We now want to apply (a translated version of) \propref{approx-square}
	with
	\[
		\xi_{\eps}=\eps^{1-D_{k}-\kappa_{\eps}},\qquad\eta_{\eps}=\eps^{1-Q/2+\kappa_{\eps}},\qquad t_{\eps}=\tau_{\eps,j}-\tau_{\eps,i_{k_{1}}}+\eps^{2-Q+2\kappa_{\eps}+2\zeta_{\eps}}.
	\]
	Note that
	\[
		\lim_{\eps\downarrow0}\frac{\xi_{\eps}}{\eta_{\eps}}=\lim_{\eps\downarrow0}\frac{\eps^{1-D_{k}-\kappa_{\eps}}}{\eps^{1-Q/2+\kappa_{\eps}}}=\lim_{\eps\downarrow0}\eps^{Q/2-D_{k}-2\kappa_{\eps}}=0
	\]
	since $D_k<Q/2$ and $\kappa_\eps\to 0$, 
	and also that (using these facts along with \eqref{zetabound} and \eqref{tauepsjclose}) that
	\[
		\lim_{\eps\downarrow0}\frac{t_{\eps}^{1/2}}{\eta_{\eps}}\le\lim_{\eps\downarrow0}\frac{(\tau_{\eps,j}-\tau_{\eps,i_{k_{1}}})^{1/2}}{\eps^{1-Q/2+\kappa_{\eps}}}+\lim_{\eps\downarrow0}\frac{\eps^{1-Q/2+\kappa_{\eps}+\zeta_{\eps}}}{\eps^{1-Q/2+\kappa_{\eps}}}\le\lim_{\eps\downarrow0}\frac{\eps^{1-D_{k}-\kappa_{\eps}}}{\eps^{1-Q/2+\kappa_{\eps}}}+\lim_{\eps\downarrow0}\eps^{\zeta_{\eps}}=0.
	\]
	Therefore, \eqref{qlimits}
	is verified, so \propref{approx-square} applies, and we have (combining
	the result with \eqref{uuAk}) that
	\begin{equation}
		\lim_{\eps\downarrow0}\mathbf{E}(u_{\eps,a}-\tilde{u}_{\eps,a}^{(k)})(\tau_{\eps,i_{j}},x_{\eps,i_{j}})^{2}=0\label{eq:uutilde}
	\end{equation}
	for all $j\in P_{k}$. Now let $u_{\eps,a}^{(k)}$ solve the problem
	\begin{align}
		\label{eq:duk} \dif u_{\eps,a}^{(k)}(t,x) & =\frac{1}{2}\Delta u_{\eps,a}^{(k)}(t,x)\dif t\\&\qquad+(\log\eps^{-1})^{-\frac{1}{2}}\mathbf{1}_{\pi_{1}(S_{\eps,k})}(t)\sigma(u_{\eps,a}^{(k)}(t,x))\dif[W^{\eps,\pi_{2}(S_{\eps,k})}(t,x)+\tilde{W}^{\eps,\pi_{2}(S_{\eps,k})^{\mathrm{c}}}]\notag \\&\notag\qquad+(\log\eps^{-1})^{-\frac{1}{2}}\mathbf{1}_{\mathbf{R}\setminus\pi_{1}(S_{\eps,k})}(t)\sigma(u_{\eps,a}^{(k)}(t,x))\dif\tilde{W}^{\eps}(t,x)\\%
		u_{\eps,a}(0,x)            & =a,\label{eq:uk0}
	\end{align}
	where $\tilde{W}$ is an independent copy of
	$W$ (different and independent across different choices of $k$). Note that $u_{\eps,a}^{(1)},\ldots,u_{\eps,a}^{(R)}$
	are independent since the family $\{S_{\eps,1},\ldots,S_{\eps,R}\}$
	is disjoint. %
	The pairs
	$(u_{\eps,a},\tilde{u}_{\eps,a}^{(k)})$ and $(u_{\eps,a}^{(k)},\tilde{u}_{\eps,a}^{(k)})$
	have the same joint laws because to go from $u_{\eps,a}$ to $u_{\eps,a}^{(k)}$ we simply replaced a part of the noise (on $S_{\eps,k}^{\mathrm{c}}$) that is independent of $\tilde{u}_{\eps,a}^{(k)}$ (for which the noise on $S_{\eps,k}^{\mathrm{c}}$ is turned off). Therefore, \eqref{uutilde} also means
	that
	\begin{equation}
		\lim_{\eps\downarrow0}\mathbf{E}(u_{\eps,a}^{(k)}-\tilde{u}_{\eps,a}^{(k)})(\tau_{\eps,i_{j}},x_{\eps,i_{j}})^{2}=0,\label{eq:uutilde-1}
	\end{equation}
	and combining this with \eqref{uutilde} yields \eqref{approx-independent}.
\end{proof}
Now we can prove \thmref{multipoint}.

\begin{proof}[Proof of \thmref{multipoint}.]
	We use induction on $N$. The base case, \eqref{multipointconvergence}
	with $N=1$, is simply an application of \eqref{convergencegeneralQ}.
	Now suppose that $N\ge2$ and that \eqref{multipointconvergence}
	holds for all strictly smaller $N$. Let
	\begin{equation}
		q_{0}=2-2\max_{i,j\in[N]}d_{i,j}.\label{eq:q0def}
	\end{equation}
	Then we have
	\begin{equation}
		q<Q-2+q_{0}\implies i_{(Q-q)/2}([N])=\{1\}\label{eq:iqis1}
	\end{equation}
	by the definition \eqref{iqdef}. Define
	\begin{equation}
		m_{\eps}(q_{0})=\max\{M_{1}(\eps,\tau_{\eps,1}),\lfloor(q_{0}-2\kappa_{\eps}-2\gamma_{\eps})\delta_{\eps}^{-1}\rfloor\},\label{eq:mepsq0}
	\end{equation}
	recalling the definition \eqref{M1def}, and also recall the definition
	\eqref{dwm}--\eqref{wmic} of $w_{\eps,a,T,X}^{(m)}$. In the case
	$m_{\eps}(q_{0})=M_{0}(\eps,\tau_{\eps,1})$, we have
	\begin{equation}
		u_{\eps,a}(\tau_{\eps,j},x_{\eps,j})=w_{\eps,a,\tau_{\eps,1},x_{\eps,1}}^{(m_{\eps}(q_{0}))}\label{eq:utildeisw}
	\end{equation}
	by the definition \eqref{M1ic}.
Otherwise, we note using \eqref{kappaerrortermdij} that %
	\begin{equation}
		\tau_{\eps,j}  \ge\tau_{\eps,1}-\frac{1}{2}\eps^{2-2d_{1j}-2\kappa_{\eps}}\ge\tau_{\eps,1}-\frac{1}{2}\eps^{q_{0}-2\kappa_{\eps}}\ge\tau_{\eps,1}-\frac{1}{2}\eps^{m_{\eps}(q_{0})\delta_{\eps}+\gamma_{\eps}}.\label{eq:taujbigger}
	\end{equation}
	Thus we can apply \propref{approxubyw} with $C_{\eps}=\eps^{-\gamma_{\eps}/2}$ (recalling \eqref{deltagammabd}) and $c=1/2$,
	and by \eqref{taujbigger} take $T=\tau_{\eps,1}$, $k=m_{\eps}(q_{0})$, and $t=\tau_{\eps,j}$ in the
	supremum in \eqref{uapproxbyw}, to obtain
	\begin{equation}
		\lim_{\eps\downarrow0}\frac{\left(\mathbf{E}(u_{\eps,a}-w_{\eps,a,\tau_{\eps,1},x_{\eps,1}}^{(m_{\eps}(q_{0}))})(\tau_{\eps,j},x_{\eps,j})^{2}\right)^{1/2}}{a(1+\eps^{-m_{\eps}(q_{0})\delta_{\eps}/2-\gamma_{\eps}/2}|x_{\eps,j}-x_{\eps,1}|)}=0.\label{eq:applymiddleofthepath}
	\end{equation} Note that \eqref{utildeisw} implies \eqref{applymiddleofthepath} as well, so in fact \eqref{applymiddleofthepath} holds unconditionally.
	On the other hand, we also have, using \eqref{mepsq0}, \eqref{kappaerrortermdij}, \eqref{q0def},
	and \eqref{deltagammabd}, that
	\begin{align*}
		\lim_{\eps\downarrow0}\eps^{-m_{\eps}(q_{0})\delta_{\eps}/2-\gamma_{\eps}/2}|x_{\eps,j}-x_{\eps,1}| & \le\frac{1}{2}\lim_{\eps\downarrow0}\eps^{\gamma_{\eps}/2-q_{0}/2+1-d_{1j}}\le\frac{1}{2}\lim_{\eps\downarrow0}\eps^{\gamma_{\eps}/2}=0.
	\end{align*}
	Combined with \eqref{applymiddleofthepath}, this means that
	\begin{equation}
		\lim_{\eps\downarrow0}a^{-1}\left(\mathbf{E}(u_{\eps,a}-w_{\eps,a,\tau_{\eps,1},x_{\eps,1}}^{(m_{\eps}(q_{0}))})(\tau_{\eps,j},x_{\eps,j})^{2}\right)^{1/2}=0.\label{eq:utow}
	\end{equation}

	Now define
	\begin{equation}
		\ell_{\eps}=\tau_{\eps,1}-\eps^{m_{\eps}(q_{0})\delta_{\eps}+\gamma_{\eps}}\label{eq:seps}
	\end{equation}
	and
	\[
		w(t,x)=w_{\eps,a,\tau_{\eps,1},x_{\eps,1}}^{(m_{\eps}(q_{0}))}(t+\ell_{\eps},x_{\eps,1}).
	\]
	Note that if $T=\tau_{\eps,1}$ then $t_{m_{\eps}(q_{0})}'=\tau_{\eps,1}-\ell_{\eps}$, so $w(0,\cdot)$ is constant in space and $w(0,x)\overset{\mathrm{law}}{=}Y_{\eps,a,\tau_{\eps,1}}(m_{\eps}(q_{0}))$. Thus, by applying \thmref{MCtodiffusion} as in the proof of \eqref{MCconverges}
	(recalling \eqref{trivchgvar} and \eqref{iqis1}), we see
	that
	\begin{equation}
		w(0,x) %
		\xrightarrow[\eps\downarrow0]{\mathrm{law}}\Gamma_{a,Q,1}(Q-(2-q_0)).\label{eq:wtogamma1}
	\end{equation}
	Moreover, $w$ is equal in law to $u_{\eps,b}$, where $b=w(0,x)$
	is taken to be independent of the noise driving $u_{\eps,b}$.

	Recall the definition \eqref{iqdef} and let
	\[
		P_{k}=i_{1-q_{0}/2}^{-1}(k)=\{j\in[N]\ :\ i_{1-q_{0}/2}(j)=k\}.
	\]
	Note that $P_{1},\ldots,P_{N}$ form a partition of $[N]$, and by
	\eqref{q0def} this partition is nontrivial. If $i_{1-q_{0}/2}(j_{1})=i_{1-q_{0}/2}(j_{2})$
	then $d_{j_{1},j_{2}}<1-q_{0}/2$ by the strong triangle inequality
	\eqref{strongtriangleinequality}. On the other hand, if $i_{1-q_{0}/2}(j_{1})\ne i_{1-q_{0}/2}(j_{2})$
	and $d_{j_{1},j_{2}}<1-q_{0}/2$, then we have by \eqref{strongtriangleinequality}
	and \eqref{q0def} that
	\[
		d_{i_{1-q_{0}/2}(j_{1}),i_{1-q_{0}/2}(j_{2})}\le\max\{d_{i_{q_{0}}(j_{1}),j_{1}},d_{j_{1},j_{2}},d_{j_{2},i_{q_{0}}(j_{2})}\}<1-q_{0}/2,%
	\]
	contradicting the definition \eqref{iqdef}. Therefore, we have \begin{equation}\label{eq:checksplithyps} i_{1-q_{0}/2}(j_{1})= i_{1-q_{0}/2}(j_{2})\iff d_{j_{1},j_{2}} < 1-q_{0}/2.\end{equation} Furthermore, we note that, for
	all $j\in P_{k}$, we have $2 d_{j,k}<2-q_{0}$, which means that (recalling
	\eqref{seps}, \eqref{kappaerrortermdij}, and \eqref{q0def}) we
	have
	\begin{equation}
		2-\lim_{\eps\downarrow0}\log_{\eps}\left(\tau_{\eps,j}-\ell_{\eps}\right)=2-q_{0}.\label{eq:newQ}
	\end{equation}
	Comparing this with \eqref{Qdef-1}, we see that the collection $\{(\tau_{\eps,j}-\ell_{\eps},x_{\eps,j})\}_{j\in [N]}$
	of space-time points satisfies the hypotheses of the theorem with
	the same $d_{ij}$s %
	but with $Q$
	replaced by $2-q_{0}$. Thus by \eqref{checksplithyps}, \propref{bustupindependent} applies
	and we obtain independent processes $w^{(1)},\dots,w^{(N)}$, each
	distributed identically to $w$, so that, whenever $j\in P_{k}$,
	we have
	\begin{equation}
		\lim_{\eps\downarrow0}\mathbf{E}(w^{(k)}-w)(\tau_{\eps,j}-\ell_{\eps},x_{\eps,j})^{2}=0.\label{eq:ukapproxsu}
	\end{equation}
	By the nontriviality of the partition $\{P_{1},\ldots,P_{N}\}$ we
	have $|P_{k}|<N$ for each $k$. Therefore, by the inductive hypothesis,
	we have
	\[
		(w^{(k)}(\tau_{\eps,j}-\ell_{\eps},x_{\eps,j}))_{j\in P_{k}}\xrightarrow[\eps\downarrow0]{\mathrm{law}}(\Gamma_{b,2-q_{0},j}(2-q_{0}))_{j\in P_{k}},
	\]
	with $b=w(0,x)$ independent of the randomness in the processes on
	the right side. Here we also used that $i_{(2-q_0-q)/2}(j)$ does not change when the minimum in \eqref{iqdef} is restricted to elements of $P_k$, since $P_k$ was defined so that this minimum will be an element of $P_k$ anyway. But since the family $(w^{(k)})_{k=1}^{N}$ is independent,
	as is the family $((\Gamma_{b,Q-q_{0},j}(Q-q_{0}))_{j\in P_{k}})_{k=1}^{N}$,
	this means that in fact
	\begin{equation}
		(w^{(k)}(\tau_{\eps,j}-\ell_{\eps},x_{\eps,j}))_{j\in1}^{N}\xrightarrow[\eps\downarrow0]{\mathrm{law}}(\Gamma_{b,2-q_{0},j}(2-q_{0}))_{j=1}^{N},\label{eq:inductiveafterpart}
	\end{equation}
	again with $b=w(0,x)$ independent of the randomness in the processes
	on the right side. Combining \eqref{utow}, \eqref{wtogamma1}, \eqref{ukapproxsu},
	\eqref{inductiveafterpart}, and the continuity of the SDE \eqref{dGamma-intro}--\eqref{Gammaic-intro}
	with respect to the initial condition, we obtain \eqref{multipointconvergence}.
\end{proof}

\appendix

\section{Convergence of discrete Markov martingales to continuous diffusions\label{appendix:MCtodiffusion}}
For the convenience of readers, we recall in this section a classical result on the convergence of Markov
chains to diffusions that is used in the paper. We use the formulation
and results given in \cite[Section 11.2]{SV06}.
\begin{thm}\label{thm:MCtodiffusion}
	Suppose that we have a sequence of numbers $\delta_{k}\downarrow0$,
	a sequence of discrete Markov martingales $(\{Y_{k}(m)\}_{m=A_1(k),\ldots,A_2(k)})_{k=1}^{\infty}$,
	and a continuous function $L:[A_1,A_2]\times\mathbf{R} \to\mathbf{R} $
	satisfying the following conditions:
	\begin{enumerate}
		\item \label{enu:icconv}The sequence of random variables $(Y_{k}(A_1(k)))$
		      converges in law to a random variable $X$ as $k\to\infty$.
		\item \label{enu:lipschitz}For each $q\in[A_1,A_2]$, the function $L(q,\cdot)$
		      is Lipschitz with the Lipschitz constant bounded above independent of $q$.
		\item We have $\delta_k m\in [A_1,A_2]$ for all $k\geq1$ and $m=A_1(k),\ldots,A_2(k)$,  and %
		      \[
			      \lim_{k\to\infty}\delta_{k}A_1(k)=A_1\qquad\text{and}\qquad\lim_{k\to\infty}\delta_{k}A_2(k)=A_2.
		      \]
		\item For each $R<\infty$, we have
		      \begin{equation}
			      \adjustlimits\lim_{\substack{k\to\infty\vphantom{|x|\le R}\\
					      \vphantom{A_1(k)\le m<A_2(k)}
				      }
			      }\sup_{\substack{|x|\le R\\
					      A_1(k)\le m<A_2(k)
				      }
			      }\left|\delta_{k}^{-1}\Var[Y_{k}(m+1)\mid Y_{k}(m)=x]-L(\delta_{k}m,x)\right|=0.\label{eq:VargoestoL}
		      \end{equation}

		\item There is a $p>2$ so that, for each $R<\infty$, we have
		      \begin{equation}
			      \sup_{\substack{k<\infty,|x|\le R\\
					      A_1(k)\le m<A_2(k)
				      }
			      }\delta_{k}^{-p/2}\mathbf{E}[(Y_{k}(m+1)-Y_{k}(m))^{p}\mid Y_{k}(m)=x]<\infty.\label{eq:momentboundforSV}
		      \end{equation}
	\end{enumerate}
	Let $(Y(q))_{q\in[A_1,A_2]}$ solve the stochastic differential equation
	\begin{align}
		\dif Y(q) & =L(q,Y(q))\,\dif B(q), \quad\quad q>A_1;\label{eq:dYq} \\
		Y(A_1)    & =X,\label{eq:YAa}
	\end{align}
	where $B(q)$ is a standard Brownian motion. Then we have
	\begin{equation}
		Y_{k}(A_2(k))\xrightarrow[k\to\infty]{\mathrm{law}}Y(A_2).\label{eq:YkBkconverges}
	\end{equation}
\end{thm}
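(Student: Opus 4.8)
The plan is to deduce Theorem~\ref{thm:MCtodiffusion} from the Stroock--Varadhan limit theorem \cite[Theorem~11.2.3]{SV06}: the work is to present the chains as time-continuous processes, to check that the hypotheses above are exactly the inputs that theorem needs (with the drift input automatically zero because each chain is a martingale), and then to read off \eqref{YkBkconverges}. First I would introduce, for each $k$, the piecewise-constant interpolation $\bar Y_{k}$ on $[A_{1},A_{2}]$ given by $\bar Y_{k}(q)=Y_{k}(m)$ for $\delta_{k}m\le q<\delta_{k}(m{+}1)$, with $\bar Y_{k}(q)=Y_{k}(A_{1}(k))$ for $q$ below the grid and $\bar Y_{k}(A_{2})=Y_{k}(A_{2}(k))$; the grid $\{\delta_{k}m\}$ exhausts $[A_{1},A_{2}]$ by hypothesis (3), and $\bar Y_{k}(A_{1})$ converges in law to $X$ by hypothesis~\enuref{icconv}.

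The core step is to identify every subsequential limit of $\mathrm{Law}(\bar Y_{k})$ as a solution of the martingale problem attached to the SDE \eqref{dYq}--\eqref{YAa}. Fix $f\in C^{2}_{c}(\mathbf{R})$ and Taylor-expand $f(Y_{k}(m{+}1))-f(Y_{k}(m))$. The first-order term has zero conditional expectation given $\mathcal{F}_{m}$ because $\{Y_{k}(m)\}$ is a martingale; the second-order term has conditional expectation $\tfrac12 f''(Y_{k}(m))\,\mathbf{E}[(Y_{k}(m{+}1)-Y_{k}(m))^{2}\mid\mathcal{F}_{m}]$, which by the Markov and martingale properties equals $\tfrac12 f''(Y_{k}(m))\,\Var[Y_{k}(m{+}1)\mid Y_{k}(m)]$ and hence, by \eqref{VargoestoL}, is $\tfrac12\delta_{k}f''(Y_{k}(m))L(\delta_{k}m,Y_{k}(m))^{2}$ up to an error that is $o(\delta_{k})$ uniformly while $Y_{k}(m)$ stays in a fixed ball; the remainder (order $\ge3$) is at most a constant times $\mathbf{E}[|Y_{k}(m{+}1)-Y_{k}(m)|^{3}\mid\mathcal{F}_{m}]=o(\delta_{k})$ by \eqref{momentboundforSV} and $p>2$. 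Summing in $m$ and localizing so that the chain remains in a fixed ball---a routine stopping-time truncation, legitimate because \eqref{VargoestoL} and \eqref{momentboundforSV} give uniform moment control and because the Lipschitz hypothesis~\enuref{lipschitz} forces at most linear growth of $L$, so the limiting SDE does not explode---one finds that $f(\bar Y_{k}(q))-\tfrac12\sum_{m\,:\,\delta_{k}m\le q}\delta_{k}L(\delta_{k}m,Y_{k}(m))^{2}f''(Y_{k}(m))$ is, modulo a uniformly vanishing error, a martingale.

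Tightness of $\mathrm{Law}(\bar Y_{k})$ in $D([A_{1},A_{2}],\mathbf{R})$ follows from the martingale property together with \eqref{momentboundforSV}: by the Burkholder--Davis--Gundy inequality the $p$-th moment of an increment of $\bar Y_{k}$ over $[s,t]$ is controlled by the $p/2$-th moment of the accumulated conditional quadratic variation, which \eqref{VargoestoL} shows is $O(t-s)$, so $\mathbf{E}|\bar Y_{k}(t)-\bar Y_{k}(s)|^{p}\lesssim(t-s)^{p/2}$ with $p>2$, the Kolmogorov-type criterion that also forces limiting paths to be continuous. Passing to a subsequential limit (via the Skorokhod representation theorem, using uniform continuity of $L$ on compacts and continuity of the limit path) shows that $f(Y(q))-\tfrac12\int_{A_{1}}^{q}L(s,Y(s))^{2}f''(Y(s))\,\dif s$ is a martingale for every $f\in C^{2}_{c}$, i.e.\ $Y$ solves the martingale problem for \eqref{dYq}. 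Because $L$ is globally Lipschitz in its second argument by hypothesis~\enuref{lipschitz}, this SDE has a pathwise-unique strong solution, so the martingale problem with initial law $\mathrm{Law}(X)$ is well-posed; since every subsequential limit of $\mathrm{Law}(\bar Y_{k})$ solves this one problem, the whole sequence $\bar Y_{k}$ converges in law to $Y$ in $D([A_{1},A_{2}],\mathbf{R})$. Finally, $Y$ has continuous sample paths and $A_{2}$ is deterministic, so $\omega\mapsto\omega(A_{2})$ is continuous at $Y$-almost every path, and the continuous mapping theorem gives $Y_{k}(A_{2}(k))=\bar Y_{k}(A_{2})\xrightarrow[k\to\infty]{\mathrm{law}}Y(A_{2})$, which is \eqref{YkBkconverges}.

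I expect the main obstacle to be carrying out the martingale-identification step quantitatively: one must control the Taylor remainder and the error in \eqref{VargoestoL} uniformly over the roughly $\delta_{k}^{-1}$ terms in the sum while the chain explores a spatial region that is a priori unbounded in $k$, which is exactly why the hypotheses are localized to $\{|x|\le R\}$ and why a preliminary stopping-time truncation---followed by letting $R\to\infty$ using non-explosion of the limit SDE---is needed. Everything else (the interpolation, the tightness estimate, and the transfer from path-space convergence to convergence of the terminal value) is routine once \cite[Theorem~11.2.3]{SV06} is invoked for the abstract input.
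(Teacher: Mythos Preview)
Your proposal is correct and follows essentially the same route as the paper: both deduce the result from \cite[Theorem~11.2.3]{SV06} by checking that the martingale property kills the drift condition, that \eqref{VargoestoL} supplies the diffusion-coefficient condition, and that \eqref{momentboundforSV} supplies the jump-control condition, with well-posedness of the limiting martingale problem coming from the Lipschitz assumption via Yamada--Watanabe. The only difference is one of presentation: the paper treats \cite[Theorem~11.2.3]{SV06} as a black box and simply matches hypotheses to \cite[(11.2.4)--(11.2.6)]{SV06} (handling time-dependence by passing to the space-time process $(Y_k(m),\delta_k m)$ and using linear interpolation in the uniform topology), whereas you unpack the internals---the Taylor expansion, the BDG/Kolmogorov tightness, the Skorokhod-space identification---which is more self-contained but not needed once the citation is in hand.
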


\begin{proof}
	This is essentially an application of \cite[Theorem 11.2.3]{SV06}.
	Since that theorem is stated in a general form, we provide some details
	on how to check the conditions. First we note that although \cite[Theorem 11.2.3]{SV06}
	is stated for time-independent diffusions, it is trivial to add the
	time-dependence simply by considering the space-time processes of
	the form $\{(Y_{k}(m),\delta_km)\}_{m=A_{1}(k),\ldots,A_2(k)}$.
	Applying \cite[Theorem 11.2.3]{SV06} requires also knowing that the
	limiting martingale problem corresponding for \eqref{dYq}--\eqref{YAa}
	is well-posed. The SDE \eqref{dYq}--\eqref{YAa} has pathwise unique
	solutions by the standard theory and condition \enuref{lipschitz} in
	the statement of theorem. This implies that there are unique solutions
	for the martingale problem by results \cite{WY71I,WY71II} of Watanabe
	and Yamada; see \cite[Corollary 8.1.6]{SV06}. Finally, \cite[Theorem 11.2.3]{SV06}
	is stated for diffusions starting at time $0$ and lasting for all
	time; this can be adapted to our setting (a finite time interval with arbitrary starting time) by shifting time and extending
	the Markov chains to later times in some arbitrary way.

	The quantitative conditions for \cite[Theorem 11.2.3]{SV06} are \cite[(11.2.4)--(11.2.6)]{SV06}.
	In our setting, \cite[(11.2.4)]{SV06} is a consequence of \eqref{VargoestoL}
	(and the fact that there is no diffusion for the time process).
	The fact that we have assumed that each $Y_{k}(\cdot)$ is a martingale means
	that there is no drift for the space process, and of course the drift
	condition is satisfied trivially for the time process, so \cite[(11.2.5)]{SV06}
	is trivial in our setting. Finally, \cite[(11.2.6]{SV06} holds because,
	by \eqref{momentboundforSV} and Markov's inequality, we have for
	any fixed $\kappa>0$ that
	\begin{align*}
		\frac{1}{\delta_{k}}\mathbf{P}\left(\left|Y_{k}(m+1)-Y_{k}(m)\right|\ge\kappa\mid Y_{k}(m)=x\right) & \le\frac{\mathbf{E}\left[\left|Y_{k}(m+1)-Y_{k}(m)\right|^{p}\mid Y_{k}(m)=x\right]}{\delta_{k}\kappa^{p}}\\&\le C\delta_{k}^{p/2-1}\kappa^{-p}
	\end{align*}
	for a constant $C<\infty$, and the last quantity goes to $0$ as
	$k\to\infty$ since $p>2$ and $\delta_{k}\downarrow0$.

	Now condition~\enuref{icconv} and the proof of \cite[Theorem 11.2.3]{SV06}
	show that, if we define
	\[
		\overline{Y}_{k}(A_1+\delta_{k}[m-A_1(k)])=Y_{k}(m),\quad\quad m=A_1(k),\ldots,A_2(k),
	\]
	and extend $\overline{Y}_{k}$ to $[A_1,A_2]$ by linear interpolation
	(possibly extending it by a constant on the small interval $[A_1+\delta_k(A_2(k)-A_1(k)),A_2]$),
	then $\overline{Y}_{k}$ converges to $Y$ in distribution with respect
	to the uniform topology on continuous functions on $[A_1,A_2]$. Then \eqref{YkBkconverges}
	follows.
\end{proof}

\emergencystretch=2em

\end{document}